\documentclass[reqno,12pt]{amsart}

\def\vint_#1{\mathchoice%
          {\mathop{\kern 0.2em\vrule width 0.6em height 0.69678ex depth -0.58065ex
                  \kern -0.8em \intop}\nolimits_{\kern -0.4em#1}}%
          {\mathop{\kern 0.1em\vrule width 0.5em height 0.69678ex depth -0.60387ex
                  \kern -0.6em \intop}\nolimits_{#1}}%
          {\mathop{\kern 0.1em\vrule width 0.5em height 0.69678ex
              depth -0.60387ex
                  \kern -0.6em \intop}\nolimits_{#1}}%
          {\mathop{\kern 0.1em\vrule width 0.5em height 0.69678ex depth -0.60387ex
                  \kern -0.6em \intop}\nolimits_{#1}}}
\def\vintslides_#1{\mathchoice%
          {\mathop{\kern 0.1em\vrule width 0.5em height 0.697ex depth -0.581ex
                  \kern -0.6em \intop}\nolimits_{\kern -0.4em#1}}%
          {\mathop{\kern 0.1em\vrule width 0.3em height 0.697ex depth -0.604ex
                  \kern -0.4em \intop}\nolimits_{#1}}%
          {\mathop{\kern 0.1em\vrule width 0.3em height 0.697ex depth -0.604ex
                  \kern -0.4em \intop}\nolimits_{#1}}%
          {\mathop{\kern 0.1em\vrule width 0.3em height 0.697ex depth -0.604ex
                  \kern -0.4em \intop}\nolimits_{#1}}}

\usepackage{a4wide}
\usepackage{amsmath}
\usepackage{mathtools}
\usepackage{amsthm}
\usepackage{amssymb}
\usepackage{hyperref}

\usepackage[capitalise]{cleveref}
\usepackage{glossaries}
\expandafter\def\csname ver@etex.sty\endcsname{3000/12/31}

\usepackage{autonum}
\usepackage{color}
\usepackage{enumerate}
\usepackage[obeyFinal]{todonotes}

\usepackage{csquotes}

\newcommand{\R}{\mathbb{R}}
\newcommand{\N}{\mathbb{N}}

\newcommand{\id}{\mathrm{id}}
\newcommand{\m}{\mathfrak{m}}
\newcommand{\geo}{\mathrm{Geo}}
\newcommand{\opt}{\mathrm{Opt}}
\newcommand{\og}{\mathrm{OptGeo}}
\newcommand{\vol}{\mathrm{vol}}
\renewcommand{\d}{\,\mathrm{d}}
\newcommand{\ap}[1][t,s]{\hat P_{#1}}
\renewcommand{\liminf}{\varliminf}
\renewcommand{\limsup}{\varlimsup}
\newcommand{\Ric}{\mathrm{Ric}}
\newcommand{\Hess}{\mathrm{Hess}}

\newcommand{\cd}{\mathrm{CD}}
\newcommand{\rcd}{\mathrm{RCD}}
\newcommand{\ent}{\mathrm{Ent}}
\newcommand{\ch}{\mathrm{Ch}}

\newcommand{\p}{\mathcal{P}}
\newcommand{\spt}{\mathrm{spt}\,}

\renewcommand{\L}{\Delta_f\,}

\newcommand{\rfe}{\mathrm{RFex}}

\numberwithin{equation}{section}
\newtheorem{theorem}{Theorem}[section]
\newtheorem{proposition}[theorem]{Proposition}
\newtheorem{lemma}[theorem]{Lemma}
\newtheorem{corollary}[theorem]{Corollary}
\theoremstyle{definition}

\newtheorem{definition}[theorem]{Definition}
\newtheorem{assumption}[theorem]{Assumption}
\theoremstyle{remark}
\newtheorem{rem}[theorem]{Remark}
\newtheorem{example}[theorem]{Example}

\newcommand{\aint}[2][]{
\ifthenelse{\equal{#1}{}}%
{%
\mathchoice%
     {\mathop{\kern 0.2em\vrule width 0.6em height 0.69678ex depth -0.58065ex
             \kern -0.8em \intop}\nolimits_{\kern -0.45em#2}^{#1}}%
     {\mathop{\kern 0.1em\vrule width 0.5em height 0.69678ex depth -0.60387ex
             \kern -0.6em \intop}\nolimits_{#2}^{#1}}%
     {\mathop{\kern 0.1em\vrule width 0.5em height 0.69678ex depth -0.60387ex
             \kern -0.6em \intop}\nolimits_{#2}^{#1}}%
     {\mathop{\kern 0.1em\vrule width 0.5em height 0.69678ex depth -0.60387ex
             \kern -0.6em \intop}\nolimits_{#2}^{#1}}}%
{%
\mathchoice%
     {\mathop{\kern 0.2em\vrule width 0.6em height 0.69678ex depth -0.58065ex
             \kern -0.8em \intop}\nolimits_{\kern -0.45em#1}^{#2}}%
     {\mathop{\kern 0.1em\vrule width 0.5em height 0.69678ex depth -0.60387ex
             \kern -0.6em \intop}\nolimits_{#1}^{#2}}%
     {\mathop{\kern 0.1em\vrule width 0.5em height 0.69678ex depth -0.60387ex
             \kern -0.6em \intop}\nolimits_{#1}^{#2}}%
     {\mathop{\kern 0.1em\vrule width 0.5em height 0.69678ex depth -0.60387ex
             \kern -0.6em \intop}\nolimits_{#1}^{#2}}}} 
 
\begin{document}

\parindent=0in

\title[Synthetic notions of Ricci flow for metric measure spaces]
{Synthetic notions of Ricci flow for metric measure spaces}

\author{Matthias Erbar$^\dagger$}
\author{Zhenhao Li$^\dagger$}
\author{Timo Schultz$^\dagger$$^\ddag$}

\address{$^\dagger$Faculty of Mathematics \\university
         Bielefeld University \\
         Postfach 10 01 31 \\
         33501 Bielefeld \\
         Germany.}
\address{$^\ddag$Department of Mathematics and Statistics\\
        University of Jyv\"askyl\"a \\
        P.O.Box 35 (MaD)\\
        FI-40014\\
        Finland.}

\email{matthias.erbar@math.uni-bielefeld.de}        
\email{zhenhao.li@math.uni-bielefeld.de}
\email{timo.m.schultz@jyu.fi}

\date{\today}

\keywords{Ricci flow, metric measure space, time-dependent, optimal transport, synthetic}

\subjclass[2020]{}
\thanks{}


\begin{abstract}
We develop different synthetic notions of Ricci flow in the setting of time-dependent metric measure spaces based on ideas from optimal transport. They are formulated in terms of dynamic convexity and local concavity of the entropy along Wasserstein geodesics on the one hand and in terms of global and short-time asymptotic transport cost estimates for the heat flow on the other hand. We show that these properties characterise smooth (weighted) Ricci flows. Further, we investigate the relation between the different notions in the non-smooth setting of time-dependent metric measure spaces.
\end{abstract}

\maketitle
\tableofcontents
 \section{Introduction}\label{sec:introduction}
 The goal of this paper will be to develop synthetic notions of Ricci flow in the setting of time-dependent metric measure spaces.

A smooth manifold $M$ equipped with a smooth one-parameter family of
Riemannian metrics $(g_t)_{t\in I}$ evolves according to \emph{Ricci
  flow}, if
\begin{equation}
  \label{eq:RF}
  \partial_tg_t = -2\Ric_{g_t}\;.
\end{equation}
Since the groundbreaking work of Hamilton \cite{H82,H95}, Ricci flow has
received a lot of attention and has become a powerful tool in many applications, most prominently in Perelman's work on the Poincar\'e conjecture and Thurston's geometrization conjecture \cite{P02,P03,P03b}, see also \cite{CZ06,KL08,MT07}. For a detailed account on the Ricci flow we refer e.g.~to \cite{CK04}.
\medskip

A challenging feature of Ricci flow is that it typically develops singularities in finite time. In Hamilton's and Perelman's approach e.g.~such singularities require a careful surgery procedure. Therefore it seems desirable to obtain robust characterisations of Ricci flow that make it possible to consider evolutions of non-smooth spaces and eventually flows through singularities. In recent years, a lot of activity has been devoted in this direction. 

Let us highlight some of these developments. Ricci flows with irregular or incomplete metrics as initial data have been intensely investigated, see e.g. \cite{Sim02, Sim12, KL12, GT13, MRS15, Yin10}. In \cite{ST21} e.g., Simon and Topping proved existence of a smooth Ricci flow in dimension 3 starting from non-collapsed Ricci limits spaces in the sense of Cheeger-Colding \cite{CC97}.
A different major challenge is to define and analyze Ricci flows through singularities and study evolution of spaces with changing dimension and/or topological type. Among the exciting recent contributions, Bamler, Kleiner and Lott \cite{KL17, KL20, BK22} have introduced a weak notion of Ricci flow in 3 dimensions and have constructed of a canonical Ricci flow through singularities as the unique limit of Ricci flows with surgery. In \cite{Bam23} Bamler develops a compactness theory for super Ricci flows providing in particular the basis for a partial regularity and structure theory for non-collapsed limits of Ricci flows established in \cite{Bam20}.
An alternative approach is to develop characterizations of Ricci flow in terms of robust properties that can eventually provide a synthetic definition in a non-smooth setting. In this direction, Haslhofer and Naber \cite{Haslhofer-Naber18} and Cheng and Thalmaier \cite{CT17} have characterized Ricci flow (and two-sided bounds on the Ricci curvature on static manifolds) in terms of functional inequalities on the path space equipped with the Wiener measure. McCann and Topping \cite{McCann-Topping}, Sturm \cite{Sturm2018Super} and Kopfer and Sturm \cite{Kopfer-Sturm2018} have used the heat flow and ideas from optimal transport to characterize super-Ricci flows by extending the characterizations of lower Ricci bounds to a dynamic setting. 

Our main contribution in the present paper is to provide and analyse two synthetic notions of Ricci flow based on optimal transport and the short time behaviour of the heat flow in the setting of time-dependent metric measure spaces.
In the following we will describe this approach and our results in more detail.
\medskip

Since the seminal work of Cordero, McCann, and Schmuckenschl\"ager \cite{Mccann2001} and von Renesse and Sturm \cite{vonRenesseSturm} it is well known that lower bounds on the Ricci curvature can be encoded using optimal transport and the heat flow. Namely, let $(M,g)$ be a Riemannian manifold. We denote by $W$ the $L^2$ Wasserstein distance on the space $\mathcal P_2(M)$ of probability measures over $M$ built from the Riemannian distance $d$ associated to $g$ (see Section 2 for recalling the definition. For a probability measure $\mu$ on $M$ the Boltzmann entropy is given by 
\[\ent(\mu)=\int \rho\log \rho\; \d \vol_g\;,\]
provided $\mu=\rho\vol_g$ is absolutely continuous w.r.t. the volume measure $\vol_g$, and by $\ent(\mu)=+\infty$ else. We denote by $P_t=e^{t\Delta}$ the heat semigroup generated by the Laplace-Beltrami operator $\Delta$ and by $\hat P_t$ 
the dual semigroup acting on measures. Now, the following are equivalent
\begin{itemize}
\item[($0$)] $\Ric_g\geq 0$\;;
\item[($1$)] \emph{geodesic convexity of the entropy}: for any constant speed Wasserstein geodesic $(\mu^a)_{a\in[0,1]}$ and all $a\in[0,1]$ we have
\[\ent(\mu^a)\leq (1-a)\ent(\mu^0) + a \ent(\mu^1)\;; \]
\item[($2$)] \emph{Wasserstein contractivity of the heat flow}: for all $\mu,\nu\in \mathcal P_2(M)$
\[W(\hat P_t\mu, \hat P_t\nu) \leq W(\mu,\nu) \;.\]
\end{itemize}
The latter properties are robust and can be used to give a synthetic definition of Ricci curvature bounds for non-smooth spaces with the only structure required being a distance and a reference measure. Starting from the pioneering works by Sturm \cite{sturm2006--1,Sturm06-2} and Lott and Villani \cite{LottVillani09} a rich and still rapidly growing theory of metric measure spaces with synthetic Ricci bounds has been developed. For a concise partial overview, we refer e.g. to \cite{Amb18}.

McCann and Topping \cite{McCann-Topping} and later Sturm \cite{Sturm2018Super} showed that this approach can be generalised to a dynamic setting of a time-dependent family of metrics $(g_t)$ on the manifold $M$ to obtain the following characterisation:

\begin{itemize}
\item[($0_{\sf dyn}$)] $(M,g_t)$ is a super-Ricci flow, i.e. $\partial_t g_t \geq -2\Ric_{g_t}$\;;
\item[($1_{\sf dyn}$)] \emph{dynamic convexity of the entropy}: for all $t$ and any constant speed geodesic $(\mu^a)_{a\in[0,1]}$ in $(P(M),W_t)$ we have
\[\partial_a\big|_{a=1-}\ent_t(\mu^{a})-\partial_a\big|_{a=0+}\ent_t(\mu^{a})\geq -\frac12 \partial_t W_t(\mu^0,\mu^1)^2\;; \]
\item[($2_{\sf dyn}$)] \emph{Wasserstein contractivity of the heat flow}: for any $s\leq t$ and $\mu,\nu\in \mathcal P(M)$
\[W_s(\hat P_{t,s}\mu, \hat P_{t,s}\nu) \leq W_t(\mu,\nu) \;.\]
\end{itemize}
Here, $ P_{t,s}$ denotes the heat propagator and $\hat P_{t,s}$ its dual, i.e. $P_{t,s}f$ gives the solution at time $t$ of $\partial_t u = \Delta_t u$ with initial datum $u=f$ at time $s$. Further $\Delta_t$, $W_t$, and $\ent_t$ denote the Laplace-Beltrami operator, the Wasserstein distance, and the entropy associated with the metric tensor $g_t$. 
Sturm \cite{Sturm2018Super} and Kopfer and Sturm \cite{Kopfer-Sturm2018} have used this to define a synthetic notion of super-Ricci flow for time-dependent families of metric measure spaces $\left(X, d_t, m_t\right)_{t\in I}$. Suitable regularity assumptions are needed for the second notion in order to ensure existence of the heat flow, as we shall discuss below.
\medskip

The goal of the present paper is to develop and analyse synthetic notions of \emph{Ricci flow} for time-dependent families of metric measure spaces. To this end, we complement the above notions of super-Ricci flow with corresponding notions of sub-Ricci flow. This will be achieved by reversing the inequalities in ($1_{\sf dyn}$) and ($2_{\sf dyn}$) up to an arbitrarily small error for sufficiently localised transports and small times. We build on recent ideas and results in the static case \cite{sturm2017remarks} to encode Ricci upper bounds. This has also been used in the Lorentzian setting \cite{Mondino-suhr} to give characterisations of the Einstein equations in terms of optimal transport. Locally reverting ($1_{\sf dyn}$) to characterise sub-Ricci flows has already been proposed in \cite{Sturm2018Super}. Let us now specify the setting and the notions of Ricci flow we consider and describe our main results.
\medskip

\subsection{Setting and main results}

Let $(X,d_t,\m_t)_{t\in I}$ be a time-dependent family of metric measure spaces with $I\subset \R$ an interval. That is, $X$ is a Polish space, and $(d_t)$, $(\m_t)$ are Borel families of distance functions inducing the given topology on $X$ and locally finite Radon measures on $X$. Let $W_t$ denote the $L^2$-Wasserstein distance on the space $\p_t(X)$ of probability measures with finite second moment w.r.t. $d_t$ and $\ent_t$ denote the relative entropy w.r.t. $\m_t$, i.e. for a probability measure $\mu\in \p_t(X)$ we set
\[\ent_t(\mu)=\int\rho\log\rho \d \m_t\;,\]
provided $\mu=\rho \m_t$ is absolutely continuous and $\ent_t(\mu)=+\infty$ else. 
\medskip

\emph{Synthetic notions of Ricci flow.}\smallskip

The first notion of Ricci flow we consider is based on dynamic convexity and almost concavity of the entropy and was in a slightly different form already proposed in \cite{Sturm2018Super}.\medskip

The family $(X,d_t,\m_t)_{t\in I}$  is called a {\bf weak Ricci flow} if it satisfies
\begin{enumerate}[(i)]
\item[($w_{super}$)] (\emph{weak super-Ricci flow}): the entropy is strongly dynamically convex, i.e. for a.e. $t\in I$ and every $W_t$-geodesic $(\mu^a)_{a\in [0,1]}$ in $\p_t(X)$ with finite entropy at endpoints, the function $a\mapsto \ent_t(\mu^{a})$ is absolutely continuous on $[0,1]$ and
\begin{align}\label{ineq:weaksuperRF-intro}
    \partial^+_a \ent_t(\mu^a)\lvert_{a=1-}-\partial^-_a \ent_t(\mu^a)\lvert_{a=0+}\ge -\frac12\partial^-_tW_{t^-}^2(\mu^0,\mu^1),
\end{align}
holds, and

\item[($w_{sub}$)] (\emph{weak sub-Ricci flow}) for a.e. $t$ and every $\varepsilon>0$, there exists an open cover $\{U_i\}$ such that for all $i$ and every open subsets $V_0,V_1\subset U_i$, there exists a $W_t$--Wasserstein geodesic $(\mu^a)$ with $\spt\mu^0\subset V_0$, $\spt\mu^1\subset V_1$, and
\begin{align}\label{eq:EntSub-intro}
    \partial_a^+\ent_t(\mu^a)\lvert_{a=1-}-\partial_a^-\ent_t(\mu^a)\lvert_{a=0+}\le -\frac12\partial^-_t W_{t^-}^2(\mu^0,\mu^1)+\varepsilon W_t^2(\mu^0,\mu^1).
\end{align}
\end{enumerate}

Here, $\partial^\pm_a\big|_{a=c\mp}$ denotes the upper/lower right/left derivative.

The second notion of Ricci flow we consider is based on expansion properties of the heat flow. Let us assume that $(X,d_t,\m_t)_{t\in I}$ satisfies additional regularity properties as specified in \cref{asm:dualheatflow}. Namely, we require a uniform log-Lipschitz control in the time parameter $t$ on the distance functions $d_t$ and the measures $\m_t$ and that for each fixed $t$, the space $(X,d_t,\m_t)$ satisfies the Riemannian curvature-dimension condition RCD$(K,N)$ for some $K\in \R$ and $N\in[1,\infty)$. Under these conditions, Kopfer and Sturm \cite{Kopfer-Sturm2018} have shown the existence of a (dual) heat flow $(P_{t,s})$ (see Section \ref{sec:HF} for more details).\medskip

We call $(X,d_t,\m_t)_{t\in I}$  a {\bf rough Ricci flow} if it satisfies
\begin{enumerate}[(i)]
    \item[($r_{super}$)] (\emph{rough super-Ricci flow}) for all $s\leq t$ and all $x,y\in X$
    \begin{align}\label{ineq:defroughsuperRicci-intro}
        W^2_s(\ap{\delta_x},\ap{\delta_y})\le d^2_t(x,y)\;;
    \end{align}
    \item[($r_{sub}$)] (\emph{rough sub-Ricci flow}) for a.e. $t$ and every $\varepsilon>0$ there exists an open cover $\{U_i\}_{i\in\N}$ such that for every $i$ and all $x,y\in U_i $, there exists $s_0<t$ so that
    \begin{align}\label{ineq:defroughsubRicci-intro}
        W^2_s(\ap{\delta_x},\ap{\delta_y})\ge d^2_t(x,y)-\varepsilon d^2_t(x,y)(t-s)
    \end{align}
    for every $s\in (s_0,t)$.
\end{enumerate}

Note that the notions of weak/rough sub-Ricci flow above both formalise the idea of locally reverting the inequalities in ($1_{\sf dyn}$) and ($2_{\sf dyn}$) respectively up to a small error. It will further be convenient to consider the following localised quantities. 

For $\varepsilon>0$ and $t\in I$, $x,y\in X$ we set
 \[
  \eta_{\varepsilon}(t,x,y)\coloneqq  \inf\Big\{
\frac1{W_t^2(\mu^0,\mu^1)}\cdot\Big[ \partial^{+}_a \ent_t(\mu^a)\big|_{a=1}-\partial_a^{-} \ent_t(\mu^a)\big|_{a=0}
+\frac12\partial_{t}^{-}W_{t^-}^2(\mu^0,\mu^1)\Big]\Big\}\;,
\]
where the infimum is taken over all $W_t$-geodesics 
$(\mu^a)_{a\in[0,1]}$ such that $\mu^0$ and $\mu^1$ have finite entropy and are supported in balls of radius $\varepsilon$ w.r.t. $d_t$ around $x$ and $y$ respectively. Moreover, we set
\[
\eta(t,x,y)\coloneqq \sup_{\varepsilon> 0}\eta_{\varepsilon}(t,x,y)\;,\quad
  \eta^*(t,x)\coloneqq\limsup_{y,z\to x}\eta(t,y,z)\;.
\]
Bounds on these quantities describe the dynamic convexity/concavity of the entropy for transports between measures concentrated around the points $x,y$. Similarly, we define for $t\in I$ and $x,y \in X$
\begin{equation}\label{eq:theta-intro}
    \vartheta(t,x,y)\coloneqq -\liminf_{s\nearrow t}\frac1{t-s}\log\frac{W_s(\ap\delta_x,\ap\delta_y)}{d_t(x,y)}\;,
\end{equation}
as well as
\begin{equation}\label{eq:thetastar-intro}
    \vartheta^*(t,x)=\limsup_{y,z\to x}\vartheta(t,y,z)\;,
\end{equation}
describing the short time asymptotics of the transport cost between two heat flows starting from $x,y$ respectively.

Our first main result shows that for families of smooth weighted Riemannian manifolds the notions of weak/rough Ricci flow indeed yield a characterisation of the classical notion of (weighted) Ricci flow. Let $(M,g_t)_{t\in I}$ be a smooth family of closed Riemannian manifolds with Riemannian distance $d_t$ and let $(f_t)_{t\in I}$ be a smooth family of functions on $M$. The associated metric measure spaces $(M,d_t, e^{-f_t}\vol_{g_t})_{t\in I}$ will be called a \emph{smooth flow}. The weighted Ricci tensor is defined by
\[\Ric_{f_t}:= \Ric_{g_t}+\Hess f_t\;.\]

\begin{theorem}\label{thm:consistency-intro}
Let $(M,d_t, e^{-f_t}\vol_{g_t})_{t\in I}$ be a smooth flow. Then the following are equivalent:
\begin{enumerate}[(i)]
\item The family of metric measure spaces $(M, d_t,e^{-f_t}\vol_{g_t})$ is a weak Ricci flow;
\item The family of metric measure spaces $(M, d_t,e^{-f_t}\vol_{g_t})$ is a rough Ricci flow;
\item for all $t\in I$ and $x,y\in M$ we have
\[\eta(t,x,y) \geq 0\;,\quad \eta^*(t,x)\leq 0\;;\]
\item for all $t\in I$ and $x,y\in M$ we have
\[\vartheta(t,x,y) \geq 0\;,\quad \vartheta^*(t,x)\leq 0\;;\]
\item $(M,g_t,e^{-f_t}\vol_{g_t})$ is weighted Ricci flow, i.e.
\[\partial_t g_t = - 2\Ric_{f_t}\;.\]
\end{enumerate}
\end{theorem}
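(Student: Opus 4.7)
The plan is to take (v) as the hub and prove (v)$\Leftrightarrow$(iii) and (v)$\Leftrightarrow$(iv) directly, then deduce (iii)$\Leftrightarrow$(i) and (iv)$\Leftrightarrow$(ii) by a standard covering/localisation argument. The quantities $\eta$ and $\vartheta$ are built so that $\eta(t,x,y)\ge 0$ and $\vartheta(t,x,y)\ge 0$ are infinitesimal, pointwise versions of $(w_{super})$ and $(r_{super})$, while $\eta^*(t,x)\le 0$ and $\vartheta^*(t,x)\le 0$ encode the existence of local transports/points along which $(w_{sub})$ and $(r_{sub})$ hold.

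\textbf{Equivalence (v)$\Leftrightarrow$(iii).} For a $W_t$-geodesic $(\mu^a)_{a\in[0,1]}$ in $(M,d_t,e^{-f_t}\vol_{g_t})$ with Kantorovich potential $\phi$, the weighted McCann second-variation formula gives
\begin{align*}
\partial^+_a\ent_t(\mu^a)\lvert_{a=1-}-\partial^-_a\ent_t(\mu^a)\lvert_{a=0+}
=\int_0^1\!\!\!\int_M\!\Big(\Ric_{f_t}(\dot\gamma_a,\dot\gamma_a)+\|\Hess\phi_a\|^2_{HS}\Big)d\mu^a\,da,
\end{align*}
while the envelope theorem applied to $W_t^2(\mu^0,\mu^1)$ at the optimal coupling yields
\begin{align*}
\tfrac12\partial^-_tW^2_{t^-}(\mu^0,\mu^1)=\tfrac12\int_0^1\!\!\!\int_M(\partial_tg_t)(\dot\gamma_a,\dot\gamma_a)\,d\mu^a\,da.
\end{align*}
Summing, the quantity inside the infimum defining $\eta_\varepsilon(t,x,y)$, normalised by $W_t^2$, is the integral of $(\Ric_{f_t}+\tfrac12\partial_tg_t)(\dot\gamma,\dot\gamma)$ along the geodesic plus the non-negative Hessian term. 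Under (v) the first bracket vanishes, so the whole expression is $\ge 0$, giving $\eta\ge 0$. For $\eta^*\le 0$, I would choose localised geodesics between measures concentrated in $g_t$-balls of radius $\varepsilon\to 0$ around $x$, with Kantorovich potential close to an affine function $\langle v,\cdot\rangle$ in normal coordinates; on such balls $\|\Hess\phi_a\|^2_{HS}=o(|\nabla\phi_a|^2_{g_t})$ uniformly, so the Hessian contribution normalised by $W_t^2\sim|\nabla\phi|^2_{g_t}$ vanishes in the limit. Conversely, if (iii) holds, testing the resulting identity in arbitrary directions $v\in T_xM$ at every $(t,x)$ forces $\Ric_{f_t}+\tfrac12\partial_tg_t=0$ pointwise, which is (v).

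\textbf{Equivalence (v)$\Leftrightarrow$(iv).} The relevant tool is the short-time asymptotic
\begin{align*}
W_s^2(\hat P_{t,s}\delta_x,\hat P_{t,s}\delta_y)
=d_t^2(x,y)-2(t-s)\!\int_0^1\!\Big(\Ric_{f_t}+\tfrac12\partial_tg_t\Big)(\dot\gamma_a,\dot\gamma_a)\,da+o(t-s)
\end{align*}
as $s\nearrow t$, where $\gamma$ is the $g_t$-geodesic from $x$ to $y$. This generalises the classical expansion of von Renesse--Sturm to the time-dependent setting of Kopfer--Sturm, the correction $\tfrac12\partial_tg_t$ arising from the first-order variation of $d_s^2\to d_t^2$ on $[s,t]$. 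The definition of $\vartheta$ then reads off precisely (minus) the coefficient of $(t-s)$, normalised by $d_t^2$, so (v) makes this coefficient vanish and $\vartheta\ge 0$, $\vartheta^*\le 0$ follow trivially. Conversely, the combined bound forces the coefficient to vanish in every direction at every point, hence (v).

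\textbf{Main obstacle.} The technical crux is the uniform quantitative control of the error terms in the sub-Ricci direction. In the entropy formula the Hessian term $\|\Hess\phi_a\|^2_{HS}$ is a priori of the same order as the Ricci term, so proving $\eta^*\le 0$ requires constructing quasi-translational optimal transports on small balls whose Hessian contribution becomes genuinely $o(W_t^2)$; this adapts Sturm's local-upper-bound technique to the dynamic setting, with care needed to accommodate the time-variation of the metric. For the heat flow, the short-time asymptotic must be justified uniformly in $s\nearrow t$ within the time-dependent RCD framework, which relies on the log-Lipschitz regularity of $(d_t,\m_t)$ to compare the propagator $\hat P_{t,s}$ with the static heat kernel of $(X,d_t,\m_t)$ with a Wasserstein error of order $(t-s)^2$. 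These two localisation arguments constitute the bulk of the technical work.
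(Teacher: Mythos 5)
Your proposal is correct in outline and takes essentially the same route as the paper: the equivalence is established by matching the infinitesimal quantities $\eta(t,x,y)$, $\vartheta(t,x,y)$ (and their $\limsup$-versions $\eta^*,\vartheta^*$) against the Ricci flow excess $\rfe_t(x,y)=\frac{1}{d_t(x,y)^2}\int_0^1[\Ric_{f_t}+\tfrac12\partial_tg_t](\dot\gamma^a)\,\mathrm{d}a$, which in turn characterises the eigenvalues of $\Ric_{f_t}+\tfrac12\partial_tg_t$ as $d_t(x,y)\to 0$. The McCann second variation plus envelope theorem for $\eta$, and a two-sided short-time Wasserstein expansion for the dual heat flow for $\vartheta$, are indeed the two engines; and you correctly single out the construction of near-translational transports with small Hessian as the technical crux (this is Lemma~\ref{lemma:spacetimepotentials} in the paper, used in the proofs of \cref{thm:sharperestimates}, \cref{prop:theta_N}, \cref{prop:eta_N}).

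Two imprecisions worth flagging. First, your short-time expansion for the dual heat flow claims an $o(t-s)$ remainder uniformly; it is not. The actual expansion in \cref{thm:sharperestimates} has an error of size $O\big(d_t^2(x,y)\,(t-s)+(t-s)^2\big)$, coming from the Hessian term $\|\Hess\phi^a\|^2_{\mathrm{HS}}=O(d_t^2(x,y))$, and this is precisely why $\vartheta^*$ (the $\limsup$ as $y,z\to x$) is needed: under $\partial_tg_t=-2\Ric_{f_t}$ one only gets $\vartheta^-\ge 0$ at fixed $x,y$ and $\vartheta^+\le O(d_t^2)$, so $\vartheta^+\le 0$ is recovered only in the limit $y,z\to x$. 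Stating the conclusion ``(iv) follows trivially'' therefore overstates what the asymptotic gives on its own, even though the Main Obstacle paragraph correctly identifies the issue; the paper resolves it via the explicit Riccati bound \eqref{eq:Hessianphi} and \cref{cor:theta+}. Second, the first display in your (v)$\Leftrightarrow$(iii) argument is an equality only for smooth transports induced by smooth potentials; for a general $W_t$-geodesic it is an inequality $\ge$ (\cref{thm:displaceconvex}). That suffices for $\eta\ge 0$, and for $\eta^*\le 0$ the paper constructs special smooth transports realising equality up to the desired order, which is your ``quasi-translational'' construction. These are matters of precision rather than gaps, and the strategy you outline is the paper's.
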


In fact, independently the first property in (iii) or (iv) or weak/rough super-Ricci flow characterises weighted super-Ricci flows $\partial_t g_t \geq - 2\Ric_{f_t}$, while the second property in (iii) or (iv) or weak/rough sub-Ricci flow equivalently characterises weighted sub-Ricci flow $\partial_t g_t \leq - 2\Ric_{f_t}$, see Theorems \ref{thm:smoothrough_infty}, \ref{thm:weak-char} below.  The characterisation of smooth weighted super-Ricci flows through weak/rough super-Ricci flows has already been shown in \cite{Sturm2018Super, Kopfer-Sturm2018} and the characterisation of weighted sub-Ricci flow through weak sub-Ricci flow has been sketched. The characterisation of sub-Ricci flow through short time asymptotics of the heat flow we present here is genuinely new. We give detailed proofs of all characterisations above in Section \ref{sec:smoothRF}. 
\medskip

The characterisation above is obtained through a detailed local analysis of the short-time asymptotics of transport costs along the heat flow and of the dynamic convexity/almost concavity of the entropy along geodesics. Let us define the Ricci flow excess of a smooth flow given for $t\in I$ and $x,y\in M$ by
\[
{\rm RFex}(t,x,y) :=\inf \frac{1}{d_t(x,y)^2}\int_0^1\big[\Ric_{f_t}(\dot\gamma^a) +\frac12 \partial_tg_t(\dot\gamma^a)\big] {\rm d}a\;,
\]
where the infimum is taken over all geodesics from $x$ to $y$. We show (see Corollaries \ref{cor:theta+}, \ref{cor:eta+}) the following estimates:
\begin{theorem} We have for all $t\in I$, $x,y\in M$:
\begin{align}\label{eq:excess-intro1}
{\rm RFex}(t,x,y) \leq \vartheta(t,x,y)\;.
\end{align}
For every $t_0\in I$ there is $\varepsilon>0$ such that for all $t\in I$, $x,y\in M$ non-conjugate with $|t-t_0|, d_t(x,y)<\varepsilon$: 
\begin{align}\label{eq:excess-intro2}
 \vartheta(t,x,y) \leq {\rm RFex}(t,x,y) + \sigma_t \tan^2\big(\sqrt{\sigma_t} d_t(x,y)\big)\;,
\end{align}
where $\sigma_t$ is an upper bound on the modulus of the Riemann tensor along the geodesic from $x$ to $y$. The same estimates hold for $\eta$ in place of $\vartheta$.
\end{theorem}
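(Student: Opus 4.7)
Plan. The strategy is to derive a matched short-time expansion
\[
W_s^2(\ap{\delta_x},\ap{\delta_y}) = d_t^2(x,y)\bigl[1 - 2(t-s)\rfe(t,x,y)\bigr] + R_{t,s}(x,y),
\]
and the analogous second-order expansion of $\ent_t$ along Wasserstein geodesics concentrating near $x$ and $y$, then match the leading order with $\rfe$ and control the remainder $R_{t,s}$ by $\sigma_t\tan^2(\sqrt{\sigma_t} d_t(x,y))\cdot d_t^2(x,y)(t-s)$.

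\emph{Lower bound $\rfe(t,x,y)\leq \vartheta(t,x,y)$.} Fix a $d_t$-geodesic $(\gamma^a)_{a\in[0,1]}$ from $x$ to $y$ and construct an explicit competitor by setting $\mu^a_s\coloneqq\ap{\delta_{\gamma^a}}$. This curve connects $\ap{\delta_x}$ and $\ap{\delta_y}$ in $\p_s(X)$, so its $W_s$-length bounds $W_s(\ap{\delta_x},\ap{\delta_y})$ from above. Using the concentration of the dual heat kernel with variance of order $t-s$ together with a Bochner-type calculation for the heat flow generated by $\L$, the metric speed expands as
\[
|\dot\mu^a_s|_{W_s}^2 = g_s(\dot\gamma^a,\dot\gamma^a) - 2(t-s)\Ric_{f_t}(\dot\gamma^a,\dot\gamma^a) + o(t-s).
\]
Combining with $g_s = g_t - (t-s)\partial_t g_t + o(t-s)$, integrating in $a$ and optimising over $\gamma$ yields $\vartheta(t,x,y)\geq \rfe(t,x,y)$. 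For $\eta$ the corresponding lower bound follows from a direct computation of the second variation of $\ent_t$ along Wasserstein geodesics whose marginals shrink to Dirac masses at $x$ and $y$: the optimal transport is approximated by the gradient of $\tfrac12 d_t^2(x,\cdot)$, and the standard McCann-type Hessian identity together with the time-derivative of the cost reproduces exactly $\rfe$ as the leading correction.

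\emph{Upper bound $\vartheta,\eta\leq \rfe + \sigma_t\tan^2(\sqrt{\sigma_t}d_t(x,y))$.} This requires a matching lower bound on $W_s$ (respectively an upper bound on the entropy concavity). For $\vartheta$ I would use Kantorovich duality: take as test potential the $d_t^2/2$-Kantorovich potential $\phi_t(z)=\tfrac12 d_t^2(x,y) - \tfrac12 d_t^2(y,z)$ for the pair $(\delta_x,\delta_y)$ and propagate it under the heat flow to obtain
\[
\tfrac12 W_s^2(\ap{\delta_x},\ap{\delta_y}) \geq P_{t,s}\phi_t(x) + P_{t,s}\phi_t^{c_s}(y) + o(t-s).
\]
The leading-order contribution in $t-s$ again gives $\rfe$, while the second-order error is governed by the Hessian of $\phi_t$ along the $d_t$-geodesic from $x$ to $y$. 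Under non-conjugacy and for $d_t(x,y)$ small, a Jacobi field comparison with the constant sectional curvature $\sigma_t$ model bounds this Hessian pointwise, producing exactly the claimed error $\sigma_t\tan^2(\sqrt{\sigma_t}d_t(x,y))$. The analogous upper bound for $\eta$ follows by pushing localised measures along the $d_t$-exponential map from $x$ and $y$ and controlling the Jacobian of the induced transport via the same Jacobi comparison.

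\emph{Main obstacle.} The delicate step is the uniform control of all higher-order errors on a neighbourhood of $(t_0,x_0,x_0)$ and their matching to the explicit $\sigma_t\tan^2$ correction. The non-conjugacy hypothesis and smallness of $d_t(x,y)$ and $|t-t_0|$ are precisely what guarantees a unique $d_t$-geodesic remaining in a chart with uniform Riemann tensor control, so that both the WKB-type heat-kernel asymptotics and the Jacobi comparison apply simultaneously; the $\sigma_t\tan^2$ correction is the inevitable defect measuring how parallel transport along the $d_t$-geodesic fails to exactly conjugate the concentration profiles of $\ap{\delta_x}$ and $\ap{\delta_y}$, and cannot be removed without additional curvature hypotheses.
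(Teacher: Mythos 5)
Your proposal has the right architecture (upper/lower estimates on $W_s^2$ via a competitor curve and via Kantorovich duality respectively), but both halves have genuine gaps, and the second one is fatal as written.

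For the upper bound \eqref{eq:excess-intro2}, the choice of test potential is the whole difficulty, and the squared-distance potential $\phi_t(z)=\tfrac12 d_t^2(x,y)-\tfrac12 d_t^2(y,z)$ does not work. After the Taylor expansion in $s$ and the Bochner identity, the error term that survives is $\int \lVert\nabla_t^2\phi^a\rVert_{\mathrm{HS}}^2(\gamma^a)\,\mathrm{d}a$ with the \emph{full} Hessian, not its deviation from the identity. The Hessian of $\tfrac12 d_t^2(y,\cdot)$ along the geodesic is $\approx I_n$ (Jacobi comparison only controls its deviation from $I_n$ by $O(\sigma d^2)$), so $\lVert\nabla^2\phi\rVert_{\mathrm{HS}}^2\approx n$ and the resulting additive error in $\vartheta$ is of order the dimension, not $\sigma_t\tan^2(\sqrt{\sigma_t}d_t(x,y))=O(d_t(x,y)^2)$. (Your potential is also not tight at $s=t$: $\phi_t(x)+\phi_t^c(y)=-\tfrac12 d_t^2(x,y)$, so differentiating the duality bound at $s=t$ yields nothing.) This is precisely why the paper constructs, in \cref{lemma:spacetimepotentials}, a family of space--time potentials $\phi^a_s$ that are \emph{almost linear} along the geodesic, with prescribed gradient $-\dot\gamma^a$ and prescribed (small) Hessian at the midpoint; the Riccati comparison then keeps $\lVert\nabla_t^2\phi^a_t\rVert_{\mathrm{HS}}\le\sqrt{\sigma}\tan(\sqrt{\sigma}|a|)=O(r)$ along the whole geodesic, which is exactly where the $\sigma_t\tan^2$ term comes from. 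One also needs uniform $C^5$ control of these potentials and the gradient estimate \eqref{ineq:gradientestimate} to handle the second-order remainder $\aint{[s,t]}(s-\tau)\partial_\tau^2 P_{t,\tau}\phi_\tau\,\mathrm{d}\tau$, a step your sketch omits entirely. The same remark applies to your $\eta$ upper bound: "pushing measures along the exponential map" only yields the $O(d^2)$ error if the generating potential has Hessian $O(d)$.

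For the lower bound \eqref{eq:excess-intro1}, your argument rests on the asserted expansion $|\dot\mu^a_s|^2_{W_s}=g_s(\dot\gamma^a,\dot\gamma^a)-2(t-s)\Ric_{f_t}(\dot\gamma^a)+o(t-s)$ for the curve $a\mapsto\ap{\delta_{\gamma^a}}$. This is not a reduction: it is essentially the theorem itself localised at infinitesimally close points, and the "Bochner-type calculation" you invoke is exactly where a one-sided control of the Hessian-type corrections (uniformly in $a$ and in the two limits $h\to0$, $s\nearrow t$, which do not commute) would have to be supplied. The paper takes a different and cleaner route here (following McCann--Topping): it differentiates $r\mapsto W_r^2(\hat P_{t,r}\delta_x,\hat P_{t,r}\delta_y)$ using the coupling obtained by flowing an optimal plan along the vector fields $\nabla_r\log\rho^i_r$, and then invokes the displacement-convexity inequality of \cref{thm:displaceconvex} to produce the $\Ric_{f_r}$ term with the correct sign; no metric-speed expansion of the heat kernel in the base point is needed. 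If you want to keep your competitor-curve strategy you would need to prove the metric-speed expansion as a standalone lemma, and at present no argument for it is given.
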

\medskip

\emph{Unweighted/non-collapsed Ricci flows}

Note that the weight $e^{-f_t}$ on the volume measure presents an additional degree of freedom that influences the evolution of the metric in a weighted Ricci flow $\partial_tg_t=\Ric_{f_t}$ but whose own evolution is not constraint. It is therefore desirable to be able to single out \emph{unweighted} Ricci flows through a synthetic characterisation. This can be achieved by a dimensional refinement of the notion of weak/rough super-Ricci flow proposed in \cite{Sturm2018Super,Kopfer-Sturm2018}.  A family of m.m.s. $\left(X,d_t,\m_t\right)_{t\in I}$ is called a \emph{weak $N$-super-Ricci flow} for $N\in [1,\infty]$ if the inequality in $(w_{super})$ above is strengthened to 
\[
 \partial^+_a \ent_t(\mu^a)\lvert_{a=1-}-\partial^-_a \ent_t(\mu^a)\lvert_{a=0+}\ge -\frac12\partial^-_tW_{t^-}^2(\mu^0,\mu^1) +\frac{1}{N} \Big|\ent_t(\mu^1)- \ent_t(\mu^0)\Big|^2\;.
\]
Similarly, we call it a \emph{rough $N$-super-Ricci flow} if for all $s<t$ and $\mu,\nu\in \mathcal P_t(X)$ we have
\[
        W_s^2(\ap\mu,\ap\nu)\le W_t^2(\mu,\nu)-\frac2N\int_{[s,t]}[\ent_r(\hat P_{t,r}\mu)-\ent_r(\hat P_{t,r}\nu)]^2\d r\;.
\]
Now, a smooth flow $(M,g_t,e^{-f_t}\vol_{g_t})$ of $n$-dimensional manifolds is a weak/rough $N$-super-Ricci flow, if and only if we have
\[\partial_tg_t\geq -2 \Ric_{N,f_t}\;,\quad \Ric_{N,f_t}:= \Ric_{g_t} +\Hess f_t -\frac{1}{N-n}\nabla f_t\otimes \nabla f_t\;,\]
where the latter is the so-called weighted-$N$-Ricci tensor. We then obtain the following synthetic characterisation of Ricci flows. 
\begin{corollary}\label{cor:non-collapsed Ricciflow}
A smooth flow $(M,d_t, e^{-f_t}\vol_{g_t})_{t\in I}$ is a \emph{Ricci flow}, i.e. $\partial_tg_t=-2\Ric_{g_t}$ and $f_t$ is constant for all $t$, if and only if it is a weak/rough sub-Ricci flow and a weak/rough $N$-super-Ricci flow for some $N\in[1,\infty)$.
\end{corollary}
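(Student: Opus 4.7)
The plan is to reduce the statement to the pointwise PDE characterisations of sub-/super-Ricci flow established in the smooth setting and then carry out a one-line tensorial argument. The ``only if'' direction is essentially bookkeeping: spatial constancy of $f_t$ implies $\nabla f_t\equiv 0$ and $\Hess f_t\equiv 0$, whence $\Ric_{f_t}=\Ric_{N,f_t}=\Ric_{g_t}$ for every $N\geq n$; the Ricci flow equation $\partial_tg_t=-2\Ric_{g_t}$ therefore matches simultaneously $\partial_tg_t=-2\Ric_{f_t}$ and $\partial_tg_t=-2\Ric_{N,f_t}$ with equality, so by Theorems~\ref{thm:smoothrough_infty} and~\ref{thm:weak-char} (together with the $N$-dimensional version of the super-Ricci flow characterisation discussed immediately above) all four synthetic conditions hold for any such $N$.

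For the nontrivial ``if'' direction, I would first translate the two assumptions into their PDE-level content on the smooth family $(g_t,f_t)$: the weak/rough sub-Ricci flow property is equivalent to $\partial_tg_t\leq -2\Ric_{f_t}$, and the weak/rough $N$-super-Ricci flow property to $\partial_tg_t\geq -2\Ric_{N,f_t}$ for the chosen $N\in[1,\infty)$. Subtracting these pointwise inequalities and inserting the identity
\[
\Ric_{N,f_t}-\Ric_{f_t}=-\frac{1}{N-n}\,\nabla f_t\otimes \nabla f_t
\]
yields
\[
-\frac{1}{N-n}\,\nabla f_t\otimes \nabla f_t\;\geq\; 0
\]
as symmetric $(0,2)$-tensors, everywhere on $M$ and for every $t\in I$. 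The algebraic heart of the proof is that $\nabla f_t\otimes\nabla f_t$ is rank-one positive semi-definite, so whenever the coefficient $1/(N-n)$ is strictly positive the displayed inequality forces $\nabla f_t\equiv 0$. Plugging this back kills the $\Hess f_t$ correction in $\Ric_{f_t}$ as well, and the pair of inequalities collapses to $\partial_tg_t=-2\Ric_{g_t}$, i.e.~classical Ricci flow, with $f_t$ spatially constant on each connected component of $M$.

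The main (mild) obstacle is book-keeping around the admissible range of $N$. The regularity framework underlying the rough $N$-super-Ricci flow bakes in $\rcd(K,N)$ at each fixed time, so on a smooth $n$-manifold we automatically have $N\geq n$; for $N>n$ the coefficient $1/(N-n)$ is strictly positive and the rank-one positivity argument applies directly, while the boundary case $N=n$ is covered by the standard convention that $\Ric_{N,f_t}$ is finite only when $\nabla f_t=0$, which already yields the conclusion for free. The weak case is identical once the $N$-dimensional weak characterisation identifies the same tensor $\Ric_{N,f_t}$ as the relevant object.
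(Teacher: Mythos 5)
Your proof is correct and follows essentially the same route as the paper: combining the sub-Ricci flow bound $\partial_tg_t\leq-2\Ric_{f_t}$ with the $N$-super bound $\partial_tg_t\geq-2\Ric_{N,f_t}=-2\Ric_{f_t}+\frac{2}{N-n}\nabla f_t\otimes\nabla f_t$ and using rank-one positivity of $\nabla f_t\otimes\nabla f_t$ to force $\nabla f_t\equiv 0$, after which the two inequalities collapse to equality. Your additional remarks about the admissible range $N\geq n$ and the boundary case $N=n$ only make explicit what the paper leaves implicit.
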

This is due to the observation that the combination of the bounds
\[ -\Ric_{f_t}\geq  \frac12\partial_tg_t\geq -\Ric_{N,f_t}= -\Ric_{f_t}+\frac{1}{N-n}\nabla f_t\otimes\nabla f_t\]
for some $N\geq n$ necessarily implies that $\nabla f_t\equiv 0$. Based on this results we call a family of m.m.s $\left(X,d_t,\m_t\right)_{t\in I}$ a \emph{non-collapsed weak/rough Ricci flow} if it is a weak/rough sub-Ricci flow and weak/rough $N$-super-Ricci flow for some finite $N$. We conjecture that such flows are indeed non-collapsed in the sense that the reference measure $\mathfrak m_t$ is a multiple of the Hausdorff measure w.r.t. $d_t$ for a.e. $t$.
\medskip

\emph{Relating notions of Ricci flow}\smallskip

Our second set of results concerns the relation between the different notions of synthetic Ricci flow considered in this paper. 
Let $(X,d_t,\m_t)_{t\in I}$ be a time-dependent family of m.m.s. that satisfies the regularity properties specified in \cref{asm:dualheatflow}. 
From the work of Kopfer and Sturm \cite{Kopfer-Sturm2018} it is known that the notions weak and rough super-Ricci flow are equivalent. 
On the other hand, weak and rough sub-Ricci flow turn out not to be equivalent. 
Indeed, for static spaces it has been shown in \cite{sturm2017remarks} that upper Ricci bounds in terms of transport cost asymptotics for the heat flow imply upper Ricci bounds in terms of almost concavity of the entropy. 
However, the latter notion does not detect the positive Ricci curvature in the vertex $o$ of a cone while the former does. 
More precisely, a Euclidean cone is Ricci flat in terms of convexity/almost concavity of the entropy, while $\vartheta^+(x,o)=+\infty$ for any point $x$ as shown in \cite{Erbar-Sturm}.
\smallskip

We introduce a relaxation $\vartheta^\flat\leq \vartheta^*$ of the quantity $\vartheta^*$ in \eqref{eq:thetastar-intro} obtained by considering transports between meassures in schrinking balls around $x,y$ as in the definition of $\eta^*$, see Sec. \ref{sec:comparingRF}. Generalising the results in \cite{sturm2017remarks} to a dynamic setting, we show 

\begin{theorem}\label{thm:relation-intro}
For a.e. $t\in I$ we have $\vartheta^\flat(t,x)= \eta^*(t,x)$ for all $x\in X$. 
In particular, any rough sub-Ricci flow is also a weak sub-Ricci flow.
\end{theorem}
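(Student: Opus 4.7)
The plan is to extend the static-case equivalence in \cite{sturm2017remarks} — short-time transport expansion of the heat flow versus almost-concavity of the entropy along short Wasserstein geodesics — to the time-dependent setting, using the dynamic EVI of the dual heat flow $\hat P_{t,s}$ established under \cref{asm:dualheatflow} in \cite{Kopfer-Sturm2018}. Granted the identity $\vartheta^\flat(t,x)=\eta^*(t,x)$, the ``in particular'' claim is immediate: a rough sub-Ricci flow gives, for a.e.\ $t$ and every $\varepsilon>0$, a cover $\{U_i\}$ on which $\vartheta^*(t,\cdot)\le\varepsilon$, and since $\vartheta^\flat\le\vartheta^*$ we obtain $\eta^*(t,\cdot)\le\varepsilon$ on the same cover, which is precisely the local existence of near-geodesics required by \eqref{eq:EntSub-intro}. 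Hence the entire weight of the argument lies in proving $\vartheta^\flat=\eta^*$ at a.e.\ $t$ and every $x\in X$.

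For the inequality $\eta^*(t,x)\le\vartheta^\flat(t,x)$, given $y,z$ close to $x$ I would take a $W_t$-geodesic $(\mu^a)_{a\in[0,1]}$ between $\mu^0,\mu^1$ supported in small balls around $y,z$ and compare it with its backward heat-flow regularisation $\hat P_{t,s}\mu^0,\hat P_{t,s}\mu^1$. The dynamic EVI of \cite{Kopfer-Sturm2018}, which in integrated form reads
\begin{equation*}
\tfrac12 W_s^2(\hat P_{t,s}\mu,\hat P_{t,s}\nu) - \tfrac12 W_t^2(\mu,\nu) = -\int_s^t \text{(entropy-gradient terms)}\,\mathrm{d}r + \text{(time-correction)},
\end{equation*}
identifies the time-correction precisely with the $\tfrac12\partial_t^-W_{t^-}^2$ appearing in the definitions of both $\eta$ and $\vartheta$, and converts the concavity defect of $a\mapsto\ent_t(\mu^a)$ into the short-time asymptotic ratio defining $\vartheta^\flat$. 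For the reverse inequality $\vartheta^\flat(t,x)\le\eta^*(t,x)$ I would run the same machinery the other way: take measures $\mu_y,\mu_z$ concentrated near $y,z$, form their heat-smoothened versions $\nu_y^s=\hat P_{t,s}\mu_y$, $\nu_z^s=\hat P_{t,s}\mu_z$ (which have finite $\ent_t$ and, by short-time heat-kernel estimates inherited from the pointwise RCD$(K,N)$ structure, remain in shrinking $d_t$-balls), and interpolate by a $W_t$-geodesic. The concavity bound supplied by $\eta^*$ is then transported through the EVI into an upper bound on $W_s(\hat P_{t,s}\nu_y^s,\hat P_{t,s}\nu_z^s)/d_t(y,z)$, whose logarithmic rate as $s\nearrow t$ bounds $\vartheta^\flat$.

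The main obstacle is the handling of the time-correction $\partial_t^-W_{t^-}^2$: in the static setting this term is absent, and here matching its sign and coefficient on both sides of the equivalence requires exploiting the dynamic EVI in its sharpest form. The log-Lipschitz control of $(d_t,\m_t)$ from \cref{asm:dualheatflow} guarantees that $t\mapsto W_t^2(\mu,\nu)$ is locally Lipschitz, so $\partial_t^-$ exists at a.e.\ $t$; this is exactly what forces the ``a.e.\ $t$'' qualifier in the statement. A secondary difficulty, already present in \cite{sturm2017remarks}, is the straightening procedure turning the $\hat P_{t,s}$-image of a geodesic into a near-$W_t$-geodesic; in the dynamic setting it must be performed with uniformity on compact time-subintervals, which again relies on the regularity hypotheses of \cref{asm:dualheatflow} and is the most technical piece of the argument.
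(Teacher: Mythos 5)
Your outline correctly identifies the tools that enter (the dynamic EVI from Kopfer--Sturm, heat-kernel short-time estimates, log-Lipschitz control of $t\mapsto d_t$ to justify the a.e.\ $t$ qualifier), and the reduction of the ``in particular'' clause to the identity $\vartheta^\flat(t,x)=\eta^*(t,x)$ is exactly right: $\vartheta^\flat\le\vartheta^*$, so $\vartheta^*(t,\cdot)\le 0$ plus the identity gives $\eta^*(t,\cdot)\le 0$. Up to here you are on the same track as the paper.

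The gap is in the two-sided equality itself. You attribute \emph{both} inequalities to the dynamic EVI, but EVI only buys one side. The EVI reads
\[
\tfrac12\,\partial_s^-\!\big(W^2_{s,t}(\hat P_{t,s}\mu,\sigma)\big)\big|_{s=t^-}\;\ge\;\ent_t(\mu)-\ent_t(\sigma)+\tfrac K2\,W_t^2(\mu,\sigma),
\]
which bounds entropy \emph{from above} by the short-time Wasserstein rate. Applying it with $\sigma=\mu^0$ and $\sigma=\mu^1$ along a $W_t$-geodesic and combining with a triangle-type decomposition of $\partial_t^-W_{t^-}^2$, one obtains a \emph{lower} bound on the dynamic-convexity defect of $\ent_t$ in terms of the short-time rate: this is precisely $\eta(t,x,y)\ge\vartheta^\flat(t,x,y)$, the content of the paper's Proposition~\ref{prop:eta>theta}. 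Taking $\limsup$ in $x,y$ gives $\eta^*(t,x)\ge\vartheta^\flat(t,x)$. No amount of ``running the EVI the other way'' gives the converse; the EVI is inherently one-sided.

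The direction you actually need for the ``in particular'' statement, namely $\eta(t,x,y)\le\vartheta^\flat(t,x,y)$ for a.e.\ $t$ (Theorem~\ref{thm:eta-theta}), rests on a different mechanism. One starts from Kantorovich duality and the Kopfer--Sturm identities for the Cheeger energy (their Propositions~4.2--4.3) to get the integral inequality
\[
W_{t}^2-W_{s}^2\;\ge\;2\int_s^{t}\!\Big(\partial_a^-\ent_r(\mu^a_r)\big|_{a=1}-\partial_a^+\ent_r(\mu^a_r)\big|_{a=0}+\tfrac12\partial_r^-W_{r^-}^2\Big)\,\mathrm{d}r
\]
along $\mu^i_r=\hat P_{t,r}\mu^i$. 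Then the integrand at time $r$ must be compared with $\eta_\varepsilon(r,x,y)$, which forces one to split the optimal dynamical plan $\pi_r$ into the part whose geodesics stay in $B_r(x,\varepsilon)\times B_r(y,\varepsilon)$ (where $\eta_\varepsilon$ applies) and the remainder, whose mass must be controlled via the Gaussian upper bound on the heat kernel and Bishop--Gromov under the pointwise $\rcd(K,N)$ assumption. Finally one cannot simply integrate and divide by $t-s$: $t\mapsto\eta_\varepsilon(t,x,y)$ has no a priori measurability, so the paper passes through a minimal measurable majorant and Lebesgue differentiation. None of these three ingredients (the duality/Cheeger-energy identity, the plan partition with Gaussian tail control, the measurability repair) appears in your proposal, and your phrase ``straightening $\hat P_{t,s}$-images into near-$W_t$-geodesics'' does not substitute for them. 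As written, your argument misidentifies the source of the hard inequality and would not close.
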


We will see below that the reverse implication fails, i.e. rough sub-Ricci flow is strictly stronger then weak sub-Ricci flow.

\subsection{Examples}

\emph{Smooth flows.} As already discussed, any smooth (sub/super)-Ricci flow $(M, d_t,e^{-f_t}\vol_{g_t})_{t\in I}$ is also a weak/rough (sub/super)-Ricci flow, this holds in particular for smooth Ricci flows starting from non-smooth initial data as considered e.g. in \cite{Sim12}, i.e $I=[t_0,t_1)$ and $(X,d_t,\m_t)$ approximates to a non-smooth m.m.s. as $t\searrow t_0$.
\smallskip

\emph{Gaussian weights.} Consider $(X,d_t,\m_t)=(\R^n, d_t, e^{-f_t}\mathcal L^n)$ with
\[
    f_t(x)=\frac12\langle x,a_t x\rangle+\langle x,b_t\rangle+c_t\;, 
\]
where $a: I\to \R^{n\times n}$, $b: I\to \R^n$, $c:I\to\R$ are suitably regular functions and the distance $d_t$ is induced by the inner product $\langle \cdot, A_t\cdot\rangle$ where $A:I\to \R^{n\times n}$ is positive definite for all $t\in I$. Then $(X,d_t,\m_t)_{t\in I}$ is a weak/rough super-resp.~sub-Ricci flow if and only if 
\[\dot A_t \geq -2 a_t\;,\quad \text{resp.}\quad \dot A_t\leq -2a_t\;.\]
However, it will not be a $N$-super-Ricci flow for some $N\in [n,\infty)$ unless $a\equiv 0$ and $b\equiv 0$.
\smallskip

\emph{Cones and suspensions.} Let $(M,g_M)$ be a $n$-dimensional Riemannian manifold with $\mathrm{diam}(M)\leq \pi$ and consider the spherical suspicion $\Sigma(M)=M\times [0,\pi]/\sim$ obtained by contracting $\mathcal{S}=M\times\{0\}$ and $\mathcal{N}=M\times \{\pi\}$ to a point and equipped with the metric $d_0$ given by 
\[
    \cos\left(d_0((x,s),(x',s')) \right)\coloneqq \cos s\cos s'+\sin s\sin s'\cos(d_{M}(x,x'))\;,
\]
and volume measure $\m_0(\d x,\d s)\coloneqq \mathrm{vol}_M(\d x)\otimes (\sin^n s\d s)$. Assume that $(M,g_M)$ is an $n$-dimensional Einstein manifold with $\Ric_{g_M}\equiv (n-1)g_M$. We show in Section \ref{sec:examples} that the time-scaled spherical suspension $(\Sigma(M),d_t,\m_t)_{t\in [0,\frac{1}{2n})}$ with
    \[
        d_t\coloneqq(1-2nt)^{\frac12}d_0,\quad \m_t\coloneqq(1-2nt)^{\frac{n+1}{2}}\m_0
    \]
is a weak Ricci flow. However, it is a rough Ricci flow if and only if $(\Sigma(M), d_0, \m_0)$ is the unit sphere $\mathbb{S}^{n+1}$ with the round metric and a multiple of volume measure.

A similar result can be obtained for Euclidean cones. This yields a dynamic counterpart to the observation from \cite{Erbar-Sturm} that an RCD$(K,N')$  space that is a Euclidean $N$-cone has a synthetic upper Ricci bound in the rough sense if and only it is isomorphic to Euclidean space $\R^N$.

\subsection{Heuristics} 
Finally, let us briefly give some intuitive ideas behind the notions of rough and weak Ricci flow and the proof of Theorem \ref{thm:consistency-intro}.
Let $\left(M, d_t, e^{-f_t}\vol_{g_t}\right)_{t\in I}$ be a smooth flow. A well-known formal computation in optimal transport \cite{Mccann2001,vonRenesseSturm} shows that the second derivative of the entropy along a Wasserstein geodesic $(\mu^a)$ w.r.t.~$dt_t$ is given by
\[
\frac{d^2}{da^2}\ent_t(\mu^a) = \int_M\Big[\Ric_{f_t}(\nabla\psi^a) +\|\Hess\;\psi^a\|^2_{\rm HS}\Big]{\rm d}\mu^a\;,
\]
where $(\psi^a)$ is the family of Kantorovich potentials associated with the geodesic $(\mu^a)$. Assuming (weighted) super-Ricci flow $\partial_tg_t \geq -2\Ric_{f_t}$ and neglecting the positive term $\|\Hess \psi^a\|_{\rm HS}^2$ formally yields the dynamic convexity inequality in $(w_{super})$. However, to obtain a concavity estimate under the assumption of (weighted) sub-Ricci flow $\partial_tg_t \leq -2\Ric_{f_t}$, the Hessian term can not be neglected as it has the wrong sign. 
The key idea in \cite{sturm2017remarks, Sturm2018Super, Mondino-suhr} on which we build is to consider transports that are sufficiently concentrated around a single geodesic on $M$ and where $\psi^a$ thus behaves almost linearly, so that the Hessian term becomes an arbitrarily small error. This leads to the almost concavity estimate in $(w_{sub})$. A similar reasoning applies to the non-expansion of the heat flow under super-Ricci flow and the almost non-constraction under sub-Ricci flow for sufficiently ``linear" transports.

In Section \ref{sec:smoothRF} we will make this ideas precise and rigorous. The key technical challenge in the proof of consistency will be to carefully construct suitable transports via their Kantorovich potentials and to obtain sufficient control on the Hessians of the latter.
\medskip

{\bf Plan of the paper.}
In Section 2 we collect notions and results concerning optimal transport and the heat flow on time-dependent metric measure spaces. In Section 3 we introduce the synthetic notions of weak and rough super-/sub- Ricci flow for metric measure spaces. Consistency of these notions with classical Ricci flow for smooth time-dependent families of Riemannian manifolds will be established in Section 4. In Section 5 we investigate the relation between weak and rough Ricci flows. We discuss several examples in Section 6. In the Appendix we collect results on the construction of smooth solutions to the Hamilton-Jacobi equation on time-dependent Riemannian manifolds, measure-theoretic prerequisits, and auxiliary results related to the curvature-dimension condition.
\medskip

{\bf Acknowledgements.}
M.E.~and T.S.~were funded through the SPP 2026 "Geometry at Infinity" by the Deutsche Forschungsgemeinschaft (DFG) - project number 441873017.

 \section{Preliminaries}\label{sec:preliminaries}
 We collect several preliminaries on optimal transport, the concept of dynamic convexity and the heat flow on time-dependent metric-measure spaces.
\subsection{Optimal transport and displacement convexity}
Let $(X,d)$ be a complete and separable metric space. A (constant speed) geodesic in $X$ is a curve $(\gamma^a)_{a\in[s,t]}$ such that $d(\gamma^a,\gamma^b)=\frac{b-a}{t-s}d(\gamma^s,\gamma^t)$ for all $s\leq a\leq b\leq t$. We denote by $\geo(X)$ the space of all geodesics $(\gamma^a)_{a\in[0,1]}$ in $X$ equipped with the supremum distance.

Let $\mathcal P_2(X)$ denote the set of all Borel probability measures on $X$ with finite second moment. For $\mu,\nu\in \mathcal P_2(X)$ we define the $2$-Wasserstein distance by
\begin{equation}
    W^2(\mu,\nu)\coloneqq \inf\left\{\int_{X\times X} d^2(x,y)\d \sigma(x,y)\right\},
\end{equation}
where the infimum is taken over all couplings $\sigma$ of $\mu$ and $\nu$.
The above infimum is achieved, and any minimizer is called an optimal coupling or an optimal (transport) plan.
The set of all optimal plans between $\mu$ and $\nu$ is denoted by $\opt(\mu,\nu)$.

The Wasserstein space $(\mathcal P_2(X),W)$ is complete and separable and it is a length (resp. geodesic) space if and only if so is $(X,d)$. A curve $(\mu_t)$, $t\in[0,1]$, in the Wasserstein space is a (constant speed) geodesic if and only if there exists a measure $\pi\in \mathcal{P}(\geo(X))$ such that $(e_s,e_t)_\#\pi\in \opt(\mu_s,\mu_t)$ for all $s,t\in[0,1]$,
where $e_t$ denotes the evaluation map $\gamma\mapsto \gamma_t$.
Such a measure $\pi$ is called an optimal dynamical plan, and the set of all dynamical plans is denoted by $\og(\mu,\nu)$.

The Wasserstein distance can be expressed by a dual maximisation problem. To this end we recall that the $c$-transform of a function $\phi:X\to\R\cup\{\pm\infty\}$ for the cost $c=d^2/2$ is the function $\phi^c:X\to\R\cup\{\pm\infty\}$ defined by 
\begin{equation}\label{eq:ctransform}
\phi^c(y)\coloneqq \inf_{x\in X}[d^2(x,y)/2-\phi(x)]\;.
\end{equation}
$\phi^c$ is also called the conjugate function of $\phi$ (w.r.t. $d^2$/2). 
A function $\phi$ is called $d^2/2$-concave if $\phi=\phi^{cc}$. In this case, we call $(\phi,\phi^c)$ a conjugate pair. The Kantorovich duality states that 
\begin{equation}\label{eq:KantorovichDuality}
    \frac12 W^2(\mu,\nu)=\sup \Big\{\int\phi\d \mu+\int\psi\d \nu\Big\}=\sup \Big\{\int\phi\d \mu+\int\phi^c\d \nu\Big\}\;,
\end{equation}
  where the first supremum is taken over all pairs of functions $\phi,\psi$ such that $\phi(x)+\psi(y)\leq d^2(x,y)/2$ for all $x,y$, and the second supremum is taken over all $d^2/2$-concave functions $\phi$. If $(\phi,\psi)$ attains the supremum then $\psi=\phi^c$ with $\phi$ $c$-concave and the pair is called a pair of Kantorovich potentials for $\mu,\nu$.
\smallskip

\subsection{Metric measure spaces and curvature-dimension conditions}
A metric measure space $(X,d,\m)$ consists of a Polish metric space $(X,d)$ and a locally finite Borel measure $\m$ on $X$.
The relative entropy w.r.t. $\m$ of a Borel probability measure $\mu\in P(x)$ is defined by
\[\ent(\mu|\m):= \int \rho\log\rho \d\m\;,\]
provided $\mu=\rho\m$ and $(\rho\log\rho)^-$ is integrable. Otherwise, we set $\ent(\mu|\m)=+\infty$.

We say that $(X,d,\m)$ satisfies $\cd(K,N)$ for some $K\in\R$ and $N\in[1,\infty]$ if it has Ricci curvature bounded below by $K$ and dimension bounded above by $N$ in the sense of Lott--Sturm--Villani; see \cite{LottVillani09,sturm2006--1,Sturm06-2}. 
In the case $N=\infty$, this means that the entropy is $K$-convex along at least one Wasserstein geodesic connecting two given measures.

On a CD$(K,N)$ space the Cheeger energy of a function $f\in L^2(X,\m)$ is defined as 
  \[
  \mathrm{Ch}(f)\coloneqq \inf\left\{\liminf_{k\to\infty}\frac{1}{2}\int_X \mathrm{lip}(f_k)^2\d \m:f_k\in\mathrm{Lip}(X,d),f_k\to f\text{ in $L^2(X,\m)$}\right\},
  \]
where $\mathrm{lip}(f)(x)\coloneqq \limsup_{y\to x}\frac{|f(y)-f(x)|}{d(x,y)}$ denotes the local Lipschitz constant of $f$. $\ch$ is a convex and l.s.c. function on $L^2(X,\m)$. The Laplacian operator $\Delta f$ is defined as the element of minimal norm in the subdifferential of $\ch$ at $f$, see \cite{AGSInvention}. The space $(X,d,\m)$ is called \emph{infinitesimally Hilbertian} if $\mathrm{Ch}$ is a quadratic form on $L^2(X,\m)$. In this case, $\ch$ is a Dirichlet form and the Laplacian is the corresponding generator characterised by $\ch(u,v)=-\int_X \Delta u\cdot v\d \m$ for all $u\in D(\Delta)$ and $v\in D(\ch)$.
The space is said to satisfy the Riemannian curvature-dimension condition $\rcd(K,N)$ if it is infinitesimally Hilbertian and satisfies $\cd(K,N)$; cf. \cite{Ambrosio-Gigli-Mondino-Rajala,AGSRCD2014,GigliRCD}.
\medskip

Let $(M,g)$ be a smooth, complete Riemannian manifold of dimension $n$ and Riemannian distance $d$, let $f:M\to\R$ be a smooth weight function and consider the measure $\nu=e^{-f}\mathrm{vol}_g$.

The following result discusses the behaviour of the relative entropy $\ent(\cdot|\nu)$ along Wasserstein geodesics on $M$.
We refer to \cite{Sturm05} for a proof in the unweighted case $f=0$ and $N=n$. The general case can be obtained from there as in \cite[p. 381]{Villani}.

\begin{theorem}\label{thm:displaceconvex}
Let $(M,g)$ be a complete, weighted smooth Riemannian manifold of dimension $n$ with the reference measure $\nu\coloneqq e^{-f}\mathrm{vol}_g$.
    Let $(\mu^a)_{a\in[0,1]}$ be a $2$-Wasserstein geodesic by absolutely continuous probability measures and $\pi\in\mathrm{OptGeo}(\mu^0,\mu^1)$.
    Denote by $\rho^a$ the density of $\mu^a$ with respect to $\nu$ for each $a\in[0,1]$.
    Then there exists a map $\ell:[0,1]\times M\to \R$ such that
    \begin{enumerate}
        \item for all $a\in [0,1]$, the function $x\mapsto \ell(a,x)$ is Borel and 
        \begin{equation}
            \rho^0(\gamma^0)=\rho^a(\gamma^a)\cdot e^{-\ell(a,\gamma^0)},\quad \text{for $\pi$-a.e. $\gamma$}.
        \end{equation}
        \item for all $N\in[n,\infty]$ and $x\in M$, the function $a\mapsto \ell(a,x)$ is semiconvex on $[0,1]$ and continuous on $(0,1)$. 
        For $x=\gamma^0$, the centered second order derivative $\underline{\partial}^2_a\ell(a,x)$ satisfies
        \begin{equation}\label{ineq:2rdDlogJacobian}
    \underline{\partial}^2_a\ell(a,\gamma^0)\geq \frac{(\partial_a\ell(a,\gamma^0))^2}{N}+\Ric_{N,f}(\dot\gamma^a,\dot\gamma^a).
        \end{equation}
    \end{enumerate}
    Moreover, when $\Ric_{N,f}\geq K$ for some $K\in \R$ (see \eqref{eq:N-Riccitensor} for the notation), then for $\mu^0,\mu^1$ having finite entropy, $a\mapsto \ent(\mu^a)$ is absolutely continuous and semi-convex on $[0,1]$, and satisfies
    \begin{align}
    \ent(\mu^a)&=\int \log\rho^0(\gamma^0)+\ell(a,\gamma^0)\d\pi(\gamma)\\
        \ent(\mu^1)-\ent(\mu^0)&=\iint \partial_a\ell(a,\gamma^0)\d a\d \pi(\gamma)\label{eq:1orderDentropy}\\
        \frac{\d^2 }{\d a^2}\ent(\mu^a)|_{a=\tau }&\geq  \int \frac{(\partial_a\ell(a,\gamma^0))^2}{N}+\Ric_{N,f}(\dot\gamma^a,\dot\gamma^a)\d \pi(\gamma).\label{ineq:2orderDentropy}
    \end{align}
\end{theorem}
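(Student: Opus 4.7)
\emph{Plan.} The strategy is to translate the entropic statements about the Wasserstein geodesic into a pointwise analysis along the Riemannian geodesics that realise the optimal transport. I first invoke the Brenier-McCann theorem in the weighted Riemannian setting: since $\mu^0$ is absolutely continuous, there exists a $d^2/2$-concave Kantorovich potential $\phi$ such that
\[
\mu^a=(T^a)_\#\mu^0,\qquad T^a(x):=\exp_x(-a\nabla\phi(x))\;,
\]
and the optimal dynamical plan is $\pi=T_\#\mu^0$ with $T(x)=(a\mapsto T^a(x))$. Semiconcavity of $\phi$ and Aleksandrov's theorem give $\mu^0$-a.e.\ second-order differentiability; for such $x$ the differential $Y^a(x):=dT^a(x)$ is a Jacobi field along $\gamma^a(x):=T^a(x)$ with $Y^0=\id$ and $(Y^0)'=-\Hess\phi(x)$. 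The weighted Monge-Amp\`ere identity
\[
\rho^a(T^a(x))\,\det Y^a(x)\,e^{-f(T^a(x))}=\rho^0(x)\,e^{-f(x)}
\]
then motivates the definition
\[
\ell(a,x):=-\log\det Y^a(x)+f(T^a(x))-f(x)\;,
\]
which yields (1) directly; I set $\ell(a,x):=+\infty$ beyond the first conjugate point along the corresponding geodesic, consistent with semiconvexity.

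For the key inequality \eqref{ineq:2rdDlogJacobian}, I work along a single geodesic and set $U(a):=Y'(a)Y(a)^{-1}$. The Riccati equation $U'+U^2+R_{\dot\gamma^a}=0$ gives, after taking traces,
\[
\partial_a^2(-\log\det Y^a)=\mathrm{tr}(U^2)+\Ric(\dot\gamma^a,\dot\gamma^a)\;,\qquad \partial_a(-\log\det Y^a)=-\mathrm{tr}\,U\;.
\]
Incorporating the weight via $\partial_af(\gamma^a)=\langle\nabla f,\dot\gamma^a\rangle$ and $\partial_a^2 f(\gamma^a)=\Hess f(\dot\gamma^a,\dot\gamma^a)$ produces $\partial_a^2\ell=\mathrm{tr}(U^2)+\Ric_f(\dot\gamma^a,\dot\gamma^a)$ and $\partial_a\ell=-\mathrm{tr}\,U+\langle\nabla f,\dot\gamma^a\rangle$. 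Writing $v:=\langle\nabla f,\dot\gamma^a\rangle$ and using the identity $\Ric_f=\Ric_{N,f}+\frac{1}{N-n}v^2$ (interpreted as $\Ric_f=\Ric_{N,f}$ when $N=\infty$), the target bound reduces, via Cauchy-Schwarz $n\,\mathrm{tr}(U^2)\geq(\mathrm{tr}\,U)^2$, to the nonnegativity of the perfect square
\[
\Big(\sqrt{(N-n)/n}\,\mathrm{tr}\,U+\sqrt{n/(N-n)}\,v\Big)^2\geq 0\;,
\]
which is manifest. Continuity of $a\mapsto\ell(a,x)$ on $(0,1)$ comes from smooth dependence of Jacobi fields on initial data away from conjugate points.

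The entropy identity \eqref{eq:1orderDentropy} follows from (1) by taking logarithms and pushing forward through $\pi$. Integrating \eqref{ineq:2rdDlogJacobian} against $\pi$ and using $\int|\dot\gamma^a|^2\d\pi=W^2(\mu^0,\mu^1)$ together with $\Ric_{N,f}\geq K$ delivers a uniform $K$-semiconvexity of $a\mapsto\ent(\mu^a)$, which in turn yields absolute continuity on $[0,1]$ and justifies the exchange of $\partial_a$ and $\int$ needed for \eqref{eq:1orderDentropy} and \eqref{ineq:2orderDentropy}. The principal technical obstacle is the regularity of $T^a$ and $\phi$: the map is typically only Lipschitz, $\phi$ is merely semiconcave, and the exponential map need not be a diffeomorphism near the cut locus. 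Handling this is done via the Cordero-Erausquin-McCann-Schmuckenschl\"ager framework, which isolates a set of full $\pi$-measure of regular geodesics where the Jacobi-field computation is valid, as carried out in \cite{Sturm05} for the unweighted case and adapted to the weighted setting in \cite[p.\ 381]{Villani}.
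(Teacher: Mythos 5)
The paper does not prove this theorem; it cites Sturm \cite{Sturm05} for the unweighted case $f\equiv 0$, $N=n$, and indicates that the general weighted case is obtained as in Villani's book. Your proposal reproduces exactly that standard route (Brenier--McCann map via a $d^2/2$-concave Kantorovich potential, Jacobi-field and Riccati analysis of $Y^a=dT^a$ with $U=Y'Y^{-1}$, incorporation of the weight via $f\circ T^a$, and the extraction of the $\Ric_{N,f}$ term by combining $n\,\mathrm{tr}(U^2)\geq(\mathrm{tr}\,U)^2$ with a completed square in $\mathrm{tr}\,U$ and $\langle\nabla f,\dot\gamma\rangle$), and you correctly flag the only genuine technical hurdle --- Aleksandrov differentiability of the semiconcave potential and exclusion of interior conjugate points on a full-measure set --- as being handled by the Cordero-Erausquin--McCann--Schmuckenschl\"ager framework; so this is essentially the same proof as the one the paper delegates to the references.
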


The centered second order derivative appearing in the statement above is defined as
\begin{align}
    \underline{\partial}_t^2h(t)\coloneqq \liminf_{s\to 0}\frac{1}{s^2}\cdot\big( h(t+s)+h(t-s)-2h(t)\big).
\end{align}

\subsection{Dynamic convexity}
In this subsection, we revisit several notions introduced by Sturm in \cite{Sturm2018Super} on the analysis of time-dependent metric spaces.

Let $I$ be a left open interval and $(X,d_t)_{t\in I }$ a one-parameter family of geodesic metric spaces with $V\colon I\times X\to (-\infty,+\infty]$.
For $t\in I $, we write $V_t\coloneqq V(t,\cdot)$ and $D(V_t)\coloneqq \{x\in X:V_t(x)<\infty\}$.
\medskip

Throughout this paper, we will adopt the following notation:
Given a function $u:I \to \R$, we define the \emph{upper left} and \emph{lower right} derivative of $u$ by 
	\begin{align}
    \partial^+_t u(t-)\coloneqq\limsup\limits_{s\nearrow t}\frac{1}{t-s}(u(t)-u(s)),\qquad \partial^-_t u(t+)\coloneqq\liminf\limits_{s\searrow t}\frac{1}{t-s}(u(t)-u(s))\;.
	\end{align}
Analogously, we define the \emph{lower left} and \emph{upper right} derivative of $u$.    
    
\begin{definition}[dynamic convexity] Given a time-dependent geodesic spaces $(X,d_t)_{t\in I}$, a function $V\colon I\times X\mapsto (-\infty,\infty]$ is called
\begin{enumerate}
    \item \emph{strongly/weakly dynamically $(0,N)$-convex} if for a.e. $t\in I$ and every $x^0,x^1\in D(V_t)$, $\forall/\exists$ $d_t$-geodesic $(x^a)_{a\in[0,1]}$ from $x^0$ to $x^1$, the function $a\mapsto V_t(x^a)$ is u.s.c. on $[0,1]$, ac on $(0,1)$ and 
    \begin{equation}\label{ineq:dynamicalconvex}
        \partial_a^+V_t(x^{1-})- \partial_a^-V_t(x^{0+})\geq -\frac12 \partial_t^-d_{t^-}^2(x^0,x^1)+\frac{|V_t(x^0)-V_t(x^1)|^2}{N}.
    \end{equation}
    \item \emph{locally strongly/weakly dynamically $(0,N)$-convex} if for a.e. $t\in I$ and $x\in X$, there exists $r>0$ s.t. for every $x^0,x^1\in D(V_t)\cap B_t(x,r)$, $\forall/\exists$ $d_t$-geodesic $(x^a)_{a\in[0,1]}$ from $x^0$ to $x^1$, the function $a\mapsto V_t(x^a)$ is u.s.c. on $[0,1]$, ac on $(0,1)$ and satisfies \eqref{ineq:dynamicalconvex}.
\end{enumerate}
When $N=\infty$, the last term in \eqref{ineq:dynamicalconvex} is understood to be zero and $V$ is called (locally) weakly/strongly dynamically convex.
\end{definition}

\begin{definition}[Upper regular] A function $V\colon X\to (-\infty,\infty]$ is called \emph{upper regular} if for each geodesic $(\gamma^a)_{a\in[0,1]}$ in $X$ with $\gamma^0,\gamma^1\in D(V)$ the function $u=V\circ\gamma$ is u.s.c. on $[0,1]$, ac on $(0,1)$ and satisfies $\partial^+_au(a-)\leq \partial_a^-u(a+)$ for all $a\in (0,1)$ as well as\footnote{Note that the condition \eqref{ineq:upperregular} is slightly different from the one used in \cite{Sturm2018Super}.}
\begin{align}\label{ineq:upperregular}
       \liminf_{a\nearrow 1}\partial_\tau^-u(a+)\leq \partial_\tau^-u(1-),\qquad \limsup_{a\searrow 0}\partial_\tau^+u(a-)\geq \partial_\tau^+u(0+).
\end{align}
   If $V$ is a function on $I\times X$, then it is called upper regular if for a.e. $t\in I$, $V_t\coloneqq V(t,\cdot)$ is upper regular on $(X,d_t)$.
\end{definition}

We note that a convex function $V\colon [0,1]\to (-\infty,\infty]$ is upper regular.
In particular, this applies to the relative entropy on a metric measure space that satisfies the strong $\cd(K,\infty)$ space (in the sense of \cite{Rajala-Sturm14}), or that is essentially non-branching (e.n.b.) and satisfies $\cd(K,N)$ for some $K\in\R$, $N<\infty$.
In fact in both situations the Wasserstein geodesic between two measures having finite entropy and finite variance is unique, see \cite{Rajala-Sturm14} and \cite{cavalletti2017optimal}, and therefore the upper regularity follows from the weak $K$-convexity. 
On the other hand, any e.n.b. $\cd(K,\infty)$-space carrying upper regular entropy has to satisfy strong $\cd(K,\infty)$-condition, see \cref{sec:appendix3}.

\begin{definition}[log-Lipschitz control on metrics]
    We say that a family of distances on $X$ admits an \emph{upper or lower log-Lipschitz control} by a non-negative function $\kappa\in L^1_{\mathrm{loc}}(I)$ if for all $s<t$ and $x,y\in X$
    \begin{align}
        \log d_t(x,y)-\log d_s(x,y)\leq \int^t_s\kappa_r\d r \quad \text{or}\quad  \log d_t(x,y)-\log d_s(x,y)\geq -\int^t_s\kappa_r\d r
    \end{align}
    respectively. We say that the family has \emph{log-Lipschitz control} if it admits both lower and upper log-Lipschitz control.
    \end{definition}
    
    One can check that upper/lower log-Lipschitz control is equivalent to the requirement that for all $x,y\in X$, the map $t\mapsto d_t(x,y)$ is locally upper (resp. lower) absolutely continuous (see e.g. \cite{Ponomarev} for equivalent definitions) and we have
    \begin{align}
        \partial^+_t d_t(x,y)\leq \kappa_t d_t(x,y)\quad (\text{resp.}\quad \partial^-_t d_t(x,y)\ge -\kappa_t d_t(x,y))\qquad \text{for each } t\;.
    \end{align}
   
\subsection{Time-dependent metric measure spaces and heat flows}\label{sec:HF}

A \emph{time-dependent metric measure space} $(X,d_t,\m_t)_{t\in I}$ is a one-parameter Borel family of metric measure spaces, where $I$ is a left open interval and for each $t\in I$, $d_t$ is a geodesic metric generates the given topology of $X$.

We denote by $\mathcal{P}_t(X)$ the set of all probability measures on $(X,d_t)$ with finite second moment, by $W_t$ the $L^2$-Wasserstein distance w.r.t. $d_t$, and by $\ent_t$ the relative entropy w.r.t. $\m_t$.
\medskip

We call a time-dependent metric measure space $(X,d_t,\m_t)_{t\in I}$ \emph{uniformly comparable} if there exist constants $C,L>0$ s.t.
\begin{itemize}
    \item   for all $s,t\in I$ and $x,y\in X$, 
    \begin{equation}
        \left|\log \frac{d_t(x,y)}{d_s(x,y)}\right|\leq L\cdot |t-s|;
    \end{equation}
    \item measures $\m_t$ are mutually absolutely continuous with bounded, Lipschitz logarithmic densities; that is to say, there exists a reference measure $\m$ that for each $t\in I$ $\m_t=e^{-f_t}\m$, where functions $f_t$ satisfy $|f_t(x)|\leq C$, $|f_t(x)-f_t(y)|\leq Cd_t(x,y)$, and $|f_t(x)-f_s(y)|\leq L|t-s|$ for all $s,t\in I$, $x,y\in X$.
\end{itemize}
Our main assumption will be
\begin{assumption}\label{asm:dualheatflow}
    The time-dependent metric measure space $(X,d_t,\m_t)_{t\in I}$ is uniformly comparable and there exist $K\in\R$ and $N<\infty$ s.t. the static space $(X,d_t,\m_t)$ satisfies $\rcd(K,N)$-condition for each $t\in I$.
\end{assumption}

Under \cref{asm:dualheatflow} the results developed in \cite{AGSRCD2014,Lierl-JMPA,Sturm95} apply. In particular, we denote by $\ch_t$ and $\Delta_t$ the the Cheeger energy and Laplace operator of the static space $(X,d_t,\m_t)$.
It is useful to observe that when $(X,d_t,\m_t)_{t\in I}$ is uniformly comparable, various spaces including $L^2(X,\m_t)$, $\mathcal{P}_t(X)$, $D(\ent_t)$, $D(\mathrm{Ch}_t)$ do not depend on $t$.
In the following theorem, we collect results of heat flows on time-dependent metric measure spaces from \cite{Kopfer-Sturm2018}.
\begin{theorem}\label{thm:heatkernel}
    Let $(X,d_t,\m_t)_{t\in I}$ be given as in \cref{asm:dualheatflow}. Then
    \begin{enumerate}
        \item There exists a heat kernel $p$ on $\{(t,s,x,y)\in I^2\times X^2:t>s\}$, H\"older-continuous in all variables s.t. for each $s\in I$ and $h\in L^2(X,\m_s)$,
        \begin{equation}\label{eq:heatflow}
            (t,x)\mapsto P_{t,s}h(x)\coloneqq \int p_{t,s}(x,y)h(y)\d \m_s(y)
        \end{equation}
        is the unique solution to the heat equation $\partial_t u_t=\Delta_t u_t$ on $(s,T)\times X$ with $u_s=h$. 
        More precisely, denoting $u_t\coloneqq P_{t,s}h$, $(u_t)_{t\geq s}$ is a locally absolutely continuous curve in $L^2(X)$, $u_t\in D(\Delta_t)$ for a.e. $t$ and 
        \begin{equation}
            -\int^\tau_s \ch_t(u_t,w_t)\d t=\int^{\tau}_s\int_X \partial_t u_t\cdot w_t\d \m_t\d t,\quad \forall \tau>s
        \end{equation}
        for all absolutely continuous curve $(w_t)_{t\geq s}$ on $L^2(X)$.
       \item The heat kernels satisfies the propagator property
       \begin{equation}
           p_{t,r}(x,z)=\int p_{t,s}(x,y)p_{s,r}(y,z)\d \m_s(y),\quad \forall s\in (r,t),x,z\in X.
       \end{equation}
       The operators $\{P_{t,s}\}_{s\leq t}$ defined in \eqref{eq:heatflow} satisfy the propagator property $P_{t,r}=P_{t,s}\circ P_{s,r}$ for all $r\leq s\leq t$.
       \item the heat kernel admits upper Gaussian bound i.e. there exists a constant $C>0$ that for all $\tau\in I$ and $s<t$
       \begin{equation}\label{ineq:Gaussian}
           p_{t,s}(x,y)\leq \frac{C}{\m_{\tau}(B_{\tau}(x,\sqrt{|s-t|}))}\exp\left(-\frac{d^2_{\tau}(x,y)}{C|s-t|}\right).
       \end{equation}
       \item the heat kernel is Markovian 
       \begin{equation}
           \int p_{t,s}(x,y)\d \m_s(y)=1,\forall s<t,x\in X.
       \end{equation}
       Hence we can define dual propagators $\hat{P}_{t,s}:\mathcal{P}(X)\to \mathcal{P}(X)$ and dual heat flow of measures for all $s<t$, given by
       \begin{equation}\label{eq:dualheatflow}
           (\hat{P}_{t,s}\mu)\coloneqq\left[\int p_{t,s}(x,y)\d \mu(x) \right]\d \m_s(y).
       \end{equation}
       In particular, $\hat{P}_{t,s}\delta_x(\d y)=p_{t,s}(x,\d y)$ for all $x\in X$.
    \end{enumerate}
\end{theorem}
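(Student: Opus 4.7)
The plan is to assemble these properties from the time-dependent Dirichlet form theory developed in \cite{Kopfer-Sturm2018}, which applies directly under \cref{asm:dualheatflow}. The central point is that uniform comparability makes the family $(\ch_t)_{t\in I}$ into a log-Lipschitz family of mutually comparable quadratic forms on the common Hilbert space $L^2(X,\m)$, so that the non-autonomous Cauchy problem $\partial_t u_t = \Delta_t u_t$, $u_s = h$ can be solved by a Lions-type variational scheme.

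First I would fix $s\in I$ and construct the flow $(P_{t,s}h)_{t\geq s}$ for $h\in L^2(X,\m_s)$ as the unique curve in $L^2$ satisfying the evolution equation in the weak sense, appealing to the general theory of non-autonomous Dirichlet forms with log-Lipschitz time dependence; uniqueness in item (1) then comes from an energy estimate using the positivity of $\ch_t$, and the propagator property in (2) follows at once from uniqueness. At this point $P_{t,s}$ is only defined as an $L^2$-contraction. To upgrade to a jointly H\"older-continuous heat kernel $p_{t,s}(x,y)$ I would exploit the fact that each static space $(X,d_t,\m_t)$ is $\rcd(K,N)$, so that a parabolic Harnack inequality is available at each frozen time; combined with the uniform comparability of $d_t$ and $\m_t$, a perturbation argument in the spirit of \cite{Lierl-JMPA} yields a genuinely time-dependent Harnack estimate, from which joint H\"older regularity and the kernel representation \eqref{eq:heatflow} follow.

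The Gaussian upper bound (3) proceeds in the same spirit: for a fixed $\tau$ close to $s,t$ the sub-Gaussian heat kernel estimate on the static space $(X,d_\tau,\m_\tau)$ is known, and the log-Lipschitz control on $d_\cdot$ and $\m_\cdot$ absorbs the time-dependence into the constant $C$, permitting one to replace the correct but inaccessible quantities by their counterparts at time $\tau$ up to multiplicative errors. Markovianity (4) is then obtained by verifying that $(P_{t,s}1)_{t\geq s}$ solves the heat equation with initial datum $1$; since each static space is $\rcd(K,N)$ with $N<\infty$ it is stochastically complete, and together with the log-Lipschitz control of $f_t$ this forces $P_{t,s}1\equiv 1$. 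Consequently the dual propagators $\hat P_{t,s}$ in \eqref{eq:dualheatflow} are well-defined probability-measure valued maps.

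The main obstacle I anticipate is establishing joint H\"older regularity of $p_{t,s}(x,y)$ in all four variables, especially the two time variables: spatial Harnack estimates are static in nature, and transferring them to the time-dependent setting requires careful use of the log-Lipschitz control in order to compare the operators $\Delta_t$ at nearby times without losing regularity. Once the kernel and its Gaussian upper bound are in place, the remaining propagator and Markov properties follow by routine arguments and the whole package then coincides with the statements summarised from \cite{Kopfer-Sturm2018}.
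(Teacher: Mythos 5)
The paper does not reprove this theorem; it is explicitly presented as a collection of results imported from Kopfer--Sturm \cite{Kopfer-Sturm2018}, drawing also on \cite{Lierl-JMPA,Sturm95,AGSRCD2014}, so its ``proof'' is a citation. Your sketch is a faithful outline of how the proof actually proceeds in those references (Lions-type variational construction of the non-autonomous flow, parabolic Harnack and perturbation estimates in the spirit of Lierl for joint H\"older regularity, time-freezing plus log-Lipschitz comparability for the Gaussian bound, and stochastic completeness for Markovianity), so it is consistent with the paper's approach.
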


Note that, on any smooth family of compact Riemannian manifolds, the time-dependent heat kernel is smooth, so is the heat flow.
In the following, we summarize regularity results for dual heat flows on general metric measure spaces from \cite{Kopfer-Sturm2018}.
\begin{lemma}[Regularity of dual heat flows] \label{lemma:dualheatflow} Let $(X,d_t,\m_t)_{t\in I}$ be given as in \cref{thm:heatkernel}.
\begin{enumerate}
    \item $\hat{P}_{t,s}:\p(X)\to \p(X)$ is continuous with respect to the weak convergence.
    \item There exists $C>0$ s.t. for all $s,s'<t$, $\tau\in I$ and $\mu\in\p(X)$,
    \begin{equation}
        W^2_{\tau}(\hat{P}_{t,s'}\mu,\hat{P}_{t,s}\mu)\leq C\cdot |s-s'|.
    \end{equation}
\end{enumerate}
    Let $\mu,\nu\in \p_t(X)$, $t\in I$ and $\mu_s\coloneqq \hat{P}_{t,s}\mu$, $\nu_s\coloneqq \hat{P}_{t,s}\nu$.
    \begin{enumerate}
        \item[(3)] For all $s<t$, $\mu_s\in D(\ent_s)\cap \p_s(X)$.
        Moreover, denoting by $\rho_{t,s}$ the density of $\mu_s$ w.r.t. $\m_s$ (given by \eqref{asm:dualheatflow}), then $\rho_{t,s}\in C_b(X)\cap D(\ch_s)$.
        \item[(4)]\label{item:acdualheatflow} For any $\tau\in I$, the curve $s\mapsto\mu_s$ belongs to $\mathrm{AC}^2_{\mathrm{loc}}([0,t)\colon\p(X),W_\tau)$ and if $\mu\in D(\ent)$, then also to $\mathrm{AC}^2([0,t]\colon\p(X),W_\tau)$.
        \item[(5)] the function $s\mapsto W^2_s(\ap\mu,\ap\nu)$ is continuous on $[0,t]$ and absolutely continuous on $[0,r]$ for all $r<t$.
    \end{enumerate}
\end{lemma}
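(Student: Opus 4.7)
All five items are drawn from \cite{Kopfer-Sturm2018}; the common strategy is to leverage the heat-kernel regularity of \cref{thm:heatkernel} together with the $\rcd(K,N)$-structure at each fixed time provided by \cref{asm:dualheatflow}. For (1), weak continuity of $\hat P_{t,s}$ would follow by duality: if $\mu_n \rightharpoonup \mu$ and $f \in C_b(X)$, then $\int f\d\hat P_{t,s}\mu_n = \int P_{t,s}f\d\mu_n$, and the function $P_{t,s}f$ lies in $C_b(X)$ because the kernel $p_{t,s}(\cdot,y)$ is H\"older continuous and uniformly controlled by \eqref{ineq:Gaussian}. For (3), the density of $\hat P_{t,s}\mu$ against $\m_s$ is $\rho_{t,s}(y) = \int p_{t,s}(x,y)\d\mu(x)$, which is bounded and continuous directly from \eqref{ineq:Gaussian} and the H\"older regularity of the kernel; membership of $\rho_{t,s}$ in $D(\ch_s)$ then comes from the standard $L^\infty$-to-$D(\ch)$ regularization of the heat semigroup on the static $\rcd(K,N)$ space $(X,d_s,\m_s)$ applied over a tiny interval $[s-\varepsilon,s]$, where uniform comparability ensures that the metric measure structure is frozen at an admissible cost.

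For (2), I would exploit the propagator identity $\ap{t,s'} = \ap{s,s'}\circ\ap{t,s}$ for $s'<s<t$ to reduce the estimate to bounding the Wasserstein displacement of $\ap{s,s'}$ acting on the nice measure $\nu := \ap{t,s}\mu$, whose density is controlled via (3). The Gaussian bound \eqref{ineq:Gaussian} together with the propagator property then produces a coupling between $\nu$ and $\ap{s,s'}\nu$ transporting mass by at most $O(\sqrt{s-s'})$ in $d_\tau$, yielding $W^2_\tau(\ap{t,s'}\mu,\ap{t,s}\mu) \leq C(s-s')$; uniform comparability frees the choice of reference time $\tau$. Item (4), the $\mathrm{AC}^2$ property, is the time-dependent counterpart of the fact that the heat flow is the $W$-gradient flow of the entropy on $\rcd(K,N)$ spaces. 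The plan is to show that at each fixed reference time $\tau$ the curve $s\mapsto \ap{t,s}\mu$ has metric derivative $|\dot\mu_s|_{W_\tau}$ locally (and globally if $\mu\in D(\ent)$) in $L^2$; one obtains this from the instantaneous static Wasserstein gradient-flow structure at each time $s$, transferred to the global time-dependent setting via the log-Lipschitz control on $(d_t,\m_t)$ which absorbs the error from shifting the reference metric from $s$ to $\tau$.

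Finally, (5) follows by combining (2) and (4) with uniform comparability: the triangle inequality splits $|W_s(\mu_s,\nu_s) - W_{s'}(\mu_{s'},\nu_{s'})|$ into a Wasserstein-displacement term, controlled by (2) and (4) applied to both $\mu$ and $\nu$, plus a term due to varying the reference distance itself, controlled by uniform comparability of the family $\{d_s\}$.

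The main obstacle is item (4): producing an $L^2$ bound on the Wasserstein metric velocity of the dual heat flow that is uniform in the chosen reference time $\tau$. In the static $\rcd(K,N)$ case this is a direct consequence of the Evolutionary Variational Inequality for the entropy on $(\p(X),W)$; in the time-dependent setting, error terms arising from freezing the metric at different instants must be absorbed using the log-Lipschitz bounds of \cref{asm:dualheatflow}. This is exactly where uniform comparability is used in full strength and constitutes the technical heart of the argument in \cite{Kopfer-Sturm2018}.
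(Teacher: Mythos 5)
The paper does not prove this lemma: it is presented explicitly as a summary of results imported verbatim from \cite{Kopfer-Sturm2018} (the text just above the lemma reads ``we summarize regularity results for dual heat flows \dots from \cite{Kopfer-Sturm2018}''). There is therefore no internal argument in the paper to compare your sketch against, and for the purposes of this paper it is entirely acceptable to cite the lemma without proof, as the authors do.

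That said, your outline is broadly consistent with how these claims are established in the cited reference, and you have correctly identified the load-bearing inputs (H\"older regularity of the kernel, Gaussian upper bound, Markov/propagator identities, uniform comparability, the static $\rcd$ structure at each time) and the chain of dependencies (1)$\to$(3)$\to$(2)$\to$(4)$\to$(5). A few small points of imprecision worth flagging if you were to turn this into an actual proof. In item (3), the membership $\rho_{t,s}\in D(\ch_s)$ does not really come from ``freezing the metric on a tiny interval $[s-\varepsilon,s]$'': the heat evolution between times $s$ and $t$ already flows forward in time, so one should instead split the propagator at an intermediate time $s<s'<t$ and use that $\rho_{t,s}$ arises from propagating a bounded function (namely $\rho_{t,s'}$) through the kernel $p_{s',s}$, together with the energy/gradient estimate for the time-dependent flow; no artificial backward interval is needed. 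In item (2), the coupling you invoke is correct in spirit, but the estimate you actually use is the second moment bound $\int d_\tau^2(y,z)\,p_{s,s'}(y,z)\,\d\m_{s'}(z)\lesssim |s-s'|$ (a consequence of the Gaussian upper bound plus Bishop--Gromov), not a ``displacement by at most $O(\sqrt{s-s'})$'' coupling, which would be a stronger, non-averaged statement. Item (4) is indeed the technical core in \cite{Kopfer-Sturm2018}; your description of the strategy (extract $L^2$ control on the metric velocity from the static gradient-flow/EVI structure at each instant and absorb the reference-time mismatch with log-Lipschitz control) matches what is done there. None of this affects the paper, which treats the lemma as a black box.
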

    
\subsection{Flows of smooth Riemannian manifolds}\label{sec:smoothRFpreli}

Let $(M,g)$ be a smooth complete Riemannian manifold of dimension $n$ and $f:M\to\R$ a smooth weight function. For $N\in [n,\infty]$, denote by $\Ric_{N,f}$ the \emph{weighted $N$-Ricci curvature tensor}:
\begin{align}\label{eq:N-Riccitensor}
    \Ric_{N,f}\coloneqq \Ric_{g}+\mathrm{Hess}_{g}f-\frac1{N-n}\nabla f\otimes \nabla f
\end{align}
the $N$-Ricci curvature tensor (also referred to as the $N$-Bakry--\'Emery curvature tensor). For $N=\infty$ the last term is understood as zero and we write $\Ric_f$ instead of $\Ric_{\infty,f}$.

We denote by $\Delta_f=\Delta-\nabla f\cdot \nabla$ the weighted Laplacian, where $\Delta$ denotes the Laplace-Beltrami operator of $(M,g)$. We recall the Bochner-Weitzenb\"ock identity (see e.g. \cite[Chapter 14]{Villani}) for the weighted Laplacian $\Delta_f$. For a smooth function $\psi$ on $M$ we have
\begin{align}\label{eq:NBochner}
    \L\frac{|\nabla \psi|^2}{2}-\nabla \psi\cdot \nabla(\L\psi)=&\frac{(\L\psi)^2}{N}+\Ric_{N,f}(\nabla \psi)+\big\lVert \nabla^2\psi-\left(\frac{\Delta \psi}{n}\right)I_n\rVert^2_{\mathrm{HS}}
    \\ &+\frac{n}{N(N-n)}\left[\left(\frac{N-n}n\right)\Delta\psi+\nabla f\cdot \nabla\psi\right]^2\;.
\end{align}
When $N=\infty$ this reduces to 
\begin{equation}\label{eq:inftyBochner}
    \Delta_f\frac{|\nabla \psi|^2}{2}-\nabla \psi\cdot \nabla(\L\psi)=\Ric_{f}(\nabla\psi)+\|\nabla^2\psi\|^2_{\mathrm{HS}},
\end{equation}
where $\|\cdot\|_{\mathrm{HS}}$ denotes the Hilbert-Schmidt norm and $\Ric(X)\coloneqq \Ric(X,X) $.
\medskip

Let now $(M,g_t)_{t\in I}$ be a smooth family of complete Riemannian manifolds and $(f_t)_{t\in I}$ be a smooth family of functions on $M$. This gives rise to the time-dependent metric measure space $(M,d_t, e^{-f_t}\vol_{g_t})_{t\in I}$, where $d_t$ is the Riemannian distance induced by $g_t$. We call $(M,g_t,f_t)_{t\in I}$ a \emph{smooth flow}.

Let $\nabla_t$ denote the gradient associated $g_t$ and write for short $\Delta_t\coloneqq \Delta_{g_t}-\nabla_{t}f_t\cdot \nabla_t$ for the time-dependent weighted Laplacian, where $\Delta_{g_t}$ denotes  the Laplace-Beltrami operator on $(M,g_t)$. If the heat flow exists (for instance if $M$ is compact or if the metrics and weights are uniformly comparable in the sense of Assumption \ref{asm:dualheatflow}) we denote by $(P_{t,s})_{s\leq t}$ the associated family of propagators, i.e. $P_{t,s}\phi$ denotes the solution $u(t)$ to $(\partial_r-\Delta_r)u=0$ with initial datum $u(s)=\phi$.

We call $(M,g_t,f_t)_{t\in I}$ a weighted \emph{super(sub) - Ricci flow} if for all $t\in I$ the inequality 
\begin{align}\label{ineq:defSRF}
\Ric_{f_t}+\frac12\partial_tg_t\ge 0\; (\le 0)\;.
\end{align}
holds respectively. Moreover, we call it a weighted \emph{$N$-super-Ricci flow} for $N<\infty$ if the stronger inequality \[\Ric_{N,f_t}+\frac12\partial_tg_t\ge 0\]
holds.
Here all inequalities are understood in the sense of quadratic forms.
If both inequalities in \eqref{ineq:defSRF} hold, that is $ \Ric_{f_t}=-\frac12\partial_tg_t$, we say $(M,g_t,f_t)_{t\in I}$ is a \emph{weighted Ricci flow}.

For any smooth function $\psi$ on $I\times M$ we have that 
\begin{equation}
    \partial_t \left(|\nabla_t\psi|^2_{g_t}\right)=2g_t(\nabla_t\psi,\nabla_t\partial_t\psi)-\partial_tg_t(\nabla_t\psi,\nabla_t\psi).
\end{equation}
Combining with \eqref{eq:inftyBochner}, we obtain
\begin{align}\label{eq:Bochnerflow}
    \left(\partial_t-\Delta_t\right)\frac{|\nabla_t\psi|^2_{g_t}}{2}&=\nabla_t\psi\cdot\nabla_t(\partial_t-\Delta_t)\psi-\frac12\partial_tg_t(\nabla_t\psi)-\Ric_{f_t}(\nabla_t\psi)-\|\nabla^2_t\psi\|^2_{\mathrm{HS}}
\end{align}
which can be regarded as a Bochner-type identity for the heat operator.
In particular, if $\psi$ is a solution to the heat equation $(\partial_t-\Delta_t)P_{t,s}\phi=0$, this yields
\begin{equation}\label{eq:Bochner-heat}
    -(\partial_t-\Delta_t)|\nabla_t\psi|^2_{g_t}=\partial_tg_t(\nabla_t\psi)+2\Ric_{f_t}(\nabla_t\psi)+2\|\nabla^2_t\psi\|^2_{\mathrm{HS}}\;.
\end{equation}

As a consequence one obtains a gradient estimate for solutions to the heat equation on $(K,\infty)$-super-Ricci flows. Assume that $\Ric_{f_t}\ge-\frac12\partial_tg_t+Kg_t$ for $K\in \R$ and let $psi$ be a solution to the heat equation $(\partial_t-\Delta_t)P_{t,s}\phi=0$. Then we have
\begin{align}\label{ineq:gradientestimate}
    |\nabla_{t}P_{t,s}\phi|^2_{g_t}\leq e^{-2K(t-s)}P_{t,s}(|\nabla_{s}\phi|^2_{g_s}).
\end{align}
Indeed, setting
\begin{equation}
    \Phi(\tau)\coloneqq e^{2K\tau}P_{t,\tau}(|\nabla_{\tau}P_{\tau,s}\phi|^2_{g_\tau}),\quad \tau\in(s,t).
\end{equation}
we have, with \eqref{eq:Bochner-heat} and the equation $\partial_\tau P_{t,\tau}\phi=-P_{t,\tau}\Delta_{\tau}\phi$, that
\begin{align}
    \Phi'(\tau)&=2K\Phi(\tau)+e^{2K\tau}P_{t,\tau}(-\Delta_{\tau}|\nabla_{\tau}P_{\tau,s}\phi|^2_{g_{\tau}}+\partial_\tau|\nabla_\tau P_{\tau,s}\phi|_{g_\tau}^2)\\
    &=2K\Phi(\tau)-e^{2K\tau}P_{t,\tau}(\partial_tg_t(\nabla_tP_{t,s}\phi)+2\Ric_{f_t}(\nabla_tP_{t,s}\phi)+2\|\nabla^2_tP_{t,s}\phi\|^2_{\mathrm{HS}})\\
    &\leq0\;.
\end{align}
Hence $\Phi(\tau)$ is non-increasing and we conclude.

\section{Synthetic Ricci flow}\label{sec:syntRF}
In this section we introduce several synthetic notions of super- and sub-Ricci flow for time-dependent metric measure spaces.

\subsection{Weak Ricci-flow}

\begin{definition}[Weak Ricci flow]\label{def:WeakRF}
A time dependent metric measure space $(X,d_t,\m_t)_{t\in I}$  is called
\begin{enumerate}[(i)]
\item \label{def:weakinftySuper}\emph{weak super-Ricci flow} if for almost every $t\in I$ and every $W_t$-geodesic $(\mu^a)_{a\in [0,1]}$ in $\p_t(X)$ with finite entropy at endpoints, the function $a\mapsto \ent_t(\mu^{a})$ is absolutely continuous on $[0,1]$ and we have
\begin{align}\label{ineq:weaksuperRF}
    \partial^+_a \ent_t(\mu^a)\lvert_{a=1-}-\partial^-_a \ent_t(\mu^a)\lvert_{a=0+}\ge -\frac12\partial^-_tW_{t^-}^2(\mu^0,\mu^1)\;;
\end{align}

\item \label{def:weakSub}\emph{weak sub-Ricci flow} if for almost every $t\in I$, and every $\varepsilon>0$, there exists an open cover $\{U_i\}$ such that for every open subsets $V_0,V_1\subset U_i$, there exists a $W_t$--Wasserstein geodesic $(\mu^a)_{a\in[0,1]}$ so that $\spt\mu^0\subset V_0$, $\spt\mu_1\subset V_1$, and
\begin{align}\label{eq:EntSub}
    \partial_a^+\ent_t(\mu^a)\lvert_{a=1-}-\partial_a^-\ent_t(\mu^a)\lvert_{a=0+}\le -\frac12\partial^-_t W_{t^-}^2(\mu^0,\mu^1)+\varepsilon W_t^2(\mu^0,\mu^1)\;.
\end{align}
\end{enumerate}
If both \eqref{def:weakinftySuper} and \eqref{def:weakSub} are satisfied, we call $(X,d_t,\m_t)_{t\in I}$ a \emph{weak Ricci flow}.
\end{definition}

We consider also the following dimensional refinements of the notion above.

\begin{definition}[Weak non-collapsed Ricci flow]\label{def:weakNSRF}
A time dependent metric measure space $(X,d_t,\m_t)_{t\in I}$  is called
\begin{enumerate}[(i)]
\item \emph{weak $N$-super-Ricci flow} for $N\in[1,\infty)$ if for almost every $t\in I$, every $W_t$-geodesic $(\mu^a)_{a\in [0,1]}$ in $\p_t(X)$ with finite entropy at endpoints, the function $a\mapsto \ent_t(\mu^{a})$ is absolutely continuous on $[0,1]$ and
\begin{align}\label{ineq:weakNSRF}
    \partial^+_a \ent_t(\mu^a)\lvert_{a=1-}-\partial^-_a \ent_t(\mu^a)\lvert_{a=0+}\ge -\frac12\partial^-_tW_t^2(\mu^0,\mu^1)+\frac1{N}|\ent_t(\mu^1)-\ent_t(\mu^0)|^2
\end{align}
\item \emph{weak non-collapsed Ricci flow} if it is a weak sub-Ricci flow and a weak $N$-super-Ricci flow for some $N$.
\end{enumerate}
\end{definition}

We note that the almost concavity inequality \eqref{eq:EntSub} \emph{at the endpoints} on a weak super-Ricci flow backround self improves to an almost concavity at the intermediate points $\rho,\sigma\in(0,1)$. Note however, that the form of the error term in this estimate does not allow to obtain an estimate on the second derivative of the entropy along the geodesic. For a smooth flow, we will obtain a more precise error estimate, where this is possible, see Remark \ref{rem:smooth-error}. 

\begin{proposition}\label{prop:endtointer}
Let $(X,d_t,\m_t)_{t\in I}$ be a weak super-Ricci flow with upper regular entropy.
For any $W_t$-geodesic $(\mu^a)_{a\in [0,1]}$ satisfying \eqref{eq:EntSub} at any intermediate points $0\leq \sigma<\rho\leq 1$ we have that 
    \begin{equation}
         \partial_a^+\ent_t(\mu^a)\lvert_{a=\rho-}-\partial_a^-\ent_t(\mu^a)\lvert_{a=\sigma+}\le -\frac{1}{2(\rho-\sigma)}\partial^-_t W_{t^-}^2(\mu^\sigma,\mu^\rho)+\varepsilon W_t^2(\mu^0,\mu^1).
    \end{equation}
\end{proposition}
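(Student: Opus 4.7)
The plan is to combine the weak super-Ricci flow bound \eqref{ineq:weaksuperRF} on the two sub-geodesics $[0,\sigma]$ and $[\rho,1]$ with the hypothesised endpoint almost concavity \eqref{eq:EntSub} on $[0,1]$, then use upper regularity of the entropy to shift between one-sided derivatives, and a triangle/Cauchy--Schwarz comparison to reconcile the Wasserstein error terms.

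As a first step, I would apply \eqref{ineq:weaksuperRF} to the constant-speed reparametrisations $s\mapsto\mu^{\sigma s}$ and $s\mapsto\mu^{\rho+(1-\rho)s}$, which are again $W_t$-geodesics with finite-entropy endpoints by absolute continuity of $a\mapsto\ent_t(\mu^a)$ granted by the super-Ricci flow assumption. The chain rule in the $a$-variable then gives
\begin{align*}
\partial_a^+\ent_t(\mu^a)|_{\sigma-}-\partial_a^-\ent_t(\mu^a)|_{0+}&\ge -\tfrac{1}{2\sigma}\partial_t^-W^2_{t^-}(\mu^0,\mu^\sigma),\\
\partial_a^+\ent_t(\mu^a)|_{1-}-\partial_a^-\ent_t(\mu^a)|_{\rho+}&\ge -\tfrac{1}{2(1-\rho)}\partial_t^-W^2_{t^-}(\mu^\rho,\mu^1).
\end{align*}
Subtracting these from the endpoint hypothesis \eqref{eq:EntSub} produces
\[
\partial_a^-\ent_t(\mu^a)|_{\rho+}-\partial_a^+\ent_t(\mu^a)|_{\sigma-}\le \tfrac{1}{2\sigma}\partial_t^-W^2_{t^-}(\mu^0,\mu^\sigma)+\tfrac{1}{2(1-\rho)}\partial_t^-W^2_{t^-}(\mu^\rho,\mu^1)-\tfrac12\partial_t^-W^2_{t^-}(\mu^0,\mu^1)+\varepsilon W_t^2(\mu^0,\mu^1),
\]
and upper regularity of the entropy (which gives $\partial_a^+\ent_t(\mu^a)|_{\sigma-}\le\partial_a^-\ent_t(\mu^a)|_{\sigma+}$ and $\partial_a^+\ent_t(\mu^a)|_{\rho-}\le\partial_a^-\ent_t(\mu^a)|_{\rho+}$ for $\sigma,\rho\in(0,1)$) upgrades the left-hand side to the target $\partial_a^+\ent_t(\mu^a)|_{\rho-}-\partial_a^-\ent_t(\mu^a)|_{\sigma+}$.

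The final task is to prove the Wasserstein inequality
\[
\tfrac{1}{2\sigma}\partial_t^-W^2_{t^-}(\mu^0,\mu^\sigma)+\tfrac{1}{2(\rho-\sigma)}\partial_t^-W^2_{t^-}(\mu^\sigma,\mu^\rho)+\tfrac{1}{2(1-\rho)}\partial_t^-W^2_{t^-}(\mu^\rho,\mu^1)\le\tfrac12\partial_t^-W^2_{t^-}(\mu^0,\mu^1).
\]
The triangle inequality together with Cauchy--Schwarz using the weights $\sigma,\rho-\sigma,1-\rho$ (summing to one) yields $W_s^2(\mu^0,\mu^1)\le\tfrac{1}{\sigma}W_s^2(\mu^0,\mu^\sigma)+\tfrac{1}{\rho-\sigma}W_s^2(\mu^\sigma,\mu^\rho)+\tfrac{1}{1-\rho}W_s^2(\mu^\rho,\mu^1)$ for every $s\in I$, with equality at $s=t$ because $(\mu^a)$ is a constant-speed $W_t$-geodesic. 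Abbreviating the four terms above by $A,B,C,D$, this reads $D(s)\le A(s)+B(s)+C(s)$ with $D(t)=A(t)+B(t)+C(t)$, which for $s<t$ rearranges to
\[
\frac{D(t)-D(s)}{t-s}\ge \frac{A(t)-A(s)}{t-s}+\frac{B(t)-B(s)}{t-s}+\frac{C(t)-C(s)}{t-s};
\]
taking $\liminf_{s\nearrow t}$ and invoking the superadditivity $\liminf(f+g+h)\ge\liminf f+\liminf g+\liminf h$ on the right delivers precisely the desired Wasserstein inequality.

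The main subtlety, rather than a genuine obstacle, is the bookkeeping of one-sided derivatives: one must arrange the algebraic manipulations so that $\liminf$ is taken only after the correct rearrangement to exploit superadditivity in the right direction, and one must apply upper regularity of the entropy at each of $\sigma$ and $\rho$ in exactly the orientation needed to pass from the telescoped expression $\partial_a^-\ent_t(\mu^a)|_{\rho+}-\partial_a^+\ent_t(\mu^a)|_{\sigma-}$ to the target $\partial_a^+\ent_t(\mu^a)|_{\rho-}-\partial_a^-\ent_t(\mu^a)|_{\sigma+}$.
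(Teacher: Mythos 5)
Your proof is correct and follows essentially the same route as the paper's: partition $[0,1]$ into $[0,\sigma]\cup[\sigma,\rho]\cup[\rho,1]$, apply the weak super-Ricci flow inequality on the two outer pieces, use upper regularity to pass between one-sided entropy derivatives at the interior points $\sigma,\rho$, and exploit the additivity of the squared Wasserstein derivative under the partition (with equality at $s=t$ coming from the constant-speed geodesic). The paper packages this by defining $s(\mu;\sigma,\rho)$ and noting its superadditivity under general partitions of $[0,1]$; your step via Cauchy--Schwarz with weights $\sigma,\rho-\sigma,1-\rho$ is just an equivalent way to obtain the same Wasserstein inequality that the paper derives from \eqref{ineq:WassersteinDeri} together with the geodesic speed relation.
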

\begin{proof}
    To simplify the presentation, we introduce the following notation
    \begin{equation}
        s(\mu;\sigma,\rho)\coloneqq   \partial_a^+\ent_t(\mu^a)\lvert_{a=\rho-}-\partial_a^-\ent_t(\mu^a)\lvert_{a=\sigma+}+\frac{1}{2(\rho-\sigma)}\partial^-_t W_{t^-}^2(\mu^\sigma,\mu^\rho).
    \end{equation}
For any partition $0=a_0< a_1< \cdots < a_n=1$ of $[0,1]$ and $s<t$, by triangle inequality we have 
\begin{equation}
	W_t(\mu^0,\mu^1)-W_s(\mu^0,\mu^1)\geq \sum_{i=1}^nW_t(\mu^{a_{i-1}},\mu^{a_i})-W_s(\mu^{a_{i-1}},\mu^{a_i})\;.
\end{equation}
which implies
\begin{equation}\label{ineq:WassersteinDeri}
\partial^-_t W_{t^-}(\mu^0,\mu^1)\geq \sum^n_{i=1} \partial^-_t W_{t^-}(\mu^{a_{i-1}},\mu^{a_i}).
\end{equation}
By upper regularity of entropy, one gets
  \begin{align}
     \partial_a^+\ent_t(\mu^a)\lvert_{a=1-}-\partial_a^-\ent_t(\mu^a)\lvert_{a=0+}
     \geq \sum_{i=1}^n\partial_a^+\ent_t(\mu^a)\lvert_{a=a_i-}-\partial_a^-\ent_t(\mu^a)\lvert_{a=a_{i-1}+}
  \end{align}
Combining both estimates yields
  \begin{align}
	s(\mu;0,1)\geq&\sum_{i=1}^n s(\mu;a_{i-1},a_i).\label{ineq:etaalongcurve}
\end{align}
   Note that each summand in \eqref{ineq:etaalongcurve} is non-negative due to the super-Ricci flow assumption and $s(\mu;0,1)\leq \varepsilon W^2_t(\mu^0,\mu^1)$ by assumption. Thus we conclude $s(\mu;\sigma,\rho)\leq \varepsilon W^2_t(\mu^0,\mu^1)$ for arbitrary $\sigma<\rho$.
\end{proof}

Let us also introduce the following infinitesimal quantities.
\begin{definition}\label{def:eta} For $t\in I$, $x,y\in X$ and $\varepsilon>0$, define
    \begin{align}\label{ric}
 & \eta^\pm_{\varepsilon}(t,x,y)\coloneqq  \inf\left\{
\frac1{W_t^2(\mu^0,\mu^1)}\left[ \partial^{\pm}_a \ent_t(\mu^a)\big|_{a=1-}-\partial_a^{\mp} \ent_t(\mu^a)\big|_{a=0+}
+\frac12\partial_{t}^{-}W_{t^-}^2(\mu^0,\mu^1)\right]\right\}\;,
\end{align}
where the infimum is taken over all non-constant $W_t$-geodesics $(\mu^a)_{a\in[0,1]}$ with $\ent_t(\mu^0)$, $\ent_t(\mu^1)<\infty$ and such that $\spt(\mu^0)\subset B_t(x,\varepsilon)$, $\spt(\mu^1)\subset B_t(y,\varepsilon)$. We further set 
\begin{equation}
\eta^\pm(t,x,y)\coloneqq\lim_{\varepsilon\to 0}\eta^\pm_{\varepsilon}(t,x,y)=\sup_{\varepsilon> 0}\eta^\pm_{\varepsilon}(t,x,y)\;, \quad
  \eta^*(t,x)\coloneqq\limsup_{y,z\to x}\eta^+(t,y,z)\;.
\end{equation}  
\end{definition}

Note that obviously $\eta^+_\varepsilon\geq \eta^-_\varepsilon$. In the definition of $\eta^\pm$ the choice of upper and lower derivative of the entropy along geodesics and the Wasserstein distance along the flow are made such that $\eta^-$ is super-additive under partitioning of transports, see e.g. Lemma \ref{lemma:D_aEnt} below. The next lemma shows that under additional regularity assumptions upper/lower derivatives of the  entropy can be switched.

\begin{lemma}\label{lemma:freeofchoice_eta}
	Let $(X,d_t,\m_t)_{t\in I}$ be a time-dependent metric measure spaces admitting a lower log-Lipschitz control and assume that for each $t$ the entropy $\ent_t$ is upper regular. Then we have $\eta^+_\varepsilon=\eta^-_\varepsilon$ on $I\times X^2$ for each  $\varepsilon>0$.
\end{lemma}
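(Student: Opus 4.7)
The inequality $\eta^+_\varepsilon \ge \eta^-_\varepsilon$ is immediate: for any admissible geodesic $(\mu^a)$, writing $u(a) \coloneqq \ent_t(\mu^a)$, the trivial bounds $\partial^+_a u(1-) \ge \partial^-_a u(1-)$ and $\partial^+_a u(0+) \ge \partial^-_a u(0+)$ show that the functional defining $\eta^+_\varepsilon$ pointwise dominates the one defining $\eta^-_\varepsilon$ on every admissible $\mu$.

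For the reverse, my plan is a restriction--reparameterisation argument powered by the upper regularity of the entropy. Fix $\delta>0$ and a geodesic $(\mu^a)_{a\in[0,1]}$ admissible for $\eta^-_\varepsilon$ whose functional value lies within $\delta$ of $\eta^-_\varepsilon(t,x,y)$. The endpoint conditions $\liminf_{a\nearrow 1}\partial^-_a u(a+)\le \partial^-_a u(1-)$ and $\limsup_{a\searrow 0}\partial^+_a u(a-)\ge \partial^+_a u(0+)$ of upper regularity, combined with the interior condition $\partial^+_a u(a-)\le\partial^-_a u(a+)$ at $a\in(0,1)$, supply sequences $\beta_n\searrow 0$ and $\alpha_n\nearrow 1$ with
\begin{align*}
\partial^+_a u(\alpha_n-)\le\partial^-_a u(1-)+\delta, \qquad \partial^-_a u(\beta_n+)\ge\partial^+_a u(0+)-\delta.
\end{align*}
Setting $\Delta_n\coloneqq\alpha_n-\beta_n$, the reparameterised restriction $\tilde\mu_n^s\coloneqq\mu^{\beta_n+s\Delta_n}$, $s\in[0,1]$, is a constant-speed $W_t$-geodesic with $W_t(\tilde\mu_n^0,\tilde\mu_n^1)=\Delta_n W_t(\mu^0,\mu^1)$ and one-sided derivatives of $s\mapsto\ent_t(\tilde\mu_n^s)$ given by $\Delta_n$ times those of $u$ at $\beta_n,\alpha_n$.

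Plugging $\tilde\mu_n$ into the functional defining $\eta^+_\varepsilon$, the entropy numerator is at most $\Delta_n[\partial^-_a u(1-)-\partial^+_a u(0+)+2\delta]$ by the choice of $\alpha_n,\beta_n$. For the Wasserstein term I apply the triangle inequality
\begin{align*}
W_s(\mu^{\beta_n},\mu^{\alpha_n})\ge W_s(\mu^0,\mu^1)-W_s(\mu^0,\mu^{\beta_n})-W_s(\mu^{\alpha_n},\mu^1)
\end{align*}
together with the lower log-Lipschitz control $W_s\le e^{\int_s^t\kappa_r\,dr}W_t$ on the last two terms; squaring (both sides are positive for $s$ close to $t$) and taking $\liminf_{s\nearrow t}$ yields
\begin{align*}
\partial^-_t W^2_{t^-}(\mu^{\beta_n},\mu^{\alpha_n})\le \partial^-_t W^2_{t^-}(\mu^0,\mu^1)+C(1-\Delta_n),
\end{align*}
with $C$ depending only on $W_t(\mu^0,\mu^1)$ and the local $L^\infty$-norm of $\kappa$. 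Dividing by $\Delta_n^2 W_t^2(\mu^0,\mu^1)$, combining with the entropy bound, and letting $n\to\infty$ so that $\Delta_n\to 1$, the value of the $\eta^+_\varepsilon$-functional at $\tilde\mu_n$ is bounded above by the value of the $\eta^-_\varepsilon$-functional at $\mu$ plus $O(\delta)$. Sending $\delta\to 0$ concludes $\eta^+_\varepsilon\le\eta^-_\varepsilon$.

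The main obstacle is the admissibility of $\tilde\mu_n$ for $\eta^+_\varepsilon$: since $\spt(\tilde\mu_n^0)=\spt(\mu^{\beta_n})$ lies only in an $O(\beta_n)$-neighbourhood of $\spt(\mu^0)$, which may escape the open ball $B_t(x,\varepsilon)$, one first has to approximate $\mu$ by a geodesic whose endpoints have supports compactly contained in strictly smaller balls $B_t(x,\varepsilon-2\sigma), B_t(y,\varepsilon-2\sigma)$ for some $\sigma>0$, while controlling the effect of this perturbation on the $\eta^-_\varepsilon$-functional value. Once this preliminary approximation is in place, the rescaling of one-sided derivatives and the log-Lipschitz Wasserstein estimate are routine.
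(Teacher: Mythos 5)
Your argument is essentially the paper's own proof: both pass to a restriction $(\mu^a)_{a\in[\sigma,\rho]}$ of a near-minimizing geodesic, use the upper-regularity inequality $\liminf_{a\nearrow 1}\partial^-_a u(a+)\leq \partial^-_a u(1-)$ (resp.\ at $0$) combined with $\partial^+_a u(a-)\leq\partial^-_a u(a+)$ to pick interior points whose one-sided derivatives are controlled by those at the endpoints, and then bound the Wasserstein term via super-additivity of $\partial^-_t W_{t^-}$ together with the lower log-Lipschitz control on the excised pieces; your reparameterisation to $[0,1]$ is immaterial since all terms in the defining functional are scaling-homogeneous. The admissibility caveat you raise at the end — that $\spt(\mu^{\beta_n})$ need not remain inside $B_t(x,\varepsilon)$ when the supports of the endpoints are not compactly contained in the balls — is a genuine detail that the paper also passes over silently, and your proposed remedy (pre-truncating to compactly supported cores) is the natural fix.
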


\begin{proof}
	It remains to prove $\eta^+_\varepsilon\leq \eta^-_\varepsilon$.
	For any $\delta>0$, take a $W_t$-geodesic $(\mu^a)_{a\in [0,1]}$ almost achieving $\eta_{\varepsilon}(t,x,y)$ up to error $\delta$.
	Recall from the upper regularity of entropy that we have 
	\begin{equation}
		\partial_a^-\ent_t(\mu^a)|_{a=1-}\geq \liminf_{\rho\nearrow1}\partial^-_a\ent_t(\mu^a)|_{a=\rho+}\geq \liminf_{\rho\nearrow1}\partial^+_a\ent_t(\mu^a)|_{a=\rho-}.
	\end{equation}
	Combining this with the analogous inequality at $a=0$, one can find $\rho,\sigma\in (0,1)$ arbitrarily close to $1$ and $0$ respectively such that $\spt(\mu^\sigma)\subset B_t(x,\varepsilon)$, $\spt(\mu^\rho)\subset B_t(y,\varepsilon)$ and 
	\begin{equation}
		\partial_a^-\ent_t(\mu^a)|_{a=1-}-\partial_a^+\ent_t(\mu^a)|_{a=0+}\geq \partial^+_a\ent_t(\mu^a)|_{a=\rho-}-\partial^-_a\ent_t(\mu^a)|_{a=\sigma+}-\delta.\label{ineq:15/07-2}
	\end{equation}
	Similar to \eqref{ineq:WassersteinDeri}, now together with the lower log-Lipschitz condition, we have
	\begin{align}
		\frac12\partial^-_tW^2_{t^-}(\mu^0,\mu^1)\geq &W_t(\mu^0,\mu^1)\cdot \left(\partial^-_tW_{t^-}(\mu^\sigma,\mu^\rho)+\partial^-_tW_{t^-}(\mu^\sigma,\mu^0)+\partial^-_tW_{t^-}(\mu^\rho,\mu^1)\right)\\
		\geq & \frac{1}{2(\rho-\sigma)}\partial^-_tW^2_{t^-}(\mu^\sigma,\mu^\rho)-L(1-\rho+\sigma)W^2_t(\mu^0,\mu^1).\label{ineq:15/07-1}
	\end{align}
	Hence using the restricted geodesic $(\mu^a)_{a\in[\sigma,\rho]}$ as a candidate in the infimum defining $\eta^+_\varepsilon$, we obtain that $\eta^+_\varepsilon(t,x,y)$ is bounded above by $\eta^-_{\varepsilon}(t,x,y)$ up to an error that vanishes as $\delta,\sigma\to0$ and $\rho\to1$.
\end{proof}

Let us note that without further regularity assumptions on the time-dependence of the metrics, sudden change of the flow cannot be detected by the above weak formulation as the following example shows.

\begin{example}[Gluing two flows]
Let $(X,\bar{d}_t,\bar{\m}_t)_{t\in(0,t_1]}$ and $(X,\hat{d}_t,\hat{\m}_t)_{t\in(0,t_2]}$ be two families of time-dependent metric measure spaces.
Define a time-dependent mms $(X,d_t,\m_t)_{t\in (0,t_1+t_2]}$ by taking $d_t$ and $\m_t$ to be $\bar{d}_t$ and $\bar{\m}_t$ when $t\in (0,t_1]$ and $\hat{d}_{t-t_1}$ and $\hat{\m}_{t-t_1}$ when $t\in (t_1,t_1+t_2]$ respectively.
Then, if both $(X,\bar{d}_t,\bar{\m}_t)_{t\in(0,t_1]}$ and $(X,\hat{d}_t,\hat{\m}_t)_{t\in(0,t_2]}$ are weak $N$-super-/sub-Ricci flows, so is $(X,d_t,\m_t)_{t\in (0,t_1+t_2]}$. 
\end{example}

\subsection{Rough Ricci-flow}

\begin{definition}[Rough Ricci flow]\label{def:rough2RF} A time-dependent metric measure space $(X,d_t,\m_t)_{t\in I}$ satisfying \cref{asm:dualheatflow} is called
\begin{enumerate}[(i)]
    \item\label{def:roughSuper}(\emph{rough super-Ricci flow}) if for all $s\leq t$ and every $x,y\in X$ we have
    \begin{align}\label{ineq:defroughsuperRicci}
        W^2_s(\ap{\delta_x},\ap{\delta_y})\le d^2_t(x,y)\;;
    \end{align}
  \item\label{def:roughSub}(\emph{rough sub-Ricci flow}) if for every $\varepsilon>0$ and almost every $t\in I$, there exists an open cover $\{U_i\}_{i\in\N}$ for which the following holds. For every $i$ and $x,y\in U_i $, there exists $s_0<t$ so that for every $s\in (s_0,t)$
    \begin{align}\label{ineq:defroughsubRicci}
        W^2_s(\ap{\delta_x},\ap{\delta_y})\ge d^2_t(x,y)-\varepsilon d^2_t(x,y)(t-s)\;.
    \end{align}
\end{enumerate}
If both \eqref{def:roughSuper} and \eqref{def:roughSub} are satisfied we call $(X,d_t,\m_t)_{t\in I}$ a \emph{rough Ricci flow}.
\end{definition}

\begin{rem}\label{rmk:roughsuperRF}
     The definition of rough super-Ricci-flow is equivalent to the following a priori stronger one: for all $s\leq t$ and every $\mu,\nu\in \p_t(X)$ we have
    \begin{align}
        W^2_s(\ap{\mu},\ap{\nu})\le W^2_t(\mu,\nu)\;.
    \end{align}
    Indeed, take $\sigma$ to be an optimal plan from $\mu$ to $\nu$ for the cost $d^2_t$.
    Recall that the dual propagator satisfies $\ap\mu=\int_X \ap\delta_x\d\mu(x)$. 
    Then by convexity of the Wasserstein distance and \eqref{ineq:defroughsuperRicci} we obtain for any $s\leq t$
    \begin{align}
W^2_s(\ap\mu,\ap\nu)\leq \int W^2_s(\ap\delta_x,\ap\delta_y)\d\sigma(x,y)
{\leq}\int d^2_t(x,y)\d\sigma(x,y)=W^2_t(\mu,\nu)\;.
    \end{align}
\end{rem}
Again we consider a dimensional refinement of the notion above.
\begin{definition}[Rough non-collapsed Ricci flow] \label{def:NroughSuper}  A time dependent metric measure space $(X,d_t,\m_t)$ satisfying \cref{asm:dualheatflow} is called 
\begin{enumerate}[(i)]
    \item \emph{rough $N$-super-Ricci flow} for $N\in[1,\infty)$ if
for all $s\leq t$ and every $\mu,\nu\in \p_t(X)$ we have
    \begin{align}
        W_s^2(\ap\mu,\ap\nu)\le W_t^2(\mu,\nu)-\frac2N\int_{[s,t]}[\ent_r(\hat P_{t,r}\mu)-\ent_r(\hat P_{t,r}\nu)]^2\d r\;;
    \end{align}
    \item \emph{rough non-collapsed Ricci flow} if it is both a rough sub-Ricci flow and a rough $N$-super-Ricci flow for some $N<\infty$.
    \end{enumerate}
\end{definition}

We also introduce the following quantities detecting the initial exponential expansion rates of the Wasserstein distances between heat flows starting from Dirac masses. In the static case, this notion has been introduced in \cite{sturm2017remarks}.

\begin{definition}\label{def:theta+}
Define functions $\vartheta^\pm:I\times X^2\to \R\cup\{\pm\infty\}$ by
\begin{align}
    \vartheta^+(t,x,y)\coloneqq -\liminf_{s\nearrow t}\frac1{t-s}\log\frac{W_s(\ap\delta_x,\ap\delta_y)}{d_t(x,y)}
\end{align}
and 
\begin{align}
    \vartheta^-(t,x,y)\coloneqq -\limsup_{s\nearrow t}\frac1{t-s}\log\frac{W_s(\ap\delta_x,\ap\delta_y)}{d_t(x,y)}.
\end{align}
Moreover, we define $\vartheta^*\colon I\times X\to \R\cup\{\pm\infty\}$ by
\begin{align}
    \vartheta^*(t,x)=\limsup_{y,z\to x}\vartheta^+(t,y,z)
\end{align}
\end{definition}

\subsection{Characterizing synthetic notions by infinitesimal quantities} \label{sec:infvsloc}
We will now give equivalent characterisations of the weak/rough super- and sub-Ricci flows in terms of non-negativity or non-positivity of the quantities $\eta$ and $\theta$ respectively.

\begin{theorem}\label{thm:RFvsTheta}
	A time-dependent mms~$(X,d_t,\m_t)_{t\in I}$  satisfying \cref{asm:dualheatflow} is a rough super-Ricci flow if and only if 
    \[\vartheta^-(t,x,y)\geq 0\quad  \text{ for all  $t\in I$ and $x,y\in X$}\;.\]
    Moreover, $(X,d_t,\m_t)_{t\in I}$ is a rough sub-Ricci flow if and only if for a.e. $t\in I$ we have
    \[\vartheta^{*}(t,x)\leq 0\quad  \text{ for all $x\in X$}\;.\]
\end{theorem}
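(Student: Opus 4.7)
\emph{Super-Ricci flow and $\vartheta^-\geq 0$.}
The forward implication is immediate: rough super-Ricci flow gives $W_s(\hat P_{t,s}\delta_x,\hat P_{t,s}\delta_y)\leq d_t(x,y)$, hence $(t-s)^{-1}\log(W_s/d_t)\leq 0$ for every $s<t$, and thus $\vartheta^-(t,x,y)\geq 0$. For the converse, I would fix $t_0\in I$, $x,y\in X$, $s_*<t_0$, and consider
\[
 u(r)\coloneqq W_r^2\bigl(\hat P_{t_0,r}\delta_x,\hat P_{t_0,r}\delta_y\bigr)\,,\qquad r\in[s_*,t_0]\,.
\]
The goal is to show $u(s_*)\leq u(t_0)=d_{t_0}^2(x,y)$, which by \cref{lemma:dualheatflow}(5) (continuity on $[s_*,t_0]$ and absolute continuity on $[s_*,t_0-\eta]$ for every $\eta>0$) reduces to showing that the lower-left Dini derivative of $u$ is non-negative on $(s_*,t_0)$. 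For $r\in(s_*,t_0)$ and $r'<r$, the propagator identity $\hat P_{t_0,r'}\delta_z=\hat P_{r,r'}(\hat P_{t_0,r}\delta_z)$ combined with convexity of the squared Wasserstein distance under mixtures, applied to an optimal $W_r$-coupling $\sigma_r$ of $\mu_r\coloneqq\hat P_{t_0,r}\delta_x$ and $\nu_r\coloneqq\hat P_{t_0,r}\delta_y$, yields
\[
 \frac{u(r)-u(r')}{r-r'}\;\geq\;\int\frac{d_r^2(x',y')-W_{r'}^2\bigl(\hat P_{r,r'}\delta_{x'},\hat P_{r,r'}\delta_{y'}\bigr)}{r-r'}\,\d\sigma_r(x',y')\,.
\]
The pointwise assumption $\vartheta^-(r,x',y')\geq 0$ translates, for each $(x',y')$, into the estimate $W_{r'}^2\leq d_r^2 e^{2\varepsilon(r-r')}$ for $r'$ close enough to $r$ (with threshold depending on $\varepsilon$ and on $x',y'$); in particular the integrand has pointwise $\liminf_{r'\nearrow r}(\cdot)\geq 0$. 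To conclude via Fatou's lemma, I would combine the uniform log-Lipschitz control of the distances with the static $\rcd(K,N)$-gradient estimate on each slice (from \cite{Kopfer-Sturm2018}) to obtain a short-time expansion bound $W_{r'}(\hat P_{r,r'}\delta_{x'},\hat P_{r,r'}\delta_{y'})\leq e^{C(r-r')}d_r(x',y')$. This furnishes a $\sigma_r$-integrable lower envelope $-C' d_r^2(x',y')$ for the integrand, and Fatou yields the required non-negativity of the Dini derivative, hence monotonicity of $u$.

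\emph{Sub-Ricci flow and $\vartheta^*\leq 0$.}
Both directions here amount to unwrapping definitions. If $(X,d_t,\m_t)_{t\in I}$ is a rough sub-Ricci flow, then for a.e.\ $t$ and every $\varepsilon>0$ the cover $\{U_i\}$ provided by the definition satisfies $W_s^2\geq d_t^2(1-\varepsilon(t-s))$ for $y,z$ in a common $U_i$ and $s$ in some interval $(s_0,t)$; taking logarithms and dividing by $t-s$ gives $\vartheta^+(t,y,z)\leq C\varepsilon$ uniformly in such $y,z$, whence $\vartheta^*(t,x)\leq C\varepsilon$ for each $x$, and letting $\varepsilon\to 0$ yields $\vartheta^*(t,x)\leq 0$. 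Conversely, at any $t$ with $\vartheta^*(t,\cdot)\leq 0$, for each $\varepsilon>0$ and each $x\in X$ one picks a neighbourhood $V_x$ on which $\vartheta^+(t,y,z)<\varepsilon/2$; the family $\{V_x\}_{x\in X}$ is an open cover. For $y,z\in V_x$, the bound on $\vartheta^+$ supplies, by definition of the $\liminf$, an $s_0<t$ with $W_s\geq d_t e^{-\varepsilon(t-s)/2}$ on $(s_0,t)$, and squaring together with $e^{-\varepsilon u}\geq 1-\varepsilon u$ for small $u\geq 0$ recovers the rough sub-Ricci flow inequality.

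\emph{Main obstacle.}
The only genuinely non-trivial step is the converse direction for the super-characterisation, specifically the passage to the limit inside the integral. The hypothesis $\vartheta^-\geq 0$ delivers only a pointwise estimate whose rate of convergence depends on the basepoints, so one truly needs the uniform short-time expansion bound mentioned above in order to apply Fatou. This is where \cref{asm:dualheatflow} enters in an essential way.
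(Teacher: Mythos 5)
Your proposal is correct and follows essentially the same route as the paper: the propagator identity plus convexity of the squared Wasserstein distance under mixtures to split the increment over an optimal $W_r$-coupling, Fatou to pass the pointwise hypothesis $\vartheta^-\geq 0$ inside the integral, and the absolute continuity from \cref{lemma:dualheatflow}(5) to integrate; the sub-flow direction is, as you say, purely definitional, with the countable cover extracted by compact exhaustion in the paper (equivalently, Lindel\"of for a separable space). You are in fact slightly more explicit than the paper about the Fatou domination step: the uniform short-time expansion bound $W_{r'}\leq e^{C(r-r')}d_r$ needed as an integrable lower envelope is indeed what \cref{asm:dualheatflow} supplies (any such flow is a $(K-L,N)$-super-Ricci flow in the Kopfer--Sturm sense), a point the paper leaves implicit.
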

\begin{proof}
For both statements, it suffices to show the ``if" part as the other direction is evident.
	Assume that $\vartheta^-\geq 0$. Fix $x,y\in X$ and $t\in I$. For any $r<t$, as in \cref{rmk:roughsuperRF}, we have for all $\sigma\in\mathrm{Opt}_{d^2_r}(\hat{P}_{t,r}\delta_x,\hat{P}_{t,r}\delta_y)$ and $\tau<r$ that
\begin{align}
W^2_{r}(\hat{P}_{t,r}\delta_x,\hat{P}_{t,r}\delta_y)-W^2_{\tau}(\hat{P}_{t,\tau}\delta_x,\hat{P}_{t,\tau}\delta_y)
	\geq \int \big[d^2_r(z_1,z_2)-W^2_{\tau}(\hat{P}_{r,\tau}\delta_{z_1},\hat{P}_{r,\tau}\delta_{z_2})\big]\d\sigma(z_1,z_2).
	\end{align}
Then by Fatou's lemma,
\begin{align}
	&\partial_\tau^-\big|_{\tau=r-}W^2_{\tau}(\hat{P}_{t,\tau}\delta_x,\hat{P}_{t,\tau}\delta_y) = \liminf_{\tau\nearrow r}\frac{W^2_{\tau}(\hat{P}_{t,\tau}\delta_x,\hat{P}_{t,\tau}\delta_y)-W^2_{r}(\hat{P}_{t,r}\delta_x,\hat{P}_{t,r}\delta_y)}{r-\tau}	 \geq \\
&  \int \liminf_{\tau\nearrow r}\frac{d^2_r(z_1,z_2)-W^2_{\tau}(\hat{P}_{r,\tau}\delta_{z_1},\hat{P}_{r,\tau}\delta_{z_2})}{r-\tau}\d\sigma(z_1,z_2)=\int 2d^2_r(z_1,z_2)\vartheta^-(r,z_1,z_2)\d\sigma(z_1,z_2)\geq 0.
\end{align}
By \cref{lemma:dualheatflow}, $r\mapsto W^2_{r}(\hat{P}_{t,r}\delta_x,\hat{P}_{t,r}\delta_y)$ is absolutely continuous. Integrating the above inequality from $s$ to $t$ yields \eqref{ineq:defroughsuperRicci}.

Now assume that for a.e. $t\in I$, $\vartheta^{*}(t,\cdot)\leq 0$. 
Then by definition of $\vartheta^*$ for all $\varepsilon>0$, there exists $\delta>0$ s.t. for all $y,z\in B_t(x,\delta)$ we have 
\begin{equation}
\vartheta^+(t,y,z)=\limsup_{s\nearrow t} \frac{-1}{t-s}\log\frac{W_s(\hat P_{t,s}\delta_y,\hat P_{t,s}\delta_z)}{d_t(y,z)}\leq \varepsilon\;,
\end{equation}
which means one can find $s_0<t$ so that inequality \eqref{ineq:defroughsubRicci} holds for all $s\in(s_0,t)$ and $y,z\in B_t(x,\delta)$. 
Finally, to obtain the open cover required in \cref{def:rough2RF}, one exhausts $X$ by compact sets and notes that any of these compact sets can be covered by finitely many balls as above such that \eqref{ineq:defroughsubRicci} holds for any two points in the same ball.
\end{proof}

We now move to a characterisation of weak super-/sub-Ricci flows in terms of the quantity $\eta$. We need the following preparatory lemma.

\begin{lemma}\label{lemma:D_aEnt}
    Let $(X,d,\m)$ be an e.n.b. m.m.s.
    Let $(\mu^a)_{a\in [0,1]}$ be a 2-Wasserstein geodesic in $D(\ent)$ with optimal dynamical plan $\pi$.
    Let $\{\Gamma_\alpha\}_\alpha$ be a finite partition of $\spt(\pi)$ with each having positive $\pi$-measure.
    Denote by $\mu^a_{\alpha}$ the normalized measure $(e_a)_{\#}\frac{\pi\llcorner \Gamma_\alpha}{\pi(\Gamma_\alpha)}$ for each $a,\alpha$.
    Then
    \begin{align}
        \partial_a^-\ent(\mu^{a\pm})&\geq \sum_\alpha \pi(\Gamma_\alpha)\partial_a^-\ent(\mu^{a\pm}_\alpha),\quad \forall a\in[0,1)\label{ineq:D_a^-Ent}\\
        \partial_a^+\ent(\mu^{a\pm})&\leq \sum_\alpha \pi(\Gamma_\alpha)\partial_a^+\ent(\mu^{a\pm}_\alpha),\quad \forall a\in(0,1]\label{ineq:D_a^+Ent}.
    \end{align}
\end{lemma}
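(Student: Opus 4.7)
The plan is to reduce both displayed inequalities to the pointwise identity
\[
\ent(\mu^a)=\sum_\alpha\pi(\Gamma_\alpha)\ent(\mu^a_\alpha)+\sum_\alpha\pi(\Gamma_\alpha)\log\pi(\Gamma_\alpha),\qquad a\in[0,1],
\]
which is equivalent to the measures $\{\mu^a_\alpha\}_\alpha$ being pairwise mutually singular at every $a$. Once this identity is available, the constant $\sum_\alpha\pi(\Gamma_\alpha)\log\pi(\Gamma_\alpha)$ drops out of every finite difference quotient, so for any $a$ and $h$ with $a+h\in[0,1]$ one has
\[
\ent(\mu^{a+h})-\ent(\mu^a)=\sum_\alpha\pi(\Gamma_\alpha)\bigl[\ent(\mu^{a+h}_\alpha)-\ent(\mu^a_\alpha)\bigr].
\]
Dividing by $h$ and passing to $\liminf$ resp.~$\limsup$ then yields both claimed inequalities, via super-additivity of $\liminf$ and sub-additivity of $\limsup$ applied to finite sums with non-negative weights $\pi(\Gamma_\alpha)$. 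Both sided versions $\mu^{a\pm}$ are handled simultaneously, since the displayed formula is valid for $h$ of both signs.

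To establish the mutual singularity I would argue as follows. Because $(\mu^a)\subset D(\ent)$ the endpoints satisfy $\mu^0,\mu^1\ll\m$, and each restriction $\pi_\alpha\coloneqq(\pi\llcorner\Gamma_\alpha)/\pi(\Gamma_\alpha)$ is itself an optimal dynamical plan for its own marginals. Combining $c$-cyclical monotonicity of $\pi$, essential non-branching, and absolute continuity of the endpoint marginals---in the spirit of the structural results of Rajala-Sturm and Cavalletti-Mondino on optimal transport in e.n.b.~spaces---one obtains that the evaluation $e_a\colon\spt\pi\to X$ is injective on a $\pi$-full set for every $a\in[0,1]$. Two disjoint collections $\Gamma_\alpha,\Gamma_\beta$ of geodesics therefore push forward under $e_a$ to measures concentrated on disjoint sets, so the densities $\rho^a_\alpha\coloneqq\d\mu^a_\alpha/\d\m$ have pairwise disjoint supports a.e.~and $\rho^a=\sum_\alpha\pi(\Gamma_\alpha)\rho^a_\alpha$. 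Substituting this into $\rho^a\log\rho^a$ and integrating yields the displayed identity.

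The main obstacle I anticipate is the mutual singularity at the two endpoints $a=0,1$. At interior times $a\in(0,1)$ it follows rather directly from essential non-branching applied to pairs of optimal geodesics that would otherwise share an intermediate point; at the endpoints one must use in addition that absolute continuity of $\mu^0$ and $\mu^1$ forces the optimal plan to be induced by a Borel transport map, so that disjoint collections of geodesics automatically have disjoint source (resp.~target) marginals. Should a direct argument at the endpoints prove delicate, a viable fallback is to prove the identity first on $(0,1)$ and extend by continuity of $a\mapsto\ent(\mu^a)$ and $a\mapsto\ent(\mu^a_\alpha)$ on $[0,1]$, which in the settings relevant for this paper follows from $K$-semiconvexity of the entropy along Wasserstein geodesics in e.n.b.~$\cd(K,\infty)$ spaces.
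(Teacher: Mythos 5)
There is a genuine gap. Your main reduction hinges on the pointwise identity
\[
\ent(\mu^a)=\sum_\alpha\pi(\Gamma_\alpha)\ent(\mu^a_\alpha)+\sum_\alpha\pi(\Gamma_\alpha)\log\pi(\Gamma_\alpha)
\]
holding also at the \emph{endpoints} $a=0,1$. This would require the measures $\mu^0_\alpha$ to be mutually singular, and you justify that by claiming the optimal plan is induced by a transport map once $\mu^0,\mu^1\ll\m$. But the lemma only assumes $(X,d,\m)$ is e.n.b.\ and the geodesic lies in $D(\ent)$; there is no curvature condition, no upper regularity of the entropy, and no qualitative non-degeneracy. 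Under these hypotheses alone one cannot conclude that the plan is given by a map (the structural theorems of Rajala--Sturm, Cavalletti--Mondino, Gigli--Rajala--Sturm you have in mind are proved under e.n.b.\ \emph{plus} a CD/MCP-type condition, and the paper's own Theorem~\ref{thm:enb+ur} adds upper regularity of the entropy precisely for this reason). Your fallback of proving the identity on $(0,1)$ and extending by continuity of $a\mapsto\ent(\mu^a)$ has the same defect: that continuity is a consequence of semiconvexity under CD, not of e.n.b.\ alone. So as written the argument proves the lemma only under stronger hypotheses than stated.

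The fix, and this is exactly what the paper does, is to give up on the identity at $a=0,1$ and instead use a one-sided inequality there, which costs nothing. From $\rho^a=\sum_\alpha\pi(\Gamma_\alpha)\rho^a_\alpha$ (always true; no injectivity of $e_a$ needed) and the pointwise superadditivity $\bigl(\sum_\alpha u_\alpha\bigr)\log\bigl(\sum_\alpha u_\alpha\bigr)\geq\sum_\alpha u_\alpha\log u_\alpha$ for $u_\alpha\geq 0$, one gets
\[
\ent(\mu^a)\;\geq\;\sum_\alpha\pi(\Gamma_\alpha)\ent(\mu^a_\alpha)+\sum_\alpha\pi(\Gamma_\alpha)\log\pi(\Gamma_\alpha)
\qquad\text{for every }a\in[0,1],
\]
with equality at interior times $a\in(0,1)$ where e.n.b.\ gives $\pi$-essential injectivity of $e_a$ and hence mutual singularity of the pieces. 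Subtracting the equality at $1-h$ (resp.\ $h$) from the inequality at $1$ (resp.\ $0$) produces exactly the one-directional difference-quotient bounds needed, and the constant $\sum_\alpha\pi(\Gamma_\alpha)\log\pi(\Gamma_\alpha)$ cancels as in your argument. Your $\liminf$/$\limsup$ step is then fine. Note that with this correction you neither need the plan to be induced by a map nor any continuity of the entropy, so the proof holds under the lemma's stated minimal hypotheses.
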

\begin{proof}
    By the essential non-branching property, $\pi$ is concentrated on a Borel non-branching set $\Gamma$, and the evaluation map $e_a$ is injective for each $a\in (0,1)$.
    Therefore, by setting  $\rho^a_\alpha\coloneqq\frac{\d \mu^a_\alpha}{\d \m}$, we have 
   \begin{equation}
    \begin{array}{ll}
\rho^a_\alpha\leq\frac{\rho^a}{\pi(\Gamma_\alpha)},\quad & a=0,1\\
\rho^{a}_\alpha=
    \frac{\rho^a}{\pi(\Gamma_\alpha)}\llcorner e_a(\Gamma_\alpha),\quad & a\in(0,1).\label{eq:restrictdensity}
\end{array}
\end{equation}
Hence for all $h\in(0,1)$,
\begin{align}
     \ent(\mu^1)-\ent(\mu^{1-h})
    &\geq \sum_\alpha\pi(\Gamma_\alpha)(\ent(\mu^{1}_\alpha)-\ent(\mu^{1-h}_\alpha))\label{ineq:05/03-1}\\
     \ent(\mu^h)-\ent_t(\mu^{0})
    &\leq \sum_\alpha\pi(\Gamma_\alpha)(\ent(\mu^{h}_\alpha)-\ent(\mu^{0}_\alpha))\label{ineq:05/03-2}.
\end{align}
Thus the inequalities \eqref{ineq:D_a^-Ent} and \eqref{ineq:D_a^+Ent} for $a=0,1$ follow by letting $h$ to $0$. The case for $a\in(0,1)$ is obtained similarly.
\end{proof}

\begin{assumption}\label{asm:localtoglobal}
    The time-dependent metric measure space $(X,d_t,\m_t)_{t\in I}$ admits a log-Lipschitz control on the metrics. For each $t$ the space $(X,d_t,\m_t)$ is e.n.b. and locally compact and the entropy $\ent_t$ is upper regular.
\end{assumption}

\begin{proposition}\label{prop:etatoSRF}
    A time-dependent mms $\left(X,d_t,\m_t \right)_{t\in I}$  satisfying \cref{asm:localtoglobal} is a weak super-Ricci flow provided that for a.e. $t\in I$
    \[\eta^-(t,x,x)\geq 0\quad \text{ for all $x\in X$}\;.\]
    
Moreover, the statement holds without the assumption of a log-Lipschitz control if instead $X$ is compact.
\end{proposition}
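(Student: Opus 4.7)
The strategy is to reduce the global dynamic convexity inequality to the local hypothesis $\eta^-(t,x,x)\ge 0$ by simultaneously partitioning the optimal dynamical plan of a given Wasserstein geodesic in time and in space. Under \cref{asm:localtoglobal}, Lemma~\ref{lemma:freeofchoice_eta} gives $\eta^+=\eta^-$, so the hypothesis controls slope jumps built from either choice of one-sided derivatives. Fix an admissible $t\in I$, a $W_t$-geodesic $(\mu^a)_{a\in[0,1]}$ with $\ent_t(\mu^0),\ent_t(\mu^1)<\infty$ and an associated $\pi\in\og(\mu^0,\mu^1)$, and $\delta>0$. I will show that the slope-jump
\[s(\mu;\alpha,\beta)\coloneqq\partial_a^+\ent_t(\mu^{\beta-})-\partial_a^-\ent_t(\mu^{\alpha+})+\frac{1}{2(\beta-\alpha)}\partial_t^-W_{t^-}^2(\mu^\alpha,\mu^\beta)\]
satisfies $s(\mu;0,1)\ge -\delta W_t^2(\mu^0,\mu^1)$ and let $\delta\to 0$.

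First I would introduce a time partition $0=a_0<\dots<a_N=1$ and, exactly as in the proof of Proposition~\ref{prop:endtointer}, use the triangle inequality for $W_{t^-}$ combined with the upper regularity of $\ent_t$ to get $s(\mu;0,1)\ge\sum_k s(\mu;a_{k-1},a_k)$. It then suffices to bound each summand by $-\delta(a_k-a_{k-1})W_t^2(\mu^0,\mu^1)$. For each fixed $k$ I would partition $\spt\mu^{a_{k-1}}$ into small Borel sets $A_{k,j}\subset B_t(x_{k,j},\rho)$, inducing a decomposition of the restricted plan into pieces $\pi_{k,j}=\pi\llcorner\{\gamma:\gamma^{a_{k-1}}\in A_{k,j}\}$ with conditional sub-sub-geodesics $(\mu_{k,j}^a)_{a\in[a_{k-1},a_k]}$. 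Choosing the mesh $a_k-a_{k-1}$ and $\rho$ sufficiently small, both endpoints of each sub-sub-geodesic are supported in a ball $B_t(x_{k,j},\varepsilon_{k,j})$ for which the local hypothesis yields $\eta^-_{\varepsilon_{k,j}}(t,x_{k,j},x_{k,j})\ge-\delta$, so after reparameterising to $[0,1]$ I obtain
\[\partial_a^-\ent_t(\mu_{k,j}^{a_k-})-\partial_a^+\ent_t(\mu_{k,j}^{a_{k-1}+})+\frac{1}{2(a_k-a_{k-1})}\partial_t^-W_{t^-}^2(\mu_{k,j}^{a_{k-1}},\mu_{k,j}^{a_k})\ge-\frac{\delta}{a_k-a_{k-1}}W_t^2(\mu_{k,j}^{a_{k-1}},\mu_{k,j}^{a_k}).\]

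To reassemble, I would invoke Lemma~\ref{lemma:D_aEnt} together with the pointwise inequality $\partial_a^+\ge\partial_a^-$ to deduce
\[\partial_a^+\ent_t(\mu^{a_k-})-\partial_a^-\ent_t(\mu^{a_{k-1}+})\ge\sum_j\pi(\Gamma_{k,j})\bigl[\partial_a^-\ent_t(\mu_{k,j}^{a_k-})-\partial_a^+\ent_t(\mu_{k,j}^{a_{k-1}+})\bigr];\]
the identities $\sum_j\pi(\Gamma_{k,j})W_t^2(\mu_{k,j}^{a_{k-1}},\mu_{k,j}^{a_k})=W_t^2(\mu^{a_{k-1}},\mu^{a_k})=(a_k-a_{k-1})^2W_t^2(\mu^0,\mu^1)$ follow from the optimality of $\pi$, while a Fatou argument applied to the bound $W_s^2(\mu^{a_{k-1}},\mu^{a_k})\le\sum_j\pi(\Gamma_{k,j})W_s^2(\mu_{k,j}^{a_{k-1}},\mu_{k,j}^{a_k})$ for $s<t$ gives the sub-additivity $\partial_t^-W_{t^-}^2(\mu^{a_{k-1}},\mu^{a_k})\ge\sum_j\pi(\Gamma_{k,j})\partial_t^-W_{t^-}^2(\mu_{k,j}^{a_{k-1}},\mu_{k,j}^{a_k})$. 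Assembling these pieces delivers $s(\mu;a_{k-1},a_k)\ge-\delta(a_k-a_{k-1})W_t^2(\mu^0,\mu^1)$, and telescoping in $k$ closes the argument.

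The principal obstacle is the simultaneous smallness constraint: I need $\rho+d_t(\gamma^0,\gamma^1)(a_k-a_{k-1})<\varepsilon_{k,j}$ uniformly in $\gamma\in\spt\pi_{k,j}$ and in $(k,j)$, but the displacements $d_t(\gamma^0,\gamma^1)$ are a priori unbounded and the thresholds $\varepsilon_{k,j}$ depend on the points $x_{k,j}$. If $X$ is compact, this is immediate since $d_t$ is bounded and $\spt\mu^{a_{k-1}}$ admits a finite cover by arbitrarily small balls. In the general setting I would first truncate to a large compact set $K_R$ carrying most of the mass of $\mu^0$ and $\mu^1$, use the log-Lipschitz control of the metrics together with the finite second moment to estimate the $W_t$-error incurred by the truncation, and then run the preceding argument on the truncated plan. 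Finally, absolute continuity of $a\mapsto\ent_t(\mu^a)$ is inherited from the upper regularity of $\ent_t$ combined with the local lower bounds on $s(\mu;a_{k-1},a_k)$ produced above.
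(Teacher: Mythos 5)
Your proposal is correct and follows essentially the same strategy as the paper's proof: telescope in the time parameter $a$ using upper regularity and the triangle inequality, partition the optimal dynamical plan into pieces with small supports, apply the local hypothesis via Lemma~\ref{lemma:D_aEnt} and sub-additivity of $\partial_t^-W_{t^-}^2$, and reassemble; for the non-compact case both approaches ultimately rest on a countable exhaustion plus the $K$-convexity of $\ent_t$ (ensured by Proposition~\ref{prop:etatoCD}) and Fatou's lemma. The one point you gloss over is the uniformization of the local thresholds $\varepsilon_{k,j}$: the paper makes this precise by exploiting the joint lower semi-continuity of $\eta^-(t,\cdot,\cdot)$ together with compactness (equivalently a Lebesgue-number argument) to produce a single partition indexed by multi-indices valid for all time steps at once, whereas you invoke a per-time-step partition and tacitly assume a uniform $\rho$ works --- this can be repaired exactly by the LSC argument but should be said explicitly.
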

\begin{proof}
i) We first consider the case that $X$ is compact.
Fix $\mu^0,\mu^1\in\mathcal{P}_t(X)\cap D(\ent_t)$ and let $\pi$ be an optimal dynamical plan which is concentrated on a non-branching Borel set $\Gamma\subset \geo(X,d_t)$. Fix $\delta>0$ and  
for $n\in\N$ set $a_i\coloneqq i/n$, $i\in\{0,1,\cdots,n\}$.
We make the following
\smallskip

\textbf{Claim}: There is $n\in\N$ and a finite partition $\{\Gamma_\alpha\}_\alpha$ of $\Gamma$ into sets of positive $\pi$-measure s.t. for $\mu^a_{\alpha}\coloneqq(e_a)_{\#}\frac{\pi\llcorner \Gamma_\alpha}{\pi(\Gamma_\alpha)}$ we have for all $i$ and $\alpha$
 \begin{align}
       (a_{i+1}-a_i)\left(\partial^-_a \ent_t(\mu^{a_{i+1}-}_\alpha)-\partial_a^+ \ent_t(\mu^{a_{i}+}_\alpha)\right)
+\frac12\partial_{t}^-W_{t^-}^2(\mu^{a_{i}}_\alpha,\mu^{a_{i+1}}_\alpha)\geq -\delta\cdot W_t^2(\mu^{a_{i}}_\alpha,\mu^{a_{i+1}}_\alpha).
 \end{align}
 
Assuming that the claim is true, we conclude as follows. By \cref{lemma:D_aEnt}, for all $i$ and $\alpha$ we have
\begin{align}
      \partial_a^-\ent_t(\mu^{a_{i+1}-})-\partial_a^+\ent_t(\mu^{a_{i}+})&\geq \sum_\alpha \pi(\Gamma_\alpha)\left(\partial_a^-\ent_t(\mu^{a_{i+1}-}_\alpha)-\partial_a^+\ent_t(\mu^{a_{i}+}_\alpha)\right).
\end{align}
For each $\alpha$, $a\mapsto \mu^a_{\alpha}$ is a $W_t$-geodesic. Hence analogous to \eqref{ineq:WassersteinDeri}, one has
\begin{align}
     \frac12\partial_{t}^-W_{t^-}^2(\mu^{0},\mu^{1})\geq \sum_\alpha\pi(\Gamma_\alpha) \frac12\partial_{t}^-W_{t^-}^2(\mu^{0}_\alpha,\mu^{1}_\alpha)\geq \sum_{i,\alpha}\pi(\Gamma_\alpha)\frac{n}{2}\partial_{t}^-W_{t^-}^2(\mu^{a_{i}}_\alpha,\mu^{a_{i+1}}_\alpha).\label{ineq:16/05-1}
\end{align}
Then, with the upper regularity of entropy, it follows
\begin{align}
    \partial_a^-\ent_t(\mu^{1-})-\partial_a^+\ent_t(\mu^{0+})&\geq \sum_i  \partial_a^-\ent_t(\mu^{a_{i+1}-})-\partial_a^+\ent_t(\mu^{a_{i}+})\\
    &\geq\sum_i\sum_\alpha \pi(\Gamma_\alpha)\left(\partial_a^-\ent_t(\mu^{a_{i+1}-}_\alpha)-\partial_a^+\ent_t(\mu^{a_{i}+}_\alpha)\right)\\
    &\geq \sum_{i,\alpha}-n\pi(\Gamma_\alpha)\left(\frac12\partial_{t}^-W_{t^-}^2(\mu^{a_{i}}_\alpha,\mu^{a_{i+1}}_\alpha)+\delta W_t^2(\mu^{a_{i}}_\alpha,\mu^{a_{i+1}}_\alpha)\right)\\
    &\geq  -\frac12\partial_{t}^-W_{t^-}^2(\mu^{0},\mu^{1})-\delta\sum_{i,\alpha}n^{-1}\pi(\Gamma_\alpha)W_t^2(\mu^{a_{i}}_\alpha,\mu^{a_{i+1}}_\alpha)\\
    &= -\frac12\partial_{t}^-W_{t^-}^2(\mu^{0},\mu^{1})-\delta W_t^2(\mu^0,\mu^1).
\end{align}
Since $\delta$ and the geodesic $(\mu^a)_{a\in[0,1]}$ were arbitrary, we conclude the strong dynamic convexity of the entropy.
\medskip

ii) We now prove the claim. By definition, $\eta^-(t,x,y)$ is lower semi-continuous jointly in $x$ and $y$.
Therefore by assumption, for any $x\in X$, there is $r_x>0$ s.t. $\eta^-(t,x_1,x_2)> -\delta$ for all $x_1,x_2\in B_t(x,r_x)$. 
By compactness, we can find finitely many such balls covering $X$.
In particular, this implies an $\varepsilon>0$, a finite partition of $X$ by $\{L_j\}_{j\in J}$ and a family of compact subsets $\{X_j\}_{j\in J}$ s.t. $B_t(L_j,\varepsilon)\subset X_j$ and $\eta^-(t,x,y)>-\delta$ for all pair of $x,y$ in the same element of $\{X_j\}_j$.

Next, for each pair $(x,y)$ in $X_j\times X_j$ for some $j$, by definition, there exists some $d>0$ s.t. $\eta^-_d(t,x,y)>-\delta$.
In particular, $\eta^-_{d/2}(t,\tilde{x},\tilde{y})>-\delta$ for all $(\tilde{x},\tilde{y})\in B_t(x,\frac{d}{2})\times B_t(y,\frac{d}{2})$.
Again based on the compactness, one has finitely many balls $\{B_t(x_k,r_k)\}_{k\in \Lambda}$ s.t. 
\begin{align}
    \cup_{j\in J} X_j\times X_j\subset \cup_{k_1,k_2\in \Lambda} B_t(x_{k_1},r_{k_1})\times B_t(x_{k_2},r_{k_2}),
\end{align}
and for all $W_t$-geodesic $(\mu^a)_{a\in [0,1]}$ that $\spt(\mu^0)\times \spt(\mu^1)\subset B_t(x_{k_1},r_{k_1})\times B_t(x_{k_2},r_{k_2})$, one has
 \begin{equation}
        \partial^-_a \ent_t(\mu^a)\big|_{a=1}-\partial_a^+ \ent_t(\mu^a)\big|_{a=0}
+\frac12\partial_{t}^-W_{t^-}^2(\mu^0,\mu^1)\geq -\delta\cdot W_t^2(\mu^0,\mu^1).
    \end{equation}
By intersecting above products of balls with $\{L_{j_1}\times L_{j_2}\}_{j_1,j_2\in J}$, we can find a finite collection of mutually disjoint subsets $\{P_{\beta}\}_{\beta\in B}$ in $X\times X$ s.t. each $P_{\beta}$ is contained in some $B_t(x_{k_1},r_{k_1})\times B_t(x_{k_2},r_{k_2})$ and 
\begin{align}
   \cup_{j\in J}X_j\times X_j\subset \cup_{\beta\in B}P_{\beta}.
\end{align}

Now we choose $n$ to be an integer with $\mathrm{diam}(X)/n<\varepsilon$.
This means, for any geodesic $\gamma$ and $i$, $(\gamma^{a_i},\gamma^{a_{i+1}})$ is in $X_j\times X_j$ for the $j\in J$ that has $\gamma^{a_i}\in L_j$ and thus the pair is contained in some $P_{\beta}$.

Finally we can express the decomposition of $\Gamma$.
Let $\alpha\coloneqq (\alpha_1,\dots,\alpha_{n})$ be a multi-index so that $\alpha_i\in[0,|B|]\cap \N$.
Denote by $\Gamma_\alpha$ for each $\alpha$ as follows
\begin{align}
    \Gamma_\alpha\coloneqq\{\gamma\in\Gamma:(\gamma^{a_i},\gamma^{a_{i+1}})\in P_{\alpha_i},\forall i=0,\dots, n\}.
\end{align}
This construction yields the desired partition $\{\Gamma_\alpha\}_{\alpha}$ and conlcudes the claim.
\medskip

iii) Finally, we consider the case that $X$ is not compact.
By Hopf-Rinow theorem, $(X,d_t)$ is proper.
Hence whenever $\mu^0$ and $\mu^1$ have bounded supports, we can reduce the problem to the compact situation.
In general, we can decompose $\spt(\pi)$ into a countable partition $\{\Gamma_n\}_{n\in\N}$ s.t. for each $n\in\N$, $\pi(\Gamma_n)>0$ and $\Gamma_n$ is contained in a bounded set.

As usual, we denote $\mu^a_n\coloneqq (e_a)_{\#}\frac{\pi\llcorner \Gamma_n}{\pi(\Gamma_n)}$ for each $a\in[0,1]$ and $n\in\N$. 
For each $n\in \N$, as the curve $(\mu^a_n)_{a\in [0,1]}$ is contained in a compact subset, we have
\begin{align}\label{ineq:etafrombelow2}
       \partial^-_a \ent_t(\mu^a_n)\big|_{a=1-}-\partial_a^+ \ent_t(\mu^a_n)\big|_{a=0+}
+\frac12\partial_{t}^-W_{t^-}^2(\mu^{0}_n,\mu^{1}_n)\geq 0.
\end{align}

By the upper regularity, $\ent_t(\mu^a)<\infty$ for all $a$ hence by Fubini that \eqref{ineq:05/03-1} and \eqref{ineq:05/03-2} are still true.
Furthermore, \cref{prop:etatoCD} together with upper log-Lipschitz control of distances ensures that $\ent_t$ is strongly $-\kappa_t$-convex for a.e. $t\in I$.
This indicates that for every $h,n$ 
\begin{equation}
    \ent_t(\mu^{1}_n)-\ent_t(\mu^{1-h}_n)-(\ent_t(\mu^{h}_n)-\ent_t(\mu^{0}_n))\geq -\kappa_th(1-h)W_t^2(\mu^{1}_n,\mu^{0}_n).
\end{equation}
So we can apply Fatou's lemma, obtaining 
\begin{align}
    &\partial^-_a \ent_t(\mu^a)\big|_{a=1-}-\partial_a^+ \ent_t(\mu^a)\big|_{a=0+}\\
   \geq & \liminf_{h\searrow0}\sum_n \frac{\pi(\Gamma_n)}{h}\big[ \ent_t(\mu^{1}_n)-\ent_t(\mu^{1-h}_n)-\ent_t(\mu^{h}_n)+\ent_t(\mu^{0}_n)\big]\\
    \geq& \sum_n\liminf_{h\searrow0}\frac{\pi(\Gamma_i)}{h}\big[ \ent_t(\mu^{1}_n)-\ent_t(\mu^{1-h}_n)-\ent_t(\mu^{h}_n)+\ent_t(\mu^{0}_n)\big] \\
   \geq & \sum_n\pi(\Gamma_n) \big[\partial^-_a \ent_t(\mu^{a}_n)\big|_{a=1-}-\partial_a^+ \ent_t(\mu^{a}_n)\big|_{a=0+}\big].
\end{align}
Similarly, with the lower log-Lipschitz control of the distance, we have
\begin{equation}
    \partial_{t}^-W_{t^-}^2(\mu^{0},\mu^{1})\geq \sum_n\pi(\Gamma_n)\partial_{t}^-W_{t^-}^2(\mu^{0}_n,\mu^{1}_n).
\end{equation}
The proof is completed by combining the inequalities above with \eqref{ineq:etafrombelow2}.
\end{proof}

Analogously to \cref{thm:RFvsTheta}, the weak super/sub-Ricci flow can be characterized using the quantities $\eta$ as follows.
\begin{theorem}\label{thm:charaSRF}
	Let $\left(X,d_t,\m_t \right)_{t\in I}$ be a time-dependent m.m.s. 
    It is a weak sub-Ricci flow if and only if for a.e. $t\in I$,  \[\eta^*(t,x)\leq 0\quad \text{ for all $x\in X$}\;.\]

    Moreover, if $\left(X,d_t,\m_t \right)_{t\in I}$ satisfies \cref{asm:localtoglobal}, then the following are equivalent:
	\begin{enumerate}
		\item $\left(X,d_t,\m_t \right)_{t\in I}$ is a weak super-Ricci flow;
		\item for a.e. $t\in I$, $\eta^-(t,x,y)\geq 0$ for all $x,y\in X$;
		\item for a.e. $t\in I$, $\eta^-(t,x,x)\geq 0$ for all $x\in X$.
	\end{enumerate}
\end{theorem}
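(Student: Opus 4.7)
The approach is to read both parts as translations between the ``cover-and-open-set'' formulations of the sub/super-Ricci flow conditions and the pointwise infinitesimal quantities $\eta^\pm$ and $\eta^*$. The only genuinely nontrivial content is already encapsulated in \cref{prop:etatoSRF} and \cref{lemma:freeofchoice_eta}.

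For \textbf{Part 1}, I will exploit the algebraic observation that \eqref{eq:EntSub} with error $\varepsilon$ is equivalent to
\[
\frac{1}{W_t^2(\mu^0,\mu^1)}\Bigl[\partial_a^+\ent_t(\mu^a)|_{a=1-}-\partial_a^-\ent_t(\mu^a)|_{a=0+}+\tfrac12\partial_t^-W_{t^-}^2(\mu^0,\mu^1)\Bigr]\le\varepsilon,
\]
which is precisely the functional whose infimum defines $\eta^+_\varepsilon$. For the forward direction I fix $\delta>0$, let $\{U_i\}$ be the cover produced by the weak sub-Ricci flow at error $\delta$, and for any $x\in X$ pick $U_i\ni x$; given $y,z\in U_i$ and $\varepsilon'>0$, the open sets $V_0=B_t(y,\varepsilon')\cap U_i$ and $V_1=B_t(z,\varepsilon')\cap U_i$ yield a geodesic witnessing $\eta^+_{\varepsilon'}(t,y,z)\le\delta$, hence $\eta^+(t,y,z)\le\delta$ and $\eta^*(t,x)\le\delta$; sending $\delta\searrow 0$ concludes. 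For the converse, given $\delta>0$ I choose for each $x$ a neighborhood $U_x$ on which $\eta^+(t,y,z)<\delta$; for open $V_0,V_1\subset U_x$ I pick $y\in V_0$, $z\in V_1$, select $\varepsilon'>0$ small enough that $B_t(y,\varepsilon')\subset V_0$ and $B_t(z,\varepsilon')\subset V_1$, and take an almost-minimiser of $\eta^+_{\varepsilon'}(t,y,z)<\delta$ as the required geodesic.

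For \textbf{Part 2}, I will close the cycle $(1)\Rightarrow(2)\Rightarrow(3)\Rightarrow(1)$. The step $(2)\Rightarrow(3)$ is immediate. The step $(3)\Rightarrow(1)$ is exactly the content of \cref{prop:etatoSRF}, whose hypotheses are precisely those in \cref{asm:localtoglobal}. For $(1)\Rightarrow(2)$ I note that \eqref{ineq:weaksuperRF} forces every geodesic competing in the infimum that defines $\eta^+_\varepsilon(t,x,y)$ to contribute a non-negative value, so $\eta^+\ge 0$; since \cref{asm:localtoglobal} supplies lower log-Lipschitz control and upper regularity of $\ent_t$, \cref{lemma:freeofchoice_eta} then upgrades this to $\eta^-=\eta^+\ge 0$.

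The only substantive step is the localisation argument passing from the infinitesimal bound $\eta^-(t,x,x)\ge 0$ to global strong dynamic convexity of the entropy along arbitrary Wasserstein geodesics; this has already been carried out in \cref{prop:etatoSRF} by partitioning the optimal dynamical plan, applying the local bound on each piece, and summing using the upper regularity of $\ent_t$ together with the log-Lipschitz control of the metrics (or compactness). The remainder of the present proof is confined to carefully matching the two formulations of each condition.
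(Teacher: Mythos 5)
Your proof is correct and follows essentially the same route as the paper: part one is the same unwrapping of the cover/quantifier structure against $\eta^*$ (the paper summarily calls one direction ``clear'' and invokes the \cref{thm:RFvsTheta} exhaustion for the other), and part two closes the same cycle via \cref{prop:etatoSRF}. The one place where you are more explicit than the paper is the step $(1)\Rightarrow(2)$, where you correctly observe that weak super-Ricci flow directly gives $\eta^+\geq 0$ only, and that \cref{lemma:freeofchoice_eta} (available under \cref{asm:localtoglobal}) is what upgrades this to $\eta^-\geq 0$; the paper compresses this into ``straightforward,'' but your reading of what is actually needed is accurate.
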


\begin{proof}
Assume that for a.e. $t\in I$, $\eta^{*}(t,\cdot)\leq 0$. 
By definition, if $\eta^*(t,x)\leq 0$, then for all $\varepsilon>0$ there exists $\delta>0$ s.t. for all $y,z\in B_t(x,\delta)$ we have $\eta^+(t,y,z)<\varepsilon$.
Hence for any open sets $V_0,V_1\subset B_t(x,\delta)$, by the definition of $\eta^+$, there exists a non-constant $W_t$-geodesic $(\mu^a)_{a\in[0,1]}$ with $\spt(\mu^i)\subset V_i$ for $i=0,1$ s.t. \eqref{eq:EntSub} is satisfied.
Repeating the exhaustion argument from the proof of \cref{thm:RFvsTheta}, we obtain an open cover as required in \cref{def:WeakRF}. Thus, $(X,d_t,\m_t)_{t\in I}$ is a weak sub-Ricci flow. The reverse direction is clear.

For the equivalence of the weak super-Ricci flow, the implications $(1) \Rightarrow (2)$ and $(2) \Rightarrow (3)$ are straightforward, while the direction $(3) \Rightarrow (1)$ is given by \cref{prop:etatoSRF}.
\end{proof}

We conclude this section by establishing a local-to global property for weak $N$-super-Ricci flows.

\begin{definition}
   A time-dependent mms $\left(X,d_t,\m_t \right)_{t\in I}$ is called a \emph{local weak $N$-super-Ricci flow} for $N\in[1,\infty]$ if for a.e. $t\in I$ every point $x\in X$ has a neighborhood $U$ s.t. along every $W_t$-geodesic $(\mu^\tau)_{\tau\in [0,1]}$ in $\p_t(X)$ with $\mu^0,\mu^1$ supported in $U$ and with finite entropy, the function $[0,1]\ni\tau\mapsto \ent_t(\mu^{\tau})$ is absolutely continuous and \eqref{ineq:weakNSRF} holds.
\end{definition}

We stress that the intermediate points of the geodesic in the definition above are not required to be supported in $U$.

\begin{theorem}\label{prop:localtoglobal}
  Let $\left(X,d_t,\m_t \right)_{t\in I}$ be a time-dependent m.m.s. satisfying \cref{asm:localtoglobal}.
If $\left(X,d_t,\m_t \right)_{t\in I}$ is a local weak $N$-super-Ricci flow, then it is a weak $N$-super-Ricci flow.
\end{theorem}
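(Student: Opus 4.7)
The plan is to adapt the partition argument used for the $N=\infty$ case in the proof of \cref{prop:etatoSRF}, and carry the dimensional term $\tfrac{1}{N}|\ent_t(\mu^1)-\ent_t(\mu^0)|^2$ through the decomposition via a Cauchy--Schwarz bound. Fix $t\in I$ at which local weak $N$-super-Ricci flow holds and let $(\mu^a)_{a\in[0,1]}$ be an arbitrary $W_t$-geodesic with $\mu^0,\mu^1\in D(\ent_t)$, lifted by an optimal dynamical plan $\pi$ concentrated on a non-branching Borel set $\Gamma\subset\geo(X,d_t)$. By local compactness, log-Lipschitz control, and the countable-decomposition argument in step~(iii) of the proof of \cref{prop:etatoSRF}, I first reduce to the case where the geodesic has compact support, which is then covered by finitely many neighborhoods on which the local inequality applies.

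Following exactly the compact-case construction in \cref{prop:etatoSRF}, I then produce, for any $\delta>0$, an integer $n$, a uniform time partition $0=a_0<a_1<\dots<a_n=1$ with $a_{i+1}-a_i=1/n$, and a finite Borel partition $\{\Gamma_\alpha\}_\alpha$ of $\Gamma$ of positive $\pi$-mass, such that for each pair $(i,\alpha)$ the marginals $\mu^{a_i}_\alpha\coloneqq (e_{a_i})_\#\tfrac{\pi\llcorner\Gamma_\alpha}{\pi(\Gamma_\alpha)}$ and $\mu^{a_{i+1}}_\alpha$ have supports in a common neighborhood where local weak $N$-super-Ricci flow applies. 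Rescaling the piece $(\mu^a_\alpha)_{a\in[a_i,a_{i+1}]}$ to a $W_t$-geodesic on $[0,1]$, applying the local inequality, and then dividing by $a_{i+1}-a_i=1/n$ yields
\[
\partial_a^-\ent_t(\mu^{a_{i+1}-}_\alpha)-\partial_a^+\ent_t(\mu^{a_i+}_\alpha)\ge -\frac{n}{2}\partial^-_tW_{t^-}^2(\mu^{a_i}_\alpha,\mu^{a_{i+1}}_\alpha)+\frac{n}{N}\bigl|\ent_t(\mu^{a_{i+1}}_\alpha)-\ent_t(\mu^{a_i}_\alpha)\bigr|^2.
\]
Multiplying by $\pi(\Gamma_\alpha)$ and summing over $(i,\alpha)$, \cref{lemma:D_aEnt} combined with the upper regularity of $\ent_t$ bounds the left-hand side from above by $\partial_a^-\ent_t(\mu^{1-})-\partial_a^+\ent_t(\mu^{0+})$, while the triangle-inequality estimate \eqref{ineq:16/05-1} controls the first right-hand term from below by $-\tfrac12\partial^-_tW_{t^-}^2(\mu^0,\mu^1)$, exactly as in the $N=\infty$ case.

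For the new dimensional term, the key step is a double Cauchy--Schwarz: by the density identity \eqref{eq:restrictdensity}, the sum $\sum_{i,\alpha}\pi(\Gamma_\alpha)[\ent_t(\mu^{a_{i+1}}_\alpha)-\ent_t(\mu^{a_i}_\alpha)]$ telescopes exactly at interior time-points (the normalization $-\sum_\alpha\pi(\Gamma_\alpha)\log\pi(\Gamma_\alpha)$ cancels), giving $\ent_t(\mu^1)-\ent_t(\mu^0)$, and Cauchy--Schwarz applied to this sum with weights $\pi(\Gamma_\alpha)$ over the $n\cdot |\{\alpha\}|$ summands yields $\tfrac{n}{N}\sum_{i,\alpha}\pi(\Gamma_\alpha)|\ent_t(\mu^{a_{i+1}}_\alpha)-\ent_t(\mu^{a_i}_\alpha)|^2\ge \tfrac{1}{N}|\ent_t(\mu^1)-\ent_t(\mu^0)|^2$. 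Combining the three lower bounds then delivers \eqref{ineq:weakNSRF} for the given global geodesic. The main obstacle will be that \eqref{eq:restrictdensity} is only an inequality at the endpoints $a=0,1$, so the boundary contributions to the telescoping must be absorbed. I would address this by running the entire argument first on $[\sigma,\rho]\subset(0,1)$, where the telescoping identity is exact, and then letting $\sigma\searrow 0$ and $\rho\nearrow 1$, using the upper regularity of $\ent_t$ and the continuity of $s\mapsto W_s^2(\mu^0,\mu^1)$ that is granted by the log-Lipschitz control on the distances in \cref{asm:localtoglobal}.
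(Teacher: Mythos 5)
Your plan coincides in essence with the paper's argument: reduce to compact support via the countable decomposition of part (iii) of the proof of \cref{prop:etatoSRF}; produce a finite Borel partition $\{\Gamma_\alpha\}$ of $\Gamma$ and a time partition $\{a_i\}$ so that the local inequality applies on each piece; sum the rescaled pieces using \cref{lemma:D_aEnt}, upper regularity of $\ent_t$, and the triangle-inequality estimate \eqref{ineq:16/05-1}; and handle the dimensional term by Jensen/Cauchy--Schwarz using the exact density identity \eqref{eq:restrictdensity} at interior time-points. These are precisely the steps the paper carries out.

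Where the proposal falls short is the concluding limit. You claim that after restricting the telescoping to $[\sigma,\rho]\subset(0,1)$, one may send $\sigma\searrow 0$ and $\rho\nearrow 1$ using only upper regularity of $\ent_t$ and continuity of $s\mapsto W^2_s$. That is not enough for the dimensional term: upper regularity only guarantees that $a\mapsto\ent_t(\mu^a)$ is upper semi-continuous on $[0,1]$ and absolutely continuous on $(0,1)$, so the one-sided limits $\lim_{\sigma\searrow 0}\ent_t(\mu^\sigma)$ and $\lim_{\rho\nearrow 1}\ent_t(\mu^\rho)$ could be strictly smaller than $\ent_t(\mu^0)$ and $\ent_t(\mu^1)$ respectively, and then $\liminf |\ent_t(\mu^\rho)-\ent_t(\mu^\sigma)|^2$ may fall short of $|\ent_t(\mu^1)-\ent_t(\mu^0)|^2$. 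The paper supplies the missing ingredient explicitly: the local $N$-super-Ricci flow hypothesis already yields a global weak super-Ricci flow (the $N=\infty$ reduction via \cref{prop:etatoSRF}), and this together with the log-Lipschitz control in \cref{asm:localtoglobal} forces each static slice $(X,d_t,\m_t)$ to satisfy $\cd(K_t,\infty)$ for some $K_t\in\R$. The resulting semiconvexity of $\ent_t$ along $W_t$-geodesics gives lower semicontinuity at the endpoints, and combined with the upper semicontinuity from upper regularity this yields genuine continuity of $\ent_t$ at $a=0,1$, which is what the passage to the limit actually requires. You need to add this convexity input; upper regularity alone cannot deliver it.
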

\begin{proof}
	The case $N=\infty$ is a direct consequence of \cref{prop:etatoSRF}, since on any local super-Ricci flow we immediately conclude that $\eta(t,x,x)\geq 0$ for all $t$ and $x$.

	 Assume that $\left(X,d_t,\m_t \right)_{t\in I}$ is a local weak $N$-super-Ricci flow for $N<\infty$. Then it is in particular a local and thus also a global weak super Ricci flow as we just argued. Similar to the proof of \cref{prop:etatoSRF}, for any $\delta>0$ and any non-constant $W_t$-geodesic $(\mu^a)_{a\in [0,1]}$ with finite entropy at end points, the optimal dynamical plan $\pi$  is concentrated on some non-branching set $\Gamma$ and we can find a countable partition $\{\Gamma_\alpha\}_\alpha$ of $\Gamma$ provided that $\pi(\Gamma_\alpha)>0$ s.t. for all $i,\alpha$ and $\mu^a_{\alpha}\coloneqq(e_a)_{\#}\frac{\pi\llcorner \Gamma_\alpha}{\pi(\Gamma_\alpha)}$ we have that
	 \begin{align}
	 &	(a_{i+1}-a_i)\left(\partial^-_a \ent_t(\mu^{a_{i+1}-}_\alpha)-\partial_a^+ \ent_t(\mu^{a_{i}+}_\alpha)\right)
	 	+\frac12\partial_{t}^-W_{t^-}^2(\mu^{a_{i}}_\alpha,\mu^{a_{i+1}}_\alpha)\\
	 	\geq &-\delta\cdot W_t^2(\mu^{a_{i}}_\alpha,\mu^{a_{i+1}}_\alpha)+\frac1{N}|\ent_t(\mu^{a_{i}}_\alpha)-\ent_t(\mu^{a_{i+1}}_\alpha)|^2,
	 \end{align}
 where $a_i\coloneqq i/n$ for $i\in\{0,1,\cdots,n\}$ and $n$ can be arbitrarily large.
Analogous to the proof of \cref{prop:etatoSRF}, summing the above inequality over all $\alpha$ and $i$ yields: 
 \begin{align}
 	& \partial_a^-\ent_t(\mu^{1-})-\partial_a^+\ent_t(\mu^{0+})+\frac12\partial_{t}^-W_{t^-}^2(\mu^{0},\mu^{1})\\
 	\geq &-\delta W_t^2(\mu^0,\mu^1)+ \frac1{N}\sum_{i,\alpha}n\pi(\Gamma_\alpha)|\ent_t(\mu^{a_{i}}_\alpha)-\ent_t(\mu^{a_{i+1}}_\alpha)|^2.
 \end{align}
For all $1\leq i\leq n-2$, one has the following equation on entropy due to \eqref{eq:restrictdensity}
\begin{equation}
	\ent_t(\mu^{a_{i}}_\alpha)-\ent_t(\mu^{a_{i+1}}_\alpha)=\pi(\Gamma_{\alpha})^{-1}\left(\int_{e_{a_i}(\Gamma_\alpha)}\rho^{a_i}\log\rho^{a_i}-\int_{e_{a_{i+1}}(\Gamma_\alpha)}\rho^{a_{i+1}}\log\rho^{a_{i+1}}\right)\;.
\end{equation}
Therefore, applying twice the Jensen inequality gives
\begin{align}
	&\sum^{n-2}_{i=1}\sum_{\alpha}n\pi(\Gamma_\alpha)|\ent_t(\mu^{a_{i}}_\alpha)-\ent_t(\mu^{a_{i+1}}_\alpha)|^2\\
	=&\sum^{n-2}_{i=1} n\left(\sum_{\alpha}\int_{e_{a_i}(\Gamma_\alpha)}\rho^{a_i}\log\rho^{a_i}-\int_{e_{a_{i+1}}(\Gamma_\alpha)}\rho^{a_{i+1}}\log\rho^{a_{i+1}}\right)^2\\
	\geq & \sum^{n-2}_{i=1} n \Big(\ent_t(\mu^{a_i})-\ent_t(\mu^{a_{i+1}})\Big)^2\geq \frac{n}{n-2} \Big(\ent_t(\mu^{1-1/n})-\ent_t(\mu^{1/n})\Big)^2.
\end{align}
Combining the fact that $(X,d_t,\m_t)_{t\in I}$ is a weak super Ricci flow and the log-Lipschitz control from \cref{asm:localtoglobal}, we infer that for every $t$ the space $(X,d_t,\m_t)$ satisfies $\cd(K_t,\infty)$ for some $K_t\in\R$. Hence $\ent_t$ is continuous along each $W_t$-geodesic.
The conclusion follows by letting $n\to\infty$ and $\delta\to 0$.
\end{proof}

\section{Smooth flows}\label{sec:smoothRF}
In this section we consider \emph{smooth flows} $(M,g_t,f_t)_{t\in I}$, i.e. smooth families of complete Riemannian manifolds $(M,g_t)$ and weight functions $f_t$. We say that $(M,g_t,f_t)_{t\in I}$ is a \emph{compact smooth flow} if in addition $M$ is a closed manifold. We denote by $\m_t=e^{-f_t}\mathrm{vol}_{g_t}$ the weighted volume measure and by $d_t$ the Riemannian distance induced by $g_t$.

\subsection{Rough super/sub-Ricci flow}\label{sec:Rough_infty}
We begin this section by proving precise (two-sided) expansion estimates for the Wasserstein distance along heat flows.

Let $\geo(M,d_t)$ denote the set of all constant speed geodesics parameterized by $[0,1]$ on $(M,d_t)$, equipped with the supremum distance $d_{\infty}(\gamma,\tilde{\gamma})\coloneqq \sup_{a\in[0,1]}d_t(\gamma^a,\tilde{\gamma}^a)$ .
For each $\gamma\in \geo(M,d_t)$, we the \emph{average Ricci flow excess} along $\gamma$ given by
\begin{align}\label{def:rfe}
    \rfe_{t}(\gamma)\coloneqq \aint{[0,1]}\frac{1}{|\dot{\gamma}|^2}\left[\Ric_{f_t}(\dot\gamma^a)+\frac12\partial_tg_t(\dot\gamma^a)\right]\d a,
\end{align}
and set $\rfe_t(x,y)\coloneqq \inf_{\gamma}\rfe_t(\gamma)$, where the infimum is taken over all $\gamma\in \geo(M,d_t)$ connecting $x$ to $y$.

\begin{lemma}\label{lemma:rfe}
    Given a smooth flow $(M,g_t,f_t)$, the following holds
    \begin{enumerate}
        \item\label{item:09-06-1} for each $t\in I$ and $x,y\in M$, there is $\gamma\in \geo(M,d_t)$ s.t. $\rfe_t(x,y)=\rfe_t(\gamma)$;
        \item\label{item:09-06-3} for any sequence $(x_m,y_m)\to (x,y)$ and $t_m\to t$,
        \begin{equation}
            \liminf_{m\to \infty}\rfe_{t_m}(x_m,y_m)\geq \rfe_t(x,y).
        \end{equation}
    \end{enumerate}
\end{lemma}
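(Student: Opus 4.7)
The plan is to treat both items as a compactness-plus-continuity argument. For (\ref{item:09-06-1}) I would consider the set $\Gamma_{x,y}^t\coloneqq\{\gamma\in\geo(M,d_t):\gamma^0=x,\,\gamma^1=y\}$, show it is compact in $C([0,1],M)$ and that $\gamma\mapsto\rfe_t(\gamma)$ is continuous on it, and invoke the extreme value theorem. For (\ref{item:09-06-3}) I would pick minimisers $\gamma_m\in\Gamma_{x_m,y_m}^{t_m}$ produced by (\ref{item:09-06-1}), pass to a sub-subsequence realising the $\liminf$ and converging uniformly to some $\gamma$, identify $\gamma\in\Gamma_{x,y}^t$, and pass to the limit in $\rfe_{t_m}(\gamma_m)$.

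Compactness of $\Gamma_{x,y}^t$ is standard: completeness of $(M,g_t)$ together with Hopf--Rinow gives non-emptiness, every element is $d_t$-Lipschitz with the same constant $d_t(x,y)$ and takes values in a compact ball about $x$, so Arzel\`a--Ascoli gives relative compactness, and the constant-speed identity $d_t(\gamma^a,\gamma^b)=(b-a)d_t(x,y)$ passes to uniform limits. For continuity of $\rfe_t$ on $\Gamma_{x,y}^t$, if $\gamma_n\to\gamma$ uniformly then the initial velocities $\dot\gamma_n^0$ all sit on the fixed $g_t$-sphere of radius $d_t(x,y)$ in $T_xM$; any subsequential limit $V$ produces via $\exp_x^{g_t}$ a geodesic that must coincide with $\gamma$ by uniqueness for the geodesic ODE, forcing $\dot\gamma_n^0\to\dot\gamma^0$ along the full sequence. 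Smooth dependence on initial data then upgrades the convergence to $C^\infty$, so the integrand in \eqref{def:rfe} converges uniformly and $\rfe_t(\gamma_n)\to\rfe_t(\gamma)$.

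For (\ref{item:09-06-3}) the argument carries over with the same ingredients, now allowing the metric to vary. Since $g_{t_m}\to g_t$ smoothly and $d_{t_m}(x_m,y_m)\to d_t(x,y)$, the $\gamma_m$ eventually live in a fixed compact set on which the $\gamma_m$ are equi-$d_t$-Lipschitz; Arzel\`a--Ascoli and passage to the limit in $d_{t_m}(\gamma_m^a,\gamma_m^b)=(b-a)d_{t_m}(x_m,y_m)$ (using locally uniform convergence $d_{t_m}\to d_t$) give a further subsequence $\gamma_m\to\gamma\in\Gamma_{x,y}^t$. Smooth dependence of the geodesic equation $\nabla^{g_{t_m}}_{\dot\gamma_m}\dot\gamma_m=0$ on both the metric and initial data, together with the smooth dependence of $(f_t)$ on $t$, makes the integrand of $\rfe_{t_m}(\gamma_m)$ converge uniformly on $[0,1]$ to that of $\rfe_t(\gamma)$, yielding $\rfe_{t_m}(\gamma_m)\to\rfe_t(\gamma)\geq\rfe_t(x,y)$ and the asserted lower bound on the $\liminf$.

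The only non-routine point I expect to carry weight is the lift from $C^0$-convergence of the $\gamma_m$ to convergence of their velocity fields, which is indispensable for moving the limit inside the integral defining $\rfe$. Uniform convergence of minimising geodesics is not automatically $C^1$, but the smoothness of the flow and the smooth dependence of the geodesic ODE on the metric and on initial data force any subsequential limit of $\dot\gamma_m^0$ to equal $\dot\gamma^0$, after which the whole sequence converges in $C^\infty$. This lift is the technical heart of the argument and the place where the smoothness assumptions of the flow are genuinely used; everything else is a packaging of Arzel\`a--Ascoli and the extreme value theorem.
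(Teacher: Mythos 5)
Your proof is correct and follows essentially the same route as the paper's: Arzel\`a--Ascoli compactness for the space of connecting geodesics, the lift from $C^0$- to $C^1$-convergence via smooth dependence of the geodesic ODE on the metric and on data, and passage to the limit in the integrand of $\rfe$. The only cosmetic difference is that the paper carries out the $C^1$-upgrade by chopping geodesics into pieces lying in common coordinate charts and citing smooth dependence on \emph{boundary} data, whereas you argue globally via the exponential map and smooth dependence on \emph{initial} velocities; these are interchangeable here.
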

\begin{proof}
   Consider a sequence $t_k\to t_0$ and a sequence $\gamma_k\in \geo(M,d_{t_k})$ converging to $\gamma_0$ under the supremum distance.
    Assume that there exists $N$ s.t. $\gamma_k|_{[\frac{i-1}{N},\frac{i}{N}]}$ is contained in a common coordinate chart for all $k\in \N$ and $1\leq i\leq N$. By definition we have
\[\rfe_{t_k}(\gamma_k)=\sum_{i=1}^{N}\frac1N\cdot \rfe_{t_k}(\gamma_k|_{[\frac{i-1}{N},\frac{i}{N}]})\;.\]
Recall that each $\gamma_k$ locally solves the geodesic equation, a second order ODE in the coordinate chart with smooth coefficients given by the Christoffel symbols. Hence $\dot\gamma_k$ continuously depend on boundary conditions $\gamma_k(\frac{i-1}{N})$ and $\gamma_k(\frac{i}{N})$ (see e.g. \cite[Theorem D.1]{Lee-manifold}).
In particular, $\gamma_0\in \geo(M,d_{t_0})$ and $\rfe_{t_k}(\gamma_k)\to \rfe_{t_0}(\gamma_0)$.
By the Arzela--Ascoli Theorem, the set of geodesics from $x$ to $y$ form a compact subset in $\geo(M,d_t)$. Thus by the continuity of $\rfe_t$ we have just shown, there exists a minimizer $\gamma$ s.t. $\rfe_t(x,y)=\rfe_t(\gamma)$.

Consider $t_k\to t$, $x_k\to x$ and $y_k\to y$. For each $k$, we take $\gamma_k\in \geo(M,d_{t_k})$ s.t. $\rfe_{t_k}(x_k,y_k)=\rfe_{t_k}(x_k,y_k)$.
Up to passing to a subsequence, $\gamma_k$ converges to a geodesic $\gamma$ connecting $x$ to $y$ on $(M,g_{t})$.
Hence the assertion follows as $\rfe_{t}(\gamma_k)\leq \rfe_t(\gamma)=\lim_{k\to \infty}\rfe_{t_k}(\gamma_k)$.
\end{proof}

\begin{theorem}\label{thm:sharperestimates}
Let $(M,g_t,f_t)_{t\in I}$ be a compact smooth flow.
For each $t\in I$,

\begin{enumerate}
     \item for any $s_0<t$ there exist $C,\varepsilon_0>0$ so that  for every $x,y\in M$ with $d_t(x,y)< \varepsilon_0$ and $s\in(s_0,t)$,
     \begin{align}\label{eq:refupperest}
   W_s^2(\ap{\delta_x},\ap{\delta_y})\geq d_t^2(x,y)\Big(1-2(t-s)\Big[\rfe_{t}(x,y)+C\cdot\big(d_t(x,y)^2+t-s\big)\Big]\Big).
     \end{align}
    \item for every $x,y\in M$ and $\delta>0$, there exists $s_0<t$ s.t. for all $s\in (s_0,t)$
    \begin{equation}\label{eq:reflowerest}
         W_s^2(\ap{\delta_x},\ap{\delta_y})\le d_t^2(x,y)\Big(1-2(t-s)\Big[\rfe_t(x,y)-\delta\Big]\Big);
    \end{equation}
    moreover, there exists $\varepsilon_0>0$ such that \eqref{eq:reflowerest} holds for all $x,y\in M$ with $\varepsilon=d_t(x,y)<\varepsilon_0$ and $s\in(s_0,t)$ with $s_0$ depending only on $\varepsilon$.
\end{enumerate}
In particular, for every $\delta>0$ there exists $\varepsilon_0>0$ s.t. for all $x,y\in M$ with
$\varepsilon\coloneqq d_t(x,y)<\varepsilon_0$ there exists $s_0<t$ depending only on $\delta,\varepsilon$ and $M$ so that
 \begin{equation}
   e^{-(\rfe_t(x,y)+\delta)(t-s)}\cdot d_t(x,y)\leq W_s(\ap{\delta_x},\ap{\delta_y})\leq e^{-(\rfe_t(x,y)-\delta)(t-s)}\cdot d_t(x,y), 
\end{equation}
for all $s\in(s_0,t)$.
\end{theorem}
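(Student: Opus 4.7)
The plan is to prove the two inequalities \eqref{eq:refupperest} and \eqref{eq:reflowerest} separately, each by a first-order expansion in $T := t-s \searrow 0$; the ``in particular'' statement then follows by combining them with the elementary estimates $1-2r\leq e^{-2r}$ and $1-2r\geq e^{-2r-Cr^2}$ valid for $|r|$ small.

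For the lower bound \eqref{eq:refupperest} on $W_s^2$, I would use Kantorovich duality in the form
\[
\tfrac12 W_s^2\bigl(\hat P_{t,s}\delta_x,\hat P_{t,s}\delta_y\bigr)
\;\geq\; P_{t,s}\phi_s(x) + P_{t,s}\phi_s^{c_s}(y)
\]
for any $d_s^2/2$-concave function $\phi_s$ on $M$. Fixing a minimising $d_t$-geodesic $\gamma$ realising $\rfe_t(x,y)$ (which exists by \cref{lemma:rfe}), one chooses $\phi_s$ as a smooth ``almost Kantorovich potential'' adapted to $\gamma$: a natural choice is $\phi_t$ a $c_t$-concave smooth function with $\nabla_t\phi_t(\gamma^a)=\dot\gamma^a$ along $\gamma$, and $\phi_s$ a smooth $c_s$-concave modification built via the Hamilton--Jacobi equation (see the announced appendix). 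Expanding $P_{t,s}\phi_s$ and $P_{t,s}\phi_s^{c_s}$ around $s=t$ via the Bochner-type identity \eqref{eq:Bochner-heat} applied to $u_\tau := P_{\tau,s}\phi_s$, together with the propagation rule $\partial_\tau P_{t,\tau}h_\tau = P_{t,\tau}(\partial_\tau - \Delta_\tau)h_\tau$, produces after integration on $[s,t]$ and pairing with $\delta_x,\delta_y$ the leading correction $-T\,d_t^2(x,y)\,\rfe_t(\gamma)$, plus error terms of order $T d_t^4(x,y)$ (from the Hessian of $\phi_s$, which is tangentially linear but $O(d_t^2)$ transversally) and $T^2 d_t^2(x,y)$ (from higher-order $\tau$-integration and smoothness of the flow). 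Together they give the constant $C$ in \eqref{eq:refupperest}.

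For the upper bound \eqref{eq:reflowerest} on $W_s^2$ I would construct a coupling explicitly by chaining couplings along a minimising geodesic. Let $\gamma\in\geo(M,d_t)$ realise $\rfe_t(x,y)$ and consider the curve $a\mapsto \mu_s^a := \hat P_{t,s}\delta_{\gamma^a}$ in $(\mathcal P_2(M),W_s)$; since $\mu_s^0=\hat P_{t,s}\delta_x$ and $\mu_s^1=\hat P_{t,s}\delta_y$, one has $W_s(\hat P_{t,s}\delta_x,\hat P_{t,s}\delta_y)\leq \int_0^1 |\dot\mu_s^a|_{W_s}\,da$. The task reduces to bounding the metric derivative $|\dot\mu_s^a|_{W_s}$. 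Identifying the minimal vector field generating $a\mapsto\mu_s^a$ with the gradient of a heat-propagated potential $\psi_s^a$ (whose time-$t$ limit satisfies $\nabla_t\psi_t^a=\dot\gamma^a$) and applying the gradient estimate obtained by integrating \eqref{eq:Bochner-heat} over $[s,t]$ yields
\[
|\dot\mu_s^a|_{W_s}^2 \;\leq\; |\dot\gamma^a|_{g_t}^2\Bigl(1 - 2T\cdot\tfrac{1}{|\dot\gamma^a|_{g_t}^2}\bigl[\Ric_{f_t}+\tfrac12\partial_tg_t\bigr](\dot\gamma^a)\Bigr) + o(T)
\]
uniformly in $a\in[0,1]$, where the $o(T)$ absorbs the Hessian contributions $\|\nabla^2_\tau\psi^a_\tau\|^2$ which, evaluated against the measures $\hat P_{t,\tau}\delta_{\gamma^a}$ concentrating at $\gamma^a$ as $\tau\to t$, vanish in the limit. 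Integrating in $a$, squaring, and using $\sqrt{1-r}\leq 1-r/2$ gives \eqref{eq:reflowerest}. The uniform version in (2) uses the compactness of $M$ together with the lower semicontinuity of $\rfe_t$ from \cref{lemma:rfe}.

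The main obstacle will be the quantitative control of the Hessian term in the Bochner identity: for the upper bound on $W_s^2$ it carries the ``wrong'' sign and can be made small only by exploiting that the transport remains concentrated in an $O(\sqrt T)$-neighbourhood of the single geodesic $\gamma$. This is exactly the mechanism that, in the non-smooth setting, necessitates the $\varepsilon$-error in the definitions of weak/rough sub-Ricci flow. A secondary technical issue for the lower bound is the absence of a genuine Kantorovich potential for the pair $(\delta_x,\delta_y)$, which requires a smoothing and localisation argument around $\gamma$.
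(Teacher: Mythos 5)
Your treatment of the lower bound \eqref{eq:refupperest} matches the paper's argument essentially step-for-step: Kantorovich duality with a smooth almost-potential adapted to a $\rfe_t$-minimiser, Taylor expansion of $P_{t,s}\phi_s$ around $s=t$, the Bochner identity to extract the Ricci and $\partial_t g_t$ contributions, and a Riccati-type bound on $\|\nabla^2\phi^a\|$ for the $O(d_t^2)$ error --- this is precisely how the paper sets it up via \cref{lemma:spacetimepotentials}. For the upper bound \eqref{eq:reflowerest}, however, you take a genuinely different route. The paper follows McCann--Topping: it differentiates $r\mapsto W_r^2(\hat P_{t,r}\delta_x,\hat P_{t,r}\delta_y)$ in the time-variable $r$, pairs the velocity of the dual heat flow with the entropy derivatives at the endpoints of the $W_r$-geodesic, and invokes displacement semiconvexity (\cref{thm:displaceconvex}), so that the Hessian of the interpolating potential enters with a favourable sign and is simply dropped. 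You instead freeze $s$ and bound the Benamou--Brenier length of $a\mapsto\hat P_{t,s}\delta_{\gamma^a}$. This is a plausible alternative, but the crux of it, the bound
\[
|\dot\mu_s^a|^2_{W_s}\le |\dot\gamma^a|^2_{g_t}\Bigl(1-2(t-s)\,\tfrac{1}{|\dot\gamma^a|^2_{g_t}}\bigl[\Ric_{f_t}+\tfrac12\partial_tg_t\bigr](\dot\gamma^a)\Bigr)+o(t-s),
\]
is not established and does not follow as you suggest from the gradient estimate \eqref{ineq:gradientestimate}, which only carries the global constant $K$. What you actually need is a pointwise Kuwada-type identity $|\dot\mu_s^a|_{W_s}=\sup_f\langle\nabla_t P_{t,s}f(\gamma^a),\dot\gamma^a\rangle/\|\nabla_s f\|_{L^2(\mu_s^a)}$, the first-order Bochner expansion of $|\nabla_t P_{t,s}f(\gamma^a)|^2$, and a Cauchy--Schwarz alignment argument showing that the optimiser has $\nabla_t P_{t,s}f(\gamma^a)$ asymptotically parallel to $\dot\gamma^a$, so that the Ricci correction localises on $\dot\gamma^a$ rather than on the worst-case direction. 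That alignment step is nontrivial and is precisely what the McCann--Topping argument sidesteps by working with the dynamical plan of the actual $W_r$-geodesic.

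Two further points. First, your remark at the end reverses the sign diagnosis: in the Bochner identity the Hessian contribution is an obstruction for the \emph{lower} bound \eqref{eq:refupperest} on $W_s^2$ (it adds to the Ricci term and must be shown to be $O(d_t^2)$), whereas for the \emph{upper} bound \eqref{eq:reflowerest} it enters with the favourable sign in both the paper's approach and yours, and can simply be discarded. Second, the uniform version of \eqref{eq:reflowerest} is not obtained by ``compactness plus lower semicontinuity of $\rfe_t$'' --- indeed lower semicontinuity goes the wrong way for an upper estimate. The paper's proof of this part is substantial: it decomposes the error into three pieces, feeds the already-proven lower estimate \eqref{eq:refupperest} back in, uses Lipschitz regularity of $\rfe_t$ within the injectivity radius, and then controls the mass of the evolving optimal plan far from $(x,y)$ via Gaussian heat-kernel bounds. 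If you want your route to produce the uniform statement, a comparable localisation argument would be needed.
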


\begin{proof}

{\bf (Lower estimate for Wasserstein distances under dual heat flows)} We adapt the same strategy as the proof of the analogous inequality in \cite[Theorem 3.4]{sturm2017remarks} for the static case, taking into account the additional complexity arising from the time-dependence of the metric with the aid of \cref{lemma:spacetimepotentials}.

Let $s_0,t\in I$ be fixed. 
Choose $\varepsilon>0$ sufficiently small so that \cref{lemma:spacetimepotentials} applies.
Let $x,y\in M$ with $2r\coloneqq d_t(x,y)\le \varepsilon,$ and let $\phi^a_s$ with $a\in[-r,r],s\in[s_0,t]$ be the family of potentials constructed in \cref{lemma:spacetimepotentials}.

By Kantorovich duality \eqref{eq:KantorovichDuality}, we have that
\begin{align}
   \frac{W_s^2(\ap{\delta_x},\ap{\delta_y})-d^2_t(x,y)}{2r\cdot 2(t-s)}&\ge \frac1{t-s}\left[\int\phi_s^{-r}\d\ap{\delta_x}-\int\phi^r_s\d\ap{\delta_y}-\int \phi^{-r}_t\d\delta_x+\int\phi^r_t\d\delta_y\right]
    \\ &=\frac1{t-s}\big[P_{t,s}\phi^{-r}_s(x)-\phi^{-r}_t(x)-(P_{t,s}\phi^{r}_s(y)-\phi^{r}_t(y))\big]
    \\ &=-\partial_s\lvert_{s=t} (P_{t,s}\phi_s^{-r}(x))+\partial_s\lvert_{s=t} (P_{t,s}\phi_s^{r}(y))
    \\ &\phantom{=}+\aint{[s,t]}(s-\tau)\partial_{\tau}^2(P_{t,\tau}\phi_{\tau}^{r}(y)-P_{t,\tau}\phi_{\tau}^{-r}(x))\d\tau
    \\&=\Delta_{f_t}\phi_t^{-r}(x)-\Delta_{f_t}\phi_t^r(y)-\partial_t\phi_t^{-r}(x)+\partial_t\phi_t^{r}(y)
    \\ &\phantom{{=}}+\aint{[s,t]}(s-\tau)\partial_{\tau}^2(P_{t,\tau}\phi_{\tau}^{r}(y)-P_{t,\tau}\phi_{\tau}^{-r}(x))\d\tau,\label{eq:lowerest1}
\end{align}
where we applied Taylor expansion in the second equality, and in the last equality we used that for any smooth function $\phi$ and $\tau\leq t$:
\begin{equation}\label{eq:sderivativeHF}
\partial_{\tau}P_{t,\tau}\phi=-P_{t,\tau}\Delta_{f_{\tau}}\phi,\quad\forall \tau\leq t.
\end{equation}
The above can be argued as follows
\begin{equation}
\partial_{\tau}P_{t,\tau}\phi=\lim_{h\searrow 0} \frac{P_{t,\tau+h}\phi-P_{t,\tau}\phi}{h}=\lim_{h\searrow 0}P_{t,\tau+h}(\frac{\phi-P_{\tau+h,\tau}\phi}{h})=-P_{t,\tau}\Delta_{f_{\tau}}\phi.
\end{equation}
Let $(\gamma^a)_{a\in[-r,r]}$ be a unit speed geodesic from $x$ to $y$.
Using the Hamilton-Jacobi equation and the expression of $\nabla_t\phi^a$ along $\gamma$ from \cref{lemma:spacetimepotentials}, as well as the Bochner identity \eqref{eq:inftyBochner}, we have:
\begin{align}
  \Delta_{f_t}\phi_t^{-r}(x)-\Delta_{f_t}\phi_t^r(y)
  &=-\int_{-r}^r\partial_a(\Delta_{f_t}\phi_t^{a}(\gamma^a))\d a
  \\&=-\int_{-r}^r\left(\frac12 \Delta_{f_t}|\nabla_t\phi^a_t|^2_{g_t}+g_t(\nabla_t\Delta_{f_t}\phi^a_t, \dot \gamma^a)\right)(\gamma^a)\d a   
  \\&=-\int_{-r}^r\left(\frac12 \Delta_{f_t}|\nabla_t\phi^a_t|^2_{g_t}+g_t(\nabla_t\Delta_{f_t}\phi^a_t,-\nabla_t\phi^a_t)\right)(\gamma^a)\d a
  \\&=-\int^r_{-r}\left(||\nabla_t^2\phi^a_t||^2_{\mathrm{HS}}(\gamma^a)+\Ric_{f_t}(\dot\gamma^a)\right)\d a\label{ineq:A}\;.
\end{align}
Arguing exactly as in \cite[Thm.~3.1]{sturm2017remarks}, we have that 
\begin{equation}\label{eq:Hessianphi}
    \|\nabla_t^2\phi^a_t\|^2_{\mathrm{HS}}(\gamma^a)\leq \sigma\tan^2(\sqrt\sigma|a|)\quad \text{for all $a$}\;,
\end{equation}
where $\sigma$ is an upper bound on the modulus of the Riemann tensor along $\gamma$.
Similarly, we obtain
\begin{align}
    \partial_t\phi_t^{r}(y)-\partial_t\phi_t^{-r}(x)&=\int_{-r}^r\partial_a(\partial_t\phi^a_t(\gamma^a))\d a
    \\ &=\int_{-r}^r(\partial_t\partial_a\phi^a_t)(\gamma^a)
    +g_t(\nabla_t\partial_t\phi^a_t(\gamma^a),\dot\gamma^a)\d a
    \\&=\int_{-r}^r (\frac12\partial_t|\nabla_t\phi^a_t|_{g_t}^2-g_t(\nabla_t\partial_t\phi^a_t,\nabla_t\phi^a_t))(\gamma^a)\d a
    \\&=\int^r_{-r}-\frac12\partial_tg_t(\dot\gamma^a,\dot\gamma^a)\d a.\label{ineq:B}
\end{align}
The last equality follows from the fact that in local coordinate $|\nabla_t\phi|^2_{g_t}=g^{ij}_t\partial_i\phi\partial_j\phi$ and $\partial_t g^{ij}_t=-(\partial_t g)^{ij}$.

We are left to estimate the remainder term in \eqref{eq:lowerest1}.
We use again \eqref{eq:sderivativeHF} to write
\begin{align}
    \partial^2_\tau P_{t,\tau}\phi^a_\tau&=\partial_\tau(- P_{t,\tau}\Delta_{f_\tau}\phi^a_\tau+P_{t,\tau}\partial_\tau\phi^a_\tau)
    \\&=P_{t,\tau}\Delta^2_{f_\tau}\phi^a_\tau-P_{t,\tau}\partial_\tau(\Delta_{f_\tau}\phi^a_\tau)-P_{t,\tau}\Delta_{f_\tau}\partial_\tau\phi^a_\tau+P_{t,\tau}\partial^2_\tau\phi^a_\tau
    \\&=P_{t,\tau}\big(\Delta^2_{f_\tau}\phi^a_\tau-\partial_\tau(\Delta_{f_\tau}\phi^a_\tau)-\Delta_{f_\tau}\partial_\tau\phi^a_\tau+\partial^2_\tau\phi^a_\tau\big)
    \\ &\eqqcolon P_{t,\tau}A_{\tau}\phi^a_\tau.
\end{align}
Thus,
\begin{align}
    \partial^2_\tau\big(P_{t,\tau}\phi^r_\tau(y)-P_{t,\tau}\phi^{-r}_\tau(x)\big)&=\int_{-r}^r\partial_a\big(\partial^2_\tau P_{t,\tau}\phi^a_\tau(\gamma^a)\big)\d a
    \\&=\int_{-r}^r\partial_a\big(P_{t,\tau}A_{\tau}\phi^a_\tau(\gamma^a)\big)\d a
    \\&=\int_{-r}^r P_{t,\tau}A_{\tau}\partial_a\phi^a_\tau(\gamma^a)+g_t(\nabla_t P_{t,\tau}A_{\tau}\phi^a_\tau, \dot\gamma^a)\d a
    \\ &=\int_{-r}^r\frac12P_{t,\tau}A_{\tau}|\nabla\phi^a_\tau|^2(\gamma^a)+g_t(\nabla_t P_{t,\tau}A_{\tau}\phi^a_\tau, \dot\gamma^a)\d a.
\end{align}
Since $(M,g_t,f_t)$ is a smooth flow of compact manifolds, there exists $K\in\R$ so that $\Ric_{f_t}+\frac12\partial_tg_t\ge Kg_t$.
Hence by the gradient estimate \eqref{ineq:gradientestimate}, we have
\begin{align}
   |\nabla_tP_{t,\tau}A_{\tau}\phi^a_\tau|^2_{g_t}\le e^{-2K(t-\tau)}P_{t,\tau}|\nabla_{\tau} A_{\tau}\phi^a_{\tau}|^2_{g_\tau}.
\end{align}
Further with the Markovian property of the heat propagator and the fact that fifth order derivatives of potentials are uniformly bounded by \cref{lemma:spacetimepotentials}, there exists a constant $C$ depending only on $s_0$, $\varepsilon$ and $M$ so that
\begin{align}
    \aint{[s,t]}(s-\tau)\partial_\tau^2(P_{t,\tau}\phi_\tau^{r}(y)-P_{t,\tau}\phi_\tau^{-r}(x))\d\tau\leq 2Cr(t-s),\label{ineq:C}
\end{align}
uniformly for all $x,y$ with $d_t(x,y)\leq \varepsilon$.

Combining \eqref{eq:lowerest1} with \eqref{ineq:A}, \eqref{ineq:B} and \eqref{ineq:C} and noting that by a simple reparametrisation argument 
\[\int_{-r}^r\Ric_{f_t}(\dot\gamma^a) +\frac12\partial_tg_t(\dot\gamma^a)\d r = 2r\rfe_t(x,y)\;, \]
we get
\begin{align}\label{ineq:est42}
   \frac{W_s^2(\ap{\delta_x},\ap{\delta_y})-d^2_t(x,y)}{2r\cdot 2(t-s)}\geq -2r\Big[\rfe_{t}(x,y)+H(r))-C(t-s)\Big]\;,
\end{align}
where $H(r)=\sup_{a\in[-r,r]} \|\nabla_t^2\phi^a_t\|^2_{\mathrm{HS}}(\gamma^a)\leq\sigma\tan^2(\sqrt\sigma r)=O(r^2)$ by \eqref{eq:Hessianphi}. This gives the desired estimate \eqref{eq:refupperest}.

{\bf (Upper estimate for Wasserstein distances under dual heat flows I)} The proof goes in the lines of the proof of the analogous claim (Theorem 2) in \cite{McCann-Topping}.
For $r<t$, denote by $\rho^0_r$ and $\rho^1_r$ the densities of $\mu^0_r\coloneqq\hat{P}_{t,r}\delta_x$ and $\mu^1_r\coloneqq\hat{P}_{t,r} \delta_y$ respectively.
Let $(\mu^a_r)_{a\in[0,1]}$ be the constant speed geodesic from $\mu^0_r$ to $\mu^1_r$ under $W_r$.
For all $\tau<r$, we have that $\mu^i_{\tau}=(\psi^i_{r,\tau})_\#\mu^i_r$ for $i\in\{0,1\}$, where $\psi^i_{r,\tau}$ is a family of diffeomorphisms generated by the (time dependent) vector fields $\nabla_{\tau}\log \rho^i_{\tau}$ and with $\psi^i_{r,r}=\id$, see \cite[Chapter 6]{ToppingRFbook}.
Consider $\pi_{r}\in \mathrm{OptGeo}(\mu^0_{r},\mu^1_{r})$ and $\sigma_{r}\coloneqq (e_0,e_1)_{\#}\pi_{r}$.
Then $(\psi^0_{r,\tau},\psi^1_{r,\tau})_\#\sigma_{r}$ is a coupling between $\mu^0_{\tau}$ and $\mu^1_{\tau}$. 
Thus,

\begin{align}
    \lim_{h\searrow 0} \frac{1}{h}(W_{r}^2(\mu^0_r,\mu^1_r)-W_{r-h}^2(\mu_{r-h}^0,\mu^1_{r-h}))&\geq \lim_{h\searrow 0} \frac{1}{h} \int d^2_{r}(x,y)-d^2_{r-h}(\psi^0_{r,r-h}(x),\psi^1_{r,r-h}(y))\d\sigma_r(x,y)\\
    &=\int \partial_{\tau}|_{\tau=r}\left[d^2_{\tau}(\psi^1_{r,\tau}(x),\psi^1_{r,\tau}(y))\right]\d\sigma_r.
\end{align}
From \cite{Cordero-Erausquin2001} the cut locus is $\sigma$-negligible, and outside the cut locus, we can compute the above derivative as in \cite[Remark 6]{McCann-Topping}
\begin{align}
   \partial_{\tau}|_{\tau=r} d^2_{\tau}(\psi^1_{r,\tau}(x),\psi^1_{r,\tau}(y))=\int_0^1(\partial_{r}g_{r})(\dot{\gamma}^a,\dot{\gamma}^a)\d a +2g_{r}(\nabla_r\log\rho^1_{r}(\gamma^1),\dot\gamma^1)-2g_r(\nabla_r\log\rho^0_{r}(\gamma^0),\dot\gamma^0)
\end{align}
where $\gamma$ is the unique $d_r$-geodesic from $x$ to $y$.
\cite[Lemma 8]{McCann-Topping} shows that
\begin{align}
    \left.\frac{\d}{\d a}\right\lvert_{a=1^-} \ent_r(\mu^a_r)&\leq  \int g_r(\nabla_r\log\rho^1_{r}(\gamma^1),\dot\gamma^1)\d\pi_r(\gamma);\\
     -\left.\frac{\d}{\d a}\right\lvert_{a=0^+} \ent_r(\mu^a_r)&\leq \int -g_r(\nabla_r\log\rho^0_{r}(\gamma^0),\dot\gamma^0)\d\pi_r(\gamma)\;.
\end{align}
Applying \cref{thm:displaceconvex} (and using the function $\ell$ defined there) we obtain
\begin{align}\label{ineq:411}
     \lim_{h\searrow 0} \frac{1}{2h}(W_{r}^2(\mu^0_r,\mu^1_r)&-W_{r-h}^2(\mu_{r-h}^0,\mu^1_{r-h}))\\
     &\geq \int \int_0^1(\frac12\partial_{r}g_{r}+\Ric_{N,f_r})(\dot{\gamma}^a,\dot{\gamma}^a)+\frac{(\partial_a\ell(a,\gamma^0))^2}{N}\d a\d \pi_r(\gamma).\label{ineq:06/10-1}
\end{align}
Taking $N=\infty$ and keeping in mind that for $\pi_r$-a.e. $\gamma$, $\gamma_0$ and $\gamma_1$ are non-conjugate, we get 
\begin{align}
    \partial_r[ W^2_r(\mu^0_r,\mu^1_r)]&\geq 
    \int 2\rfe_r(p,q)d^2_r(p,q)\d \sigma_r(p,q).\label{ineq:DW_2^2}
\end{align}

By stability of optimality (cf. \cite[Theorem 5.20]{Villani}), $r\mapsto \sigma_r$ is continuous under the weak topology (dual of $C_b(M\times M)$), and $r\mapsto W_r^2(\mu^0_r,\mu^1_r)$ is locally absolutely continuous by \cref{lemma:dualheatflow}.
Hence integrating \eqref{ineq:DW_2^2} over $(s,t)$ implies
\begin{equation}\label{ineq:DW^2_2-2}
    W_t^2(\mu^0_t,\mu^1_t)-W_s^2(\mu^0_s,\mu^1_s)\geq \int^t_s\int 2\rfe_{\tau}(p,q)d^2_{\tau}(p,q)\d \sigma_{\tau}(p,q)\d \tau.
\end{equation}
By the weak convergence $\sigma_{\tau}\rightharpoonup \sigma_t=\delta_x\otimes \delta_y$ and the lower semi-continuity of $\rfe_{\tau}(p,q)d^2_{\tau}(p,q)$ ensured by \cref{lemma:rfe}, we obtain
\begin{equation}
    \liminf_{\tau\to t}\int\rfe_{\tau}(p,q)d^2_{\tau}(p,q)\d \sigma_{\tau}\geq \rfe_t(x,y)d^2_t(x,y).
\end{equation}
Hence we can pass to the limit in \eqref{ineq:DW^2_2-2} to obtain \eqref{eq:reflowerest}.

{\bf (Upper estimate for Wasserstein distances under dual heat flows II--error estimate)}
To refine the upper estimate in Part I, we build on \eqref{ineq:DW^2_2-2} and progress towards \eqref{eq:reflowerest} by expressing it as follows
\begin{align}
      W_s^2(\mu^0_s,\mu^1_s)&\leq d^2_t(x,y)- \int^t_s\int 2\rfe_{\tau}(p,q)d^2_{\tau}(p,q)\d \sigma_{\tau}(p,q)\d \tau\\
      &=d^2_t(x,y)\big(1-2(t-s)\rfe_t(x,y)\big)\\
      &\quad +2\int^t_s\int \underbrace{\rfe_t(x,y)d^2_t(x,y)-\rfe_{\tau}(p,q)d^2_{\tau}(p,q)}_{E}\d \sigma_{\tau}(p,q)\d \tau\label{term:E}.
\end{align}
We decompose the error term $E$ in \eqref{term:E} as follows
\begin{align}
    E\coloneqq \underbrace{\rfe_t(x,y)d^2_t(x,y)-\rfe_{t}(x,y)d^2_{\tau}(p,q)}_{E_1}&+ \underbrace{\rfe_t(x,y)d^2_{\tau}(p,q)-\rfe_{\tau}(x,y)d^2_{\tau}(p,q)}_{E_2}\\
 + \underbrace{\rfe_\tau(x,y)d^2_\tau(p,q)-\rfe_\tau(p,q)d^2_\tau(p,q)}_{E_3}.
\end{align}
The term $E_1$ can be bounded with the help of the lower estimate \eqref{eq:refupperest}.
That is 
\begin{align}
 &\int E_1\d\sigma_{\tau}=\rfe_t(x,y)(d^2_t(x,y)-W^2_{\tau}(\mu^0_{\tau},\mu^1_{\tau}))\\
 \leq & 2\rfe_t(x,y)(t-s)d^2_t(x,y)\big(\rfe_{t}(x,y)+C\cdot[d_t(x,y)+t-s]\big)
\end{align}
with $\varepsilon_0,C>0$ chosen properly as in the lower estimate.

Now we consider the rest two terms.
Notice that inside the injectivity radius, $\rfe_t(x,y)$ is smooth in $x,y$ and $t$.
 As a smooth family of compact manifolds, $M$ admits a positive lower bound for the injectivity radius, see e.g. \cite{Enrlich1974}.
 Hence there exist $\varepsilon_2$ and $C_2>0$ s.t. for all $x$ and $t$, $\rfe_t(x,\cdot)$ is $C_2$-Lipschitz on $B_t(x,\varepsilon_2)$.
Then
\begin{align}\label{ineq:06/06-1}
    \int E_2+E_3\d \sigma_\tau&\leq \int[ C_2|t-\tau|+C_3(d_{\tau}(x,p)+d_{\tau}(y,q))]d^2_{\tau}(p,q)\d \sigma_\tau(p,q).
\end{align}
We need a preliminary estimate for the distribution of $\sigma_\tau$ on points far away from $(x,y)$.
By the same argument in \cref{thm:eta-theta} via heat kernel estimates, we have for each $l>0$
\begin{align}
    e_{\tau,l}\coloneqq 1-\sigma_\tau\big(B_\tau(x,l)\times B_\tau(y,l)\big)\leq C'\exp(-\frac{l^2}{C'(t-\tau)}),
\end{align}
where $C'$ is independent of $x,y,t,\tau$.
Thus we can proceed \eqref{ineq:06/06-1} by decomposing the integral into two parts, one over $ B_\tau(x,l)\times B_\tau(y,l)$ and the other over the complement.
With the symbol $\lesssim$ omitting unimportant constants, we can write
\begin{align}
     \int E_2+E_3\d \sigma_\tau& \lesssim  (|t-\tau|+l)(d_\tau(x,y)+2l)^2+e_{\tau,l}.
\end{align}
Choosing $l=d_\tau(x,y)\sqrt{C'}$ and $s_0$ s.t. $t-s_0<\frac{-d^2_\tau(x,y)}{\log d^3_\tau(x,y)}$, then
\begin{equation}
    \int E_2+E_3\d \sigma_\tau\lesssim (|t-\tau|+d_{\tau}(x,y))d^2_\tau(x,y).
\end{equation}
Finally, by the log-Lipschitzness of metrics, we have 
\begin{equation}
    \int^t_s \int E\d\sigma_\tau \d \tau\lesssim (t-s)^2d^2_t(x,y)+(t-s)d^3_t(x,y)
\end{equation}
for all $s\in (s_0,t)$, given by $t-s_0\lesssim\frac{-d^2_t(x,y)}{\log d^3_t(x,y)}$.
This is sufficient to conclude the upper estimate.
\end{proof}

\begin{rem}
   The assumption of compactness is made only to avoid the technical difficulty of showing existence and smoothness of the heat flow on non-compact time-dependent Riemannian manifolds. If we assume a priori that the time-dependent heat flow exists and is smooth, we can obtain a similar a similar statement as in the previous theorem, where the constants appearing depend can be chosen uniformly on compact subsets. We however need one global geometric assumption, namely that $(M,g_t,f_t)_{t\in I}$ is a $(K,\infty)$-super-Ricci flow for some $K\in\R$ in order to control the remainder term in \eqref{eq:lowerest1} via the gradient estimate for the heat flow. Then one can obtain the following statement:
   \begin{quote}
         For each compact subset $M_0\subset M$ and $s_0<t$, there exist $\varepsilon>0$ and $C>0$ which depends only on $s_0$, $\varepsilon$ and $M_0$ so that \eqref{eq:refupperest} holds for $x,y\in M_0$ with $d_t(x,y)\le \varepsilon$, and $s\in(s_0,t)$.
   \end{quote}
   The same applies to other consistency results on the rough (super/$N$-super-/sub-)Ricci flow in this section.
\end{rem}

As a direct consequence of \cref{thm:sharperestimates}, the following relation holds between the $\vartheta$ quantities and the Ricci flow excess on closed manifolds.
\begin{corollary}\label{cor:theta+}
Let $(M,g_t,f_t)_{t\in I}$ be a compact smooth flow. Then we have for all $t\in I$, $x,y\in M$:
\begin{align}\label{eq:excess1}
{\rm RFex}(t,x,y) \leq \vartheta^-(t,x,y)\;.
\end{align}
For every $t_0\in I$ there is $\varepsilon>0$ such that for all $t\in I$, $x,y\in M$ non-conjugate with $|t-t_0|, d_t(x,y)<\varepsilon$: 
\begin{align}\label{eq:excess2}
 \vartheta^+(t,x,y) \leq {\rm RFex}(t,x,y) + \sigma_t \tan^2\big(\sqrt{\sigma_t} d_t(x,y)\big)\;,
\end{align}
where $\sigma_t$ is an upper bound on the modulus of the Riemann tensor along the geodesic from $x$ to $y$.
\end{corollary}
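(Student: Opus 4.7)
The plan is to derive both inequalities by taking logarithms in the two-sided estimate for $W_s(\ap[t,s]\delta_x,\ap[t,s]\delta_y)/d_t(x,y)$ already furnished by \cref{thm:sharperestimates}, and then passing to the limit $s\nearrow t$ in the definitions of $\vartheta^{\pm}$. Inequality \eqref{eq:excess1} will follow from the upper bound, part~(2) of that theorem, while \eqref{eq:excess2} will require a mild sharpening of the lower bound, part~(1), in which the Hessian contribution to the remainder is kept explicit instead of being absorbed into the generic constant $C$ appearing in \eqref{eq:refupperest}.

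For \eqref{eq:excess1}, I would fix $t\in I$, $x,y\in M$ and $\delta>0$. By part~(2) of \cref{thm:sharperestimates} there exists $s_0<t$ such that
\begin{equation*}
W_s^2(\ap[t,s]\delta_x,\ap[t,s]\delta_y)\leq d_t^2(x,y)\bigl(1-2(t-s)[\rfe_t(x,y)-\delta]\bigr)
\end{equation*}
for every $s\in(s_0,t)$. Taking logarithms, dividing by $t-s$ and using $\log(1-u)\leq -u$ yields $\tfrac{1}{t-s}\log\bigl(W_s/d_t\bigr)\leq -(\rfe_t(x,y)-\delta)+o(1)$ as $s\nearrow t$. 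Passing to $\limsup_{s\nearrow t}$ and negating gives $\vartheta^{-}(t,x,y)\geq\rfe_t(x,y)-\delta$, and \eqref{eq:excess1} follows since $\delta>0$ was arbitrary.

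For \eqref{eq:excess2} the strategy is analogous but requires revisiting the proof of part~(1) of \cref{thm:sharperestimates}. In the chain ending at \eqref{ineq:est42}, the Hessian contribution appears via the term $\int_{-r}^{r}\|\nabla_t^{2}\phi_t^{a}\|_{\mathrm{HS}}^{2}(\gamma^{a})\,\d a$, which by \eqref{eq:Hessianphi} is at most $2r\,\sigma_t\tan^{2}(\sqrt{\sigma_t}\,r)$. Carrying this term along explicitly---rather than estimating it by $Cr^{2}$, as is done in the statement of the theorem---and using $\tan^{2}(\sqrt{\sigma_t}\,r)\leq\tan^{2}(\sqrt{\sigma_t}\,d_t(x,y))$ since $r=d_t(x,y)/2$, one obtains the refined lower estimate
\begin{equation*}
W_s^{2}(\ap[t,s]\delta_x,\ap[t,s]\delta_y)\geq d_t^{2}(x,y)\Bigl(1-2(t-s)\bigl[\rfe_t(x,y)+\sigma_t\tan^{2}(\sqrt{\sigma_t}d_t(x,y))+C(t-s)\bigr]\Bigr),
\end{equation*}
valid whenever $x,y$ are non-conjugate with $|t-t_{0}|$ and $d_t(x,y)$ small enough for \cref{lemma:spacetimepotentials} to apply and for $s$ close to $t$. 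Logarithmic comparison and passage to $\liminf_{s\nearrow t}$, exactly as in the first step, then yield \eqref{eq:excess2}. The only genuine subtlety is this bookkeeping---keeping the geometric correction $\sigma_t\tan^{2}(\sqrt{\sigma_t}d_t)$ visible separately from the $O(t-s)$ remainder produced by the Taylor expansion of $P_{t,\tau}\phi_{\tau}^{\pm r}$, so that only the latter vanishes in the limit. No new analytic input beyond \cref{thm:sharperestimates} and \cref{lemma:spacetimepotentials} is needed.
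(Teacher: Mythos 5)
Your proof is correct and takes essentially the same approach as the paper, which simply remarks that the corollary follows by letting $s\to t$ in \eqref{eq:reflowerest} and \eqref{ineq:est42}. Your observation that the $\sigma_t\tan^2(\sqrt{\sigma_t}d_t(x,y))$ correction comes from retaining the Hessian bound $H(r)\leq\sigma_t\tan^2(\sqrt{\sigma_t}r)$ in \eqref{ineq:est42} (rather than coarsening to $O(r^2)$ as in \eqref{eq:refupperest}) is exactly the intended reading of the paper's one-line proof.
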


\begin{proof}
This follows immediately by letting $s\to t$ in \eqref{eq:reflowerest} and in \eqref{ineq:est42}.
\end{proof}

\begin{corollary}\label{cor:ThetavsRFex}
For any $t\in I$ and $x\in M$, 
\begin{align}
    \vartheta^*(t,x)&=\sigma_{\max}(\frac12\partial_t g_t+\Ric_{f_t})(x),\\
    \liminf_{y,z\to x}\vartheta^{\pm}(t,y,z)&=\sigma_{\min}(\frac12\partial_t g_t+\Ric_{f_t})(x),
\end{align}
where $\sigma_{\max}(\cdot)(x)$ and $\sigma_{\min}(\cdot)(x)$ denote the largest and smallest eigenvalue of the symmetric linear map on $T_xM$ respectively.
In particular, $\vartheta^{*}(t,x)$ is jointly continuous on $I\times M$.
\end{corollary}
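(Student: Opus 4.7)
The strategy is to combine the sharp infinitesimal bound between $\vartheta^{\pm}$ and $\rfe_t$ from \cref{cor:theta+} with a pointwise spectral analysis of the symmetric $(0,2)$-tensor
\[
Q_t := \Ric_{f_t}+\tfrac{1}{2}\partial_tg_t,
\]
whose eigenvalues with respect to $g_t$ yield $\sigma_{\min}$ and $\sigma_{\max}$.

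First, by \cref{cor:theta+}, for $y,z$ sufficiently close to $x$ (so that they are non-conjugate and within a neighborhood where that estimate applies) we have
\[
\rfe_t(y,z)\;\le\;\vartheta^-(t,y,z)\;\le\;\vartheta^+(t,y,z)\;\le\;\rfe_t(y,z)+\sigma_t\tan^2\!\bigl(\sqrt{\sigma_t}\,d_t(y,z)\bigr),
\]
and the remainder is $O(d_t(y,z)^2)\to 0$ as $y,z\to x$. Consequently, $\limsup$ and $\liminf$ of $\vartheta^{\pm}(t,y,z)$ as $y,z\to x$ agree with those of $\rfe_t(y,z)$, so it remains to analyze the latter.

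Next, I would reduce $\rfe_t(y,z)$ to a single geodesic. Since $\geo(M,d_t)$ by definition consists only of constant-speed minimizing geodesics and the injectivity radius at $x$ with respect to $g_t$ is positive, for $y,z$ sufficiently close to $x$ there is a unique element $\gamma_{yz}\in\geo(M,d_t)$ from $y$ to $z$, and the infimum in the definition of $\rfe_t(y,z)$ is attained there:
\[
\rfe_t(y,z)=\rfe_t(\gamma_{yz})=\aint{[0,1]}Q_t(v_a,v_a)\,\d a,\qquad v_a:=\dot\gamma_{yz}^{\,a}/|\dot\gamma_{yz}^{\,a}|_{g_t}.
\]
As $y,z\to x$ the curve $\gamma_{yz}$ shrinks to $x$ and, along subsequences, its unit direction converges to some $v\in T_xM$ with $|v|_{g_t}=1$; smoothness of $Q_t$ and $g_t$ then forces $\rfe_t(y,z)\to Q_t(v,v)$.

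Finally, I would identify the range of such limits with the spectrum of $Q_t$ at $x$. For the upper bound one has $\rfe_t(y,z)\le \sup_a\sigma_{\max}(Q_t)(\gamma_{yz}^a)\to \sigma_{\max}(Q_t)(x)$ by uniform continuity; conversely, taking $v$ a unit eigenvector of $Q_t$ at $x$ for the largest eigenvalue and $y_n=\exp_x^{g_t}(-v/n)$, $z_n=\exp_x^{g_t}(v/n)$ gives $\rfe_t(y_n,z_n)\to Q_t(v,v)=\sigma_{\max}(Q_t)(x)$. The analogous pair of estimates yields $\liminf_{y,z\to x}\rfe_t(y,z)=\sigma_{\min}(Q_t)(x)$, and the claimed identities for $\vartheta^*$ and $\liminf\vartheta^{\pm}$ follow via the sandwich from the first step. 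Joint continuity of $(t,x)\mapsto\vartheta^*(t,x)$ is then immediate from the Rayleigh quotient $\sigma_{\max}(Q_t)(x)=\sup_{v\neq 0}Q_t(v,v)/g_t(v,v)$ together with smoothness of the flow. The only subtle point is ensuring that the infimum in $\rfe_t(y,z)$ is realized by the unique short minimizing geodesic, which is guaranteed once $(y,z)$ lies inside the injectivity radius at $x$ since $\geo(M,d_t)$ records only minimizing curves.
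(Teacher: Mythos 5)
Your proposal is correct and follows exactly the route the paper intends: the corollary is stated as an immediate consequence of the two-sided sandwich in \cref{cor:theta+}, with the error term $\sigma_t\tan^2(\sqrt{\sigma_t}\,d_t(y,z))=O(d_t(y,z)^2)$ vanishing as $y,z\to x$, so that the $\limsup$/$\liminf$ of $\vartheta^\pm$ reduce to those of $\rfe_t(y,z)$, which converge to the Rayleigh quotients $Q_t(v,v)$ along shrinking unit directions and hence realize exactly the extremal eigenvalues of $\tfrac12\partial_tg_t+\Ric_{f_t}$ at $x$. Your handling of the reduction to the unique short minimizing geodesic and of the joint continuity is consistent with the paper's setup, so there is nothing to add.
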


\begin{theorem}\label{thm:smoothrough_infty}
  Let $(M,g_t,f_t)_{t\in I}$ be a compact smooth flow. Then
\begin{enumerate}
\item $(M,g_t,f_t)_{t\in I}$ is a rough super-Ricci-flow if and only if $\Ric_{f_t}+\frac12\partial_tg_t\geq 0$ if and only if $\vartheta^{\pm}(t,x,y)\ge 0$ for all $t\in I$, $x,y\in M$;
\item $(M,g_t,f_t)_{t\in I}$ is a rough sub-Ricci-flow if and only if $\Ric_{f_t}+\frac12\partial_tg_t\leq 0$ if and only if $\vartheta^*(t,x)\le 0$ for all $t\in I$, $x\in M$.
\end{enumerate}
\end{theorem}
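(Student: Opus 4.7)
The plan is to derive both statements directly from the three results just proved: Theorem \ref{thm:RFvsTheta} (rough super/sub-Ricci flow $\Leftrightarrow$ pointwise sign of $\vartheta^-$ resp.\ $\vartheta^*$), Corollary \ref{cor:theta+} (the bound $\rfe_t(x,y) \leq \vartheta^-(t,x,y)$), and Corollary \ref{cor:ThetavsRFex} (the identification of $\vartheta^*(t,x)$ and $\liminf_{y,z\to x}\vartheta^{\pm}(t,y,z)$ with $\sigma_{\max}$ resp.\ $\sigma_{\min}$ of $\Ric_{f_t}+\tfrac12\partial_tg_t$ at $x$). No new estimates are required; the task is to assemble these pieces.

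For statement (1), Theorem \ref{thm:RFvsTheta} already gives that the flow is a rough super-Ricci flow iff $\vartheta^-(t,x,y) \geq 0$ for all $t\in I$ and $x,y\in M$. If this holds, taking $\liminf_{y,z\to x}$ and invoking Corollary \ref{cor:ThetavsRFex} yields $\sigma_{\min}\bigl(\Ric_{f_t}+\tfrac12\partial_tg_t\bigr)(x) \geq 0$ pointwise, which is the quadratic-form inequality $\Ric_{f_t}+\tfrac12\partial_tg_t\geq 0$. Conversely, if this inequality holds then $\rfe_t(x,y)\geq 0$ for every $x,y\in M$ since $\rfe_t$ is an average of non-negative quantities along any minimizing geodesic; by Corollary \ref{cor:theta+} this gives $\vartheta^-(t,x,y) \geq \rfe_t(x,y) \geq 0$. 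Since $\vartheta^+\geq \vartheta^-$ always holds by definition (the sign-flipped version of $\liminf \leq \limsup$), we in fact obtain $\vartheta^{\pm}(t,x,y)\geq 0$. The converse $\vartheta^\pm\geq 0 \Rightarrow \vartheta^-\geq 0$ is trivial, closing the chain of equivalences.

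For statement (2), I proceed analogously: Theorem \ref{thm:RFvsTheta} gives that the flow is a rough sub-Ricci flow iff $\vartheta^*(t,\cdot)\leq 0$ for a.e.\ $t$, and Corollary \ref{cor:ThetavsRFex} identifies $\vartheta^*(t,x) = \sigma_{\max}\bigl(\Ric_{f_t}+\tfrac12\partial_tg_t\bigr)(x)$, so this is equivalent to $\Ric_{f_t}+\tfrac12\partial_tg_t\leq 0$ at every $x$. Since the tensor $\Ric_{f_t}+\tfrac12\partial_tg_t$ depends smoothly on $t$ on the compact manifold $M$, an a.e.-in-$t$ inequality is automatically upgraded to every $t\in I$, matching the ``for all $t$'' quantification in the statement.

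The only point requiring care is the bookkeeping of the $\pm$ decorations on $\vartheta$ and the direction of the $\sigma_{\min}/\sigma_{\max}$ bounds from Corollary \ref{cor:ThetavsRFex}; in particular, $\sigma_{\min}$ appears only after taking $\liminf_{y,z\to x}$ (used in (1)), and $\sigma_{\max}$ only after the $\limsup$ implicit in $\vartheta^*$ (used in (2)). Beyond this, the proof is a direct concatenation of the lemmas preceding it in the section.
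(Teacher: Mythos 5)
Your proposal is correct and follows essentially the same route as the paper: in both cases one invokes Theorem~\ref{thm:RFvsTheta} to translate rough super/sub-Ricci flow into sign conditions on $\vartheta^-$ resp.\ $\vartheta^*$, then closes the equivalence cycle through the tensor $\Ric_{f_t}+\tfrac12\partial_tg_t$ via Corollary~\ref{cor:ThetavsRFex} (eigenvalue identification) and Corollary~\ref{cor:theta+} (the bound $\rfe_t\leq\vartheta^-$), with the a.e.-to-all-$t$ upgrade in (2) coming from continuity. The only cosmetic difference is that the paper inserts the intermediate step $\vartheta^-\geq 0\Rightarrow\vartheta^+\geq 0$ at the start of the chain while you note $\vartheta^+\geq\vartheta^-$ at the end; both observations are the same fact.
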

\begin{proof}
By Theorem \ref{thm:RFvsTheta}, $(M,g_t,f_t)_{t\in I}$ is a rough super-Ricci-flow if and only if $\vartheta^-(t,x,y)\geq 0$ for all $t,x,y$.
Therefore the equivalence is proved by completing the following chain of implications
\begin{equation}
    \begin{array}{lll}
       &\vartheta^-(t,x,y)\geq 0, \quad \forall t\in I,x,y\in M&\Rightarrow \vartheta^+(t,x,y)\geq 0, \quad \forall t\in I,x,y\in M\\
  \Rightarrow  & \Ric_{f_t}+\frac12\partial_tg_t\geq 0, \quad \forall t\in I, x\in M&\Rightarrow   \rfe_t(x,y)\geq 0,\quad \forall t\in I ,x,y\in M\\
  \Rightarrow  & \vartheta^-(t,x,y)\geq 0, \quad \forall t\in I,x,y\in M&
    \end{array}
\end{equation}
where the second and fourth implication follow from \cref{cor:ThetavsRFex} and \cref{cor:theta+} respectively.

Now, consider sub-Ricci-flow.
 Combining \cref{thm:RFvsTheta} gives that $(M,g_t,f_t)_{t\in I}$ is a rough sub-Ricci flow if for a.e. $t$, $\vartheta^{*}(t,x)\leq 0$ for all $x$, and hence for all $t,x$ by the continuity from \cref{cor:ThetavsRFex}.
Then the equivalence can be shown similarly as above.
\end{proof}

\subsection{Rough \texorpdfstring{$N$}{N}-super-Ricci flow}\label{sec:RoughN}
\begin{definition}
Let $\mu,\nu\in \p_t(X)$, $s\le t$. 
Define
\begin{align}
    \vartheta^+(t,\mu,\nu)\coloneqq -\liminf_{s\nearrow t}\frac1{t-s}\log\frac{W_s(\ap\mu,\ap\nu)}{W_t(\mu,\nu)}.
\end{align}
\end{definition}
Replacing $\Ric_{f_t}$ by the $N$-tensor $\Ric_{N,f_t}$ in \eqref{def:rfe}, we define the Ricci flow excess $\rfe_{N,t}$ along $\gamma\in \mathrm{Geo}(M,d_t)$ with the extra dimension parameter $N$ as follows
\begin{align} \label{eq:rfe_n}
	\rfe_{N,t}(\gamma)\coloneqq \aint{}\frac{1}{|\dot{\gamma}|^2}\left[\Ric_{N,f_t}(\dot\gamma^a)+\frac12\partial_tg_t(\dot\gamma^a)\right]\d a.
\end{align}

\begin{proposition}\label{prop:theta_N}Let $(M,g_t,f_t)_{t\in I}$ be a compact smooth flow.
Then
\begin{enumerate}
    \item for any $W_t$-geodesic $ (\mu^a)_{a\in[0,1]}$ with $\mu^0,\mu^1\in \p_t(X)\cap D(\ent_t)$ and $\pi\in \mathrm{OptGeo}_{t}(\mu^0,\mu^1)$,
    \begin{align}\label{ineq:Roughup}
   \int \rfe_{N,t}(\gamma)d_t^2(\gamma^0,\gamma^1)\d\pi(\gamma)+\frac1N\big(\ent_t(\mu^0)-\ent_t(\mu^1)\big)^2\le W^2_t(\mu^0,\mu^1)\vartheta^+(t,\mu^0,\mu^1);
   \end{align}
   \item for every $\varepsilon>0$, there exists an open cover $\{U_i\}_{i\in\N}$ such that for every $i$ and open sets $V_0,V_1\subset U_i$, there exists a $W_t$-Wasserstein geodesic $(\mu^a)_{a\in[0,1]}$ so that $\spt\mu^0\subset V_0$, $\spt\mu^1\subset V_1$ and
   \begin{align}\label{ineq:Roughsub}
W^2_t(\mu^0,\mu^1)\left(\vartheta^+(t,\mu^0,\mu^1)-\varepsilon\right)\le    \int \rfe_{N,t}(\gamma)d_t^2(\gamma^0,\gamma^1)\d\pi(\gamma)+\frac{1}N\big(\ent_t(\mu^0)-\ent_t(\mu^1)\big)^2.
   \end{align}
\end{enumerate}
\end{proposition}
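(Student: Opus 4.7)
Part (1) follows by revisiting the proof of Theorem \ref{thm:sharperestimates}(2) but retaining the $N$-dependent lower bound of \cref{thm:displaceconvex}, \eqref{ineq:2orderDentropy}, rather than dropping it as in the $N=\infty$ case there. This gives, for all $r \leq t$,
\[
\lim_{h\searrow 0}\frac{W_r^2(\mu_r^0,\mu_r^1) - W_{r-h}^2(\mu_{r-h}^0,\mu_{r-h}^1)}{h} \geq 2\int \rfe_{N,r}(\gamma)d_r^2(\gamma^0,\gamma^1)\,\d\pi_r + \frac{2}{N}\iint (\partial_a\ell_r(a,\gamma^0))^2\,\d a\,\d\pi_r.
\]
Evaluating at $r = t$ (where $\mu_r^i = \mu^i$ and $\pi_r = \pi$) and applying Jensen's inequality to $(a,\gamma) \mapsto \partial_a\ell_t(a,\gamma^0)$ with respect to the probability measure $\d a\otimes\d\pi$, together with \eqref{eq:1orderDentropy}, yields $\iint(\partial_a\ell_t)^2\,\d a\,\d\pi \geq (\ent_t(\mu^1) - \ent_t(\mu^0))^2$. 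A direct computation shows the left-hand side above equals $2W_t^2(\mu^0,\mu^1)\vartheta^-(t,\mu^0,\mu^1) \leq 2W_t^2(\mu^0,\mu^1)\vartheta^+(t,\mu^0,\mu^1)$, establishing \eqref{ineq:Roughup}.

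For Part (2), the plan is to lift the Dirac lower estimate \eqref{eq:refupperest} in Theorem \ref{thm:sharperestimates}(1) to measures concentrated on narrow tubes around single geodesics. Fixing $\varepsilon > 0$ and $t \in I$, I take the open cover $\{U_i\}$ so that on each $U_i$ the smooth Kantorovich potentials $\phi^a_s$ of \cref{lemma:spacetimepotentials} are uniformly controlled, the modulus of the Riemann tensor is small, and the heat-flow gradient estimate is uniform. For $V_0, V_1 \subset U_i$ pick $x \in V_0, y \in V_1$, let $\gamma:[-r,r]\to M$ be the unit-speed $d_t$-geodesic from $x$ to $y$, and construct $\mu^0 = \rho^0 \m_t$ with smooth density supported in $V_0$, and $\mu^1 = T_\# \mu^0$ where $T$ is the optimal transport map generated by $\phi^{-r}_t$; thus $(\phi^{-r}_t, -\phi^r_t)$ is an exact Kantorovich pair for $(\mu^0, \mu^1)$ at time $t$. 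Kantorovich duality at time $s$ gives
\[
W_s^2(\hat P_{t,s}\mu^0, \hat P_{t,s}\mu^1) \geq 2\int P_{t,s}\phi^{-r}_s\,\d\mu^0 - 2\int P_{t,s}\phi^r_s\,\d\mu^1,
\]
and Taylor expansion in $s$ near $t$ (using \eqref{eq:sderivativeHF} and the Hamilton-Jacobi equation for $\phi^a_s$) reduces its leading-order coefficient to $-\iint [\Ric_{f_t}(\dot\gamma^a) + \frac{1}{2}\partial_t g_t(\dot\gamma^a) + \|\nabla_t^2\phi^a_t\|^2_{\mathrm{HS}}(\gamma^a)]\,\d a\,\d\pi$, exactly as in \eqref{eq:lowerest1}--\eqref{ineq:B} but now weighted by the transport plan.

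The main technical obstacle is replacing the Hessian error $\|\nabla_t^2\phi^a_t\|^2_{\mathrm{HS}}$ by the entropy-difference-squared term up to an $\varepsilon$ correction. Here the $N$-dimensional Bochner identity \eqref{eq:NBochner} decomposes $\|\nabla^2\phi\|^2_{\mathrm{HS}}$ into $\frac{(\Delta_{f_t}\phi^a_t)^2}{N}$ plus nonnegative trace-free remainders (plus the weighted correction which, together with the $\frac{1}{N-n}(\nabla f)^2$ piece in $\Ric_{f_t}-\Ric_{N,f_t}$, completes the $N$-rearrangement). The continuity equation along the flow $\dot\gamma^a = -\nabla_t\phi^a_t$ gives the identity $\partial_a\ell_t(a,\gamma^0) = \Delta_{f_t}\phi^a_t(\gamma^a)$, so these quadratic quantities coincide after integration against $\d\pi$. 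To make the reverse Jensen bound tight, the plan is to choose the density profile $\rho^0$ so that $\Delta_{f_t}\phi^a_t$ is essentially constant along \emph{and} across the transport plan — achievable on small $U_i$ by a Gaussian-like template in Riemannian normal coordinates centered on $\gamma$, producing a near-isotropic Hessian — giving $\iint(\Delta_{f_t}\phi^a_t)^2\d a\d\pi \leq (\ent_t(\mu^0)-\ent_t(\mu^1))^2 + \varepsilon W_t^2$. The trace-free Bochner piece, the replacement of Dirac by small-support measures (handled by the Lipschitz estimate on $\rfe_{N,t}$ as in \eqref{ineq:06/06-1}), and the $(t-s)^2$ remainder (controlled via the heat-flow gradient estimate as in \eqref{ineq:C}) are each bounded by $\varepsilon W_t^2(\mu^0,\mu^1)$ on sufficiently small $U_i$, completing the proof.
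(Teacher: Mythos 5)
Your treatment of Part (1) matches the paper: carry over the McCann--Topping displacement-convexity argument from the proof of Theorem~\ref{thm:sharperestimates}~(2) but keep the $N$-term from~\eqref{ineq:2orderDentropy}, then apply Jensen together with~\eqref{eq:1orderDentropy}, and bound $\vartheta^-\le\vartheta^+$.

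For Part (2), your outline is structurally right (smooth space--time potential, transport map $T=\exp(-\nabla_t\phi^0)$, Kantorovich duality, $N$-Bochner decomposition, local estimate then set up the cover), but the key technical step is attributed to the wrong degree of freedom and the crucial choice is omitted. You propose to ``choose the density profile $\rho^0$ so that $\Delta_{f_t}\phi^a_t$ is essentially constant \ldots producing a near-isotropic Hessian.'' But $\nabla^2_t\phi^a_t$ and $\Delta_{f_t}\phi^a_t$ are determined by the potential $\phi$, not by the density $\rho^0$; in the paper's construction $\mu^0$ is simply the normalized restriction of $\m_t$ to a small ball, and all the tuning goes into $\phi$ via \cref{lemma:spacetimepotentials} (the parameter $A$ there). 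The decisive point that your sketch misses is the precise value of the Hessian at the midpoint: one takes $\nabla^2_t\phi^{1/2}(\xi^{1/2})=\lambda I_n$ with
\[
\lambda = \frac{1}{N-n}\,\nabla_t f_t(\xi^{1/2})\cdot \dot\xi^{1/2}\;,
\]
because this choice is exactly what makes \emph{both} parts of the error $\mathcal E_t$ in the $N$-Bochner formula~\eqref{eq:NBochner} vanish at the midpoint: the traceless piece $\|\nabla^2\phi-\tfrac{\Delta\phi}{n}I_n\|^2$ vanishes because the Hessian is a multiple of $I_n$, and the weighted correction $\frac{n}{N(N-n)}\big[\tfrac{N-n}{n}\Delta\phi + \nabla f\cdot\nabla\phi\big]^2$ vanishes because $\tfrac{N-n}{n}(n\lambda) = -\nabla f\cdot\nabla\phi^{1/2}(\xi^{1/2})$. ``Near-isotropic'' alone, with an unspecified scalar, does not kill the second term, and the argument would fail. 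The rest of the work is then an ODE/Taylor control along the single geodesic $\xi$ showing $\mathcal E_t(\xi^a)=O(r^3)$ for all $a$, followed by letting $r_0\to 0$ so that $\pi\rightharpoonup\delta_\xi$ transfers the estimate from $\xi$ to the plan; no Gaussian template, no Lipschitz estimate for $\rfe_{N,t}$, and no $(t-s)^2$ remainder term are needed, since one only passes to the $\liminf$ as $s\nearrow t$.
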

\begin{proof}
The first statement is obtained by following with small modification the same argument as for the upper estimate in \cref{thm:sharperestimates}. Namely, after \eqref{ineq:06/10-1} we apply \eqref{eq:1orderDentropy} and the Cauchy-Schwarz inequality to  obtain
\begin{align}
\iint_0^1\left(\partial_a\ell(a,\gamma^0)\right)^2\d a\d \pi_r(\gamma)&\geq \left(\iint \partial_a\ell(a,\gamma^0)\d\pi(\gamma)\right)^2\\
	&= \left(\ent_t(\mu^0)-\ent_t(\mu^1)\right)^2.\label{ineq:NEnt}\;.
\end{align}
This gives rise to the additional term in (1).

We now focus on the second assertion. Fix $\varepsilon>0$.
    Let $x,y\in M$ with $2r\coloneqq d_t(x,y)<\delta$ with $\delta$ small enough to apply \cref{lemma:spacetimepotentials}.
    For unit speed geodesic $\tilde{\xi}$ from $x$ and $y$, we have a family of smooth potentials $(\tilde\phi^{\tau})_{\tau\in[-r,r]}$ given by  \cref{lemma:spacetimepotentials}.
    Thus after reparametrization, we have $(\phi^a_s)^{a\in[0,1]}_{s\in[t-\delta,t+\delta]}$ and $\xi:[0,1]\to M$ by
    \begin{equation}
        \xi^a=\tilde\xi^{2ar-r},\quad \phi^a=2r\cdot\tilde\phi^{2ar-r}
    \end{equation}
    so that the following properties are satisfied:
    \begin{itemize}
        \item for all $a\in[0,1]$, $ \nabla_t \phi^{a}(\xi^{a})=-\dot{\xi}^{a}$ and at the point $\xi^{\frac12}$, 
        \begin{align}
       \quad \nabla^2_t\phi^{\frac12}(\xi^{\frac12})=\lambda I_n,\quad \quad \Delta_{g_t}\phi^{\frac12}(\xi^{\frac12})=n\lambda,\quad  \lambda\coloneqq \frac1{N-n}\nabla_t f_t(\xi^{\frac12})\cdot \dot{\xi}^{\frac12},
    \end{align}
    \item for each $s$, $(\phi^0_s,-\phi^1_s)$ is an admissible pair for the cost $d^2_s$;
    \item for each $s$, the function $(a,z)\mapsto\phi^a_s(z)$ on $[0,1]\times M$ holds
    \begin{equation}\label{eq:10/06 HJ}
          \partial_a\phi^a_s=\frac{1}{2}|\nabla_t\phi^a_s|^2_{g_s},
    \end{equation}
    \item there exists a constant $C$ depending only on $\delta,\varepsilon$ and $M$ that $\|\phi^a_s\|_{C^5([t-\delta,t+\delta]\times M)}\leq C\cdot r$ for all $a\in [0,1]$.
    \end{itemize}
    For convenience, we use the following short-hand notation
    \begin{equation}
    \mathcal{E}_{t}({\gamma^a})\coloneqq \big\lVert \nabla_t^2\phi^{a}-\left(\frac{\Delta_{g_t} \phi^{a}}{n}\right)I_n\rVert^2_{\mathrm{HS}}(\gamma^{a})+\frac{n}{N(N-n)}\left[\left(\frac{N-n}n\right)\Delta_{g_t}\phi^{a}+\nabla_t f\cdot \nabla_t\phi^{a}\right]^2(\gamma^{a}).
\end{equation}
In particular, by the choice of potentials, $\mathcal{E}_{t}(\xi^{\frac12})=0$.

Let now $\mu^0\coloneqq \frac{m_t\llcorner B_t(x,r_0)}{\m_t(B_t(x,r_0))}$ with $r_0\ll r$. 
Let $\pi\in \mathrm{OptGeo}_t(\mu^0,\mu^1)$ where $\mu^1\coloneqq T_\#\mu^0$ and $T(x)\coloneqq \exp{(-\nabla_t\phi^0(x))}$. 
Then for $\pi$-almost every $\gamma$ we have, that $\dot\gamma^a=-\nabla_t\phi^a(\gamma^a)$ for all $a\in [0,1]$.
Repeating the lower estimate in \cref{thm:sharperestimates} shows
\begin{align}
    \frac{W_s^2(\ap\mu,\ap\nu)-W_t^2(\mu,\nu)}{2(t-s)}\geq \frac{1}{t-s}\left( \int P_{t,s}\phi^0_s\d \mu^0-\int P_{t,s}\phi^1_s\d \mu^1-\int \phi^0_t\d \mu^0 +\int \phi^1_t\d \mu^1\right)
\end{align}
which after taking liminf as $s$ goes to $t$ gives
\begin{align}
    - W^2_t(\mu^0,\mu^1)\cdot \vartheta^+(t,\mu^0,\mu^1)\geq \int \Delta_t \phi^0_t\d \mu^0-\int \Delta_t\phi^1_t\d \mu^1 -\int \partial_t\phi^0_t\d\mu^0+\int \partial_t\phi^1_t\d \mu^1.
\end{align}
Proceeding as in \cref{thm:sharperestimates} and using \eqref{eq:NBochner}, we can write 
\begin{align}
    \int \Delta_t \phi^0_t\d \mu^0-\int \Delta_t\phi^1_t\d \mu^1&=-\int\int^1_0 \partial_a (\Delta_t\phi^a_t(\gamma^a))\d a\d \pi(\gamma)\\
    &=-\iint \frac{(\Delta_t\phi^a_t)^2}{N}+\Ric_{N,f_t}(\nabla_t\phi^a_t)+\mathcal{E}_t(\gamma^a)\d a\d \pi(\gamma)
\end{align}
and 
\begin{align}
    -\int \partial_t\phi^0_t\d\mu^0+\int \partial_t\phi^1_t\d \mu^1=\int \int^1_0 -\frac{1}{2}\partial_t g_t(\nabla_t\phi^a_t,\nabla_t\phi^a_t)(\gamma^a)\d a\d \pi(\gamma).
\end{align}
Summarizing the above shows that 
\begin{align}\label{ineq:08/10-1}
   W^2_t(\mu^0,\mu^1)\cdot \vartheta^+(t,\mu^0,\mu^1)\leq \int \rfe_{N,t}(\gamma)d^2_t(\gamma^0,\gamma^1)\d \pi(\gamma)+\iint \frac{(\Delta_t\phi^a_t)^2}{N}+\mathcal{E}_t(\gamma^a)\d a\d \pi(\gamma).
\end{align}
Next we will estimate the double integral on the right-hand side.
Using the equation (10) in \cite{sturm2017remarks}, and the $C^5$-smallness of $\phi^a$, we have
\begin{equation}
   \partial_a( \mathcal{D}_{i,j}\phi^a_t(\xi^a))=\left[R(e_i,\nabla_t\phi^a_t,e_j,\nabla_t\phi^a_t)+\sum_k \mathcal{D}_{i,k}\phi^a_t\cdot\mathcal{D}_{j,k}\phi^a_t\right](\xi^a)=O(r^2),
\end{equation}
where $\mathcal{D}_{i,j}$ represents the second order derivative with respect to a fixed orthonormal frame $\{e_i\}_i$ around $\xi$ (see more details in \cite{sturm2017remarks}). 
This clearly implies that
\begin{align}
    \partial_a(\Delta_{g_t}\phi^a_t(\xi^a))=O(r^2),\quad \ \partial_a\|\nabla^2_t\phi^a_t(\xi^a)\|_{\mathrm{HS}}=O(r^2).
\end{align}
By simple calculation, we have that
\begin{align}
    \partial_a(\nabla_t f_t\cdot \nabla_t\phi^a_t(\xi^a))&=\nabla_t f_t\cdot \nabla_t\frac{|\nabla_t\phi^a_t|^2_{g_t}}{2}+\nabla_t(f_t\cdot\nabla_t\phi^a_t)(-\nabla_t\phi^a_t)\\
    &=\nabla_t f_t\cdot \nabla_t\frac{|\nabla_t\phi^a_t|^2_{g_t}}{2}+\nabla^2_t\phi^a_t(\nabla_tf_t,-\nabla_t\phi^a_t)+\nabla^2_tf_t(\nabla_t\phi^a_t,-\nabla_t\phi^a_t)\\
    &=\nabla^2_tf_t(\nabla_t\phi^a_t,-\nabla_t\phi^a_t)=O(r^2)
\end{align}
Therefore, recalling $\mathcal{E}_t(\xi^{\frac12})=0$, we get by chain rule that $\mathcal{E}_t(\xi^{a})=O(r^3)$ for all $a$.

For the squared-Laplacian term in \eqref{ineq:08/10-1}, recall that $\Delta_t \phi^a_t=\Delta_{g_t}\phi^a_t-\nabla_tf_t\cdot\nabla_t\phi^a_t$.
Then we argue in the same way to have that
\begin{align}
    \int(\Delta_t\phi^a_t)^2(\xi^a)\d a-\Delta_t\phi^{\frac12}(\xi^{\frac12}_t)^2\leq \int 2|\Delta_t\phi^a_t|\cdot|\partial_a (\Delta_t\phi^{a}_t(\xi^a))|\d a=O(r^3)
\end{align}
and
\begin{align}
   & \Delta_t\phi^{\frac12}_t(\xi^{\frac12})^2-\left(\int\Delta_t\phi^a_t(\xi^a)\d a\right)^2\\
  = & \left(\Delta_t\phi^{\frac12}_t(\xi^{\frac12})+\int\Delta_t\phi^a_t(\xi^a)\d a \right)\cdot \left(\Delta_t\phi^{\frac12}_t(\xi^{\frac12})-\int\Delta_t\phi^a_t(\xi^a)\d a \right)  =O(r^3).
\end{align}
As a brief summary, there exists a constant $C$ depending only on $\delta,\varepsilon$ and $M$, that
\begin{align}\label{ineq:24/09-01}
   \mathcal{E}_t(\xi^a)\leq Cr^3,\quad \int(\Delta_t\phi^a_t)^2(\xi^a)\d a- \left(\int\Delta_t\phi^a_t(\xi^a)\d a\right)^2\leq  C\cdot r^3.
\end{align}
Now, we can shrink measures $\mu^0$ to $\delta_x$ by letting $r_0\to 0$.
Meanwhile, we have also limits $\mu^1\rightharpoonup \delta_y$ and $\pi\rightharpoonup \delta_{\xi}$, which in particular implies that 
\begin{align}\label{eq:24/09-03}
    \iint (\Delta_t\phi^{a}_t)^2(\gamma^{a})\d{a}\d\pi&\to \int(\Delta_t\phi^a_t)^2(\xi^a)\d a\\
    \left(\iint \Delta_t\phi^{a}_t(\gamma^{a})\d{a}\d\pi\right)^2&\to \left(\int\Delta_t\phi^a_t(\xi^a)\d a\right)^2\label{eq:24/09-04}.
\end{align}
Hence, combining \eqref{eq:1orderDentropy}, \eqref{ineq:24/09-01}, \eqref{eq:24/09-03} and \eqref{eq:24/09-04}, we conclude that there exists $\delta$ s.t. for any $x,y$ with $d_t(x,y)<\delta$ and every neighborhood $U,V$ of $x,y$, that there exist $\mu^0$ and $\mu^1$ supported inside $U$ and $V$ respectively having
\begin{align}
    \iint\mathcal{E}_t(\gamma^a)\d \pi \d a&\leq\varepsilon/2\cdot \iint|\nabla_t\phi^a|^2(\gamma^a)\d \pi\d a =\varepsilon/2
    \cdot W^2_t(\mu^0,\mu^1)\\
    \frac1N\iint (\Delta_t\phi^{a})^2(\gamma^{a})\d{a}\d\pi&\leq \frac{1}{N}\left(  \ent_t(\mu^1)-\ent_t(\mu^0)\right)^2+\varepsilon/2\cdot W_t^2(\mu^1,\mu^0).
\end{align}
Finally, \eqref{ineq:Roughsub} is concluded by plugging above inequalities into \eqref{ineq:08/10-1}.
The desired open cover can be chosen by the compactness of $M$ and the uniformity of the estimate with respect to the reference points taken.
\end{proof}

\begin{theorem}\label{thm:smoothroughN}
    Let $(M,g_t,f_t)_{t\in I}$ be a compact smooth flow. 
    Then $(M,d_t,\m_t)$ is a rough super-N-Ricci flow if and only if 
        \begin{align}
            \Ric_{N,f_t}\ge -\frac12\partial_t g_t.
        \end{align}
\end{theorem}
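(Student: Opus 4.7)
The strategy will parallel that of \cref{thm:smoothrough_infty}, using \cref{prop:theta_N} in place of \cref{thm:sharperestimates}. The bridge between the integral definition of rough $N$-super-Ricci flow and the tensor bound is the infinitesimal estimate
\begin{align}\label{eq:infN-plan}
    \vartheta^+(t,\mu,\nu) \ge \frac{1}{NW_t^2(\mu,\nu)}\big(\ent_t(\mu)-\ent_t(\nu)\big)^2,
\end{align}
which should hold for all $\mu,\nu\in\p_t(X)\cap D(\ent_t)$. First I plan to establish the equivalence between \eqref{eq:infN-plan} (holding at every time $r\in I$ and along every pair of dual heat flows $\ap[t,r]\mu,\ap[t,r]\nu$) and the rough $N$-super-Ricci flow inequality. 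From the global to the infinitesimal: dividing the global inequality by $W_t^2(\mu,\nu)$, applying $\log(1-x)\le -x$, and using the continuity of the integrand at $r=t$ supplied by \cref{lemma:dualheatflow}, one obtains \eqref{eq:infN-plan} by passing to the appropriate liminf. Conversely, setting $F(r)\coloneqq W_r^2(\ap[t,r]\mu,\ap[t,r]\nu)$, applying \eqref{eq:infN-plan} at time $r$ to $\ap[t,r]\mu,\ap[t,r]\nu$ yields
\begin{align*}
    \liminf_{\tilde s\nearrow r}\frac{F(r)-F(\tilde s)}{r-\tilde s} \ge \frac{2}{N}\big(\ent_r(\ap[t,r]\mu)-\ent_r(\ap[t,r]\nu)\big)^2.
\end{align*}
The absolute continuity of $F$ on every $[s_0,t]$ with $s_0<t$ provided by \cref{lemma:dualheatflow} promotes this to an a.e.\ pointwise lower bound on $F'$, which integrates from $s$ to $t$ to recover the global inequality.

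Granted this equivalence, the forward direction proceeds as follows. Using \eqref{eq:infN-plan} together with part (2) of \cref{prop:theta_N}, one extracts for each $\varepsilon>0$ and each pair $x,y$ contained in a single element of the open cover a $W_t$-geodesic $(\mu^a)$ satisfying \eqref{ineq:Roughsub}. Substituting \eqref{eq:infN-plan} into \eqref{ineq:Roughsub} causes the two $\tfrac{1}{N}(\ent_t(\mu^0)-\ent_t(\mu^1))^2$ terms to cancel, leaving
\begin{align*}
    -\varepsilon W_t^2(\mu^0,\mu^1)\le \int \rfe_{N,t}(\gamma)\,d_t^2(\gamma^0,\gamma^1)\,\d\pi(\gamma).
\end{align*}
Shrinking the supports of $\mu^0,\mu^1$ to $x,y$ as in the construction within the proof of part (2) of \cref{prop:theta_N} makes $\pi$ concentrate weakly on the unique minimizing geodesic $\xi$ from $x$ to $y$; continuity of $\rfe_{N,t}$ from \cref{lemma:rfe} then gives $\rfe_{N,t}(\xi)\ge -\varepsilon$. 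Sending $\varepsilon\to 0$ and subsequently letting $x,y\to p$ along a direction $v\in T_pM$ yields the tensor inequality $\Ric_{N,f_t}(v,v)+\tfrac12\partial_t g_t(v,v)\ge 0$.

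Conversely, the tensor bound forces $\rfe_{N,t}(\gamma)\ge 0$ along every geodesic, so part (1) of \cref{prop:theta_N} gives \eqref{eq:infN-plan} along every Wasserstein geodesic, and the equivalence above delivers the global inequality.

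The principal technical hurdle is the infinitesimal-to-global passage in the equivalence: one must promote an asymptotic one-sided liminf bound on difference quotients of $F$ to a Lebesgue-integrable lower bound on $F'$, which relies on the absolute continuity of $F$ supplied by part (5) of \cref{lemma:dualheatflow} together with the continuity of $r\mapsto \ent_r(\ap[t,r]\mu)$ along the dual heat flow.
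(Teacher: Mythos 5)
Your proposal is correct and reaches both implications, but it follows a route that is structurally somewhat different from the paper's. The implication ``$\Ric_{N,f_t}+\tfrac12\partial_t g_t\ge 0$ $\Rightarrow$ rough $N$-super'' is handled identically in spirit to the paper: both amount to applying part (1) of \cref{prop:theta_N} at each instant $r$ along the dual heat flows and integrating; you merely package the integration step as a stand-alone equivalence lemma. The genuine divergence is in the converse. The paper argues by contradiction: it assumes the tensor bound fails on some open set of directions, uses part (2) of \cref{prop:theta_N} to manufacture a transport concentrated in that bad direction, and shows this violates the defining integral inequality of \cref{def:NroughSuper}. You instead derive the tensor bound \emph{positively}: from the global inequality you extract the pointwise bound $\vartheta^+(t,\mu^0,\mu^1)\ge \tfrac{1}{NW_t^2}(\ent_t(\mu^0)-\ent_t(\mu^1))^2$, insert it into \eqref{ineq:Roughsub}, observe the exact cancellation of the entropy-difference terms, and are left with $\int\rfe_{N,t}\,d_t^2\,\d\pi\ge -\varepsilon W_t^2$; shrinking supports and invoking \cref{lemma:rfe} then gives $\rfe_{N,t}(x,y)\ge -\varepsilon$ for nearby $x,y$, which localises to the tensor inequality. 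This avoids the explicit counterexample construction and isolates the fact that the Hessian-error analysis of \cref{prop:theta_N}(2) is precisely what lets one delete the $\tfrac1N$-term, which is a nice conceptual point your argument makes visible and the paper's contrapositive obscures.

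Two small care points worth flagging. First, when you pass from the global inequality to \eqref{eq:infN-plan} you divide by $t-s$ and let $s\nearrow t$; this needs $r\mapsto\ent_r(\hat P_{t,r}\mu)$ to be continuous at $r=t$, which holds for the measures produced in \cref{prop:theta_N}(2) (bounded, smooth densities) but not for arbitrary elements of $\p_t(X)$ — so state that you use the infinitesimal form only on the $D(\ent_t)$-measures you actually need. Second, the order of limits at the end should be: for each $\varepsilon>0$, let $x,y\to p$ inside a fixed cover element $U_i=U_i^\varepsilon$ to get $\sigma_{\min}(\Ric_{N,f_t}+\tfrac12\partial_t g_t)(p)\ge -\varepsilon$, and \emph{then} let $\varepsilon\to 0$; you cannot first send $\varepsilon\to 0$ for fixed $x,y$ since the cover depends on $\varepsilon$ and there is no guarantee $x,y$ stay in a common element. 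Both are straightforward fixes and do not affect the validity of the argument.
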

\begin{proof}
Suppose first that $\Ric_{N,f_t}\ge -\frac12 \partial_t g_t$. 
Fix two measures $\mu^0,\mu^1\in \p_t(X)$.
    At each $r<t$, applying \cref{prop:theta_N} for $\hat{P}_{t,r}\mu^0$ and $\hat{P}_{t,r}\mu^1$ yields
\begin{align}\label{ineq:09/10-2}
    \int 2\rfe_{N,r}(\gamma)d^2_r(\gamma^0,\gamma^1)\d\pi_r(\gamma)+\frac2N\big(\ent_r(\hat{P}_{t,r}\mu^0)-\ent_r(\hat{P}_{t,r}\mu^1)\big)^2\le \partial^+_r\big[W^2_r(\hat{P}_{t,r}\mu^0,\hat{P}_{t,r}\mu^1)\big],
\end{align}
where $\pi_r\in \mathrm{OptGeo}(\hat{P}_{t,r}\mu^0,\hat{P}_{t,r}\mu^1)$.      
Integrating above inequality over $r\in[s,t]$, we conclude that the flow is a rough $N$-super-Ricci flow.

Next suppose that $\Ric_{N,f_t}\ge -\frac12 \partial_t g_t$ does not hold for some $N$-super-Ricci flow i.e. there exist $\varepsilon_0>0$, $(t_0,v_0)\in I\times TM$ and a neighbourhood $U$ of $(t_0,v_0)$ such that 
\begin{align}\label{ineq:superRiccifail1}
\Ric_{N,f_{t}}(v)\leq-\frac12\partial_t g_{t}(v)-\varepsilon_0 g_t(v)\end{align}
for every $(t,v)\in U$.
Take $\varepsilon<\varepsilon_0/4$ sufficiently small s.t. there exist $\ell>0$ and a $W_t$-geodesic $(\mu^a)_{a\in[0,1]}$ satisfying \eqref{ineq:Roughsub} and $(t,\frac{1}{2\ell}\dot\gamma^a)\in U$ for $a\in[0,1]$ and each $\gamma$ in the support of the dynamical plan.
In other words, tangential vectors of the rescaled curve $(\xi^a\coloneqq \gamma^{\frac{1}{2}+\frac{a}{2\ell}})_{a\in[-\ell,\ell]}$ satisfy \eqref{ineq:superRiccifail1}.
Notice that the integral average \eqref{eq:rfe_n} is unchanged under reparametrization of curves by constant rescalings.
Therefore, 
\begin{align}
	\rfe_{N,t}(\gamma)=\aint{}\frac{1}{|\dot{\xi}|^2}\left[\Ric_{N,f_t}(\dot\xi^a)+\frac12\partial_tg_t(\dot\xi^a)\right]\d a\leq -\varepsilon_0.
\end{align}
Then by \eqref{ineq:Roughsub}, for $s$ close to $t$,
\begin{align}
     &\frac{W_t^2(\mu^0,\mu^1)-W_s^2(\ap\mu^0,\ap\mu^1)}{2(t-s)} \le W_t^2(\mu^0,\mu^1)\vartheta^+(t,\mu^0,\mu^1)+\varepsilon W_t^2(\mu^0,\mu^1)\\
    \leq &\int \rfe_{N,t}(\gamma)d^2_t(\gamma^0,\gamma^1)\d\pi(\gamma)+\frac{1}N\big(\ent_t(\mu^0)-\ent_t(\mu^1)\big)^2+2\varepsilon W^2_t(\mu^0,\mu^1) \\
     \leq & (-\varepsilon_0+3\varepsilon)\cdot W_t^2(\mu^0,\mu^1)+\frac1N\frac1{t-s}\int_{[s,t]}\big(\ent_r(\ap[t,r]\mu^0)-\ent_r(\ap[t,r]\mu^1)\big)^2\d r.
\end{align}
This violates the rough $N$-super-Ricci condition.
\end{proof}

\subsection{Weak \texorpdfstring{$N$}{N}-super-/ sub-Ricci flow}\label{sec:weaksub}
As in \cref{sec:RoughN}, we define
\begin{align}
\eta(t,\mu^0,\mu^1)\coloneqq\frac1{W_t^2(\mu^0,\mu^1)}\cdot\Big[ \partial^{+}_a \ent_t(\mu^a)\big|_{a=1}-\partial_a^{-} \ent_t(\mu^a)\big|_{a=0}+\frac12\partial_{t}^{-}W_{t^-}^2(\mu^0,\mu^1)\Big].
\end{align}
\begin{proposition}\label{prop:eta_N}Let $(M,g_t,f_t)_{t\in I}$ be a smooth flow. 
Then
\begin{enumerate}
    \item for any $W_t$-geodesic $ (\mu^a)_{a\in[0,1]}$ with $\mu^0,\mu^1\in \p_t(X)\cap D(\ent_t)$ and $\pi\in \mathrm{OptGeo}_{t}(\mu^0,\mu^1)$,
    \begin{align}\label{ineq:weakup}
   \int \rfe_{N,t}(\gamma)d^2_t(\gamma^0,\gamma^1)\d\pi(\gamma)+\frac1N\big(\ent_t(\mu^0)-\ent_t(\mu^1)\big)^2\le W^2_t(\mu^0,\mu^1)\eta(t,\mu^0,\mu^1);
   \end{align}
   \item for every $\varepsilon>0$, there exists an open cover $\{U_i\}$ such that for every $i$ and every open sets $V_0,V_1\subset U_i$, there exists a $W_t$-Wasserstein geodesic $(\mu^a)_{a\in[0,1]}$ so that $\spt\mu^0\subset V_0$, $\spt\mu^1\subset V_1$ and 
   \begin{align}\label{ineq:weaksub}
W^2_t(\mu^0,\mu^1)\left(\eta(t,\mu^0,\mu^1)-\varepsilon\right)\le    \int \rfe_{N,t}(\gamma)d^2_t(\gamma^0,\gamma^1)\d\pi(\gamma)+\frac{1}N\big(\ent_t(\mu^0)-\ent_t(\mu^1)\big)^2.
   \end{align}
\end{enumerate}
\end{proposition}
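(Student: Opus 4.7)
The plan parallels the proof of \cref{prop:theta_N}, with the role of the Taylor expansion of the heat propagator now played by the displacement-convexity estimate \cref{thm:displaceconvex}, and the essential geometric input supplied by the Bochner identity \eqref{eq:NBochner}.

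For Part (1), fix a $W_t$-geodesic $(\mu^a)_{a\in[0,1]}$ with $\pi \in \mathrm{OptGeo}_t(\mu^0,\mu^1)$ and finite entropy at the endpoints. Integrating the pointwise bound \eqref{ineq:2rdDlogJacobian} of \cref{thm:displaceconvex} over $a \in [0,1]$ and taking expectation with respect to $\pi$ gives
\[
\partial_a^+\ent_t(\mu^a)\lvert_{a=1-} - \partial_a^-\ent_t(\mu^a)\lvert_{a=0+} \ge \iint_0^1 \frac{(\partial_a\ell(a,\gamma^0))^2}{N} + \Ric_{N,f_t}(\dot\gamma^a) \, da \, d\pi(\gamma).
\]
On the other hand, the McCann--Topping formula (compare the upper-estimate computation in the proof of \cref{thm:sharperestimates}), applied to the fixed endpoints $\mu^0,\mu^1$, gives
\[
\partial_t W_t^2(\mu^0,\mu^1) = \iint_0^1 \partial_t g_t(\dot\gamma^a,\dot\gamma^a) \, da \, d\pi(\gamma).
\]
Adding these and invoking the definition of $\rfe_{N,t}$ converts the sum of the $\Ric_{N,f_t}$ and $\partial_t g_t$ contributions into $\int \rfe_{N,t}(\gamma) d_t^2(\gamma^0,\gamma^1) d\pi$. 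Cauchy--Schwarz combined with \eqref{eq:1orderDentropy} then bounds $\iint(\partial_a\ell)^2\,da\,d\pi$ from below by $(\ent_t(\mu^1)-\ent_t(\mu^0))^2$, and dividing by $W_t^2(\mu^0,\mu^1)$ yields \eqref{ineq:weakup}.

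For Part (2), the idea is to construct a transport that almost saturates Part (1) by choosing a potential whose Hessian along a reference geodesic has a very specific form. Fix $\varepsilon>0$ and $x,y\in M$ with $2r := d_t(x,y)$ small enough. Let $\tilde\xi$ be a unit-speed $d_t$-geodesic from $x$ to $y$, and apply \cref{lemma:spacetimepotentials} to construct smooth potentials $(\tilde\phi^\tau)_{\tau\in[-r,r]}$ satisfying $\nabla_t\tilde\phi^\tau(\tilde\xi^\tau)=-\dot{\tilde\xi}^\tau$, the midpoint Hessian relation $\nabla_t^2\tilde\phi^0(\tilde\xi^0)=\lambda I_n$ with $\lambda = \frac{1}{N-n}\nabla_t f_t(\tilde\xi^0)\cdot\dot{\tilde\xi}^0$ (which forces $\mathcal{E}_t(\tilde\xi^0)=0$), and with uniform $C^5$-smallness of order $r$. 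After reparametrizing to $[0,1]$, let $\mu^0 := \m_t\llcorner B_t(x,r_0)/\m_t(B_t(x,r_0))$ with $r_0 \ll r$ and $\mu^1 := T_\#\mu^0$ for $T=\exp(-\nabla_t\phi^0)$. The intermediate maps $a\mapsto \exp(-a\nabla_t\phi^0)$ push $\mu^0$ to a $W_t$-geodesic $(\mu^a)$ with optimal dynamical plan $\pi$. Along this smooth transport one has $\partial_a\ell = \Delta_t\phi^a\circ\gamma^a$, so combining the Bochner identity \eqref{eq:NBochner} with the McCann--Topping computation yields the exact identity
\[
[\partial_a\ent_t]_0^1 + \frac12 \partial_t W_t^2 = \iint \left(\frac{(\Delta_t\phi^a)^2}{N} + \mathcal{E}_t(\gamma^a)\right) da\,d\pi + \int \rfe_{N,t}(\gamma)\,d_t^2(\gamma^0,\gamma^1)\,d\pi.
\]

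It remains to control the two ``gap'' contributions
\[
\iint \mathcal{E}_t(\gamma^a)\,da\,d\pi, \qquad \frac1N\Big[\iint(\Delta_t\phi^a)^2\,da\,d\pi - (\ent_t(\mu^1)-\ent_t(\mu^0))^2\Big],
\]
showing each is bounded by $\varepsilon W_t^2(\mu^0,\mu^1)$ for sufficiently small $r$ and $r_0$. By the chain-rule estimates carried out in the proof of \cref{prop:theta_N} (culminating in \eqref{ineq:24/09-01}), the special Hessian choice forces both gaps along the reference curve $\xi$ to be $O(r^3)$, and letting $r_0\to 0$ transfers these bounds to the integrals against $\pi$ exactly as in \eqref{eq:24/09-03}--\eqref{eq:24/09-04}. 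Since $W_t^2(\mu^0,\mu^1) \asymp r^2$, the total error divided by $W_t^2$ is $O(r)$, which lies below $\varepsilon$ once $r$ is sufficiently small. The required open cover is produced by exhausting $M$ by compact subsets and covering each by finitely many such small balls, using that the involved constants are uniform. The main obstacle is the simultaneous control of the Bochner error $\mathcal{E}_t$ and the Cauchy--Schwarz gap: both collapse only thanks to the specially engineered midpoint Hessian supplied by \cref{lemma:spacetimepotentials}.
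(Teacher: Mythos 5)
Your proposal is correct and follows essentially the same route as the paper: Part (1) combines Theorem \ref{thm:displaceconvex} with the Cauchy--Schwarz step \eqref{ineq:NEnt} and the identity $\partial_t^- W_{t^-}^2(\mu^0,\mu^1)=\iint \partial_t g_t(\dot\gamma^a)\,\mathrm{d}a\,\mathrm{d}\pi$, while Part (2) constructs the geodesic via the midpoint-Hessian-tuned potentials from Lemma \ref{lemma:spacetimepotentials}, computes $\frac{\mathrm{d}^2}{\mathrm{d}a^2}\ent_t(\mu^a)$ via the Bochner identity \eqref{eq:NBochner} (the paper does this by the continuity equation from Lemma \ref{lemma:continuityeq}, which is equivalent to your $\partial_a\ell=\Delta_t\phi^a\circ\gamma^a$), and bounds the two gap terms by the $O(r^3)$ estimates imported from the proof of Proposition \ref{prop:theta_N}. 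The only small added value is that you make the $\partial_t W_t^2$ step in Part (1) explicit, which the paper leaves implicit.
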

\begin{proof}
    The first item is a consequence of \cref{thm:displaceconvex} and \eqref{ineq:NEnt}.
    Hence we focus mainly on the second statement, though the strategy is similar to the proof of \cref{prop:theta_N}.
    
    Fix $\varepsilon>0$.
    Let $x,y\in M$ with $2r\coloneqq d_t(x,y)<\delta$ small enough.
    As in the proof of \cref{prop:theta_N}, we have a $d_t$-geodesic $\xi:[0,1]\to M$ from $x$ to $y$ and a family of smooth potentials $(\phi^a)_{a\in[0,1]}$ (since we are working on the static space $(M,g_t)$, there is no time-variation on $\phi$) so that the following properties are satisfied:
    \begin{itemize}
        \item for all $a\in[0,1]$, $ \nabla_t \phi^{a}(\xi^{a})=-\dot{\xi}^{a}$ and at the point $\xi^{\frac12}$, 
        \begin{align}
       \quad \nabla^2_t\phi^{\frac12}(\xi^{\frac12})=\lambda I_n,\quad \quad \Delta_{g_t}\phi^{\frac12}(\xi^{\frac12})=n\lambda,
    \end{align}
    where
    \[
    \lambda\coloneqq \frac1{N-n}\nabla_t f_t(\xi^{\frac12})\cdot \dot{\xi}^{\frac12},
    \]
    \item on $[0,1]\times M$, 
    \begin{equation}\label{eq:25/09-01}
          \partial_a\phi^a=\frac{1}{2}|\nabla_t\phi^a|^2_{g_t},
    \end{equation}
    \item there exists a constant $C$ depending only on $\delta,\varepsilon$ and $M$ that $\|\phi^a\|_{C^5(M)}\leq C\cdot r$ for all $a\in [0,1]$.
    \end{itemize}

Denote $\m_t\coloneqq e^{-f_t}\d\mathrm{vol}_{g_t}$.
Let now $\mu^0\coloneqq \rho^0\m_t\in\p(M)$ with $\rho^0\in C_{c}^{\infty}(B_t(x,r_0))$ and $r_0\ll r$. 
Let $\pi\in \mathrm{OptGeo}_t(\mu^0,\mu^1)$ where $\mu^1\coloneqq T_\#\mu^0$ and $T(x)\coloneqq \exp{(-\nabla_t\phi^0(x))}$. 
Then for $\pi$-almost every $\gamma$ we have, that $\dot\gamma^a=-\nabla_t\phi^a(\gamma^a)$ for all $a\in [0,1]$.

By direct caclulation, we get
\begin{align}
    \frac{\d}{\d{a}}\ent_t(\mu^{a})&=\int \frac{\d}{\d{a}}\log(\rho^{a}(\gamma^{a}))\d\pi(\gamma)=\int \frac{\partial_{a}\rho^{a}(\gamma^{a})-\nabla_t \rho^{a}(\gamma^{a})\cdot \nabla_t\phi^{a}(\gamma^{a})}{\rho^{a}(\gamma^{a})}\d\pi(\gamma)
    \\ &=\int -\nabla_t \rho^{a}\cdot \nabla_t\phi^{a} \d\m_t=\int \rho^{a}\Delta_t \phi^{a}\d\m_t,
\end{align}
and by the absolutely continuity of the entropy along $(\mu^a)_a$, 
\begin{align}\label{eq:24/09-02}
    \ent_t(\mu^1)-\ent_t(\mu^0)=\int^1_0\int\Delta_t\phi^a(\gamma^a)\d \pi(\gamma)\d a.
\end{align}
By \cref{lemma:continuityeq}, the map $(a,x)\mapsto\rho^a(x)$ is smooth and it satisfies the continuity equation
\begin{equation}
      \partial_a \rho^a=\mathrm{div}_{f_t}(\rho^a\nabla_t\phi^a),
\end{equation}
where $\mathrm{div}_{f_t}$ denotes the weighted divergence by $\mathrm{div}_{f_t}v\coloneqq \mathrm{div}v-\nabla_t f_t\cdot v$.
Then, together with \eqref{eq:25/09-01} and \eqref{eq:NBochner}, we have that
\begin{align}
    \frac{\d^2}{\d{a}^2}\ent_t(\mu^{a})&=\int \partial_{a}\rho^{a}\Delta_t\phi^{a}+\rho^{a}\Delta_t(\partial_{a} \phi^{a})\d\m_t
    \\ &=\int \mathrm{div}_{f_t}(\rho^{a} \nabla_t \phi^{a})\Delta_t\phi^{a}+\rho^{a}\Delta_t(\frac12|\nabla_t \phi^{a}|^2)\d\m_t
    \\ &= \int -\nabla_t\phi^{a}\cdot \nabla_t(\Delta_t \phi^{a})+\Delta_t(\frac12|\nabla_t\phi^{a}|^2)\d\mu^{a}
    \\ &=\int\frac{(\Delta_t\phi^{a})^2}{N}+\Ric_{N,f}(\nabla_t \phi^{a}) +\mathcal{E}_{t}({\gamma^a}) \d \pi(\gamma).\label{eq:''entropy}
\end{align}
 Handling the Laplacian-squared and the error term in \eqref{eq:''entropy} as in \cref{prop:theta_N}, we obtain the desired estimate on $\eta$ for suitable $\mu^0$ and $\mu^1$ that
\begin{align}
&\partial_a\ent_t(\mu^{a})\lvert_{{a}=1}-\partial_{a} \ent_t(\mu^{a})\lvert_{{a}=0}+\frac12\partial_{t}^-W_t^2(\mu^0,\mu^1)\\
=&\iint\frac{(\Delta_t\phi^{a})^2}{N}(\gamma^a)+\Ric_{N,f}(\nabla_t \phi^{a})(\gamma^a)+\frac{1}{2}\partial_t g_t(\nabla_t\phi^a)(\gamma^a) +\mathcal{E}_{t}(\gamma^{a}) \d \pi\d a\\
\le&\int\rfe_{N,t}(\gamma)d^2_t(\gamma^0,\gamma^1)\d\pi(\gamma)+\frac{1}{N}\left(  \ent_t(\mu^1)-\ent_t(\mu^0)\right)^2 +\varepsilon\cdot W_t^2(\mu^1,\mu^0).\qedhere
\end{align} 
\end{proof}

Naturally, performing the same argument as in \cref{sec:RoughN}, we have the following analogy of \cref{thm:smoothroughN}, see also Theorem 2.14 in \cite{Sturm2018Super}.

\begin{theorem}\label{thm:smoothWeak}
    Let $(M,g_t,f_t)_{t\in I}$ be a smooth flow. 
    Then $(M,d_t,\m_t)$ is a weak $N$-super-Ricci flow if and only if
    \begin{align} \label{eq:smoothsuperRicciflow}
    \Ric_{N,f_t}\ge-\frac12\partial_t g_t.
    \end{align}
\end{theorem}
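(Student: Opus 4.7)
The plan is to mirror the argument of \cref{thm:smoothroughN}, replacing the rough expansion bounds in \cref{prop:theta_N} by the entropy-derivative bounds in \cref{prop:eta_N}.

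\textbf{Sufficiency.} Assume $\Ric_{N,f_t}\geq -\frac{1}{2}\partial_t g_t$ pointwise. Then for every $t\in I$ and every geodesic $\gamma$ in $(M,d_t)$ we have $\rfe_{N,t}(\gamma)\geq 0$. Fix $t\in I$ and a $W_t$-geodesic $(\mu^a)_{a\in[0,1]}$ with $\mu^0,\mu^1\in D(\ent_t)$, and let $\pi\in\og_t(\mu^0,\mu^1)$. Apply \cref{prop:eta_N}(1) to obtain
\begin{align}
\frac{1}{N}\big(\ent_t(\mu^0)-\ent_t(\mu^1)\big)^2
\leq \int \rfe_{N,t}(\gamma)\, d_t^2(\gamma^0,\gamma^1)\d\pi(\gamma)+\frac{1}{N}\big(\ent_t(\mu^0)-\ent_t(\mu^1)\big)^2
\leq W_t^2(\mu^0,\mu^1)\,\eta(t,\mu^0,\mu^1).
\end{align}
Unfolding the definition of $\eta(t,\mu^0,\mu^1)$, this is exactly the weak $N$-super-Ricci flow inequality \eqref{ineq:weakNSRF}. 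Absolute continuity of $a\mapsto\ent_t(\mu^a)$ follows from \cref{thm:displaceconvex} applied for each fixed $t$, since $\Ric_{N,f_t}$ is bounded below on compact subsets.

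\textbf{Necessity.} Suppose the pointwise inequality fails, so there exist $\varepsilon_0>0$, a point $(t_0,v_0)\in I\times TM$, and an open neighbourhood $U$ of $(t_0,v_0)$ on which
\begin{align}
\Ric_{N,f_t}(v)\leq -\tfrac{1}{2}\partial_t g_t(v)-\varepsilon_0\, g_t(v)\qquad\forall (t,v)\in U.
\end{align}
Pick $\varepsilon<\varepsilon_0/4$. By \cref{prop:eta_N}(2), for every open cover of $M$ there exist open sets $V_0,V_1$ in a common member, arbitrarily small and centered near the basepoint of $v_0$ in the direction of $v_0$, and a $W_{t_0}$-geodesic $(\mu^a)$ with $\spt\mu^i\subset V_i$ satisfying \eqref{ineq:weaksub}. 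By shrinking $V_0,V_1$ we can ensure that for $\pi$-a.e.\ $\gamma$ the rescaled tangent vectors of $\gamma$ lie in $U$; invariance of $\rfe_{N,t}$ under constant-speed reparametrisation then yields $\rfe_{N,t_0}(\gamma)\leq -\varepsilon_0$ for $\pi$-a.e.\ $\gamma$. Combining with \eqref{ineq:weaksub} gives
\begin{align}
W_{t_0}^2(\mu^0,\mu^1)\,\eta(t_0,\mu^0,\mu^1)
&\leq (-\varepsilon_0+\varepsilon) W_{t_0}^2(\mu^0,\mu^1)+\frac{1}{N}\big(\ent_{t_0}(\mu^0)-\ent_{t_0}(\mu^1)\big)^2,
\end{align}
contradicting \eqref{ineq:weakNSRF} at $t_0$ since $-\varepsilon_0+\varepsilon<0$.

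\textbf{Main obstacle.} The work is already packaged into \cref{prop:eta_N}; the real technical point is its localisation-to-a-geodesic construction with sharp Hessian control of the potentials, which feeds the error term down to size $\varepsilon W_t^2$ in \eqref{ineq:weaksub}. Here I just need to make sure the geodesic in the failure argument can be chosen tangentially compatible with $v_0$ uniformly in scale, which is guaranteed by the open-cover formulation together with the freedom to rescale the geodesic parameter and invariance of $\rfe_{N,t}$ under such rescalings.
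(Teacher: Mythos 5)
Your proposal matches exactly what the paper itself promises for this theorem: the paper devotes a single sentence to the proof, stating that it is "performing the same argument as in \cref{sec:RoughN}", i.e.\ the proof of \cref{thm:smoothroughN} with \cref{prop:eta_N} substituted for \cref{prop:theta_N}. You have fleshed out that substitution correctly. The sufficiency direction is in fact \emph{simpler} here than in the rough case (you just invoke \cref{prop:eta_N}(1) and drop the nonnegative $\rfe_{N,t}$ term — no heat-flow integration over time is needed as in \cref{thm:smoothroughN}), and the necessity direction reproduces the rescaling/localisation contradiction argument verbatim.

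One small point worth tightening: in the necessity argument you derive a contradiction with \eqref{ineq:weakNSRF} ``at $t_0$'', but Definition \ref{def:weakNSRF} only requires \eqref{ineq:weakNSRF} for \emph{a.e.}\ $t$, and a priori $t_0$ could lie in the exceptional null set. Since the Ricci inequality fails on the open neighbourhood $U$ of $(t_0,v_0)$, its failure persists for an open (hence positive-measure) set of times, so one simply replaces $t_0$ by a nearby $t_1$ in the full-measure set where \eqref{ineq:weakNSRF} holds and $(t_1,v_0)\in U$ still. You should also state that the absolute continuity of $a\mapsto\ent_t(\mu^a)$ from \cref{thm:displaceconvex} requires a global lower bound on $\Ric_{N,f_t}$ (automatic in the compact case; in the non-compact case one localises to bounded supports), but the paper is no more explicit on this than you are.
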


As a byproduct of \cref{prop:eta_N} when $N=\infty$, we get an explicit relation between $\eta$ and the tensor $\Ric_{f_t}+\frac{1}{2}\partial_t g_t$ , parallel to what we have shown in \cref{sec:Rough_infty} for $\vartheta$.
Recall that $\eta(t,x,y)$ is defined by taking the infimum of $\eta(t,\mu^0,\mu^1)$ over $\mu^0$ and $\mu^1$.
Therefore, it is not difficult to deduce the following analogous results of \cref{cor:theta+}, \cref{cor:ThetavsRFex} and \cref{thm:smoothrough_infty} with the help of \cref{prop:eta_N}.
In particular, we recover the $\eta$-characterization of weak Ricci flow from \cref{thm:charaSRF} in the smooth setting.

\begin{corollary}\label{cor:eta+}
Let $(M,g_t,f_t)_{t\in I}$ be a compact smooth flow. Then we have for all $t\in I$, $x,y\in M$:
\begin{align}\label{eq:excess1eta}
{\rm RFex}(t,x,y) \leq \eta^-(t,x,y)\;.
\end{align}
For every $t_0\in I$ there is $\varepsilon>0$ such that for all $t\in I$, $x,y\in M$ non-conjugate with $|t-t_0|, d_t(x,y)<\varepsilon$: 
\begin{align}\label{eq:excess2eta}
 \eta^+(t,x,y) \leq {\rm RFex}(t,x,y) + \sigma_t \tan^2\big(\sqrt{\sigma_t} d_t(x,y)\big)\;,
\end{align}
where $\sigma_t$ is an upper bound on the modulus of the Riemann tensor along the geodesic from $x$ to $y$. In particular, for any $t\in I$ and $x\in M$, 
\begin{align}
    \eta^*(t,x)&:=\liminf_{y,z\to x}\eta^\pm(t,y,z)=\sigma_{\max}(\frac12\partial_t g_t+\Ric_{f_t})(x),\\
    \eta^\pm(t,x,x)&= \liminf_{y,z\to x}\eta^\pm(t,y,z)=\sigma_{\min}(\frac12\partial_t g_t+\Ric_{f_t})(x).
\end{align}
\end{corollary}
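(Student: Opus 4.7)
The plan is to derive both \eqref{eq:excess1eta} and \eqref{eq:excess2eta} from \cref{prop:eta_N} specialised to $N=\infty$, where the entropy-squared contribution disappears, $\Ric_{N,f_t}$ becomes $\Ric_{f_t}$, and the error term $\mathcal{E}_t(\gamma^a)$ reduces to $\|\nabla_t^2\phi^a(\gamma^a)\|^2_{\mathrm{HS}}$ via the Bochner identity \eqref{eq:inftyBochner}. The asymptotic formulas for $\eta^*$ and for the liminf of $\eta^\pm$ then follow by sandwiching.

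For \eqref{eq:excess1eta}, I would fix $(\mu^0,\mu^1)$ with supports in $B_t(x,\varepsilon)\times B_t(y,\varepsilon)$, apply the first part of \cref{prop:eta_N} with $N=\infty$ to obtain
\[
\int \rfe_t(\gamma)\,d_t^2(\gamma^0,\gamma^1)\d\pi(\gamma) \;\le\; W_t^2(\mu^0,\mu^1)\,\eta(t,\mu^0,\mu^1),
\]
use the bound $\rfe_t(\gamma)\ge \rfe_t(\gamma^0,\gamma^1)$ given by the definition of $\rfe_t(x,y)$, and invoke the lower semi-continuity of $\rfe_t$ from \cref{lemma:rfe} to pass to the limit $\varepsilon\to 0$; this yields $\rfe_t(x,y)\le \eta^+(t,x,y)$. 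The equality $\eta^+=\eta^-$ then follows from \cref{lemma:freeofchoice_eta}, whose hypotheses (log-Lipschitz control and upper regularity of entropy) are automatic on a compact smooth flow.

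For \eqref{eq:excess2eta}, I would use the construction in the proof of \cref{prop:eta_N}(2) around a minimising $d_t$-geodesic $\xi$ realising $\rfe_t(x,y)$, whose existence is guaranteed by \cref{lemma:rfe}. The statement of \cref{prop:eta_N}(2) absorbs the Hessian contribution into $\varepsilon$, so I would instead revisit its proof for $N=\infty$: letting $\mu^0,\mu^1$ concentrate to $\delta_x,\delta_y$, the principal term on the right-hand side converges to $d_t(x,y)^2\rfe_t(\xi)=d_t(x,y)^2\rfe_t(x,y)$, while the rescaling $\phi^a=2r\tilde\phi^{2ar-r}$ combined with the pointwise Hessian estimate \eqref{eq:Hessianphi} produces
\[
\int_0^1 \|\nabla_t^2\phi^a(\xi^a)\|_{\mathrm{HS}}^2\d a \;\le\; d_t(x,y)^2\,\sigma_t\tan^2\!\big(\sqrt{\sigma_t}\,d_t(x,y)\big).
\]
Dividing by $W_t^2(\mu^0,\mu^1)\to d_t(x,y)^2$ yields the claimed error. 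The main technical obstacle is exactly this Hessian control, for which the non-conjugacy assumption is needed so that the family of potentials from \cref{lemma:spacetimepotentials} is available with the required uniform $C^5$-smallness.

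The final eigenvalue identifications then follow from the resulting two-sided bound
\[
\rfe_t(y,z) \;\le\; \eta^-(t,y,z)\le \eta^+(t,y,z) \;\le\; \rfe_t(y,z)+\sigma_t\tan^2\!\big(\sqrt{\sigma_t}\,d_t(y,z)\big),
\]
valid for non-conjugate $(y,z)$ close to $x$. Since the right-hand error vanishes as $(y,z)\to x$, both $\limsup$ and $\liminf$ of $\eta^\pm(t,y,z)$ agree with the corresponding extremal limits of $\rfe_t(y,z)$. For close $(y,z)$ the essentially unique minimising geodesic has an initial tangent direction that is free to sweep out all of $T_xM$, and $\rfe_t(y,z)\to \big(\Ric_{f_t}+\tfrac12\partial_tg_t\big)(v)/g_t(v,v)$ for the corresponding limiting direction $v$; the supremum and infimum of this ratio over $v\in T_xM\setminus\{0\}$ are by definition $\sigma_{\max}$ and $\sigma_{\min}$ of $\tfrac12\partial_tg_t+\Ric_{f_t}$ at $x$.
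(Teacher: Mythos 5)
Your proposal is correct and follows the same route the paper intends, namely deducing the corollary from \cref{prop:eta_N}.  The paper does not in fact spell out a proof of \cref{cor:eta+} — it states only that the result follows from \cref{prop:eta_N} ``analogously'' to how \cref{cor:theta+} and \cref{cor:ThetavsRFex} follow from \cref{thm:sharperestimates} — so what you have written is essentially the intended argument made explicit.  In particular you correctly identify the two key points: for \eqref{eq:excess1eta}, one applies \cref{prop:eta_N}(1) at $N=\infty$, lower-bounds $\rfe_t(\gamma)$ by $\rfe_t(\gamma^0,\gamma^1)$ and uses \cref{lemma:rfe}(2) together with $\int d_t^2(\gamma^0,\gamma^1)\d\pi=W_t^2(\mu^0,\mu^1)$ before appealing to $\eta^+=\eta^-$ via \cref{lemma:freeofchoice_eta}; for \eqref{eq:excess2eta}, one must reopen the proof of \cref{prop:eta_N}(2), note that with $N=\infty$ the potentials are constructed with $\nabla_t^2\phi^{1/2}(\xi^{1/2})=0$, and then the Riccati comparison underlying \eqref{eq:Hessianphi} gives the explicit $\sigma_t\tan^2$ control on $\mathcal E_t=\|\nabla^2_t\phi^a\|^2_{\rm HS}$ rather than only an $O(r^2)$ bound absorbed into $\varepsilon$.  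The passage to the eigenvalue identities from the resulting two-sided sandwich is as you describe.  One minor remark: the Hessian bound from the construction centred at the midpoint actually yields the sharper constant $\sigma_t\tan^2(\sqrt{\sigma_t}\,d_t(x,y)/2)$, which in particular implies the stated inequality; this is consistent with the wording of \cref{cor:theta+} as well and causes no issue.
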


\begin{theorem}\label{thm:weak-char}
    Let $\big(M_t\big)_{t\in I}\coloneqq (M,g_t,f_t)$ be a smooth flow. Then
\begin{enumerate}
\item $\big(M_t\big)_{t\in I}$ is a weak super-Ricci-flow if and only if $\Ric_{f_t}+\frac12\partial_tg_t\geq 0$ if and only if $\eta(t,x,y)\ge 0$ for all $t\in I$, $x,y\in M$;
\item $\big(M_t\big)_{t\in I}$ is a weak sub-Ricci-flow if and only if $\Ric_{f_t}+\frac12\partial_tg_t\leq 0$ if and only if $\eta^*(t,x)\le 0$ for all $t\in I$, $x\in M$.
\end{enumerate}
\end{theorem}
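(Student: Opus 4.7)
The proof will parallel that of Theorem~\ref{thm:smoothrough_infty} for the rough case, now using the infinitesimal identity for $\eta$ supplied by Corollary~\ref{cor:eta+} together with the characterization of weak sub-Ricci flows via $\eta^*$ from Theorem~\ref{thm:charaSRF} and the consistency result Theorem~\ref{thm:smoothWeak}. As a preliminary observation, for a smooth flow the distances $(d_t)$ admit log-Lipschitz control and each $\ent_t$ is upper regular (each slice being a smooth Riemannian manifold, hence non-branching and $\cd(K,N)$ locally), so Lemma~\ref{lemma:freeofchoice_eta} identifies $\eta^+=\eta^-=\eta$ and justifies the unambiguous use of $\eta$ in the statement.

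For item~(1), the plan is to close the cycle
\[
\text{weak super-Ricci flow} \;\Longrightarrow\; \eta\ge 0 \;\Longrightarrow\; \Ric_{f_t}+\tfrac{1}{2}\partial_t g_t\ge 0 \;\Longrightarrow\; \text{weak super-Ricci flow.}
\]
The first implication is immediate from the defining inequality~\eqref{ineq:weaksuperRF}: for any Wasserstein geodesic with endpoints of finite entropy concentrated in balls of radius $\varepsilon$ around $x,y$, the weak super-Ricci condition forces the expression inside the infimum defining $\eta^-_\varepsilon(t,x,y)$ to be non-negative, so that letting $\varepsilon\to 0$ yields $\eta(t,x,y)\ge 0$. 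The second implication uses the pointwise identity $\eta^\pm(t,x,x)=\sigma_{\min}(\tfrac{1}{2}\partial_t g_t+\Ric_{f_t})(x)$ from Corollary~\ref{cor:eta+} (applied with $y=z=x$), forcing the smallest eigenvalue of the symmetric tensor to be non-negative at every point, and hence the tensor itself to be non-negative. The third implication specialises Theorem~\ref{thm:smoothWeak} to $N=\infty$.

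For item~(2), the equivalence \emph{weak sub-Ricci flow} $\iff$ $\eta^*(t,x)\le 0$ for all $x$ is the content of Theorem~\ref{thm:charaSRF}. To close the loop with the Ricci tensor inequality, I invoke the pointwise identity $\eta^*(t,x)=\sigma_{\max}(\tfrac{1}{2}\partial_t g_t+\Ric_{f_t})(x)$ from Corollary~\ref{cor:eta+}, which shows that $\eta^*\le 0$ on all of $M$ is equivalent to the largest eigenvalue of $\tfrac{1}{2}\partial_t g_t+\Ric_{f_t}$ being non-positive at every point, i.e.\ to $\Ric_{f_t}+\tfrac{1}{2}\partial_t g_t\le 0$ as a quadratic form.

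No genuine analytic obstacles are anticipated since all the heavy lifting---the upper and lower bounds on $\eta$ along transports tightly concentrated around a single geodesic---has already been carried out in Proposition~\ref{prop:eta_N} and Corollary~\ref{cor:eta+}. The only mild caveat is that Corollary~\ref{cor:eta+} is formulated for compact smooth flows; in the general smooth (non-compact) case the estimates are applied on a compact exhaustion, which is harmless since every characterisation in the theorem is purely pointwise in $(t,x)$.
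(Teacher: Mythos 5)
Your argument is essentially correct and follows the paper's intended route: the authors explicitly state that Theorem~\ref{thm:weak-char} is deduced from Proposition~\ref{prop:eta_N} and Corollary~\ref{cor:eta+} by the same pattern used for the rough case in Theorem~\ref{thm:smoothrough_infty}. Your three-step chain through the tensor inequality via Theorem~\ref{thm:smoothWeak} and the pointwise eigenvalue identities $\eta^\pm(t,x,x)=\sigma_{\min}$ and $\eta^*(t,x)=\sigma_{\max}$ of $\frac12\partial_t g_t+\Ric_{f_t}$ is the right skeleton, and you correctly identify that the $N$-dimensional refinements in Proposition~\ref{prop:eta_N} specialise to $N=\infty$.

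However, you glide over the passage from \emph{almost every} $t$ to \emph{all} $t$, and this is precisely where the paper's own proof of the rough analogue Theorem~\ref{thm:smoothrough_infty} inserts an explicit argument. The notions of weak super- and sub-Ricci flow, as well as the characterisations in Theorem~\ref{thm:charaSRF}, are all phrased for almost every $t\in I$, whereas Theorem~\ref{thm:weak-char} asserts the sign conditions on $\eta$ and on the tensor for \emph{every} $t$. The upgrade follows because Corollary~\ref{cor:eta+} identifies $\eta^\pm(t,x,x)$ and $\eta^*(t,x)$ with eigenvalues of a tensor that is jointly smooth in $(t,x)$, hence continuous in $t$; a continuous function that is non-negative (respectively non-positive) for a.e.~$t$ is so for all $t$. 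You should make this step explicit, exactly as the paper does for $\vartheta^*$ in the proof of Theorem~\ref{thm:smoothrough_infty}. Once that is added, your chain closes: weak super-Ricci flow $\Rightarrow$ (for a.e.~$t$) $\eta\ge 0$ $\Rightarrow$ $\Ric_{f_t}+\tfrac12\partial_tg_t\ge 0$ at a.e.~$t$, hence at all $t$ by continuity, $\Rightarrow$ ${\rm RFex}\ge 0$ and $\eta\ge 0$ at all $t$ via Corollary~\ref{cor:eta+}, $\Rightarrow$ weak super-Ricci flow via Theorem~\ref{thm:smoothWeak} with $N=\infty$; and similarly for the sub-Ricci case. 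Your remark about compactness is reasonable: the construction underpinning Corollary~\ref{cor:eta+} is Proposition~\ref{prop:eta_N}, which is stated without compactness, and the eigenvalue identities are purely local, so they hold for arbitrary smooth flows.
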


\begin{rem}\label{rem:smooth-error}
The proof of the second statement of \cref{prop:eta_N} i.e. the smooth condition implies the synthetic sub-Ricci flow relies on a concrete construction of transports.
Consequently, we can estimate the entropy derivative at all intermediate times based on \eqref{eq:''entropy}.

In particular, the same proof actually gives the following stronger estimate for a smooth flow satisfying $\Ric_{f_t}\leq -\frac12\partial_t g_t$: for every $t\in I$, $\varepsilon>0$, there exists an open cover $\{U_i\}$ such that for every $i$ and every open $V_1,V_1\subset U_i$, there exists a $W_t$-Wasserstein geodesic $(\mu^a)_{a\in[0,1]}$ so that for all $0\leq \sigma<\rho\leq 1$
 \begin{equation}\label{ineq:endtointer}
         \partial_a^+\ent_t(\mu^a)\lvert_{a=\rho-}-\partial_a^-\ent_t(\mu^a)\lvert_{a=\sigma+}\le \frac{1}{\rho-\sigma}\left(-\frac12\partial^-_t W_{t^-}^2(\mu^\sigma,\mu^\rho)+\varepsilon W_t^2(\mu^\sigma,\mu^\rho) \right),
    \end{equation}
    or alternatively that for any $a\in (0,1)$
    \begin{equation}
        \frac{\d^2}{\d a^2}\ent_t(\mu^a)\leq -\frac12\partial^-_t|\dot{\mu}|^a_{W_t}+\varepsilon\cdot W_t^2(\mu^0,\mu^1),
    \end{equation}
   where $|\dot{\mu}|^a_{W_t}$ denotes the $W_t$ metric derivative of the curve $\mu$ at $a$.
   Note that compared with \cref{prop:endtointer}, \eqref{ineq:endtointer} has the expected scaling in the error term.
\end{rem}

\section{Comparing different notions of synthetic Ricci flow}\label{sec:comparison}
In this section, we always assume $(X,d_t,\m_t)_{t\in I}$ to be a time-dependent family of \textbf{compact} metric measure spaces satisfying \cref{asm:dualheatflow}.

\subsection{Preliminaries on synthetic super-Ricci flows}
One of the core contributions of Kopfer--Sturm's paper \cite{Kopfer-Sturm2018} is to establish the equivalence between different descriptions of synthetic super-Ricci flow in terms of dynamic convexity of entropy, contraction property of Wasserstein distances under dual heat flows and monotonicity of gradient estimates under the primal heat flow.

\begin{theorem}[Kopfer--Sturm I]\label{thm:Kopfer-Sturm}  Let $(X,d_t,\m_t)_{t\in I}$ be a family of  time-dependent compact m.m.s. satisfying \cref{asm:dualheatflow}.
Then for any $N\in(0,\infty)$ and $K\in\R$, the following are equivalent.
\begin{enumerate}
    \item\label{item:DCforS+RF} For a.e. $t\in I$ and every $W_t$-geodesic $(\mu^a)_{a\in[0,1]}$ with $\mu^0,\mu^1\in D(\ent_t)$
    \begin{equation}
        \partial^+_a \ent_t(\mu^a)|_{a=1-}-\partial^-_a \ent_t(\mu^a)|_{a=0+}\geq -\frac12\partial^-_t W_{t-}^2(\mu^0,\mu^1)+K\cdot W^2_t(\mu^0,\mu^1)+\frac1N|\ent_t(\mu^0)-\ent_t(\mu^1)|^2
    \end{equation}
    \item\label{item:WineqforS+RF} For all $0\leq s<t\leq T$ and $\mu,\nu\in \p_t(X)$,
    \begin{equation}
        e^{-2Ks}W_s^2(\hat{P}_{t,s}\mu,\hat{P}_{t,s}\nu)\leq e^{-2Kt}W_t^2(\mu,\nu)-\frac2N\int^t_se^{-2Kr}\left(\ent_r(\hat{P}_{t,r}\mu)-\ent_r(\hat{P}_{t,r}\nu)\right)^2\d r
    \end{equation}
    \item For all $u\in D(\ch_t)$ and all $0<s<t<T$
    \begin{equation}
        e^{2Kt}|\nabla_t(P_{t,s}u)|^2\leq e^{2Ks}P_{t,s}(|\nabla_su|^2)-\frac2N\int^t_se^{2Kr}(P_{t,r}\Delta_rP_{r,s}u)^2\d r.
        \end{equation}
\end{enumerate}
Moreover, if any one (hence all) of the above properties holds, then \eqref{item:DCforS+RF} holds for all $t\in I$.

We call a time-dependent m.m.s. a $(K,N)$-super-Ricci flow if it satisfies one (hence all) of the above properties.
\end{theorem}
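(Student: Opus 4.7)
The plan is to prove the equivalence via the cyclic chain $(1)\Rightarrow(2)\Rightarrow(3)\Rightarrow(1)$, extending the Ambrosio--Gigli--Savar\'e EVI scheme from the static $\rcd(K,N)$ setting to the time-dependent one. The three pillars are: the dual heat flow realised as the Wasserstein gradient flow of the frozen-in-time entropy $\ent_r$, Kantorovich duality bridging primal and dual flows, and the Hopf--Lax semigroup providing solutions to the Hamilton--Jacobi equation at each frozen time.

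For $(1)\Rightarrow(2)$, I would fix $s<t$ and $\mu,\nu\in\p_t(X)$, set $\mu_r\coloneqq\hat P_{t,r}\mu$, $\nu_r\coloneqq\hat P_{t,r}\nu$, and study the scalar function $\sigma(r)\coloneqq e^{-2Kr}W_r^2(\mu_r,\nu_r)$ on $[s,t]$, which is absolutely continuous by \cref{lemma:dualheatflow}(5). The derivative $\sigma'(r)$ decomposes into a spatial contribution (the motion of $\mu_r,\nu_r$ under the dual heat flow in the frozen metric $W_r$) and a metric contribution (the variation of $W_r$ with $r$). The spatial contribution is bounded by first upgrading the dynamic convexity in (1) to a frozen-time EVI-type inequality at each $r$ via standard Wasserstein interpolation along geodesics ending at $\mu_r$ and $\nu_r$, then summing the two EVI contributions to obtain
\[
\tfrac{\d}{\d r}\big|_{\text{spatial}}W_r^2(\mu_r,\nu_r)\le -2K\,W_r^2(\mu_r,\nu_r)-\tfrac{2}{N}\big(\ent_r(\mu_r)-\ent_r(\nu_r)\big)^2,
\]
while the metric contribution reproduces $\partial_r^-W_{r^-}^2(\mu_r,\nu_r)$, controlled by the log-Lipschitz hypothesis from \cref{asm:dualheatflow}. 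Integrating from $s$ to $t$ and rearranging produces (2).

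For $(2)\Rightarrow(3)$, I would apply (2) to $\mu=\delta_x$, $\nu=\delta_y$ and combine with Kantorovich duality $P_{t,s}u(x)-P_{t,s}u(y)=\int u\,\d(\hat P_{t,s}\delta_x-\hat P_{t,s}\delta_y)$, divide by $d_t(x,y)^2$ and pass to $y\to x$; the entropy difference in (2) is rewritten via a de Bruijn-type identity along the dual heat flow as an integral involving $P_{t,r}\Delta_r P_{r,s}u$. For $(3)\Rightarrow(1)$, I would take a $W_t$-geodesic $(\mu^a)$ with Kantorovich potential $\phi$ evolved by the $d_t$-Hopf--Lax semigroup $Q^a\phi$, use the continuity equation $\partial_a\rho^a=\mathrm{div}_t(\rho^a\nabla_t Q^a\phi)$ to obtain
\[
\ent_t(\mu^1)-\ent_t(\mu^0)=\int_0^1\!\!\int \Delta_t(Q^a\phi)\,\d\mu^a\,\d a,
\]
and apply the frozen-time gradient estimate (3) at time $t$ to recover the second-order convexity inequality in (1); the metric variation enters through the term $\partial_t^-W_{t^-}^2$ via the log-Lipschitz bound.

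The main obstacle is the two-way conversion of the dimensional $1/N$ term between its three incarnations: a time integral of $(\ent_r(\hat P_{t,r}\mu)-\ent_r(\hat P_{t,r}\nu))^2$ in (2), a time integral of $(P_{t,r}\Delta_r P_{r,s}u)^2$ in (3), and a single $|\ent_t(\mu^1)-\ent_t(\mu^0)|^2$ in (1). Translating between them requires sharp Jensen and Cauchy--Schwarz estimates and the identification, via de Bruijn's identity, of $\tfrac{\d}{\d r}\ent_r(\hat P_{t,r}\mu)$ with a Fisher-type quantity, all carried out in a time-dependent framework where the reference measure itself is moving. The heat-kernel Gaussian bounds and Markovianity in \cref{thm:heatkernel}, the absolute continuity statements in \cref{lemma:dualheatflow}, and the density of regular probability measures in $D(\ent_r)$ are precisely the ingredients making these conversions quantitative. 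The ``moreover'' clause follows because (2) is stated for \emph{every} $s<t$ without exceptional set; once the cycle is closed, (1) upgrades from a.e.\ $t\in I$ to all $t\in I$.
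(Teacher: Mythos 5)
The theorem you set out to prove is not proved in this paper. It is labelled \emph{Kopfer--Sturm I} and, as the surrounding text explains, is quoted from \cite{Kopfer-Sturm2018}, where it is one of the central contributions; the present paper records it without proof and immediately moves on to quote the dynamical EVI result (Theorem~\ref{thm:Kopfer-Sturm2}) as well. There is therefore no in-paper proof against which to compare your argument.

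Judged on its own as a sketch of the Kopfer--Sturm theorem, the large-scale plan --- a cyclic chain built on Kuwada-type duality, frozen-time Hopf--Lax, and EVI --- is in the right spirit, but two steps would not go through as written. First, the dimensional $1/N$-term. You propose to translate between its three incarnations via ``a de Bruijn-type identity along the dual heat flow.'' De Bruijn's identity equates $\tfrac{\d}{\d r}\ent_r(\hat P_{t,r}\mu)$ with (minus) the Fisher information of $\hat P_{t,r}\mu$; it says nothing that identifies the \emph{difference} $\ent_r(\hat P_{t,r}\mu)-\ent_r(\hat P_{t,r}\nu)$ with the quantity $P_{t,r}\Delta_r P_{r,s}u$ appearing in (3). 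In the $\rcd$ literature this translation runs through a dimensional Bochner inequality
\[
\Gamma_2(u)\;\ge\;\frac{(\Delta u)^2}{N}+K\,|\nabla u|^2
\]
and its self-improvement in the style of Savar\'e (cf.\ \cite{AGSRCD2014}); Kopfer--Sturm establish a \emph{dynamic} Bochner inequality as the pivot between (3) and (1)/(2), and it is not dispensable from the argument. Second, in the step $(1)\Rightarrow(2)$ you write that one ``upgrades the dynamic convexity to a frozen-time EVI inequality'' and sums two EVI contributions. That is indeed the shape of the Kopfer--Sturm argument, but establishing that $\hat P_{t,\cdot}$ satisfies a dynamic EVI property is itself a substantial theorem (quoted here as Theorem~\ref{thm:Kopfer-Sturm2}), not a routine upgrade: in a time-dependent space $\hat P_{t,\cdot}$ is not the gradient flow of any fixed functional, and even formulating the EVI requires the dynamic Wasserstein action $W_{s,t}$ of \eqref{eq:defdynamicalW}. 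Your plan presupposes this step as known, where in truth it carries most of the weight.
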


A powerful tool developed by Kopfer--Sturm is the so-called dynamical EVI-gradient flows.
This necessitates the use of the dynamical Wasserstein distance, which we explain in the sequel.

Given probability measure $\mu,\nu\in\p(X)$, for $s<t$ we define the dynamical Wasserstein distance by
\begin{equation}\label{eq:defdynamicalW}
    W^2_{s,t}(\mu,\nu)\coloneqq \inf\left\{\int^1_0|\dot{\mu}^a|^2_{s+a(t-s)}\d a\right\}\;,
\end{equation}
where the infimum runs over all $2$-absolutely continuous curves $(\mu^a)_{a\in[0,1]}\subset \p(X)$ connecting $\mu$ and $\nu$, see \cite[Section 6.1]{Kopfer-Sturm2018}. Observe that due to the log-Lipschitz bounds,  the metric derivative in \eqref{eq:defdynamicalW} exists for almost every $a\in[0,1]$.
\begin{theorem}[Kopfer--Sturm II]\label{thm:Kopfer-Sturm2} Any $(K,N)$-super-Ricci flow in the sense of \cref{thm:Kopfer-Sturm} satisfies dynamical EVI$^-(K,\infty)$ property, i.e. for every $t\in I$, $\mu\in\p(X)$ and $\sigma\in D(\ent)$, we have 
\begin{equation}
    \frac12\partial_s^-\left(W^2_{s,t}(\hat{P}_{t,s}\mu,\sigma)\right)|_{s=t^-}\geq \ent_t(\mu)-\ent_t(\sigma)+\frac{K}{2}W_t^2(\mu,\sigma).
\end{equation}
\end{theorem}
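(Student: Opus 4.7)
The plan is to deduce the dynamical EVI$^-(K,\infty)$ inequality by exhibiting an explicit competitor curve in the action representation \eqref{eq:defdynamicalW} of $W_{s,t}^2$ and passing to the limit $s\nearrow t$. This mirrors the standard implication \emph{strong displacement convexity of the entropy plus heat-flow contraction} $\Rightarrow$ \emph{EVI gradient flow} from the static theory, adapted to the time-dependent metric setting; the dynamical Wasserstein distance $W_{s,t}$ is designed precisely to track the time variation of the underlying metric.

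Fix $t \in I$, $\mu \in \p(X)$ and $\sigma \in D(\ent)$, and let $(\rho^a)_{a\in[0,1]}$ be a $W_t$-geodesic from $\mu$ to $\sigma$, which exists by the $\rcd(K,N)$-condition at time $t$. For $s<t$, set $r_s(a) := s + a(t-s)$ and consider the interpolating curve
\[
\nu_s^a \;:=\; \hat P_{t,\, r_s(a)}\rho^a,\qquad a\in[0,1],
\]
so that $\nu_s^0 = \hat P_{t,s}\mu$ and $\nu_s^1 = \sigma$. By the definition of the dynamical Wasserstein distance we then have $W_{s,t}^2(\hat P_{t,s}\mu, \sigma) \le \int_0^1 |\dot\nu_s^a|_{r_s(a)}^2\, da$. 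The velocity $\dot\nu_s^a$ decomposes into a \emph{geodesic} contribution (the variation of $\rho^a$ transported by $\hat P_{t, r_s(a)}$) and a \emph{heat-flow} contribution of strength $(t-s)$ (the variation in the time-parameter $r_s(a)$, whose Wasserstein direction is formally given by $\nabla_{r_s(a)}\log\rho_{\nu_s^a}$). The contraction estimate $W_r(\hat P_{t,r}\rho^a,\hat P_{t,r}\rho^b)\le e^{-K(t-r)}W_t(\rho^a,\rho^b)$, obtained from \cref{thm:Kopfer-Sturm}(2) by discarding the nonnegative entropy integral, controls the geodesic part by $|v_{\text{geod}}|_{r_s(a)}^2\le e^{-2K(1-a)(t-s)}W_t^2(\mu,\sigma)$. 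The gradient estimate \cref{thm:Kopfer-Sturm}(3) provides the quantitative control of the heat-flow part. Recognising the cross-term via the identity $\int\nabla\log\rho^a\cdot\nabla\phi^a\,d\rho^a = -\tfrac{d}{da}\ent_t(\rho^a)$ (where $\phi^a$ is the Kantorovich potential generating $(\rho^a)$), expanding to first order in $h=t-s$, and integrating in $a$ leads to
\[
\int_0^1 |\dot\nu_s^a|_{r_s(a)}^2\, da \;\le\; W_t^2(\mu,\sigma) - 2(t-s)\bigl(\ent_t(\mu) - \ent_t(\sigma)\bigr) - K(t-s)\,W_t^2(\mu,\sigma) + o(t-s).
\]

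Since $W_{t,t}^2(\mu,\sigma) = W_t^2(\mu,\sigma)$, dividing by $t-s$ and letting $s\nearrow t$ yields the claimed lower bound on $\tfrac12\partial_s^- W_{s,t}^2(\hat P_{t,s}\mu, \sigma)|_{s=t^-}$. The main obstacle lies in justifying this action estimate rigorously: identifying the heat-flow direction with the entropy gradient and pairing it correctly with the geodesic velocity requires invoking \cref{thm:Kopfer-Sturm}(3) together with the Cheeger-type differential calculus for the time-dependent setting, while differentiability of the entropy along the $W_t$-geodesic is guaranteed by the $\rcd(K,N)$-condition at time $t$. A standard regularisation --- first reducing to $\mu\in D(\ent)$ with bounded density and then passing to the limit via the continuity of the dual heat propagator (\cref{lemma:dualheatflow}) and the lower semicontinuity of $W_{s,t}^2$ in its arguments --- closes the passage from smooth approximations to the full statement.
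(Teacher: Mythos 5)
Your strategy and the paper's are genuinely different. The paper's proof of this statement is essentially two lines: it observes that the case $K=0$ is already Theorem~6.13 of Kopfer--Sturm \cite{Kopfer-Sturm2018}, and reduces the general case to it by the standard time-and-distance rescaling $\tilde d_t = e^{-K\tau(t)}d_{\tau(t)}$, $\tilde\m_t=\m_{\tau(t)}$, $\tau(t)=\tfrac{-1}{2K}\log(C-2Kt)$, under which a $(K,N)$-super-Ricci flow becomes a $(0,N)$-super-Ricci flow and the EVI$^-(0,\infty)$ inequality for the rescaled flow translates back into EVI$^-(K,\infty)$. You, by contrast, attack the inequality directly: build the competitor curve $\nu_s^a=\hat P_{t,\,s+a(t-s)}\rho^a$ in the action functional defining $W_{s,t}^2$, decompose its metric velocity into a geodesic part (controlled by the contraction estimate of \cref{thm:Kopfer-Sturm}(2)) and a heat-flow part (entering through the cross-term as the entropy derivative along the geodesic, and through a negligible $O((t-s)^2)$ self-term bounded via \cref{thm:Kopfer-Sturm}(3)), and Taylor-expand in $t-s$.

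Your scheme is conceptually correct and the first-order bookkeeping checks out: the geodesic part contributes $W_t^2(\mu,\sigma)\int_0^1 e^{-2K(1-a)(t-s)}\,da = W_t^2(\mu,\sigma)(1-K(t-s))+o(t-s)$ and the cross-term contributes $-2(t-s)(\ent_t(\mu)-\ent_t(\sigma))$, which after dividing by $t-s$ and taking $\liminf$ as $s\nearrow t$ yields exactly the claimed inequality. But you should be aware that what you are sketching \emph{is} essentially the proof of the cited Theorem~6.13 of Kopfer--Sturm; you are reproving the deep input rather than citing it. The steps you defer --- identifying the Wasserstein velocity of the dual heat flow with $\nabla\log\rho$ in the non-smooth RCD setting, splitting a non-linear metric derivative into geodesic and heat-flow parts with a well-defined cross-term, and justifying the resulting action inequality via a Benamou--Brenier-type duality adapted to the time-dependent metrics --- are precisely the several pages of hard analysis in \cite[Section~6]{Kopfer-Sturm2018}, so what you have is a road-map rather than a proof. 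If you do want to present a self-contained argument, the cleanest path is the paper's: invoke the $K=0$ case as a black box and perform the rescaling; the only thing to check is that the normalisations of the dynamical Wasserstein distance and the entropy transform correctly under $t\mapsto\tau(t)$, $d\mapsto e^{-K\tau}d$.
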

\begin{proof}
    The case $K=0$ was proved by Kopfer and Sturm. 
    Here we explain how the statement for general $K$ follows from a rescaling argument used in \cite{Kopfer-Sturm2018}.
    Let $(X,d_t,\m_t)$ be a $(K,N)$-super-Ricci flow.
    For each $C\in\R$, we can define another time-dependent m.m.s. $(X,\tilde{d}_t,\tilde{\m}_t)_{t\in \tilde{I}}$ by taking
    \begin{equation}
        \tilde{d}_t=e^{-K\tau(t)}d_{\tau(t)},\quad \tilde{\m}_t=\m_{\tau(t)},\quad \tau(t)=\frac{-1}{2K}\log(C-2Kt)
    \end{equation}
    and $\tilde{I}=\{\tau(t):t\in I,2Kt<C\}$.
   
    By \cite[Theorem 1.11]{Kopfer-Sturm2018}, $(X,\tilde{d}_t,\tilde{\m}_t)_{t\in \tilde{I}}$ is an $N$-super-Ricci flow.
    Therefore applying Theorem 6.13 in \cite{Kopfer-Sturm2018} one obtains the EVI$^-(0,\infty)$ property for $(X,\tilde{d}_t,\tilde{\m}_t)_{t\in \tilde{I}}$.
    One can check that this translates into the EVI$^-(K,\infty)$ property on $(X,d_t,\m_t)_{t\in I}$.
\end{proof}

It is worth mentioning that any time-dependent metric measure space satisfying \cref{asm:dualheatflow} is a $(K-L,N)$-super-Ricci flow in the sense of \cref{thm:Kopfer-Sturm}, with $K$ the uniform synthetic Ricci curvature lower bound for all static space $(X,d_t,\m_t)$ and $L$ the log-Lipschitz bound on metrics.
Therefore, for any $\mu\in\p(X)$ the EVI$^-(K-L,\infty)$ property holds:
\begin{equation}\label{ineq:EVI1}
	\frac12\partial_s^-\left(W^2_{s,t}(\hat{P}_{t,s}\mu,\sigma)\right)|_{s=t^-}\geq \ent_t(\mu)-\ent_t(\sigma)+\frac{K-L}{2}W_t^2(\mu,\sigma)
\end{equation}
for all $\sigma\in \p_t(X)$.

\begin{lemma}
    Let $(\mu_s)_{s\in (s_0,t]}$ be a weakly continuous curve in $\p(X)$, then for any $\sigma\in \p(X)$,
    \begin{equation}\label{ineq:W_st}
    \partial_s^-\left(W^2_{s,t}(\mu_s,\sigma)\right)|_{s=t^-}\leq \partial_s^-\left(W_s^2(\mu_s,\sigma)\right)|_{s=t^-}+2LW^2_t(\mu_t,\sigma).
\end{equation}
\end{lemma}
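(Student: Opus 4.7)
The plan is to establish a pointwise comparison between the dynamical Wasserstein distance $W_{s,t}$ and the static $W_s$ for $s<t$, and then pass to derivatives by a first-order expansion.

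First I would use the log-Lipschitz control. Since $\log d_r - \log d_s \geq -L(r-s)$ for $s\leq r\leq t$, we have $d_r \geq e^{-L(r-s)}d_s$ on $X\times X$, and therefore $W_r \geq e^{-L(r-s)}W_s$ on $\mathcal P(X)$. For any absolutely continuous curve $(\mu^a)_{a\in[0,1]}$ connecting $\mu_s$ and $\sigma$ the metric derivatives satisfy $|\dot\mu^a|_{s+a(t-s)} \geq e^{-La(t-s)}|\dot\mu^a|_s$, hence
\[
\int_0^1 |\dot\mu^a|_{s+a(t-s)}^2\,\mathrm{d}a \;\geq\; e^{-2L(t-s)}\int_0^1|\dot\mu^a|_s^2\,\mathrm{d}a \;\geq\; e^{-2L(t-s)}\,W_s^2(\mu_s,\sigma),
\]
where the second inequality is the Benamou--Brenier identity on the static space $(X,d_s,\m_s)$. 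Taking the infimum over admissible curves yields the key pointwise bound
\[
W_{s,t}^2(\mu_s,\sigma) \;\geq\; e^{-2L(t-s)}\,W_s^2(\mu_s,\sigma).
\]

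Next I would turn this into a statement about left derivatives at $s=t$. Subtracting from $W_t^2(\mu_t,\sigma)$ and dividing by $t-s>0$:
\[
\frac{W_t^2(\mu_t,\sigma)-W_{s,t}^2(\mu_s,\sigma)}{t-s} \;\leq\; \frac{W_t^2(\mu_t,\sigma)-W_s^2(\mu_s,\sigma)}{t-s} + W_s^2(\mu_s,\sigma)\cdot\frac{1-e^{-2L(t-s)}}{t-s}.
\]
The quotient $\frac{1-e^{-2L(t-s)}}{t-s}$ converges to $2L$ as $s\nearrow t$, and by the weak continuity of $(\mu_s)$ together with the log-Lipschitz control one has $W_s^2(\mu_s,\sigma)\to W_t^2(\mu_t,\sigma)$. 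Since the second term on the right converges, the liminf distributes and the desired inequality follows.

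The only delicate point is the continuity $W_s^2(\mu_s,\sigma)\to W_t^2(\mu_t,\sigma)$ as $s\nearrow t$; this is routine, combining the weak continuity of the curve $s\mapsto\mu_s$ (hence $W_t$-continuity on compact $X$) with the uniform comparability $e^{-L|s-t|}W_t\leq W_s\leq e^{L|s-t|}W_t$ which follows directly from the log-Lipschitz hypothesis on the distances.
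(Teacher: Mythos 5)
Your proof is correct and follows essentially the same route as the paper's: derive the pointwise comparison $W_{s,t}^2(\mu_s,\sigma)\geq e^{-2L(t-s)}W_s^2(\mu_s,\sigma)$ from the log-Lipschitz control on the metrics, then decompose the difference quotient and pass to the liminf as $s\nearrow t$. The only cosmetic difference is that the paper extracts the pointwise inequality from a curve minimizing $W_{s,t}(\mu_s,\sigma)$ and compares backward to $W_s$, whereas you bound from below along an arbitrary admissible curve and then infimize, which sidesteps the existence of a minimizer; the resulting estimate and limiting argument are identical.
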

\begin{proof}
    For any $s<t$, choose $(\mu^a_s)_{a\in[0,1]}$ a $2$-absolutely continuous curve in $\p(X)$ minimizing the $W_{s,t}(\mu_s,\sigma)$. 
    Then by $L$-log-Lipschitz control on metrics, one has
    \begin{align}
        W^2_s(\mu_s,\sigma)-W^2_{s,t}(\mu_s,\sigma)&\leq \int^1_0|\dot{\mu}^a_s|^2_s-|\dot{\mu}^a_s|^2_{s+a(t-s)}\d a\\
        &\leq \int^1_0(e^{2La(t-s)}-1)|\dot{\mu}^a_s|^2_{s+a(t-s)}\d a\\
        &\leq (e^{2L(t-s)}-1)W^2_{s,t}(\mu_s,\sigma).
    \end{align}
    Therefore, 
    \begin{align}
         \partial_s^-\left(W^2_{s,t}(\mu_s,\sigma)\right)|_{s=t^-}&=  \liminf_{s\nearrow t} \frac{1}{t-s}\left(W^2_t(\mu_t,\sigma)-W_{s}^2(\mu_s,\sigma)+W_{s}^2(\mu_s,\sigma)-W^2_{s,t}(\mu_s,\sigma)\right)\\
         &\leq  \partial_s^-\left(W_s^2(\mu_s,\sigma)\right)|_{s=t^-}+\limsup_{s\nearrow t}\frac{1}{t-s}(e^{2L(t-s)}-1)W^2_{s,t}(\mu_s,\sigma)
    \end{align}
    which shows \eqref{ineq:W_st} as $W_{s,t}(\mu_s,\sigma)$ converges to $W_t(\mu_t,\sigma)$ when $s\to t$.
\end{proof}

Note that \cref{asm:dualheatflow} implies that $\ent_t$ is semiconvex along $W_t$ geodesics for all $t$ and in particular upper regular. Hence, \cref{lemma:freeofchoice_eta} gives that $\eta^-_\varepsilon=\eta^+_\varepsilon$ and hence $\eta^+=\eta^-$. We will thus not distinguish between the two and write $\eta_\varepsilon$ and $\eta$ for short.

\subsection{Comparing rough and weak super-Ricci flows}\label{sec:comparingRF}
From studies on static spaces e.g. \cite{Erbar-Sturm,sturm2017remarks}, we know the quantity $\vartheta^{\pm}$ can detect conic singularities, in contrast to $\eta$-quantities.
This is consistent with the fact that $\eta(t,x,y)$ is lower semi-continuous in $x,y$ but  $\vartheta^{\pm}(t,x,y)$ not necessarily.
Hence, we introduce an intermediate quantity $\vartheta^\flat$ as
\begin{equation}
     \vartheta^\flat(t,x,y)\coloneqq \lim_{\varepsilon\to 0}\inf\left\{-\limsup_{s\nearrow t}\frac1{t-s}\log\frac{W_s(\ap\mu^0,\ap\mu^1)}{W_t(\mu^0,\mu^1))}\right\}\;,
\end{equation}
where we infimise over all $\mu^0,\mu^1$ with $\spt(\mu^0)\subset B_{t}(x,\varepsilon)$ and $\spt(\mu^1)\subset B_t(y,\varepsilon)$.
Trivially, we have $\vartheta^\flat\leq \vartheta^-$.
We see below that $\theta^\flat$ is the lower semi-continuous envelope of $\vartheta^-$.

For the sake of convenience in the exposition, we will set $\mu_s\coloneqq \hat{P}_{t,s}\mu$ and denote by $(\mu^a_s)_{a\in[0,1]}$ the $W_s$-geodesic from $\mu^0_s$ to $\mu^1_s$ in this and the next subsection.
\begin{proposition}\label{prop:eta>theta}
    For every $t\in I$ and $x,y\in X$,
    \begin{equation}
      \eta(t,x,y)\geq   \vartheta^\flat(t,x,y)= \liminf_{(\tilde{x},\tilde{y})\to (x,y)}\vartheta^-(t,\tilde{x},\tilde{y}).
    \end{equation}
\end{proposition}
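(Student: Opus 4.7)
My plan is to establish the two assertions separately: the identity $\vartheta^\flat(t,x,y)=\liminf_{(\tilde x,\tilde y)\to (x,y)}\vartheta^-(t,\tilde x,\tilde y)$ and the inequality $\eta(t,x,y)\geq \vartheta^\flat(t,x,y)$.

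For the identity, I would prove the direction $\vartheta^\flat\leq\liminf$ by testing the infimum defining $\vartheta^\flat$ against Dirac masses $\mu^0=\delta_{\tilde x}$, $\mu^1=\delta_{\tilde y}$ supported in the shrinking balls, which gives the inequality immediately. For the reverse, given admissible $\mu^0,\mu^1$ in $B_t(x,\varepsilon)\times B_t(y,\varepsilon)$ and an optimal $d_t^2$-coupling $\sigma$, convexity of the dual heat flow (cf.\ \cref{rmk:roughsuperRF}) together with $W_t^2(\mu^0,\mu^1)=\int d_t^2\,d\sigma$ gives
\begin{equation*}
\frac{W_t^2(\mu^0,\mu^1)-W_s^2(\ap\mu^0,\ap\mu^1)}{t-s}\;\geq\;\int\frac{d_t^2(p,q)-W_s^2(\ap\delta_p,\ap\delta_q)}{t-s}\,d\sigma(p,q).
\end{equation*}
After adding a log-Lipschitz correction that makes the integrand non-negative and applying Fatou's lemma, the inner $\liminf_{s\nearrow t}$ evaluates to $2\vartheta^-(t,p,q)d_t^2(p,q)$; bounding $\vartheta^-(t,p,q)\geq L_\varepsilon:=\inf_{(p,q)\in\overline B_t(x,\varepsilon)\times\overline B_t(y,\varepsilon)}\vartheta^-(t,p,q)$ and dividing by $W_t^2(\mu^0,\mu^1)$ yields a lower bound on the quantity localised at scale $\varepsilon$ by $L_\varepsilon$; sending $\varepsilon\to 0$ closes the equality.

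For $\eta\geq\vartheta^\flat$, I would adapt the \emph{rough implies weak} half of the Kopfer--Sturm equivalence (\cref{thm:Kopfer-Sturm}) to a localised setting. Given a $W_t$-geodesic $(\mu^a)_{a\in[0,1]}$ with endpoints in the $\varepsilon$-balls, the dynamic $\mathrm{EVI}^-(K-L,\infty)$ of \cref{thm:Kopfer-Sturm2} applied at $\mu=\mu^0$ with target $\sigma=\mu^{1-h}$ and at $\mu=\mu^1$ with target $\sigma=\mu^h$ (then sending $h\searrow 0$) extracts the one-sided entropy derivatives $\partial_a^-\ent_t(\mu^a)\lvert_{a=0+}$ and $\partial_a^+\ent_t(\mu^a)\lvert_{a=1-}$. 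The resulting $W_{s,t}$ derivatives are converted into $W_s$ derivatives via \eqref{ineq:W_st}, while the triangle inequality combined with the regularity estimate $W_\tau(\mu^i,\ap\mu^i)=O(\sqrt{t-s})$ from \cref{lemma:dualheatflow}(2) allows one to replace $W_s(\ap\mu^0,\mu^1)$ by $W_s(\ap\mu^0,\ap\mu^1)$ up to an error of order $o(\sqrt{t-s})$. Taking $\liminf_{s\nearrow t}$, the defining property of $\vartheta^\flat$ supplies a bound on $\partial_s^-|_{s=t^-}W_s^2(\ap\mu^0,\ap\mu^1)$, so that after cancelling the $(K-L)W_t^2$ corrections from the EVI against $\partial_t^-W_{t^-}^2(\mu^0,\mu^1)$ I obtain
\begin{equation*}
\partial_a^+\ent_t(\mu^a)\lvert_{a=1-}-\partial_a^-\ent_t(\mu^a)\lvert_{a=0+}+\tfrac12\partial_t^- W_{t^-}^2(\mu^0,\mu^1)\geq \vartheta^\flat(t,x,y)W_t^2(\mu^0,\mu^1)
\end{equation*}
up to $\varepsilon$-vanishing errors. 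Taking the infimum over geodesics and the supremum in $\varepsilon$ concludes.

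The hardest point will be the second part: the EVI yields first-order information only for $W_{s,t}^2(\ap\mu^a,\sigma)$ with $\sigma$ \emph{fixed}, whereas $\vartheta^\flat$ controls the doubly-flowed quantity $W_s^2(\ap\mu^0,\ap\mu^1)$. The delicate step is to combine both EVI applications at the two endpoints while keeping all triangle-inequality and $W_{s,t}$-to-$W_s$ discrepancies at order $o(t-s)$, so that the principal contribution survives and the $(K-L)$-correction terms cancel cleanly against the time derivative of the metric.
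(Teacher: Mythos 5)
Your treatment of the identity $\vartheta^\flat(t,x,y)=\liminf_{(\tilde x,\tilde y)\to(x,y)}\vartheta^-(t,\tilde x,\tilde y)$ is correct and essentially coincides with the paper's: the inequality $\leq$ by testing with Dirac masses (equivalently, $\vartheta^\flat\leq\vartheta^-$ plus lower semicontinuity of $\vartheta^\flat$), and the inequality $\geq$ via an optimal $d_t^2$-coupling, convexity of the dual heat flow as in \cref{rmk:roughsuperRF}, and Fatou's lemma with a uniform lower bound on the integrand.

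The second part, however, has a genuine gap at exactly the point you flag as the hardest. First, a directional error: \cref{thm:Kopfer-Sturm2} bounds $\ent_t(\mu)-\ent_t(\sigma)$ from \emph{above} by $\tfrac12\partial_s^-W_{s,t}^2(\hat P_{t,s}\mu,\sigma)\big|_{s=t^-}-\tfrac{K}{2}W_t^2(\mu,\sigma)$, where $\mu$ is the flowed measure. To bound $\partial_a^+\ent_t(\mu^a)|_{a=1-}-\partial_a^-\ent_t(\mu^a)|_{a=0+}$ from \emph{below} one needs lower bounds on $\ent_t(\mu^1)-\ent_t(\mu^{1-h})$ and on $\ent_t(\mu^0)-\ent_t(\mu^h)$, i.e.\ one must flow the \emph{intermediate} measures $\mu^h,\mu^{1-h}$ and keep the \emph{endpoints} $\mu^0,\mu^1$ as fixed targets. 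Your pairing ($\mu=\mu^0$, $\sigma=\mu^{1-h}$ and $\mu=\mu^1$, $\sigma=\mu^h$) flows the endpoints; summing the two EVIs then bounds $[\ent_t(\mu^1)-\ent_t(\mu^{1-h})]-[\ent_t(\mu^h)-\ent_t(\mu^0)]$ from above, the wrong direction, and the terms $\tfrac{K}{2}W_t^2(\mu^0,\mu^{1-h})=\tfrac{K}{2}(1-h)^2W_t^2(\mu^0,\mu^1)$ blow up after division by $h$ (in the correct pairing they carry a factor $h^2$ and vanish).

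Second, and independently of the pairing, the proposed passage from the singly-flowed to the doubly-flowed quantity fails. Replacing $W_s(\hat P_{t,s}\mu^0,\mu^1)$ by $W_s(\hat P_{t,s}\mu^0,\hat P_{t,s}\mu^1)$ via the triangle inequality and \cref{lemma:dualheatflow}(2) costs $W_s(\mu^1,\hat P_{t,s}\mu^1)=O(\sqrt{t-s})$, hence an error of order $\sqrt{t-s}$ in the squared distance — not $o(t-s)$ — which diverges like $(t-s)^{-1/2}$ after division by $t-s$; the heat flow genuinely displaces mass at rate $\sqrt{t-s}$ in Wasserstein distance, so this cannot be improved. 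The paper avoids any such comparison: since $(\mu^a)$ is a $W_t$-geodesic, $W_t^2(\mu^0,\mu^1)$ splits exactly as $\tfrac1hW_t^2(\mu^0,\mu^h)+\tfrac1{1-2h}W_t^2(\mu^h,\mu^{1-h})+\tfrac1hW_t^2(\mu^{1-h},\mu^1)$, while Cauchy--Schwarz gives the reverse inequality for $W_s^2$ along the chain $\mu^0,\hat P_{t,s}\mu^h,\hat P_{t,s}\mu^{1-h},\mu^1$; subtracting distributes $\partial_t^-W_{t^-}^2(\mu^0,\mu^1)$ over three segments, of which the two outer (mixed flowed/fixed) ones are precisely what the EVI controls and only the middle (doubly flowed) one is estimated by $\vartheta^\flat$ — at the price that $\mu^h,\mu^{1-h}$ sit in slightly larger balls, which is exactly why $\vartheta^\flat$ is defined with shrinking supports. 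You would need to replace your triangle-inequality step by this decomposition (or an equivalent device) for the argument to close.
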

\begin{proof}
i) We first show that $\eta(t,x,y)\geq   \vartheta^\flat(t,x,y)$.
Denote $\kappa\coloneqq \vartheta^\flat(t,x,y)$. 
For any $\delta>0$, by the definition of $\vartheta^\flat$, there exists $\varepsilon>0$ s.t. for any $\mu^0,\mu^1$ supported in $B_t(x,\varepsilon)$ and $B_t(y,\varepsilon)$ respectively, one has
    \begin{align}\label{ineq:bythetaflat}
      \frac12\partial_s^-\left(W^2_s(\mu_s^a,\mu_s^{1-a})\right)|_{s=t^-}\geq (\kappa-\delta)\cdot W^2_t(\mu^a,\mu^{1-a})  
    \end{align}
for all $a\in(0,\frac12)$ small enough making $\spt\mu^a\subset B_t(x,2\varepsilon)$ and $\spt\mu^{1-a}\subset B_t(y,2\varepsilon)$.
Applying \eqref{ineq:EVI1} to $\sigma=\mu^0$ together with \eqref{ineq:W_st} gives
\begin{equation}
    \ent_t(\mu^0)-\ent_t(\mu^a)\geq -\frac12 \partial_s^-\left(W^2_s(\mu^a_s,\mu^0)\right)|_{s=t^-}+\frac{a^2(K-2L)}{2}W^2_t(\mu^0,\mu^1)
\end{equation}
and similarly
\begin{equation}
    \ent_t(\mu^1)-\ent_t(\mu^{1-a})\geq -\frac12 \partial_s^-\left(W^2_s(\mu^{1-a}_s,\mu^1)\right)|_{s=t^-}+\frac{a^2(K-2L)}{2}W^2_t(\mu^0,\mu^1)
\end{equation}
Therefore, 
\begin{align}
  &\frac{1}{a}\left(\ent_t(\mu^1)-\ent_t(\mu^{1-a})+\ent_t(\mu^0)-\ent_t(\mu^a) \right)\\
  \geq &-\frac{1}{2a}\left(\partial_s^-\left(W^2_s(\mu^a_s,\mu^0)|_{s=t^-}+\partial_s^-W^2_s(\mu^{1-a}_s,\mu^1)\right)|_{s=t^-}\right)+a(K-2L)W^2_t(\mu^0,\mu^1)\\
  \overset{(*)}{\geq }& \frac{1}{2(1-2a)}\partial_s^-\left(W^2_s(\mu^a_s,\mu^{1-a}_s)\right)|_{s=t^-}-\frac12(\partial^-_tW^2_{t^-})(\mu^0,\mu^1)+a(K-2L)W^2_t(\mu^0,\mu^1)\\
  \overset{\eqref{ineq:bythetaflat}}{\geq}&\frac{\kappa-\delta}{1-2a} W^2_t(\mu^a,\mu^{1-a})-\frac12(\partial^-_tW^2_{t^-})(\mu^0,\mu^1)+a(K-2L)W^2_t(\mu^0,\mu^1).\label{ineq:applyingEVI}
\end{align}
Here, we shortly explain the inequality $(*)$. 
Since $(\mu^a)_{a\in [0,1]}$ is a $W_t$-geodesic 
\begin{align}
    W^2_t(\mu^0,\mu^1)=\frac1aW^2_t(\mu^0,\mu^a)+\frac{1}{1-2a}W^2_t(\mu^a,\mu^{1-a})+\frac{1}{a}W^2_t(\mu^{1-a},\mu^1)
    \end{align}
and for the $W_s$-distance, we apply the Cauchy's inequality
\begin{align}
    W^2_s(\mu^0,\mu^1)\leq\frac1aW^2_s(\mu^0,\mu^a_s)+\frac{1}{1-2a}W^2_s(\mu^a_s,\mu^{1-a}_s)+\frac{1}{a}W^2_s(\mu^{1-a}_s,\mu^1).
\end{align}
Then we obtain the required inequality 
\begin{align}
  (\partial^-_tW^2_{t^-})&(\mu^0,\mu^1) = \liminf_{s\nearrow t}\frac{1}{t-s}\left(W^2_t(\mu^0_t,\mu^1_t)- W^2_s(\mu^0_t,\mu^1_t)\right)\\
  &\geq \frac{1}{a}\left(\partial_s^-\left(W^2_s(\mu^a_s,\mu^0)\right)+\partial_s^-\left(W^2_s(\mu^{1-a}_s,\mu^1)\right)\right)|_{s=t^-}+\frac{1}{1-2a}\partial_s^-\left(W^2_s(\mu^a_s,\mu^{1-a}_s)\right)|_{s=t^-}.
\end{align}
In \eqref{ineq:applyingEVI}, taking $a$ to 0 leads to 
\begin{equation}
    \partial^+_a\ent_t(\mu^a)|_{a=1}-  \partial^-_a\ent_t(\mu^a)|_{a=0}+\frac12(\partial^-_tW^2_{t^-})(\mu^0,\mu^1)\geq (\kappa-\delta)\cdot W^2_t(\mu^0,\mu^1).
\end{equation}
The thesis follows by the arbitrariness of $\delta$.
\medskip

ii) Next, we show that $\vartheta^\flat$ is the lower semi-continuous envelope of $\vartheta^-$.
For any $\mu^0,\mu^1\in\p_t(X)$ supported in a $\varepsilon$-ball around $x$ and $y$ respectively, consider the optimal transport plan $\sigma$ from $\mu^0$ to $\mu^1$ for the cost $d^2_t$.
As in \cref{rmk:roughsuperRF}, we have 
\begin{align}
\liminf_{s\nearrow t} \frac{1}{t-s}&\left( W^2_t(\mu^0,\mu^1)-W^2_s(\hat{P}_{t,s}\mu^0,\hat{P}_{t,s}\mu^1)\right)\\
\geq & \liminf_{s\nearrow t} \frac{1}{t-s}\left(\int d^2_t(\tilde{x},\tilde{y})-W^2_s(\ap\delta_{\tilde{x}},\ap\delta_{\tilde{y}})\right)\d \sigma(\tilde{x},\tilde{y})\\
\geq & \int \liminf_{s\nearrow t}\frac{1}{t-s}\left(d^2_t(\tilde{x},\tilde{y})-W^2_s(\ap\delta_{\tilde{x}},\ap\delta_{\tilde{y}})\right)\d \sigma(\tilde{x},\tilde{y})\\
=&\int 2\vartheta^-(t,\tilde{x},\tilde{y})d^2_t(\tilde{x},\tilde{y})\d\sigma(\tilde{x},\tilde{y})\\
\geq & 2\inf\{\vartheta^-(t,\tilde{x},\tilde{y}):\tilde{x}\in B_t(x,\varepsilon),\tilde{y}\in B_t(y,\varepsilon)\}\;,
\end{align}
where Fatou's lemma applies in the second inequality since the integrands are non-negative due to the non-expansion of Wasserstein distance along dual heat flows, cf. \cref{thm:Kopfer-Sturm}.
Taking infimum over $\mu^0,\mu^1$ and letting $\varepsilon\to0$, one gets
\begin{equation}
     \vartheta^\flat(t,x,y)\geq \liminf_{(\tilde{x},\tilde{y})\to (x,y)}\vartheta^-(t,\tilde{x},\tilde{y}).
\end{equation}
The equality follows once we observe that $\vartheta^\flat(t,\dot,\cdot)$ is lower semi-continuous and $\vartheta^\flat\leq \vartheta^-$ by construction.
\end{proof}

As a direct consequence of \cref{prop:eta>theta}, together with \cref{thm:RFvsTheta} and \cref{thm:charaSRF}, a rough super-Ricci flow is a weak super-Ricci flow, recovering the implication $(2)\Rightarrow (1)$ in \cref{thm:Kopfer-Sturm}.

Furthermore, \cref{prop:eta>theta} together with 
\begin{itemize}
    \item[] \cref{thm:Kopfer-Sturm} on the equivalence between rough and weak super-Ricci flows
    \item[] \cref{prop:etatoSRF} on the local-to-global property of weak super-Ricci flow
\end{itemize} 
yields the following strengthening of \cref{thm:RFvsTheta}
\begin{corollary}
    Let $(X,d_t,\m_t)_{t\in I}$ be a time-dependent compact m.m.s. satisfying \cref{asm:dualheatflow}. The following are equivalent
 \begin{itemize}
     \item[(1)] $(X,d_t,\m_t)_{t\in I}$ is a weak super-Ricci flow;
     \item[(2)] for each $t\in I$ and $x,y\in X$, $\eta(t,x,y)\ge 0$;
     \item[(3)] for a.e. $t\in I$, $\eta(t,x,x)\ge 0$ for each $x\in X$;
     \item[(4)] $(X,d_t,\m_t)_{t\in I}$ is a rough super-Ricci flow;  
    \item[(5)] for each $t\in I$ and $x,y\in X$, $\vartheta^-(t,x,y)\ge 0$;
    \item[(6)] for each $t\in I$ and $x\in X$, $\vartheta^\flat(t,x,x)\ge 0$.
\end{itemize} 
\end{corollary}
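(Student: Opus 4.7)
The plan is to close a cycle among the six conditions using the previously established results, chiefly Theorem 5.1 (Kopfer--Sturm), Theorem 3.11, Proposition 3.12, and the newly established Proposition 5.4. The chain I would establish is
\[
(2) \Rightarrow (3) \Rightarrow (1) \Leftrightarrow (4) \Leftrightarrow (5) \Rightarrow (6) \Rightarrow (3),
\]
supplemented by (1) $\Rightarrow$ (2) to close the loop. Note that (2) $\Rightarrow$ (3) is tautological.

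The equivalence (1) $\Leftrightarrow$ (4) is exactly the content of Theorem 5.1 (Kopfer--Sturm); crucially, the last sentence of that theorem upgrades the ``a.e.\ $t$'' formulation in the definitions to ``for every $t \in I$'', which will be needed to get the pointwise conclusions in (2), (5), (6). The equivalence (4) $\Leftrightarrow$ (5) is already contained in Theorem \ref{thm:RFvsTheta}: the rough super-Ricci inequality $W_s^2(\ap\delta_x,\ap\delta_y) \le d_t^2(x,y)$ for all $s \le t$ is equivalent to $\log(W_s/d_t) \le 0$, which gives $\vartheta^-(t,x,y)\ge 0$ for every $t,x,y$, and the reverse follows from the argument in the proof of Theorem \ref{thm:RFvsTheta}. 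The implication (3) $\Rightarrow$ (1) is Proposition \ref{prop:etatoSRF}, which applies because compactness together with Assumption \ref{asm:dualheatflow} forces Assumption \ref{asm:localtoglobal} (the log-Lipschitz control is built into uniform comparability, while local compactness, essential non-branching, and upper regularity of the entropy follow from the RCD$(K,N)$ hypothesis at each time).

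The two remaining implications both rely on Proposition \ref{prop:eta>theta}. For (6) $\Rightarrow$ (3): part (i) of that proposition gives the pointwise bound $\eta(t,x,y) \ge \vartheta^\flat(t,x,y)$, and specializing to $y = x$ yields $\eta(t,x,x) \ge \vartheta^\flat(t,x,x) \ge 0$. For (5) $\Rightarrow$ (6): part (ii) identifies $\vartheta^\flat$ as the lower semi-continuous envelope of $\vartheta^-$, so $\vartheta^- \ge 0$ everywhere immediately upgrades to $\vartheta^\flat \ge 0$ everywhere, in particular along the diagonal. Finally, (1) $\Rightarrow$ (2) follows by chaining (1) $\Rightarrow$ (4) $\Rightarrow$ (5), and then applying Proposition \ref{prop:eta>theta} in both halves: $\vartheta^\flat \ge 0$ everywhere by the envelope identification, and $\eta \ge \vartheta^\flat$ pointwise.

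The main obstacle here is not really a technical one but rather careful bookkeeping of quantifiers: the weak/rough super-Ricci flow conditions are formulated with an ``a.e.\ $t$'' clause in their definitions, whereas conditions (2), (5), (6) require ``every $t$'' statements. Without the ``a.e.\ to all'' upgrade built into Theorem \ref{thm:Kopfer-Sturm}, one could only conclude almost-everywhere versions of (2), (5), (6). Once this upgrade is in hand the argument is essentially assembly.
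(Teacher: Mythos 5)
Your proposal is correct and follows essentially the same route the paper has in mind: the paper's implicit justification is the one-sentence remark that \cref{prop:eta>theta} (for $\eta \ge \vartheta^\flat$ and the envelope identification of $\vartheta^\flat$), \cref{thm:Kopfer-Sturm} (for weak $\Leftrightarrow$ rough, including the a.e.-to-all upgrade), and \cref{prop:etatoSRF} (for the diagonal condition implying weak super-Ricci flow) together assemble into the six-way equivalence, and your cycle does precisely this. Your careful bookkeeping of the quantifier issue and the observation that compactness plus \cref{asm:dualheatflow} forces \cref{asm:localtoglobal} are also exactly what is needed and are in line with the remark preceding the corollary in the paper.
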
  

\subsection{Comparing rough and weak sub-Ricci flows}
The comparison of synthetic sub-Ricci flows demands a reverse direction of \cref{prop:eta>theta}.
The following theorem is analogue for time-dependent mms of \cite[Theorem 4.2]{sturm2017remarks} in the static case.

\begin{theorem}\label{thm:eta-theta}
For each pair of points $x,y\in X$, $\eta(t,x,y)\leq\vartheta^\flat(t,x,y)$ holds for almost each $t$.
\end{theorem}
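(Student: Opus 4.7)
The plan is the dynamic counterpart of \cite[Thm.~4.2]{sturm2017remarks} and mirrors the structure of the proof of \cref{prop:eta>theta}, but now exploits the EVI$^-(K-L,\infty)$ inequality \eqref{ineq:EVI1} in its \emph{upper-bound} direction. Fix $x,y\in X$, a.e.~$t\in I$, and $\delta,\varepsilon>0$. By the definition of $\vartheta^\flat(t,x,y)$, pick probability measures $\mu^0,\mu^1$ with finite entropy and supports in $B_t(x,\varepsilon)$ resp.\ $B_t(y,\varepsilon)$ that realize $\vartheta^\flat$ up to error $\delta$, i.e.\ $\partial_s^-|_{s=t^-}W_s^2(\hat P_{t,s}\mu^0,\hat P_{t,s}\mu^1)\le(2\vartheta^\flat(t,x,y)+2\delta)W_t^2(\mu^0,\mu^1)$; let $(\mu^a)_{a\in[0,1]}$ denote the $W_t$-geodesic between them.

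Apply \eqref{ineq:EVI1} to the pairs $(\mu=\mu^0,\sigma=\mu^h)$ and $(\mu=\mu^1,\sigma=\mu^{1-h})$, divide by $h$, and take $\limsup_{h\searrow 0}$: the curvature--dimension terms vanish as $O(h)$ and one obtains
\begin{align}
    \partial^+_a\ent_t(\mu^a)|_{a=1-}-\partial^-_a\ent_t(\mu^a)|_{a=0+} \le \limsup_{h\searrow 0}\tfrac{1}{2h}\bigl[\partial_s^- W_{s,t}^2(\hat P_{t,s}\mu^0,\mu^h)+\partial_s^- W_{s,t}^2(\hat P_{t,s}\mu^1,\mu^{1-h})\bigr].
\end{align}
The triangle inequality for $W_{s,t}$ along the path $\hat P_{t,s}\mu^0\to\mu^h\to\mu^{1-h}\to\hat P_{t,s}\mu^1$, combined with the weighted Cauchy--Schwarz inequality (weights $h,1-2h,h$) that is tight at $s=t$ because $(\mu^a)$ is a $W_t$-geodesic, yields after Fatou
\begin{align}
    \tfrac{1}{h}\bigl[\partial_s^- W_{s,t}^2(\hat P_{t,s}\mu^0,\mu^h)+\partial_s^- W_{s,t}^2(\mu^{1-h},\hat P_{t,s}\mu^1)\bigr] \le \partial_s^- W_{s,t}^2(\hat P_{t,s}\mu^0,\hat P_{t,s}\mu^1)-\tfrac{1}{1-2h}\partial_s^- W_{s,t}^2(\mu^h,\mu^{1-h}).
\end{align}
Combining these and adding $\tfrac{1}{2}\partial_t^- W_{t^-}^2(\mu^0,\mu^1)$ bounds $\eta(t,\mu^0,\mu^1)W_t^2$ by $\tfrac{1}{2}\partial_s^- W_{s,t}^2(\hat P_{t,s}\mu^0,\hat P_{t,s}\mu^1)+\tfrac{1}{2}\partial_t^- W_{t^-}^2(\mu^0,\mu^1)$ minus a middle-term $\liminf$. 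The first summand is converted to $\vartheta^\flat W_t^2+O(\delta)$ via the comparison $W_s^2\le e^{2L(t-s)}W_{s,t}^2$, and the remaining piece absorbs $\tfrac{1}{2}\partial_t^- W_{t^-}^2(\mu^0,\mu^1)$ using the identity $\partial_s^-|_{s=t^-}W_{s,t}^2(\mu,\nu)=\tfrac{1}{2}\partial_t^- W_{t^-}^2(\mu,\nu)$ for fixed measures, together with the triangle-type bound \eqref{ineq:WassersteinDeri} on the partition $0,h,1-h,1$.

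The main obstacle is the delicate control of the middle term $\partial_s^- W_{s,t}^2(\mu^h,\mu^{1-h})$. In the static analogue of \cite{sturm2017remarks} this term vanishes identically, whereas in our time-dependent setting it contributes genuinely and must cancel exactly the $\tfrac{1}{2}\partial_t^- W_{t^-}^2(\mu^0,\mu^1)$ piece entering the definition of $\eta$. Establishing the required identity between the dynamic Wasserstein time-derivative and the fixed-measure time-derivative of $W_t^2$, and verifying that the $O(L)$ errors produced by the $W_{s,t}$-versus-$W_s$ comparison either cancel or become negligible as $h\searrow 0$, relies on the pointwise log-Lipschitz continuity of the metric family at a regular time $t$---which accounts precisely for the ``a.e.~$t$" restriction in the conclusion.
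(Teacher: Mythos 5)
The proposal takes a genuinely different route from the paper, but it contains a gap at precisely the step the student identifies as ``the main obstacle.'' The paper's proof does not use the EVI inequality at all; it proceeds via the Kantorovich-duality/Cheeger-energy estimate from \cite[Prop.~4.2--4.3]{Kopfer-Sturm2018}, yielding the integral bound \eqref{eq:contvsconvie}, then partitions the optimal dynamical plan into geodesics staying in $B_r(x,\varepsilon)\times B_r(y,\varepsilon)$ (where $\eta_\varepsilon$ controls the integrand) and an error part, bounds the error part by Gaussian heat-kernel estimates and Bishop--Gromov, and finally passes to the limit through a careful Lebesgue-differentiation argument that sidesteps the unknown measurability of $t\mapsto\eta_\varepsilon(t,x,y)$.

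The student's EVI-based approach hinges on the identity
$\partial_s^-\big|_{s=t^-}W_{s,t}^2(\mu,\nu)=\tfrac12\,\partial^-_tW_{t^-}^2(\mu,\nu)$
for \emph{fixed} measures $\mu,\nu$, needed so that the middle term $\tfrac{1}{1-2h}\partial_s^-W_{s,t}^2(\mu^h,\mu^{1-h})$ cancels the $\tfrac12\partial_t^-W_{t^-}^2(\mu^0,\mu^1)$ contribution entering $\eta$. This identity is not established and does not hold in general. Writing $W_{s,t}^2(\mu,\nu)=\inf\int_0^1|\dot\nu^a|^2_{s+a(t-s)}\,\mathrm da$ and differentiating in $s$ at $s=t$ (envelope theorem), one obtains $\int_0^1(1-a)\,\partial_t|\dot\nu^a|^2_{t}\,\mathrm da$, whereas $\partial_tW_t^2(\mu,\nu)=\int_0^1\partial_t|\dot\nu^a|^2_t\,\mathrm da$. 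These agree up to the factor $\tfrac12$ only when $a\mapsto\partial_t|\dot\nu^a|^2_t$ is symmetric under $a\leftrightarrow 1-a$ along the $W_t$-geodesic, which is a genuine restriction, not an a.e.-$t$ regularity statement. Log-Lipschitz control of the metrics does not remove the $(1-a)$ weight. There is a secondary issue as well: the EVI inequalities bound $\partial^+_a\ent_t|_{1-}$ and $-\partial^-_a\ent_t|_{0+}$ separately by $\limsup_h\tfrac{1}{2h}A_h$ and $\limsup_h\tfrac{1}{2h}B_h$, so one must sum the two limsups, whereas the weighted triangle inequality supplies a pointwise bound on $A_h+B_h$ and hence only on $\limsup_h\tfrac1{2h}(A_h+B_h)\le\limsup_h\tfrac1{2h}A_h+\limsup_h\tfrac1{2h}B_h$; passing from the sum of limsups to the limsup of the sum requires that $\tfrac1{2h}A_h$ and $\tfrac1{2h}B_h$ actually converge as $h\searrow0$, which is not verified. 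Before comparing this route to the paper's, both of these steps would have to be filled in.
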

\begin{proof}
Fix $x,y\in X$, $\delta>0$.
For $\varepsilon>0$ small (to be made more precise later), let $\mu^0,\mu^1$ be given with supports in $B_{t}(x,\varepsilon/2)$ and $B_{t}(y,\varepsilon/2)$ respectively.
 
 \textbf{Lower bound on the difference of Wasserstein distances by Kantorovich duality}
 Fix $s<t'<t$.
 Denote by $\phi_s$ and $\psi_s$ the pair of Kantorovich potentials for optimal transport from $\mu^0_s$ to $\mu^1_s$ w.r.t. the cost $d^2_s$.
Then difference of Wasserstein distances can be estimated as in \cite[Theorem 4.4]{Kopfer-Sturm2018}:
    \begin{align}
           &W_{t'}^2(\mu^0_{t'},\mu^1_{t'})-W^2_s(\mu^0_s,\mu^1_s)=\int^{t'}_s \partial^+_r [W^2_r(\mu^0_r,\mu^1_r)]\d r\\
    =&\int^{t'}_{s}\limsup_{h\searrow 0}\frac{1}{h}[W^2_{r}(\mu^0_{r},\mu^1_{r})-W^2_{r-h}(\mu^0_{r},\mu^1_{r})+W^2_{r-h}(\mu^0_{r},\mu^1_{r})-W^2_{r-h}(\mu^0_{r-h},\mu^1_{r-h})]\d r\\
    \geq &\int^{t'}_{s}\liminf_{h\searrow 0}\frac{1}{h}[W^2_{r}(\mu^0_{r},\mu^1_{r})-W^2_{r-h}(\mu^0_r,\mu^1_r)]\d r+\int^{t'}_{s}\limsup_{h\searrow 0}\frac1h[W^2_{r-h}(\mu^0_{r},\mu^1_{r})-W^2_{r-h}(\mu^0_{r-h},\mu^1_{r-h})]\d r\\
\geq &\int^{t'}_{s}\liminf_{h\searrow 0}\frac{1}{h}[W^2_{r}(\mu^0_{r},\mu^1_{r})-W^2_{r-h}(\mu^0_r,\mu^1_r)]+\liminf_{h\searrow 0}\int^{t'}_{s}\frac1h[W^2_{r-h}(\mu^0_{r},\mu^1_{r})-W^2_{r-h}(\mu^0_{r-h},\mu^1_{r-h})]\label{ineq:thm5.6-1}\\
=&\int^{t'}_{s}(\partial^-_rW^2_{r^-})(\mu^0_r,\mu^1_r)\d r+\liminf_{h\searrow 0}\int^{t'-h}_{s-h}\frac1h[W^2_{r}(\mu^0_{r+h},\mu^1_{r+h})-W^2_{r}(\mu^0_{r},\mu^1_{r})]\d r\\
\geq & \int^{t'}_{s}(\partial^-_rW^2_{r^-})(\mu^0_r,\mu^1_r)\d r +\int^{t'}_{s}2(\ch_r(\rho^0_r,\phi_r)+\ch_r(\rho^1_r,\psi_r))\d r.\label{ineq:thm5.6-2}\\
\geq &\int^{t'}_{s}(\partial^-_rW^2_{r^-})(\mu^0_r,\mu^1_r)\d r+2\int^{t'}_{s}(\partial^-_a \ent_r(\mu^a_r)\big|_{a=1}-\partial_a^+ \ent_r(\mu^a_r)\big|_{a=0})\d r\label{ineq:thm5.6-3}
    \end{align}
In the above, we have used Fatou's lemma in \eqref{ineq:thm5.6-1}, \cite[Proposition 4.3]{Kopfer-Sturm2018} for connecting the Cheeger's energy in \eqref{ineq:thm5.6-2} and \cite[Proposition 4.2]{Kopfer-Sturm2018} in \eqref{ineq:thm5.6-3} to get the entropy derivative (see estimate (6.19) in the proof of Theorem 6.5 in \cite{Ambrosio-Gigli-Mondino-Rajala}).

 Passing $t'$ to $t$, by the continuity of Wasserstein distances, one obtains
 \begin{align}
    W_{t}^2(\mu^0_{t},\mu^1_{t})-W_{s}^2(\mu^0_{s},\mu^1_{s})\geq 2\int^{t}_s (\partial^-_a \ent_r(\mu^a_r)\big|_{a=1}-\partial_a^+ \ent_r(\mu^a_r)\big|_{a=0})+\frac12(\partial^-_rW^2_{r^-})(\mu^0_r,\mu^1_r)\d r.\label{eq:contvsconvie}
 \end{align}

\textbf{Application of the $\eta$-quantity by measure-partitioning.}
Denote by $\pi_r$ the optimal dynamical plan between $\mu^0_r,\mu^1_r$ and $\Gamma_r$ a non-branching subset of $\spt(\pi_r)$ so that $\pi_r(\Gamma_r)=1$. 
 For any $\varepsilon>0$, put $\Gamma^\varepsilon_r=\big\{\gamma\in\Gamma_r: \gamma^0\in B_r(x,\varepsilon), \gamma^1\in B_r(y,\varepsilon)\big\}$, $\lambda_{r,\varepsilon}=\pi_r(\Gamma^\varepsilon_r)$ and
 $$\pi_{r,\varepsilon}=\frac1{\lambda_{r,\varepsilon}}\cdot \pi_r\llcorner\Gamma^\varepsilon_r,\quad
 \overline\pi_{r,\varepsilon}=\frac1{1-\lambda_{r,\varepsilon}}\cdot \pi_r\llcorner(\Gamma^\varepsilon_r)^c.$$
 Moreover, put 
  $\mu_{r,\varepsilon}^a=(e_a)_\sharp \pi_{r,\varepsilon}$ and 
 $\overline\mu_{r,\varepsilon}^a=(e_a)_\sharp \overline\pi_{r,\varepsilon}$ where $e_a$ is the evaluation map.
 Then
 $\pi_r=\lambda_{r,\varepsilon}\cdot \pi_{r,\varepsilon}+(1-\lambda_{r,\varepsilon})\cdot \overline\pi_{r,\varepsilon}$ as well as
  $\mu^a_r=\lambda_{r,\varepsilon}\cdot \mu^a_{r,\varepsilon}+(1-\lambda_{r,\varepsilon})\cdot \overline\mu^a_{r,\varepsilon}$.
 As in the similar situation of \eqref{ineq:16/05-1} and \cref{lemma:D_aEnt}, we have
 \begin{equation}
     (\partial^-_rW^2_{r^-})(\mu^0_r,\mu^1_r)\geq \lambda_{r,\varepsilon}\cdot (\partial^-_rW^2_{r^-})(\mu^0_{r,\varepsilon},\mu^1_{r,\varepsilon}) +(1-\lambda_{r,\varepsilon})\cdot (\partial^-_rW^2_{r^-})(\overline\mu^0_{r,\varepsilon},\overline\mu^1_{r,\varepsilon})
 \end{equation}
 and
\begin{align}
\partial_a^- \ent_r(\mu^a_r)\big|_{a=1}-\partial_a^+ \ent_r(\mu^a_r)\big|_{a=0}
&\ge
\lambda_{r,\varepsilon}\Big[
\partial_a^- \ent_r(\mu^a_{r,\varepsilon})\big|_{a=1}-\partial_a^+ \ent_r(\mu^a_{r,\varepsilon})\big|_{a=0}\Big]\\
&+(1-\lambda_{r,\varepsilon})\Big[
\partial_a^- \ent_r(\overline\mu^a_{r,\varepsilon})\big|_{a=1}-\partial_a^+ \ent_r(\overline\mu^a_{r,\varepsilon})\big|_{a=0}\Big].\label{ineq:s2}
\end{align}
Combining these estimates with the $K$-convexity of entropy, the log-Lipschitzness of metrics and the definition of $\eta$-quantity allows us to conclude that
\begin{align}\label{eq:etaepsilonestimate}
    (\partial^-_rW^2_{r^-})(\mu^0_r,\mu^1_r)/2+&\partial^-_a \ent_r(\mu^a_r)\big|_{a=1}-\partial_a^+ \ent_r(\mu^a_r)\big|_{a=0}\\  
    \geq & \lambda_{r,\varepsilon}\cdot\eta_{\varepsilon}(r,x,y)W^2_r(\mu^0_{r,\varepsilon},\mu^1_{r,\varepsilon})+(1-\lambda_{r,\varepsilon})(K-L)W^2_r(\overline\mu^0_{r,\varepsilon},\overline\mu^1_{r,\varepsilon}).
\end{align}
\textbf{Error estimates via heat kernels}  We show that there exists a constant $C$ depending on $K,N$ only such that the mass of $\pi_r$ outside $\Gamma_r^\varepsilon$ is bounded by $C\exp(-\frac{\varepsilon^2}{C(t-r)})$.
The estimate is established as follows
\begin{align}\label{eq:lambdaepsestimate}
    1-\lambda_{r,\varepsilon}&\leq \hat P_{t,r}\mu^0(B_r(x,\varepsilon)^c)+\hat P_{t,r}\mu^1(B_r(y,\varepsilon)^c)\\
    &=\int_{B_r(x,\varepsilon)^c}\int p_{t,r}(x',y')\d\mu^0(x')\d\m_r(y')+\int_{B_r(y,\varepsilon)^c}\int p_{t,r}(x',y')\d\mu^1(x')\d\m_r(y')\\
    &\leq \sup_{x'\in B_r(x,\varepsilon/2)}
 \int_{B_r(x,\varepsilon)^c}p_{t,r}(x',z)\d\m_r(z)+
 \sup_{y'\in B_r(y,\varepsilon/2)}
 \int_{B_r(y,\varepsilon)^c}p_{t,r}(y',z)\d\m_r(z)\\
 &\overset{(*)}{\leq} 2\sup_{x'\in X}\int_{B_r(x',\varepsilon/2)^c} \frac{C_{1}}{\m_{r}(B_{r}(x',\sqrt{t-r}))}\exp\left(-\frac{d_r^2(x',z)}{C_1(t-r)}\right)\d \m_r(z)\\
 &\leq 2C_1\sup_{x'\in X}\sum_{k\geq 1}\int_{B_r(x',\frac{k+1}{2}\varepsilon)\setminus B_r(x',\frac{k}{2}\varepsilon)}\frac{1}{\m_{r}(B_{r}(x',\sqrt{t-r}))}\exp\left(-\frac{d_r^2(x',z)}{C_1(t-r)}\right)\d \m_r(z)\\
 &\lesssim  \sup_{x'\in X}\sum_{k\geq 1}\frac{\m_r(B_r(x',k\varepsilon))}{\m_{r}(B_{r}(x',\sqrt{t-r}))}\exp\left(-\frac{k^2\varepsilon^2}{C_1(t-r)}\right)\\
&\overset{(**)}{\lesssim}\sum_{k\geq 1} \exp\left(C_2\frac{k\varepsilon}{\sqrt{t-r}}-\frac{k^2\varepsilon^2}{C_1(t-r)}\right)\label{ineq:sumexp}\;,
\end{align}
where the inequality $(*)$ with the constant $C_1$ comes from the Gaussian estimate \eqref{ineq:Gaussian} and the inequality $(**)$ with the constant $C_2$ results from the Bishop-Gromov volume comparison on $\mathrm{CD}(K,N)$ spaces, cf. \cite[Section 2.3]{BGNZ-Crelle23}.

It suffices to consider $\frac{\varepsilon}{\sqrt{t-r}}>2C_1C_2$, otherwise we trivially have
\[
1-\lambda_{r,\varepsilon}\leq 1\lesssim \exp(-C_1C_2)\lesssim \exp(-\frac{\varepsilon^2}{C(t-r)}).
\]
Then continuing the estimate \eqref{ineq:sumexp}, one has
\begin{align}
	&\sum_{k\geq 1} \exp\left(C_2\frac{k\varepsilon}{\sqrt{t-r}}-\frac{k^2\varepsilon^2}{C_1(t-r)}\right)\lesssim \sum_{k\ge 1}\exp\left(-\left(\frac{k\varepsilon}{\sqrt{C_1(t-r)}}-C_2\sqrt{C_1}\right)^2\right)\\
	&\leq  \int^{\infty}_{\frac{\varepsilon}{2}}\exp\left(-\left(\frac{x}{\sqrt{C_1(t-r)}}-C_2\sqrt{C_1}\right)^2\right)\d x\lesssim C\exp\left(-\frac{\varepsilon^2}{C(t-r)}\right).
\end{align}
\textbf{Passing to the limit}
To simplify the demonstration, we use the following short-hand notations
\begin{align}
    g(\mu^0,\mu^1;r)\coloneqq (\partial_r^+W^2_{r^-})(\mu^0_r,\mu^1_r)/2+\partial^-_a \ent_r(\mu^a_r)\big|_{a=1}-\partial_a^+ \ent_r(\mu^a_r)\big|_{a=0}
\end{align}
and 
\begin{align}
   \varepsilon(\mu^0,\mu^1;r)\coloneqq (1-\lambda_{r,\varepsilon})W^2_r(\overline\mu^0_{r,\varepsilon},\overline\mu^1_{r,\varepsilon}).
\end{align}
For fixed $\varepsilon>0$, summarizing the previous steps, we know that the error term $\varepsilon(\mu^0,\mu^1;r)$ goes to $0$ as $r\to t$ and for all $r$
\begin{align}\label{inequ:eta-g-error}
    \eta_\varepsilon(r,x,y)\le \frac{g(\mu^0,\mu^1;r)+(L-K)\varepsilon(\mu^0,\mu^1;r)}{W^2_r(\mu^0_r,\mu^1_r)-\varepsilon(\mu^0,\mu^1;r)}.
\end{align}
Denote by $\sigma_r$ the optimal plan between $\mu^0_r$ and $\mu^1_r$ w.r.t. the cost $d^2_r$.
By the stability of optimal transports (cf. \cite[Theorem 5.20]{Villani}), $r\mapsto \sigma_r$ is continuous on $\mathcal{P}(X^2)$, the space of probability measures on $X^2$ metrized by the weak convergence.
In particular, $\lambda_{r,\varepsilon}$ and hence $ \varepsilon(\mu^0,\mu^1;r)$ are measurable in $r$ since $ \lambda_{r,\varepsilon}=\sigma_r(B_r(x,\varepsilon)\times B_r(y,\varepsilon))$.

Note that $t\mapsto \eta_\varepsilon(t,x,y)$ is not necessarily measurable. For an interval $[s_0,t_0]$ consider 
\[\inf\Big\{\int_{s_0}^{t_0} \beta(t)\d t\colon \beta \text{measurable}\;, \beta\geq \eta_\varepsilon(\cdot,x,y) \text{ a.e.}\Big\}\;.\]
By \ref{inequ:eta-g-error} the infimum is finite for $s_0$ sufficiently close to $t_0$. Pick $\beta$ which realises the infimum (existence of a minimiser follows from the fact that any minimising sequence can be assumed to be monotonically decreasing without restriction). Then $\beta$ realises the infimum also for any subinterval of $[s_0,t_0]$. By the Lebesgue differentiation theorem we obtain for a.e. $t$:
\begin{align}
    \eta_\varepsilon(t,x,y)\le \beta(t)= \lim_{s\to t}\aint[{[s,t]}]{}\beta(r)\d r\;.
\end{align}
Due to \ref{inequ:eta-g-error} we further have for any such $t$ and $s<t$ sufficiently close, that $\beta(r)$ is bounded by the right-hand side of \ref{inequ:eta-g-error} for all $r\in [s,t]$ (note that the latter depends implicily on $t$). Thus we have
\begin{align}
    \eta_\varepsilon(t,x,y)&\le\liminf_{s\to t}\left[\aint{[s,t]} \frac{g(\mu^0,\mu^1;r)+(L-K)\varepsilon(\mu^0,\mu^1;r)}{W^2_r(\mu^0_r,\mu^1_r)-\varepsilon(\mu^0,\mu^1;r)}\d r\right]
    \\ &= \liminf_{s\to t} \frac{1}{(t-s)W_t^2(\mu^0,\mu^1)}\int_{[s,t]} g(\mu^0,\mu^1;r)\d r
    \le \liminf_{s\to t}\frac{W_t^2(\mu^0,\mu^1)-W_s^2(\mu_s^0,\mu_s^1)}{(t-s)W_t^2(\mu^0,\mu^1)}\;.
\end{align}

Taking the infimum on both sides over $\mu^0$ and $\mu^1$ we conclude $\eta_\varepsilon(t,x,y)\le \vartheta^\flat(t,x,y)$ and the thesis follows.
\end{proof}

\begin{rem}
Part of the additional complexity in the above proof arises from the unknown measurability of $\eta$.
We know from \cref{cor:eta+} that on smooth flows, $\eta(t,x,y)$ is jointly continuous.
However, for general metric-measure spaces, while this is not  clear from the definition, a posteriori, using the pointwise comparison in \cref{prop:eta>theta} and \cref{thm:eta-theta}, we see that $t\mapsto \eta(t,x,y)$ is Lebesgue measurable. Indeed, $\vartheta^-$ is measurable from the definition, and so is $\vartheta^\flat$ due to \cref{prop:eta>theta}. But for any $x,y\in X$, the latter agrees with  $\eta(t,x,y)$ for a.e. $t$.
\end{rem}

We can now show that any rough sub-Ricci flow is a weak sub-Ricci flow.
For $x\in X$ and $t\in I$ we set
\[
\vartheta^\flat(t,x)\coloneqq\limsup\limits_{y,z\to x}\vartheta^\flat(t,y,z)\;.
\] 
\begin{corollary}\label{cor:Roughweaksub}
For almost every $t$ we have $\eta^*(t,x)=\vartheta^{\flat}(t,x)$ for all $x\in X$.
In particular, if $(X,d_t,\m_t)_{t\in I}$ is a rough sub-Ricci flow, then it is a weak sub-Ricci flow.
 \end{corollary}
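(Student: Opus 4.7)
The plan is to combine the two-sided pointwise comparison between $\eta$ and $\vartheta^\flat$ from \cref{prop:eta>theta} and \cref{thm:eta-theta}, then pass to the spatial envelopes $\eta^*(t,x)$ and $\vartheta^\flat(t,x)$ to deduce the stated equality. The sub-Ricci flow implication will follow immediately via the characterisations of \cref{thm:charaSRF} and \cref{thm:RFvsTheta}.

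First, I would extract an almost-everywhere pointwise equality of $\eta$ and $\vartheta^\flat$ that is valid simultaneously for all pairs in a countable dense set. By \cref{prop:eta>theta} one has $\eta(t,x,y)\ge\vartheta^\flat(t,x,y)$ for every triple $(t,x,y)$, while \cref{thm:eta-theta} yields the reverse bound for $t$ outside a null set $N_{x,y}$ that may depend on the pair $(x,y)$. Since $X$ is compact, hence separable, I fix a countable dense subset $D\subset X$ and set $N\coloneqq\bigcup_{(y,z)\in D\times D}N_{y,z}$, which remains Lebesgue null. Then for every $t\in I\setminus N$ the equality $\eta(t,y,z)=\vartheta^\flat(t,y,z)$ holds simultaneously for all $(y,z)\in D\times D$.

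Next, I would pass to the spatial envelopes. Fix $t\in I\setminus N$ and $x\in X$. By the argument in part (ii) of the proof of \cref{prop:eta>theta}, $\vartheta^\flat(t,\cdot,\cdot)$ is the lower semi-continuous envelope of $\vartheta^-(t,\cdot,\cdot)$, and in particular is lsc on $X\times X$; a standard fact then yields that its upper limit at $x$ is realised along sequences from $D\times D$, i.e. $\vartheta^\flat(t,x)=\limsup_{(y,z)\to x,\ (y,z)\in D\times D}\vartheta^\flat(t,y,z)$. For $\eta^*$, the analogous density reduction is obtained by exploiting the monotonicity of $\eta_\varepsilon$ in $\varepsilon$ together with the inclusion $B_t(y',\varepsilon)\supset B_t(y,\varepsilon-d_t(y,y'))$, which allows one to transfer any admissible transport for $\eta_\varepsilon(t,y,z)$ to a nearby admissible transport for $\eta_{\varepsilon'}(t,y',z')$ with $(y',z')\in D\times D$; this yields $\eta^*(t,x)=\limsup_{(y,z)\to x,\ (y,z)\in D\times D}\eta(t,y,z)$. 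Combining with the pointwise equality on $D\times D$ established in the previous step gives $\eta^*(t,x)=\vartheta^\flat(t,x)$ for $t\in I\setminus N$ and all $x\in X$.

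For the second assertion, suppose $(X,d_t,\m_t)_{t\in I}$ is a rough sub-Ricci flow. By \cref{thm:RFvsTheta} we have $\vartheta^*(t,x)\le 0$ for a.e. $t\in I$ and every $x\in X$. Since $\vartheta^\flat\le\vartheta^-\le\vartheta^+$ by construction, the spatial envelopes satisfy $\vartheta^\flat(t,x)\le\vartheta^*(t,x)\le 0$ on the same full-measure set. Combining with the first part we obtain $\eta^*(t,x)\le 0$ for a.e. $t$ and all $x$, which by \cref{thm:charaSRF} is precisely the characterisation of a weak sub-Ricci flow. The main obstacle lies in the envelope reduction for $\eta^*$ in the second step: the function $\eta$ has no clear continuity properties a priori, being defined as a supremum over $\varepsilon$-infima of transport functionals, so circumventing this will rely on careful use of the $\varepsilon$-monotonicity of $\eta_\varepsilon$ together with the log-Lipschitz control on the metrics that is inherent in \cref{asm:dualheatflow}.
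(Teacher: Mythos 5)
Your proposal is correct and takes essentially the same route as the paper: a countable dense set to propagate the a.e.\ bound from \cref{thm:eta-theta}, combined with the global lower bound from \cref{prop:eta>theta} and lower semicontinuity of $\eta$, followed by the characterisations in \cref{thm:RFvsTheta} and \cref{thm:charaSRF}. The only stylistic difference is that you reduce \emph{both} envelopes to dense-sequence limsups (invoking lsc of $\vartheta^\flat$ as well), whereas the paper's chain of inequalities only needs lsc of $\eta$ and reads off $\limsup_k\vartheta^\flat(t,x'_k,x''_k)\le\vartheta^\flat(t,x)$ directly from the definition of the spatial envelope.
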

 \begin{proof}
 Since $X$ is separable, there exists a countable dense (w.r.t. all $d_t$) subset $\{x_n\}_{n\in\N}$ and thanks to \cref{thm:eta-theta}, for a.e. $t$, 
 \begin{equation}
     \eta(t,x_n,x_m)\leq \vartheta^{\flat}(t,x_n,x_m),\quad \forall n,m\in \N.
 \end{equation}
 For any $\delta>0$, one can find sequences $(y_k)_k$ and $(z_k)_k$ converging to $x$ s.t. $\eta^*(t,x)\leq\limsup\limits_{k\to \infty}\eta(t,y_k,z_k)+\delta$.
     By definition, $\eta(t,y,z)$ is lower semi-continuous w.r.t. $y,z$. 
     Hence, for each $k$, there exist $x'_k,x''_k\in\{x_n\}$ so that
     \begin{equation}
         d_t(x'_k,y_k),d_t(x''_k,z_k)<1/k,\quad \eta(t,y_k,z_k)<\eta(t,x'_k,x''_k)+1/k.
     \end{equation}
Hence we have
     \begin{align}
         \eta^*(t,x)-\delta\leq\limsup\limits_{k\to \infty}\eta(t,y_k,z_k)\leq \limsup_{k\to \infty}\eta(t,x'_k,x''_k)\leq  \limsup_{k\to \infty}\vartheta^{\flat}(t,x'_k,x''_k)\leq \vartheta^{\flat}(t,x)\leq \vartheta^*(t,x).
     \end{align}
     The reverse direction is due to \cref{prop:eta>theta}.
   Then the implication for a rough sub-Ricci flow to be a weak sub-Ricci flow is an immediate consequence of \cref{thm:RFvsTheta} and \cref{thm:charaSRF}.  
 \end{proof}

\section{Examples and non-examples}\label{sec:examples}
\subsection*{Static cones}

The constant flow of an $\rcd(0,N)$ metric measure space can be interpreted as a weak or rough $N$-super-Ricci flow. 
Notably, this includes any Euclidean $(N-1)$-cone over an $\rcd(N-2,N-1)$ metric measure space, provided $N \geq 2$.
However, by Theorem 1.1 in \cite{Erbar-Sturm}, the only Euclidean $N$-cone with rough Ricci curvature bounded above by 0, and satisfying $\rcd(K,N')$ for some $K, N' \in \mathbb{R}$, is Euclidean space $\mathbb{R}^{N+1}$. 
Therefore, the only static Euclidean $N$-cone that forms a rough Ricci flow is Euclidean space.

\subsection*{Spherical Suspension}
Let $(X,d,\m)$ be a metric measure space.
The \emph{$N$-spherical suspension} is the mms defined on the set $\Sigma(X)=X\times [0,1]/\sim$ where $(x,r)\sim (y,r)$ $\Leftrightarrow$ $r=s=0$ or $r=s=\pi$ i.e. $\mathcal{S}=X\times\{0\}$ and $\mathcal{N}=X\times \{\pi\}$ are contracted to a point, the south and north pole respectively.
$\Sigma(X)$ is equipped with the following distance $d_{\Sigma}$ and measure $\m_{\Sigma}$:
\begin{align}
\label{eq:d_conic}
    &\cos\left(d_{\Sigma}((x,s),(x',s')) \right)\coloneqq \cos s\cos s'+\sin s\sin s'\cos(d(x,x')\wedge \pi
    )\quad (x,s),(x',s')\in \Sigma(X)\;,\\
 &\d \m_{\Sigma}(x,s)\coloneqq\d \m(x)\otimes \sin^N(s)\d s\;.
\end{align}
It is well known after Ketterer \cite{Ketterer-JMPA15} that when the base space $(X,d,\m)$ satisfies $\rcd(N-1,N)$ for some $N\geq 1$ and $\mathrm{diam}(X)\leq \pi$, then $(\Sigma(X),d_{\Sigma},\m_{\Sigma})$ satisfies $\rcd(N,N+1)$.
Consider the following time scaling of the metric
\begin{equation}
    d_t\coloneqq(1-2Nt)^{\frac12}d_{\Sigma},\quad \m_t\coloneqq c_t\m_{\Sigma},
\end{equation}
with $c:I\to \R$ a suitably regular function.
Then the time-dependent space $(\Sigma(X),d_t,\m_t)_{t\in(0,\frac{1}{2N})}$ is a $(N+1)$-super-Ricci flow, see e.g. \cite[Proposition 2.7]{Sturm2018Super} for a short explanation.

The following result shows that when the base space has a prescribed constant Ricci curvature, the time-scaled suspension becomes a weak Ricci flow.
However it qualifies as a rough sub-Ricci flow only in the absence of singularities at the poles, i.e., when it is a sphere.
\begin{theorem}\label{prop:suspension}
    Let $M$ be an $n$-dimensional Einstein manifold with $\Ric_{g_M}\equiv (n-1)g_M$.
    Then the time-scaled spherical suspension $(\Sigma(M),d_t,\m_t)_{t\in (0,\frac{1}{2n})}$ is a weak Ricci flow, where 
    \begin{equation}\label{eq:scaling}
        d_t\coloneqq(1-2nt)^{\frac12}d_{\Sigma},\quad \m_t\coloneqq(1-2nt)^{\frac{n+1}{2}}\m_{\Sigma}.
    \end{equation}
     Moreover, it is a rough super Ricci flow and it is rough sub-Ricci flow if and only if $M$ is the unit sphere $\mathbb{S}^{n}$ with the round metric and a multiple of volume measure.
\end{theorem}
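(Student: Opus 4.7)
The plan is to reduce everything to smooth Ricci flow arguments on the open dense complement of the two poles in $\Sigma(M)$ and to handle the poles by exploiting the warped product structure. First I would observe that in polar coordinates $\rho\in(0,\pi)$ around a pole, $g_0$ on $\Sigma(M)\setminus\{\mathcal{S},\mathcal{N}\}$ is the warped product $d\rho^2+\sin^2(\rho)\,g_M$; a direct application of the warped product Ricci formula with warping $\sin\rho$ together with the Einstein condition $\Ric_{g_M}=(n-1)g_M$ yields $\Ric_{g_0}=n\,g_0$ on the smooth locus. Since the prescribed measure scaling gives $\m_t=\mathrm{vol}_{g_t}$, the time-scaled family $g_t=(1-2nt)g_0$ satisfies $\partial_t g_t=-2\Ric_{g_t}$ and is thus an unweighted smooth Ricci flow on the smooth locus, so the consistency Theorems \cref{thm:smoothrough_infty,thm:weak-char} apply locally away from the poles.

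For the weak Ricci flow on all of $\Sigma(M)$ I split into the super- and sub-directions. The super direction is the $(n+1)$-super-Ricci flow property recalled in the paragraph preceding the theorem, which is a fortiori a weak super-Ricci flow by \cref{def:weakNSRF}. For the sub direction I verify the defining condition in \cref{def:WeakRF} directly, choosing an open cover consisting of small charts in the smooth locus together with small neighborhoods $U_{\mathcal{S}},U_{\mathcal{N}}$ of the two poles. On charts lying in the smooth locus the condition follows from the smooth reduction via \cref{prop:eta_N}. On $U_{\mathcal{S}}$, given arbitrary open subsets $V_0,V_1\subset U_{\mathcal{S}}$, I pick points $y\in V_0\setminus\{\mathcal{S}\}$, $z\in V_1\setminus\{\mathcal{S}\}$ in \emph{generic position}---i.e.\ the unique minimizing $d_t$-geodesic from $y$ to $z$ avoids $\mathcal{S}$, which is possible because such pairs form an open dense subset of $V_0\times V_1$---and apply the explicit Kantorovich-potential construction from the proof of \cref{prop:eta_N} to produce a Wasserstein geodesic concentrated in a smooth tubular neighborhood of this geodesic, with endpoints supported in $V_0,V_1$ and satisfying the required almost concavity; the analogous argument treats $U_{\mathcal{N}}$.

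The rough super-Ricci flow property then follows from the Kopfer--Sturm equivalence (\cref{thm:Kopfer-Sturm}) applied to the weak super-Ricci flow just established, once one notes that \cref{asm:dualheatflow} is in force: the prescribed scalings are log-Lipschitz on compact subintervals of $(0,\frac{1}{2n})$, and by Ketterer's theorem each static slice is $\rcd(n,n+1)$.

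The remaining and most delicate point is the rough sub-Ricci flow dichotomy. When $M=\mathbb{S}^n$ the suspension is the round $\mathbb{S}^{n+1}$ and the flow coincides with the classical shrinking sphere Ricci flow, so rough sub-Ricci flow follows directly from \cref{thm:smoothrough_infty}. Conversely, if $M\neq\mathbb{S}^n$ then the tangent cone to $(\Sigma(M),d_0)$ at a pole $p$ is the Euclidean cone $C(M)$, which is not isometric to $\R^{n+1}$. By \cref{thm:RFvsTheta}, rough sub-Ricci flow requires $\vartheta^*(t,x)\le 0$ for a.e.\ $t$ and all $x$, and the plan is to contradict this by showing $\vartheta^+(t,y,p)=+\infty$ for every $t$ and every $y$ sufficiently close to $p$. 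This is the main obstacle: one must transfer the static result of \cite{Erbar-Sturm}, which establishes exactly this blow-up of $\vartheta^+$ on the static Euclidean cone $C(M)$, to the time-dependent dual heat flow on $\Sigma(M)$ near $p$. The planned mechanism is a self-similar rescaling argument: at scale $\lambda\to 0$ around $p$ with time rescaled by $\lambda^2$, the time-dependent heat kernel $p_{t,s}(\cdot,\cdot)$ is expected to converge to the static cone heat kernel because the log-Lipschitz time dependence of $g_\cdot$ becomes negligible on these vanishing scales, so the super-exponential decay of $\frac{W_s(\hat P_{t,s}\delta_y,\hat P_{t,s}\delta_p)}{d_t(y,p)}$ is inherited from the cone. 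This forces $\vartheta^*(t,p)=+\infty$ and rules out rough sub-Ricci flow, completing the characterisation.
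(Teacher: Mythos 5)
Your approach to the weak Ricci flow direction is essentially sound: on the smooth locus $\Sigma_0=\Sigma(M)\setminus\{\mathcal S,\mathcal N\}$ the warped product formula gives $\Ric_{g_0}=n\,g_0$, so $(\Sigma_0,g_t)$ is an exact unweighted Ricci flow, and for the weak sub inequality (which is an existence statement over $V_0,V_1$) choosing generic endpoints whose connecting geodesic avoids the pole is a legitimate alternative to the paper's citation of Bacher--Sturm (that optimal plans between a.c.\ measures give no mass to geodesics through the poles). The rough super direction is also fine via the Kopfer--Sturm equivalence.

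The genuine gap is in the rough sub dichotomy. You rightly identify that the crux is showing $\vartheta^*$ blows up at the pole when $M\ne\mathbb S^n$, but the mechanism you propose --- parabolic blow-up at the pole and convergence of the time-dependent heat kernel to the static cone heat kernel --- is not carried out and is in fact substantially harder than necessary. Two concrete problems: (i) you would need a quantitative convergence of heat kernels of time-dependent Dirichlet forms under pointed measured Gromov--Hausdorff rescaling that survives the $\limsup_{s\nearrow t}$ in the definition of $\vartheta^+$, and blow-up limits do not in general commute with this $\limsup$; (ii) the Erbar--Sturm result you want to import concerns $\vartheta^+$ on the Euclidean cone $C(M)$, while the rigidity statement you actually need (and which controls $\vartheta^+$ on $\Sigma(M)$ itself) is for the spherical suspension, not its tangent cone. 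The simpler and exact route, which you miss, is to observe that the self-similar scaling $g_t=(1-2nt)g_0$, $\m_t=(1-2nt)^{(n+1)/2}\m_\Sigma$ makes the time-dependent propagator a pure time-change of the static heat semigroup on $(\Sigma(M),d_\Sigma,\m_\Sigma)$: indeed $\Delta_{g_t}=\phi'(t)\Delta_{g_0}$ with $\phi(t)=-\tfrac{1}{2n}\log(1-2nt)$, so $P_{t,s}=P_{\phi(t)-\phi(s)}$. This immediately yields $\vartheta^+(t,o,p)=\vartheta^+(o,p)-n$ with the right-hand side the static quantity on the suspension, and no blow-up argument is needed. What remains is the static rigidity theorem for spherical suspensions (proved in \cref{thm:suspension} by adapting the cone argument of Erbar--Sturm, and requiring its own nontrivial moment and Kantorovich-potential estimates). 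In its current form your argument for the converse direction of the dichotomy is an outline of a program, not a proof.
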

\begin{proof}
    The proof relies on several useful facts about spherical suspensions from \cite{BacherSturm14}.

    Denote by $\Sigma_0\coloneqq \Sigma(M)\setminus \{\mathcal{S},\mathcal{N}\}$ the punctured cone, which is an incomplete smooth manifold regarded as a warped product $M_{\sin (s)}\times (0,\pi)$.
The metric tensor on $\Sigma_0$ is given by $g_0\coloneqq \sin^2sg_{M}\oplus \d s^2$.
The length distance and the volume measure induced by $g_0$ coincide with $d_{\Sigma}$ and $\m_{\Sigma}$, respectively.

    For any $(x,s)\in \Sigma_0$, and $(v,t)\in T_{(x,s)}\Sigma_0$, we have (see \cite[Lemma 8]{BacherSturm14}):
    \begin{equation}
        \Ric_{g_0}((v,t),(v,t))=\Ric_{g_M}(v,v)+(1-n\cos^2s)\cdot g_M(v,v)+nt^2.
    \end{equation}
    Combining this with $\Ric_{g_M}\equiv (n-1)g_M$ and
    \begin{equation}
        g_0((v,t),(v,t))=\sin^2s \cdot g_M(v,v)+t^2
    \end{equation}
    shows that $\Ric_{g_0}\equiv n\cdot g_0$. 
    Furthermore, the distance $d_t$ is induced by $g_t=(1-2nt)g_0$ and we have $\Ric_{g_t}\equiv n\cdot g_t$ on $\Sigma_0$ for all $t$ as the Ricci tensor is invariant under uniform scaling.
\smallskip

\emph{Weak non-collapsed Ricci flow.} 

We obtain that $\Ric_{g_t}+\frac12\partial_t g_t=0$ on the incomplete smooth manifold $\Sigma_0$. By \cite[Theorem 6]{BacherSturm14}, every optimal dynamical plan between absolutely continuous measures assigns no mass to geodesics passing through the poles.
Hence by arguments in \cref{sec:weaksub} we conclude that the time-dependent space is a weak sub-Ricci flow and a $(n+1)$-super-Ricci flow.
\smallskip

\emph{Rough super-Ricci flow but not rough sub-Ricci flow.} 

 Note that $(\Sigma(M),d_\Sigma,\m_\Sigma)$ is an RCD$(N,N+1)$ space and hence by \cite{Kopfer-Sturm2018} it is a rough super-Ricci flow as the time-dependent scaling of an RCD space, in particular it satisfies \cref{asm:dualheatflow}. Denote by $(P_t)_{t\geq 0}$ and $(P_{t,s})_{t\geq s}$ the heat semigroup on $(\Sigma(M),d_{\Sigma},\m_{\Sigma})$ and the heat propagator on $(\Sigma(M),d_t,\m_t)_{t\in(0,\frac{1}{2n})}$ respectively.  
 Observe that for all $s<t$
\begin{equation}\label{eq:twoheatflow}
    P_{t,s}= P_{\phi(t)-\phi(s)},\quad \phi(t)\coloneqq \frac{-\log(1-2nt)}{2n}.
\end{equation}
Indeed, for any $f\in C^{\infty}_c(\Sigma_0)$, since $\Delta_{g_t}=(1-2nt)^{-1}\Delta_{g_0}=\phi'(t)\Delta_{g_0}$ for each $g_t=(1-2nt)g_0$, it holds
\begin{align}
    \partial_t (P_{\phi(t)-\phi(s)}f)&=\Delta_{g_0}((P_{\phi(t)-\phi(s)}f))\phi'(t)=\Delta_{g_t}P_{\phi(t)-\phi(s)}f.
\end{align}
In other words, \eqref{eq:twoheatflow} holds on $C^{\infty}_c(\Sigma_0)$, a dense subset of $L^2(\Sigma(M))$ and hence on the whole $L^2(\Sigma(M))$ by continuity.
The same relation holds also for the dual heat propagators.

For every $t$ and $o,p\in \Sigma(M)$, we can compute the $\vartheta$-quantity as follows
\begin{align}
    &\frac{1}{t-s}\left(d^2_t(o,p)-W^2_s(\hat{P}_{t,s}\delta_o,\hat{P}_{t,s}\delta_p)\right)\\
    = &\frac{1}{t-s}\big((1-2nt)d^2_0(o,p)-(1-2ns)W^2_0(P_{\phi(t)-\phi(s)}\delta_o,P_{\phi(t)-\phi(s)}\delta_p)\big)\\
    =& \frac{1-2ns}{t-s}\left(d^2_0(o,p)-W^2_0(P_{\phi(t)-\phi(s)}\delta_o,P_{\phi(t)-\phi(s)}\delta_p)\right)+\frac{1-2nt-(1-2ns)}{t-s}d^2_0(o,p).
\end{align}
   Taking limsup as $s\nearrow t$ implies
   \begin{align}
       \vartheta^+(t,o,p)= (1-2nt)\vartheta^+(o,p)\phi'(t)-n=\vartheta^+(o,p)-n.
   \end{align}
  In \cref{thm:suspension}, we show in analogy with the result in \cite{Erbar-Sturm} for cones that $\vartheta^+(o,p)=+\infty$ when $o$ a pole and $p$ in the same hemisphere of $o$ unless $M$ is the unit sphere $\mathbb{S}^n$ with the round distance and a multiple of the volume measure.
   Therefore, $\vartheta^*(t,o)=\infty$ for all $t$ and $o$ being the south or north pole when $M\neq \mathbb{S}^n$.
   	Hence by \cref{thm:RFvsTheta}, $(\Sigma(M),d_t,\m_t)_{t\in (0,\frac{1}{2n})}$ is not a rough sub-Ricci flow unless $M=\mathbb{S}^n$.
\end{proof}

In particular, the induced time-dependent mms $(\Sigma(M),d_t,\m_t)_{t\in [0,\frac{1}{8})}$ for $M=S^2(1/\sqrt{3})\times S^2(1/\sqrt{3})$ is not a rough Ricci flow, which was conjectured in Example 1.4 of \cite{Kopfer-Sturm2018}.

\begin{rem}
    In the proof of \cref{prop:suspension}, the argument on the rough sub-Ricci flow part is robust enough to apply to general $\rcd$-spaces.
    That is, let $\Sigma(X)$ be a spherical suspension over an $\rcd(n-1,n)$ space $(X,d,\m)$ for some $n\geq 1$.
    Then the scaled suspension $(\Sigma(X),d_t,\m_t)_{t\in [0,\frac{1}{2n})}$, given by \eqref{eq:scaling}, is a rough sub-Ricci flow if and only if $X$ is the unit sphere with the round metric and a multiple of volume measure.
\end{rem}

\subsection*{Gaussian weights} 
Let $X=\R^n$.
Take a family of distances $d_t$ induced by the inner product $\langle \cdot, A_t\cdot\rangle$ where $A:I\to \R^{n\times n}$ is positive definite for all $t\in I$.
Consider $\m_t$ to be the weighted Lebesgue measure $e^{-f_t}\mathcal L^n$ with 
\[
    f_t(x)=\frac12\langle x,a_t x\rangle+\langle x,b_t\rangle+c_t\;, 
\]
where $a: I\to \R^{n\times n}$, $b: I\to \R^n$, $c:I\to\R$ are suitably regular functions.
Then $(X,d_t,\m_t)_{t\in I}$ is a weak/rough super-resp.~sub-Ricci flow if and only if 
\[\dot A_t \geq -2 a_t\;,\quad \text{resp.}\quad \dot A_t\leq -2a_t\;.\]
However, it will not be a $N$-super-Ricci flow for some $N\in [n,\infty)$ unless $a\equiv 0$ and $b\equiv 0$.

In particular, consider $A_t=1-2t$, $a_t\equiv 1$, $b_t\equiv 0$ and $c\equiv 0$ for $t\in(0,\frac12)$ corresponding to $d_t=\sqrt{1-2t}$ and $f_t(x)=\frac{|x|^2}{2}$.
Then the time-dependent space $(\R^n,d_t,e^{-f}\mathcal{L}^n)_{t\in(0,\frac12)}$, often referred to as a shrinking Gaussian, is a rough Ricci flow but not a non-collapsed Ricci flow.

\appendix
\section{time-dependent potentials}\label{sec:appendix}
Let $(M,g)$ be a smooth complete Riemannian manifold with Riemannian distance $d$. For a function $\phi:M\to\R$ and $a\in[-1,1]$ we define the propagated potentials $(\phi^a)_{a\in[-1,1]}$ by setting $\phi^0=\phi$ and 
\begin{equation}\label{eq:interpotential}
    \phi(a,x)=\phi^a(x)\coloneqq \begin{cases}
     \sup_{z\in M}[\phi(z)-\frac{d^2(z,x)}{2a}],& a\in(0,1] \;, \\
    \inf_{z\in M}[\phi(z)+\frac{d^2(z,x)}{2|a|}], & a\in[-1,0)\;.
\end{cases}
\end{equation}
In other words, for $a\in(0,1]$ we set $\phi^a=-\phi^c$ and $\phi^{-a}=(-\phi)^c$ for using the $c$-transform defined in \eqref{eq:ctransform} with the cost $d^2/2a$. 

\begin{lemma}\label{lemma:potential}
    Let $M$ be a smooth complete Riemannian manifold, and let $K$ be a compact subset of $M$. Then, there is $\varepsilon>0$ such that for any function $\phi\in C^\infty_c(M)$ with $\spt(\phi)\subset K$ and $\|\phi\|_{C^2}\leq \varepsilon$, the following hold:    
\begin{enumerate}
        \item the map $\Phi^a\colon x\mapsto \exp_x(-a\nabla\phi(x))$ is a diffeomorphism on $M$ for all $a\in[-1,1]$;
    \item\label{item:tangent+HJ} the map $(a,x)\mapsto \phi(a,x)$ defined in \eqref{eq:interpotential} is smooth on $[-1,1]\times M$ and satisfies the Hamilton-Jacobi equation
\begin{equation}\label{eq:HamiltonJacobi}
    -\partial_a\phi^a(x)+\frac{1}{2}|\nabla\phi^a(x)|^2_g=0\;,
\end{equation}
as well as
\begin{equation}\label{eq:velocity}
    \frac{\d}{\d a}\Phi^a(x)=-\nabla\phi^a(\Phi^a(x))\;;
\end{equation}
 \item\label{item:optimalityachieved}  both $\phi$ and $-\phi$ are $d^2/2$-concave. For any $a\in[0,1]$ the pair $(2a\phi^{-a},-2a\phi^{a})$ is admissible in the Kantorovich duality \eqref{eq:KantorovichDuality} w.r.t. the cost $d^2/2$, i.e. we have
    \begin{equation}\label{eq:admissible}
    2a\phi^{-a}(x)-2a\phi^a(y)\leq \frac{1}{2}d^2(x,y),\quad \forall x,y\in M\;.
    \end{equation}
Moreover, for every $z\in M$ we have
   \begin{equation}\label{eq:optimalityachieved}
        2a\phi^{-a}(\Phi^{-a}(z))-2a\phi^a(\Phi^a(z)=
 \frac{1}{2}d^2(\Phi^{-a}(z),\Phi^a(z))\;.
    \end{equation}
\end{enumerate}
\end{lemma}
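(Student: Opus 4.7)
The plan is to establish (1) by a perturbation-of-the-identity argument, then deduce (2) and (3) from the fact that for small $\varepsilon$ the supremum/infimum defining $\phi^a$ is uniquely attained at $z^\ast = (\Phi^a)^{-1}(x)$.

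For item (1), I would fix a slightly thicker compact neighbourhood $K' \supset K$ inside which the injectivity radius is bounded below and the sectional curvatures are bounded, and observe that outside $K'$ the map $\Phi^a$ is the identity since $\nabla\phi \equiv 0$ there. A direct computation in local coordinates gives $D\Phi^a(x) = \id + O(|a|\,\|\phi\|_{C^2})$ uniformly on $M$, where the implicit constant depends only on the geometry near $K'$. Choosing $\varepsilon$ small enough makes $D\Phi^a$ invertible everywhere, so $\Phi^a$ is a local diffeomorphism; being proper (identity outside $K'$) and of degree one (continuously deformed to $\Phi^0 = \id$), it is a global diffeomorphism.

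For item (2), set $F_a(x,z) := \phi(z) - d^2(z,x)/(2a)$ for $a > 0$. The critical-point equation $\nabla_z F_a(x,z) = 0$ is equivalent (inside the injectivity radius of $z$) to $x = \exp_z(-a\nabla\phi(z)) = \Phi^a(z)$, so by (1) there is a unique critical point $z^\ast = (\Phi^a)^{-1}(x)$. A second-order expansion gives $\nabla_z^2 F_a(x,z^\ast) = -a^{-1}\id + O(\varepsilon) + O(a|\nabla\phi|^2\sigma)$, where $\sigma$ bounds the sectional curvatures near $K'$; for $\varepsilon$ sufficiently small this is negative definite, so $z^\ast$ is a strict local max. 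Combined with the coercivity of $F_a(x,\cdot)$ (the quadratic penalty dominates the bounded $\phi$), it is the unique global max; a symmetric argument treats $a < 0$. Smoothness of $(a,x)\mapsto\phi^a(x)$ then follows from the implicit function theorem applied to $\nabla_z F_a(x,z) = 0$ near $(a,x,z^\ast)$. The Hamilton--Jacobi equation \eqref{eq:HamiltonJacobi} follows by the envelope theorem, using $\nabla\phi^a(x) = a^{-1}\exp_x^{-1}(z^\ast)$ (so $|\nabla\phi^a|^2 = d^2(z^\ast,x)/a^2$) and $\partial_a\phi^a(x) = d^2(z^\ast,x)/(2a^2)$. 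Finally, $\Phi^a(x)$ is the value at parameter $a$ of the geodesic $s\mapsto\exp_x(-s\nabla\phi(x))$, so $\tfrac{d}{da}\Phi^a(x)$ equals the parallel transport of $-\nabla\phi(x)$ along this geodesic; this same vector equals $-\nabla\phi^a(\Phi^a(x))$ by the envelope formula above (applied at optimizer $z^\ast = x$), yielding \eqref{eq:velocity}.

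For item (3), the Hopf--Lax semigroup identity $\phi^a(x) = \sup_z[\phi^{-a}(z) - d^2(z,x)/(4a)]$, which follows from uniqueness of the smooth HJ solution in this regime together with (2), rearranges into $2a\phi^{-a}(z) - 2a\phi^a(x) \leq d^2(z,x)/2$, proving \eqref{eq:admissible}; the $d^2/2$-concavity of $\pm\phi$ is a by-product of (2). For \eqref{eq:optimalityachieved}, uniqueness of the optimizer at $z^\ast = z$ for $x = \Phi^a(z)$ gives $\phi^a(\Phi^a(z)) = \phi(z) - d^2(z,\Phi^a(z))/(2a)$ and similarly $\phi^{-a}(\Phi^{-a}(z)) = \phi(z) + d^2(z,\Phi^{-a}(z))/(2a)$; subtracting, and using that $z$ is the midpoint of the geodesic from $\Phi^{-a}(z)$ to $\Phi^a(z)$ with $d(z,\Phi^{\pm a}(z)) = a|\nabla\phi(z)|$, yields \eqref{eq:optimalityachieved}. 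The main technical obstacle will be uniformity of all smallness estimates in $a \in [-1,1]$: both the invertibility of $D\Phi^a$ and the strict negative definiteness of $\nabla_z^2 F_a$ at $z^\ast$ require $\varepsilon$ chosen in terms of the geometry near $K'$ (injectivity radius and curvature) in a way that survives the endpoint $|a| = 1$, which is the binding case.
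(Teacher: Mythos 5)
Your approach is genuinely different from the paper's at each item, and most of it is sound, but there is one small but real gap in item (2). For (1), the paper does not estimate $D\Phi^a$ by a perturbation argument: it observes that $2\phi$ is $d^2/2$-concave, so the graph $\{(z,\exp_z(-2\nabla\phi(z)))\}$ is $d^2/2$-cyclically monotone, and then invokes the Mather shortening lemma to obtain $d(\Phi^a x,\Phi^a y)\geq C\, d(x,y)$ uniformly in $a\in[0,1]$ -- giving injectivity and non-degeneracy of the differential in one stroke. Your degree-theoretic argument is equally valid and more self-contained, at the cost of having to verify that the implicit constant in $D\Phi^a = \mathrm{id}+O(|a|\,\|\phi\|_{C^2})$ (which involves curvature and second derivatives of the exponential near $K'$) is controlled uniformly up to $|a|=1$, a point you correctly flag as the binding case. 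For (3), the paper proves \eqref{eq:admissible} by simply adding the two conjugacy inequalities $\phi^{-a}(x)-\phi(z)\le d^2(x,z)/(2a)$ and $\phi(z)-\phi^a(y)\le d^2(z,y)/(2a)$ and infimising over $z$; this is more economical than invoking the Hopf--Lax \emph{identity} $\phi^a(x)=\sup_z[\phi^{-a}(z)-d^2(z,x)/(4a)]$, which is indeed true in this regime but is a strictly stronger assertion whose justification by ``uniqueness of the smooth HJ solution'' is thin -- it ultimately rests on $-\phi$ being $d^2/(2a)$-concave, which you do not record.

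The gap is in item (2): the equation $\nabla_z F_a(x,z)=0$ is not defined at $a=0$, so applying the implicit function theorem to it cannot give smoothness of $(a,x)\mapsto\phi^a(x)$ across $a=0$; as written your argument only covers $a\neq0$. The paper sidesteps this by reading off joint smoothness directly from the identity $\phi^a(\Phi^a(z))=\phi(z)-\tfrac{a}{2}|\nabla\phi(z)|^2_g$, which is manifestly smooth in $(a,z)$ including at $a=0$, composed with the smooth family of inverses $(\Phi^a)^{-1}$. Your route is easily repaired: multiply the critical-point equation by $a$ to obtain $a\,\nabla\phi(z)-\nabla_z\big(d^2(z,x)/2\big)=0$, which is smooth in $(a,x,z)$ on all of $[-1,1]\times M\times M$ near the diagonal and whose $z$-Jacobian at $(a,x,z)=(0,x,x)$ equals $-\mathrm{id}$; the implicit function theorem then applies globally in $a$.
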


\begin{proof}
 As $\phi$ is smooth, $\Phi^a$ is smooth. Taking $\varepsilon$ smaller than the injectivity radius of $K$, we have that $\gamma^a_z\colon[-1,1]\ni a\mapsto \Phi^a(z)$ is a geodesic for all $z\in M$. 
 
By \cite[Thm.~13.5]{Villani} there is $\varepsilon>0$ (depending only on the metric tensor within a neighborhood of $K$) such that any function $\phi$ with $\|\phi\|_{C^2}\leq 2\varepsilon$ is $d^2/2$-concave. Hence, for every $a\in(0,1]$ the pairs $(\phi,-\phi^a)$ and $(\phi^{-a},-\phi)$ are conjugate for the cost $d^2/2|a|$ by construction. In particular, we have
\begin{equation}\label{inequ:admissible}
 \begin{split}
    -\phi^a(x)+\phi(z)\leq \frac{d^2(x,z)}{2a},&\quad\forall x,z\in M\;,\\
    \phi^{-a}(y)-\phi(z)\leq \frac{d^2(y,z)}{2a}, &\quad\forall y,z\in M\;.
\end{split}
\end{equation}
    The proof of \cite[Thm.~13.5]{Villani} shows moreover (see also \cite[Lem.~3.3]{Cordero-Erausquin2001}), that equality is obtained uniquely at $x=\Phi^a(z)$ and $y=\Phi^{-a}(z)$ respectively, i.e.
    \begin{equation}\label{eq:eqachieved42}
-\phi^a(\Phi^a(z))+\phi(z)= \frac{d^2(\Phi^a(z),z)}{2a},\quad\forall z\in M\;,
    \end{equation}
    and similarly for $-a$. Thus (3) is established. For all $x,y\in M$ we then have
\begin{equation}
    -\phi^a(x)+\phi^{-a}(y)\leq \inf_{z\in M}\Big[\frac{d^2(x,z)}{2a}+\frac{d^2(y,z)}{2a}\Big]=\frac{d^2(x,y)}{4a}\;,
\end{equation}
 i.e. $(2a\phi^{-a},-2a\phi^a)$ is admissible for $d^2/2$. Moreover, we have for any $z$ that
 \begin{align}
     \phi^{-a}(\Phi^{-a}(z))-\phi^a(\Phi^a(z)) &= \phi^{-a}(\Phi^{-a}(z))-\phi(z)+ \phi(z)-\phi^a(\Phi^a(z))\\
     & = \frac{1}{2a} d^2(\Phi^{-a}(z),z) +\frac{1}{2a} d^2(z,\Phi^a(z)=\frac{1}{4a} d^2(\Phi^{-a}(z),\Phi^a(z))\;,
 \end{align}
where we used that $[-1,1]\ni a\mapsto \Phi^a(z)$ is a geodesic. Thus (3) is established.

As also $2\phi$ is $d^2/2$-concave, the set 
$\big\{\big(z,\exp_x(-2\phi(z)\big)\colon z\in M\big\}$ is $d^2/2$-cyclically monotone. By the Mather shortening lemma \cite[Cor.~8.2]{Villani}
we have that 
\[d\big(\Phi^a(x),\Phi^a(y)\big)\geq C d(x,y)\quad \forall a\in[0,1],\; x,y\in M\;,\]
for some constant $C$ depending on $K$. In particular, for all $a\in[0,1]$ the differential of $\Phi^a$ is non-degenerate.
Hence, by the inverse function theorem $\Phi^a$ is a diffeomorphism on $M$ for all $a\in [0,1]$. The same argument applies to $a\in[-1,0]$ and we conclude (1).

To show (2), note that the joint smoothness of $(a,x)\mapsto \phi^a(x)$ is a consequence of \eqref{eq:eqachieved42} and $\Phi^a$ being a diffeomorphism. Since $(a,x)\mapsto -\phi^a(x)$ is constructed via the Hopf-Lax semigroup, it solves  the Hamilton-Jacobi equation in viscosity sense (see cf. \cite[Section 3]{AGSInvention}) and hence classically due to smoothness.
\end{proof}

\begin{lemma}\label{lemma:continuityeq}
   Let $(M,g,f)$ be a weighted smooth Riemannian manifold with reference measure $\m\coloneqq e^{-f}\d \mathrm{vol}$.
    Let $\Phi^a$ and $\phi^a$ for $a\in[-1,1]$ be as in \cref{lemma:potential}
    and let $\mu^0=\rho^0\m$ be a probability measure compactly supported smooth density $\rho^0$ w.r.t.~$\m$. Then for all $a\in [-1,1]$ the measure $\mu^a\coloneqq (\Phi^a)_{\#}\mu^0=\rho^a\m$ has smooth compactly supported density $\rho^a$ satisfying the continuity equation
  \begin{equation}\label{eq:continuityeq}
    \partial_a \rho^a=\mathrm{div}_{f}(\rho^a\nabla\phi^a).
   \end{equation}
\end{lemma}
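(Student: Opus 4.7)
The proof splits into two parts: showing that $\mu^a$ admits a smooth, compactly supported density $\rho^a$ with respect to $\m$, and verifying the continuity equation. My plan is to leverage the two key outputs of \cref{lemma:potential}: that $\Phi^a$ is a diffeomorphism of $M$ for each $a\in[-1,1]$, jointly smooth in $(a,x)$, and that it satisfies the flow identity \eqref{eq:velocity}, namely $\frac{\d}{\d a}\Phi^a(x) = -\nabla\phi^a(\Phi^a(x))$.

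For the first part, I would introduce the weighted Jacobian $J^a(x):=|\det D\Phi^a(x)|\, e^{f(x)-f(\Phi^a(x))}$, which is a smooth, strictly positive function on $M$ characterised by the change-of-variables identity
\begin{equation*}
\int (u\circ \Phi^a)\, J^a\d\m = \int u\d\m
\end{equation*}
for all test functions $u$. From this identity the pushforward $\mu^a=(\Phi^a)_\#(\rho^0\m)$ has the explicit density
\begin{equation*}
\rho^a(\Phi^a(x)) = \frac{\rho^0(x)}{J^a(x)},
\end{equation*}
which is smooth in $(a,x)$, strictly positive where $\rho^0$ is, and supported in the compact set $\Phi^a(\spt\rho^0)$.

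For the continuity equation, my plan is to verify it first in the weak sense against arbitrary $\varphi\in C^\infty_c(M)$ and then upgrade to a pointwise identity by smoothness. On the one hand, using the flow identity \eqref{eq:velocity} and the chain rule,
\begin{equation*}
\partial_a\int\varphi\,\rho^a\d\m = \partial_a\int(\varphi\circ\Phi^a)\d\mu^0 = -\int \bigl(\nabla\varphi\cdot\nabla\phi^a\bigr)\circ\Phi^a\d\mu^0 = -\int\nabla\varphi\cdot\nabla\phi^a\d\mu^a;
\end{equation*}
all differentiation under the integral is justified by compactness of supports and joint smoothness. On the other hand, the integration-by-parts identity $\int\varphi\,\mathrm{div}_f(X)\d\m = -\int\nabla\varphi\cdot X\d\m$, which follows at once from the classical identity $e^{-f}\mathrm{div}_f(X)=\mathrm{div}(e^{-f}X)$ together with compact support, gives
\begin{equation*}
\int\varphi\,\mathrm{div}_f(\rho^a\nabla\phi^a)\d\m = -\int\nabla\varphi\cdot\nabla\phi^a\,\rho^a\d\m = -\int\nabla\varphi\cdot\nabla\phi^a\d\mu^a.
\end{equation*}
Matching the two expressions across all $\varphi\in C^\infty_c(M)$ and invoking smoothness of $\rho^a$ and $\phi^a$ yields the pointwise continuity equation.

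There is no real obstacle in this argument: all integrability issues are killed by compact support, and the joint smoothness of $(a,x)\mapsto\Phi^a(x)$ (and hence of $J^a$ and $\rho^a$) transfers directly from \cref{lemma:potential}. The only point to handle with care is the time-differentiation of $\varphi\circ\Phi^a$, which is exactly where the flow identity \eqref{eq:velocity} enters and turns the geometric pushforward into the analytic divergence form.
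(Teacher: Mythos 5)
Your proof is correct and follows essentially the same route as the paper: establish smoothness of $\rho^a$ via a Monge--Amp\`ere / change-of-variable identity, then verify \eqref{eq:continuityeq} weakly against $C^\infty_c$ test functions using the flow identity \eqref{eq:velocity} and weighted integration by parts, and finally upgrade to a pointwise statement by smoothness. If anything, your weighted Jacobian $J^a$ is written a bit more carefully than the paper's displayed Monge--Amp\`ere equation, which omits the $e^{f(x)-f(\Phi^a(x))}$ factor despite $\rho^a$ being the density with respect to $\m = e^{-f}\vol$; the conclusion (joint smoothness of $\rho^a$) is unaffected either way.
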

Here $\mathrm{div}_{f}$ denotes the weighted divergence given by 
\[\mathrm{div}_{f}w\coloneqq \mathrm{div}w-\nabla f\cdot w\;.\]
   
 \begin{proof}
 Since $\Phi^a$ is a diffeomorphism for all $a$, by change-of-variable formula (see e.g. \cite{Mccann2001}), we have the following Monge-Amp\`ere equation:
\begin{equation}
    \rho^a(\Phi^a(x))\det (\d\Phi^a(x))=\rho^0(x),\quad \forall x\in M\;.
\end{equation}
In particular, $(a,x)\mapsto \rho^a(x)$ is smooth.
To show \eqref{eq:continuityeq}, we integrate against an arbitrary test function $\chi\in C^{\infty}_c(M)$ and use \eqref{eq:velocity} to obtain
\begin{align}
   \int \partial_a\rho^a\chi\d \m&=\partial_a\int \chi\d \mu^a=\partial_a\int \chi \d(\Phi^a)_{\#}\mu^0=\partial_a\int \chi\circ \Phi^a\d \mu^0\\
   &=\int \partial_a(\chi\circ \Phi^a)\d \mu^0=\int \nabla\chi\cdot (-\nabla\phi^a)(\Phi^a)\d \mu^0=\int-\nabla\chi\cdot \nabla\phi^a\d \mu^a\\
   &=\int-\nabla\chi\cdot \nabla\phi^a \rho^a\d \m=\int \mathrm{div}_{f}(\rho^a\nabla\phi^a)\chi\d\m\;.\qedhere
\end{align}
 \end{proof}

Now let $(M,g_t,f_t)_{t\in I}$ be a smooth flow of complete manifolds.

\begin{lemma}\label{lemma:spacetimepotentials} 
	For each $t\in I$ and given $\delta>0$, a compact subset $K\subset M$, and a symmetric matrix $A$ there exist $\varepsilon>0$ such that for any $x,y\in K$ with $2r\coloneqq d_t(x,y)<\varepsilon$, there exists a smooth function $(s,a,z)\mapsto\phi^a_s(z)$ on $[t-\delta,t+\delta]\times[-r,r]\times M$, satisfying 
\begin{enumerate}
    \item\label{item:gradhessian} for all $a\in[-r,r]$ we have  $-\nabla_t \phi^a_t(\gamma^a_t)=\dot{\gamma_t}(a)$ and moreover $\nabla_t^2\phi^0_t(\gamma^0_t)=A$ where $(\gamma^a_t)_{a\in[-r,r]}$ is the unit speed $d_t$-geodesic from $x$ to $y$;
    \item\label{item:conjuacy} for all $s$ and $a\in[0,r]$ the pair $(2a\phi^{-a}_s,-2a\phi^{a}_s)$ is admissible for the cost $d^2_s/2$, i.e.
    \[
    2a\phi^{-a}_s(p)-2a\phi^a_s(q)\leq \frac{1}{2}d^2_s(p,q),\quad \forall p,q\in M\;;
    \]
    moreover $2a\phi^{-a}_t(\gamma_t^{-a})-2a\phi^{a}_t(\gamma_t^a)=d_t^2(\gamma_t^{-a},\gamma_t^a)/2$;
    \item\label{item:HamiltonJacobi} for all $s$, $\phi^a_s$ solves the Hamilton-Jacobi equation 
    \[\partial_a\phi^a_s=\frac{1}{2}|\nabla_s\phi^a_s|^2_{g_s}\;;\]
    \item\label{item:5th_order} we have
    \[
    \sup_{a\in[-r,r]} \|\phi^a_\cdot\|_{C^5([t-\delta,t+\delta]\times M)}\leq C\;,\] for a constant $C$ depending only on $\varepsilon$, $\delta$, $A$, and the spatial-temporal derivatives of the metric tensor in $K$.
\end{enumerate}
\end{lemma}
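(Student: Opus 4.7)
The strategy is to construct a single seed potential $\phi^0$ (independent of $s$) whose $1$-jet and $2$-jet at the midpoint $p := \gamma_t^0$ of the $d_t$-geodesic from $x$ to $y$ are exactly what we need, and then to let $\phi^a_s$ for $s \in [t-\delta, t+\delta]$ and $a \in [-r,r]$ be the $d_s$-Hopf--Lax propagation of $\phi^0$. For the seed, write $v := \dot\gamma_t(0) \in T_pM$ (unit in $g_t$), pick $g_t$-geodesic normal coordinates around $p$, set $\tilde\phi(z) := -g_t(v,z) + \tfrac12 A(z,z)$ in those coordinates, and put $\phi^0 := \chi \cdot \tilde\phi$ where $\chi$ is a fixed smooth cutoff supported in a compact neighborhood $K' \supset K$ and equal to $1$ near $K$. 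Because Christoffel symbols vanish at the origin of normal coordinates, $-\nabla_t\phi^0(p) = v$ and $\nabla_t^2\phi^0(p) = A$; moreover $\|\phi^0\|_{C^k}$ is bounded by a constant depending only on $k$, $K$, $A$, and the $C^k$-geometry of $g_t$ on $K'$, hence is uniform in the choice of $(x,y) \in K\times K$.

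For $s \in [t-\delta,t+\delta]$ and $a \in [-r,r]\setminus\{0\}$ define
\[
\phi^a_s(z) := \sup_{w \in M}\Big[\phi^0(w) - \tfrac{d_s^2(w,z)}{2a}\Big]\ (a>0), \qquad \phi^a_s(z) := \inf_{w \in M}\Big[\phi^0(w) + \tfrac{d_s^2(w,z)}{2|a|}\Big]\ (a<0),
\]
and $\phi^0_s := \phi^0$. The key technical input is a rescaled variant of \cref{lemma:potential}: given $C > 0$ and a compact $K' \subset M$, there exists $\varepsilon_0 > 0$ such that whenever $\psi \in C^\infty_c(M)$ has $\spt\psi \subset K'$ and $\|\psi\|_{C^2} \leq C$, the conclusions of \cref{lemma:potential} hold (with $g$ replaced by $g_s$) on the restricted parameter range $a \in [-\varepsilon_0, \varepsilon_0]$, uniformly for $s \in [t-\delta, t+\delta]$. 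The proof of this rescaled statement is identical to that of \cref{lemma:potential}: both the $d_s^2/(2a)$-concavity of $\psi$ via \cite[Thm.~13.5]{Villani} and the diffeomorphism property of $\Phi^a_s(z) := \exp^{g_s}_z(-a\nabla_s\psi(z))$ via the inverse function theorem and the Mather shortening lemma depend only on the smallness of the products $|a|\,\|\psi\|_{C^2}$ and $|a|\cdot(\text{sectional curvature bound of } g_s|_{K'})$, both of which we can ensure by choosing $\varepsilon_0$ small in terms of $C$ and the $C^k$-geometry of the flow on $K'\times [t-\delta,t+\delta]$ (uniform by smoothness of the flow and compactness of $K'$). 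Setting $\varepsilon := 2\varepsilon_0$, the condition $2r < \varepsilon$ enforces $|a|\leq r < \varepsilon_0$.

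The four conclusions then follow routinely. Property (3) is the Hamilton--Jacobi equation \eqref{eq:HamiltonJacobi} applied with $g = g_s$ for each fixed $s$. Property (2) uses \eqref{eq:admissible}--\eqref{eq:optimalityachieved}: global admissibility of $(2a\phi^{-a}_s, -2a\phi^a_s)$ for $d_s^2/2$, and, at $s = t$, evaluation at $z = p$ of \eqref{eq:optimalityachieved}, noting that $\Phi^{\pm a}_t(p) = \exp^{g_t}_p(\pm a v) = \gamma_t^{\pm a}$ since $v$ is the initial tangent of $\gamma_t$. Property (1) follows from \eqref{eq:velocity} applied along $a \mapsto \Phi^a_t(p) = \gamma_t^a$ (yielding $\dot\gamma_t(a) = -\nabla_t\phi^a_t(\gamma_t^a)$) together with $\phi^0_t = \phi^0$ at $a = 0$ (realising the Hessian $A$). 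Property (4) follows from joint smoothness of $(s,a,z) \mapsto \phi^a_s(z)$: the unique optimiser in the Hopf--Lax formula is $w = \Phi^{\mp a}_s(z)$, which depends smoothly on $(s,a,z)$ by the implicit function theorem (uniformly in our parameter range and including at $a=0$, where everything reduces to $\phi^0$), and the smoothness of $g_s$ in $s$ propagates to all derivatives. The main technical obstacle lies in Step 2, namely securing uniformity of the constants $\varepsilon_0$ and of the $C^5$-bound simultaneously in $(x,y,s,a)$; this is exactly what the rescaled form of \cref{lemma:potential} delivers when combined with the compactness of $K'$ and of $[t-\delta, t+\delta]$ and the smoothness of the flow.
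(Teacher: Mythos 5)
Your construction coincides with the paper's: a seed potential $\phi^0$ with prescribed gradient and Hessian at the midpoint $p=\gamma^0_t$, built in $g_t$-normal coordinates and cut off, then propagated for each $s$ by the $d_s$-Hopf--Lax formula \eqref{eq:interpotential}. Your ``rescaled variant'' of \cref{lemma:potential} is equivalent, via the scaling identity $\phi^{rb}_s=r^{-1}\,(r\phi^0)^b_s$, to the paper's device of applying \cref{lemma:potential} verbatim to the shrunken seed $r\phi^0$ (whose $C^2$-norm is $O(r)\le O(\varepsilon)$) over the full range $b\in[-1,1]$; either way items (1)--(3) come out correctly from \eqref{eq:HamiltonJacobi}, \eqref{eq:velocity}, \eqref{eq:admissible} and \eqref{eq:optimalityachieved}, exactly as in the paper.

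The gap is item (4). You assert that the uniform $C^5([t-\delta,t+\delta]\times M)$ bound ``follows routinely'' from smoothness of the Hopf--Lax optimiser via the implicit function theorem, and that the rescaled \cref{lemma:potential} ``delivers'' it. It does not: \cref{lemma:potential} provides only qualitative smoothness together with $C^2$-level information (concavity, the diffeomorphism property, the Hamilton--Jacobi equation); it gives no quantitative control on derivatives of order $\ge 3$, and none at all on $s$-derivatives, and joint smoothness alone does not produce a constant depending only on $\varepsilon$, $\delta$, $A$ and the metric on $K$ \emph{uniformly over all pairs} $(x,y)\in K\times K$. This estimate is precisely where the paper spends most of its proof (``Step 2: Bound on the derivatives''): differentiating the Hamilton--Jacobi equation along the characteristics $a\mapsto\Phi^a_s(z)$ yields, for each order $k$, a transport ODE for $f_{i_1,\dots,i_k}(a)=(\mathcal{D}_{i_1,\dots,i_k}\phi^a_s)(\gamma^a_{z,s})$ in which all $(k{+}1)$-order terms cancel; the Hessian is controlled by a Riccati comparison giving $\|\nabla^2_s\phi^a_s\|_{\mathrm{HS}}(\gamma^a_{z,s})\le\sqrt{\sigma}\tan(\sqrt{\sigma}(a+\theta))$; and an induction on $k$, with the convention $\partial/\partial x^0=\partial_s$ to capture the time derivatives, closes the estimate with explicit dependence on the space-time derivatives of the metric. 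Some argument of this type --- or at minimum a genuine continuity-plus-compactness argument for the family of seeds $(x,y)\mapsto\phi^0_{x,y}$ and of the propagation in a topology strong enough to control five derivatives --- is required; as written, the uniform bound in (4) is asserted rather than proved.
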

\begin{proof}
\emph{Step 1: Construction of  $(\phi^a)$.} We can find a compact set $K_1$ such that for all $x,y \in K$ and $s\in[t-\delta,t+\delta]$ such that $d_s(x,y)<1$ any connecting minimizing $d_s$-geodesic is contained in $K_1$. The injectivity radius $\mathrm{inj}_{g_s}(x)$ of $x$ on $(M,g_s)$ is jointly continuous in $s$ and $x$ (see \cite{Enrlich1974}). Let $r_0>0$ be its minimum on $[t-\delta,t+\delta]\times K_1$. Let $\chi:\R^n\to[0,1]$ be a smooth cut-off function with $\chi=1$ in a neighborhood of $0$ and $\chi=0$ outside of $B_{r_0/2}(0)$ and let $C=\|\chi\|_{C^2}$. Let $\varepsilon_0$ be the threshold given in Lemma \ref{lemma:potential} and choose $\varepsilon <\min(r_0, \varepsilon_0/C)$.

Now for any two fixed points $x,y\in K$ with $d_t(x,y)=2r<\varepsilon$ there is a unique minimising unit speed geodesic $(\gamma^a_t)_{a\in[-r,r]}$ for the metric $g_t$ connecting them. Set $p:=\gamma^0_t$. In normal coordinates (w.r.t. $g_t$) around $\gamma^0_t$, chosen such that $x=(-r,0,\dots,0)$ and $y=(r,0,\dots,0)$ we define the function $\phi^0$ via
    \[\phi^0(z) = \Big( z_1 + \frac12 z\cdot Az\Big)\cdot\chi(z)\;.\]
By the choice of $\varepsilon$, the statement of Lemma \ref{lemma:potential} applies to $r\phi^0$ on the space $(M,g_s)$ for each $s\in[t-\delta,t+\delta]$. For such $s$ and  $a\in[-r,r]$ define $\phi^a_s$ by $\eqref{eq:interpotential}$ using the metric $d_s$. Note that by construction of $\phi^0$, we have $\gamma^a_t=\exp_p\big(-a\nabla\phi^0(p)\big)$ (with the exponential map of $g_t$) and $\nabla^2_t\phi^0(p)=A$. The assertions (1), (2), and (3) are then given by Lemma \ref{lemma:potential}. 
\medskip

\emph{Step 2: Bound on the derivatives.} 
It remains to prove asserstion (4). Note that all potentials $(\phi^a_s)_{s,a}$ are supported in a common compact set in which normal coordinates around $p$ are defined. Indeed, \eqref{eq:interpotential} implies that $\phi^a_s(z)= 0$ for all $z$ with $\inf\{ d_s(z,p): p\in\spt(\phi^0)\}\geq \sqrt{2\|a\phi^0\|_{\infty}}$. 
Let $\Phi^a_s(z)=\exp_z\big(-a\nabla_s\phi^0(z)\big)$ with the exponential map of $g_s$ and recall that $\Phi^a_s$ is a diffeomorphism by Lemma \ref{lemma:potential}. For $z\in K$, let $(\gamma_{z,s}^a)_{a\in[-r,r]}$ be the geodesic given by $\Phi^a_s(z)$ and recall that \begin{equation}\label{eq:integralcurve_phi}
    \frac{\d}{\d a} \gamma^a_{z,s}=-\nabla_s\phi^a_s(\gamma^a_{z,s}),\quad \gamma^0_{z,s}=z\;.
\end{equation}

It now suffices to proof the following
\smallskip

\textbf{Claim}: For any $k\in \N_0$, there exists $C_{k}$ so that for any $z\in \spt(\phi^0)$, $s\in[t-\delta,t+\delta]$ and $0\leq i_1,...,i_k\leq n$ we have
\begin{equation}
    \sup_{a\in[-r,r]} |\mathcal{D}_{i_1,i_2,..,i_k}\phi^a_s|(\gamma^a_{z,s})\leq C_k\;,
\end{equation}
where, $\mathcal{D}_{i_1,i_2,..,i_k}\phi^a_s=\frac{\partial^k\phi^a_s}{\partial x^{i_1}\cdots \partial x^{i_k}}$
denotes the $k$-th order space-time derivatives with the convention $\partial/\partial x^0\coloneqq \partial/\partial_s$.
\smallskip

For easier notation set $f(a)\coloneqq \phi^a_s(\gamma^a_{z,s})$
and for $0\leq i_1,...,i_k\leq n$, denote $f_{i_1,i_2,..,i_k}(a)\coloneqq (\mathcal{D}_{i_1,i_2,..,i_k}\phi^a_s)(\gamma^a_{z,s})$.
The claim in the case $k=0$ follows directly from \eqref{eq:optimalityachieved} and the construction of $\phi^0$ in Step 1. The claim for $k=1$ and $i_1=1,\dots,n$ follows immediately from \eqref{eq:integralcurve_phi}. Setting $h(a):=\|\nabla_s^2\phi^a_{s}\|_{\mathrm{HS}}(\gamma^a_{z,s})$ and arguing as in \cite[Thm.~3.1]{sturm2017remarks}, one obtains the Riccati-type differential inequality $h'(a)\leq \sigma +h^2(a)$ with initial condition $h(0)=\|\nabla_s^2\phi^0\|_{\mathrm{HS}}(z)$, where $\sigma$ is an upper bound on the modulus of the Riemann tensor along $\gamma_{z,s}$. The solution to the corresponding differential equation is given by $\sqrt{\sigma}\tan\big(\sqrt{\sigma}(a+\theta)\big)$ for a suitable $\theta\geq0$ chosen to match the initial condition. Sturm's comparision principle thus yields 
\begin{equation}\label{eq:Hessianphi-appendix}
    \|\nabla_s^2\phi^a_{s}\|_{\mathrm{HS}}(\gamma^a_{z,s})\leq \sqrt{\sigma}\tan\big(\sqrt\sigma(a+\theta)\big)\;.
\end{equation}
 Upon possibly reducing the value of $\varepsilon$, this gives the claim in the case $k=2$ and $i_1,i_2=1,\dots,n$.
In order to prove the remaining cases, note that using \eqref{item:gradhessian} and \eqref{item:HamiltonJacobi}, we have 
\begin{align}
   \frac{\d }{\d a} f_{i_1,i_2,..,i_k}(a)&=\bigg[ \mathcal{D}_{i_1,i_2,..,i_k}\frac{|\nabla_s\phi^a_s|^2_{g_s}}{2}-g_s(\nabla_s\mathcal{D}_{i_1,i_2,..,i_k}\phi^a_s,\nabla_s\phi^a_s)
    \bigg](\gamma^a_s) \\
 &=\bigg[\mathcal{D}_{i_1,i_2,..,i_k}(\frac{g_s^{ij}}{2}\frac{\partial \phi^a_s}{\partial x^i}\frac{\partial \phi^a_s}{\partial x^j})-g^{i,j}_s\mathcal{D}_{i,i_1,i_2,..,i_k}\phi^a_s\mathcal{D}_j\phi^a_s
    \bigg](\gamma^a_s)\label{eq:a_derivative}\;,
\end{align}
where in the Einstein summation $i,j=1,\dots,n$. Notice that in this expression all terms containing $k+1$-order derivatives of $\phi$ cancel. Singling out the terms containing $k$-order derivatives, we have
\begin{align}\label{eq:lot-general}
\frac{\d }{\d a}f_{i_1,i_2,..,i_k}(a)&=\sum_{\sigma=1}^k\frac{\partial g^{ij}_s}{\partial x^{i_\sigma}}f_{i,i_1,\dots,\hat{i_\sigma},\dots, i_k}f_j
 + \text{lower order terms}\;.
\end{align}
In particular, when $k=1$ and $k=2$, 
\begin{align}\label{eq:lot-12}
    \frac{\d }{\d a}{f_{i_1}}&=\frac{\partial g^{i,j}_s}{\partial x^{i_1}}f_if_j\;,\quad
    \frac{\d }{\d a}{f_{i_1,i_2}}=\frac{\partial g^{i,j}_s}{\partial x^{i_1}}f_{i,i_2}f_j+\frac{\partial g^{i,j}_s}{\partial x^{i_2}}f_{i,i_1}f_j + \text{ l.o.t.}\;.
\end{align}
Since $f_i,f_j$ have already been shown to be bounded, \eqref{eq:lot-12} gives the claim also for $k=1$ and $i_1=0$. Moreover, we see that for $k=2$, if $i_1=0$ or $i_2=0$, $\eqref{eq:lot-12}$ is a linear ODE for $(f_{0,0},\dots, f_{0,n})$ with coefficients given by derivatives of the metric tensor and $f_i, f_{i,j}$ with $i,j=1,\dots,n$. Since the latter are already shown to be bounded, we obtain the claim also in this case. 
Finally, for $k\geq 3$ \eqref{eq:lot-general} gives a linear ODE for the $k$-order quantities $f_{i_1,\dots,i_k}$ with coefficients given by derivatives of the metric tensor and the quantities $f$ up to order $k-1$. Hence the proof of the claim can be easily completed by induction on $k$.
\end{proof}

\section{Auxiliary results on upper regularity and \texorpdfstring{$\cd$}{CD}-condition}\label{sec:appendix3}
Let $(X,d,\m)$ be a Polish metric measure space with length metric $d$, locally finite measure $\m$ and $\spt(\m)=X$.

\begin{theorem}\label{thm:enb+ur}
    If $(X,d,\m)$ is an e.n.b. m.m.s. such that entropy is upper regular, then for any $\mu^0,\mu^1\in \mathcal{P}(X,d,\m)$, there exists a unique optimal dynamical plan $\pi$, which is induced by a map.
\end{theorem}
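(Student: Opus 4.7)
The plan is to adapt the argument of Rajala--Sturm for uniqueness of Wasserstein geodesics in strong $\cd(K,\infty)$ spaces, replacing $K$-convexity of the entropy by the weaker upper regularity hypothesis. First I would restrict to the case $\mu^0, \mu^1 \in D(\ent)$, which is the only case in which upper regularity carries information; if either endpoint has infinite entropy the claim should be interpreted trivially.

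Assuming for contradiction that $\pi_0 \neq \pi_1 \in \og(\mu^0, \mu^1)$, I would form the midpoint $\bar{\pi} \coloneqq \tfrac{1}{2}(\pi_0 + \pi_1) \in \og(\mu^0, \mu^1)$ with associated intermediate marginals $\bar{\mu}^a \coloneqq (e_a)_\# \bar{\pi}$ and $\mu_i^a \coloneqq (e_a)_\# \pi_i$, so that $\bar{\mu}^a = \tfrac{1}{2}(\mu_0^a + \mu_1^a)$. The essentially non-branching assumption furnishes a Borel set $\Gamma \subset \geo(X)$ with $\bar{\pi}(\Gamma) = 1$ on which $e_a$ is injective for every $a \in (0,1)$. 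A direct measure-theoretic consequence of this injectivity, combined with $\pi_0, \pi_1 \le 2\bar{\pi}$, is the following rigidity statement: if $\mu_0^a = \mu_1^a$ for some $a \in (0,1)$, then already $\pi_0 = \pi_1$. The uniqueness problem thus reduces to producing such an $a$.

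The main obstacle will be to convert the strict convexity of $x \mapsto x \log x$ into a genuine contradiction. Strict convexity yields, on the set $\{a : \mu_0^a \neq \mu_1^a\}$, the strict pointwise bound
\begin{equation*}
    \ent(\bar{\mu}^a) < \tfrac{1}{2} \ent(\mu_0^a) + \tfrac{1}{2} \ent(\mu_1^a).
\end{equation*}
Upper regularity, applied along each of the three Wasserstein geodesics $(\bar{\mu}^a), (\mu_0^a), (\mu_1^a)$, provides absolute continuity of $a \mapsto \ent(\cdot^a)$ on $(0,1)$, upper semicontinuity at the boundary with the common endpoint values $\ent(\mu^0), \ent(\mu^1)$, the one-sided derivative monotonicity $\partial^+_a u(a-) \le \partial^-_a u(a+)$ on the interior, and the boundary derivative conditions \eqref{ineq:upperregular}. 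The hard part will be combining these ingredients with the strict pointwise inequality: by a careful comparison of one-sided derivatives at a well-chosen interior point, and by using the boundary derivative conditions to transfer endpoint information into the interior, one must rule out that $\ent(\bar{\mu}^a)$ can strictly dip below the average of $\ent(\mu_0^a)$ and $\ent(\mu_1^a)$ on a set of positive Lebesgue measure while the three curves share the same endpoint entropies. This quantitative incompatibility is the core analytic content and the step where the proof really uses the full strength of upper regularity, rather than mere absolute continuity.

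Finally, map-inducedness will be automatic once uniqueness is in hand: fixing any $a \in (0,1)$, the injectivity of $e_a|_\Gamma$ gives $\pi = (e_a|_\Gamma)^{-1}_\# \mu^a$, and pre-composing $e_0$ with this inverse produces a Borel map $F \colon X \to \geo(X)$ so that $\pi = F_\# \mu^0$ realises the optimal coupling $(e_0, e_1)_\# \pi$ as the graph of a function, concluding the proof.
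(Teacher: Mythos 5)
There is a genuine gap, and it sits exactly where you flag ``the core analytic content''. Upper regularity is a condition on each entropy curve \emph{separately}: absolute continuity on $(0,1)$, upper semicontinuity at the endpoints, and one-sided derivative comparisons along that single curve. It provides no relation whatsoever between $\ent(\bar\mu^a)$ and $\tfrac12\ent(\mu_0^a)+\tfrac12\ent(\mu_1^a)$, and no bound on interior entropy in terms of the endpoint values. Three absolutely continuous, upper regular curves sharing the same endpoint values can perfectly well satisfy $\ent(\bar\mu^a)<\tfrac12\ent(\mu_0^a)+\tfrac12\ent(\mu_1^a)$ on all of $(0,1)$ (take any three smooth functions agreeing at $0$ and $1$ with the middle one dipping), so the ``quantitative incompatibility'' you hope to extract does not exist. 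The Rajala--Sturm argument genuinely needs the $K$-convexity inequality, i.e.\ an \emph{upper} bound on interior entropy in terms of endpoint data, and that is precisely the ingredient being dropped when $K$-convexity is replaced by upper regularity. A second, independent problem is your last paragraph: injectivity of $e_a|_\Gamma$ for $a\in(0,1)$ gives a map pushing $\mu^a$ onto $\pi$, not $\mu^0$; two distinct geodesics in $\Gamma$ may share the same starting point, so essential injectivity of $e_0$ — which is exactly the statement ``$\pi$ is induced by a map'' — does not follow from uniqueness of the plan, and needs its own argument.

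The paper's route is Gigli's GAFA argument, and it uses only the upper semicontinuity of $a\mapsto\ent(\mu^a)$ at $a=0,1$ (as the remark following the theorem makes explicit). One reduces to compactly supported marginals with bounded densities; if $\pi$ were not induced by a map one extracts mutually singular sub-plans $\pi^1\perp\pi^2$ with common initial marginal $\m\llcorner D/\m(D)$ for a compact $D$. Upper semicontinuity at $a=0$ gives $\limsup_{a\to0}\ent((e_a)_\#\pi^i)\le-\log\m(D)$, while Jensen's inequality gives $\ent((e_a)_\#\pi^i)\ge-\log\m(\{\rho^{i,a}>0\})$; since the supports of the interior marginals shrink to $D$, both sets $\{\rho^{i,a}>0\}$ have measure close to $\m(D)$ and hence must overlap on a set of positive measure for small $a$. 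The overlap, combined with injectivity of $e_a$ on the non-branching set, contradicts $\pi^1\perp\pi^2$. The contradiction is thus a support/measure argument, not a convexity/derivative argument; if you want to salvage your approach you would have to import this mechanism rather than a midpoint-convexity one.
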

\begin{rem}
    Actually, it is sufficient to assume that for any 2-Wasserstein geodesic between $\mu^0,\mu^1\in\mathcal{P}(X,d,\m)$, entropy is upper semi-continuous at $a=0,1$.
\end{rem}
\begin{proof}
    We follow the proof of the analogous statement in \cite[Theorem 3.3]{GigliGAFA}.
    It suffices to show that any $\pi\in\mathrm{OptGeo}(\mu^0,\mu^1)$ is induced by a map. We can always assume the marginal $\mu^0,\mu^1$ are compactly supported with density bounded away from $0$ and $\infty$, since the union
    \begin{align}
        \bigcup_{n}\Gamma_n, \quad \Gamma_n\coloneqq\{\gamma:\rho^0(\gamma^0),\rho^1(\gamma^1)\leq n,\gamma^a\in K_n,\forall a\in[0,1]\}
    \end{align}
    has full $\pi$-measure, where $K_n$ is an increasing sequence of compact subsets in $X$ (whose existence is guaranteed by tightness of probability measures on Polish spaces).
    In particular, $\mu^0,\mu^1\in D(\ent)$.

    Performing the same contradiction argument, we can find two probability measures $\pi^1,\pi^2\ll \pi$ s.t. $\frac{\d \pi^1}{\d \pi}, \frac{\d \pi^2}{\d \pi}$ are bounded, $\pi^1\perp\pi^2$ and $(e_0)_{\#}\pi^1=(e_0)_{\#}\pi^2=\frac{\m\llcorner{D}}{\m(D)}$ for some compact subset $D$.  
    In particular, entropy is upper regular along $\pi^1$ and $\pi^2$ as well.
    By local finiteness of $\m$, there is an open set $U\supset D$, with finite $\m$-measure.
    Since $D$ is compact, there is $r>0$ s.t. $B(D,r)\subset U$ and hence $\spt((e_a)_{\#}\pi^i)\subset U$ for sufficiently small $a$ and $i=1,2$.
    By Jensen's inequality and the upper semi-continuity of entropy:
    \begin{align}
        -\log\m(D)&=\ent((e_0)_{\#}\pi^i)\geq \limsup_{a\to 0}\ent((e_a)_{\#}\pi^i)\\
        &\geq \limsup_{a\to 0}-\log\m\left(\{\rho^{i,a}>0\}\right),\quad \rho^{i,a}\coloneqq \frac{\d(e_a)_{\#}\pi^i}{\d \m},
    \end{align}
   for $i=1,2$.
   When $a$ is small enough, both $\{\rho^{1,a}>0\}$ and $\{\rho^{2,a}>0\}$ have $\m$-measure between $3\m(D)/4$ and $3\m(D)/2$.
   This indicates that the support of $(e_a)_{\#}\pi^1$ and $(e_a)_{\#}\pi^2$ must intersect on a set of positive measure, which contradicts to the non-branching assumption and $\pi^1\perp\pi^2$.
\end{proof}

As a classic result obtained by Sturm in \cite[Theorem 4.17]{sturm2006--1}, we have globalization theorem for $\mathrm{CD}(K,\infty)$.
\begin{theorem}\label{thm:localtoglobalCD}
    Let $(X,d,\m)$ be an e.n.b. locally compact m.m.s.
    If it is a local $\mathrm{CD}(K,\infty)$ space, then it is a $\mathrm{CD}(K,\infty)$ space.
\end{theorem}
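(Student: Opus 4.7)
The plan is to establish that $\ent$ is $K$-convex along each $W$-geodesic connecting two measures in $D(\ent)$. Fix such endpoints $\mu^0, \mu^1 \in D(\ent)$ and choose an optimal dynamical plan $\pi\in \og(\mu^0,\mu^1)$ with associated geodesic $\mu^a \coloneqq (e_a)_\# \pi$. By essential non-branching, $\pi$ is concentrated on a non-branching Borel set $\Gamma\subset \geo(X,d)$. Without loss of generality I would assume that $\mu^0, \mu^1$ are compactly supported with bounded densities (by a standard truncation and approximation argument), so that $\spt(\pi)$ is compact in $\geo(X,d)$.

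First I would reduce the global $K$-convexity inequality to a midpoint version, namely
\[
\ent(\mu^{1/2}) \leq \tfrac{1}{2}\ent(\mu^0) + \tfrac{1}{2}\ent(\mu^1) - \tfrac{K}{8}W^2(\mu^0,\mu^1)
\]
together with the analogous bound applied to sub-geodesics. Combined with upper semi-continuity of $\ent$ along $W$-geodesics (a consequence of the local $\cd(K,\infty)$ assumption and local compactness), dyadic bisection then yields the full $K$-convexity on $[0,1]$; the step uses that restrictions of Wasserstein geodesics remain Wasserstein geodesics between their endpoints, which holds under essential non-branching.

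To establish the midpoint inequality, I would combine local $\cd(K,\infty)$ with a countable Borel partition of $\Gamma$ into cells whose geodesic endpoints lie in common local $\cd$-neighbourhoods. By local compactness and the local $\cd(K,\infty)$ assumption, each $x\in X$ has a neighbourhood $U_x$ in which the $\cd(K,\infty)$-inequality holds for any two measures supported in $U_x$. Pulling back a countable subcover by such neighbourhoods along the evaluation map $e_{1/2}$ and intersecting with $\Gamma$ yields a candidate partition; iterated dyadic bisection of each piece then produces a refined partition $\{\Gamma_n\}$ in which both endpoint marginals of the normalised restriction $\pi_n \coloneqq \pi\llcorner \Gamma_n / \pi(\Gamma_n)$ are supported in a common local neighbourhood. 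Applying local $\cd(K,\infty)$ to $\pi_n$ gives a midpoint inequality for $\nu_n^a \coloneqq (e_a)_\# \pi_n$, and summing across $n$ using the decomposition identity from \cref{lemma:D_aEnt} (valid thanks to essential non-branching) together with the convexity of $W^2$ under such decompositions yields the global midpoint inequality for $\mu^a$.

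The main obstacle is the construction of the partition $\{\Gamma_n\}$: ensuring that each cell fits inside a single local $\cd(K,\infty)$-neighbourhood at every relevant time requires an iterative refinement of the dyadic bisection, and the fact that the required finiteness depends measurably on the cell ties the combinatorial control to the measurable structure of $\pi$. This is precisely the technically delicate content of Sturm's original argument in \cite[Theorem 4.17]{sturm2006--1}; any adaptation in the e.n.b. setting would likewise rely on a careful combination of local compactness, continuity of the evaluation maps, and iterated bisection.
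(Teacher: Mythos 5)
Your proposal sets out to prove $K$-convexity of the entropy along \emph{each} Wasserstein geodesic between given finite-entropy marginals, i.e.\ the strong $\cd(K,\infty)$ inequality. But the theorem as stated (and the Lott--Sturm--Villani definition recalled in Section~2.2) asserts only the weak $\cd(K,\infty)$ condition: $K$-convexity along \emph{at least one} connecting geodesic. This mismatch is exactly where your argument develops a gap. In the key summation step you apply local $\cd(K,\infty)$ to the normalised restrictions $\pi_n = \pi\llcorner\Gamma_n/\pi(\Gamma_n)$, which yields a midpoint inequality for \emph{some} geodesic between the marginals $\mu^0_n,\mu^1_n$; you then feed this into the decomposition identity from \cref{lemma:D_aEnt}, which is set up for the restricted geodesics $\nu^a_n=(e_a)_\#\pi_n$. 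For those two geodesics to coincide you need uniqueness of optimal dynamical plans between absolutely continuous marginals. Essential non-branching does not supply this by itself: the paper's own \cref{thm:enb+ur} obtains uniqueness only after additionally assuming upper regularity of the entropy, which is not among the hypotheses here and cannot yet be deduced from a \emph{local} $\cd(K,\infty)$ assumption. The supporting claim that upper semi-continuity of $\ent$ along arbitrary $W$-geodesics follows from local $\cd(K,\infty)$ and local compactness is similarly unjustified in the weak setting, where continuity guarantees are only available along the favourable geodesics.

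The paper's route avoids this entirely. It does not try to verify $K$-convexity along the given plan $\pi$; instead it treats Sturm's \cite[Theorem 4.17]{sturm2006--1} (compact length $\mathrm{m.m.s.}$, no non-branching needed) as a black box that \emph{constructs} a $K$-convex geodesic between any two pre-compactly supported marginals. To pass from compact to locally compact, $\Gamma$ is partitioned into countably many pre-compact cells $\Gamma_n$; on each compact image $\overline{e_{[0,1]}(\Gamma_n)}$ Sturm's theorem produces a $K$-convex geodesic between the restricted marginals, and these are glued into a single $W$-geodesic between $\mu^0$ and $\mu^1$, essential non-branching ensuring the cells' geodesics occupy disjoint regions at interior times so the glued plan is optimal and the entropies add correctly (the argument in \cref{lemma:D_aEnt} and the estimates in the proof of \cref{prop:etatoSRF}). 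This delivers a single $K$-convex geodesic, which is exactly what weak $\cd(K,\infty)$ asks for. In short: either retarget your proof at weak $\cd(K,\infty)$ and build the geodesic piecewise rather than restricting a fixed $\pi$, or supply a uniqueness statement that identifies the restricted geodesics with the locally $K$-convex ones — without one of these, the summation step does not close.
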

In \cite{sturm2006--1}, the theorem was proven assuming $(X,d)$ is compact. 
However the same proof works for the locally compact case under mild modifications similar to the proof of \cref{prop:etatoSRF}.  
More precisely, fix any $\mu^0,\mu^1\in\mathcal{P}(X,d,\m)\cap D(\ent)$ and a Borel non-branching subset $\Gamma$ where some optimal dynamical plan is concentrated.
For any non-null compact subset $\tilde\Gamma\subset\Gamma$, one can apply the argument of Sturm to the compact set $e_{[0,1]}(\tilde\Gamma)$, finding a Wasserstein geodesic s.t. entropy is $K$-convex along it.
Finally, a geodesic between $\mu^0$ and $\mu^1$ can be obtained by some gluing procedure as one can take a countable partition $\{\Gamma_n\}$ of $\Gamma$ by pre-compact subsets (up to some zero measure set).
The desired $K$-convexity then follows from non-branching and analogous estimates as in \cref{lemma:D_aEnt}.
\medskip

Now define function $\eta_\varepsilon^\pm$ and $\eta^\pm$ on $X^2$ as in \cref{def:eta} by regarding a static space as a constant family of spaces. We have the following local-to-global property for static spaces.
\begin{proposition}\label{prop:etatoCD}
    Assume $(X,d,\m)$ is e.n.b., locally compact and the entropy is upper regular. 
    If $\eta^-(x,x)\geq K$ for all $x\in X$, then $(X,d,\m)$ satisfies strong $\mathrm{CD}(K,\infty)$.
\end{proposition}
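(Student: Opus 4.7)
The plan is to adapt the argument of \cref{prop:etatoSRF} to the static setting and to upgrade the conclusion from a single-geodesic endpoint inequality to strong $K$-convexity of the entropy along every Wasserstein geodesic. Concretely, I aim to establish, for every $W$-geodesic $(\mu^a)_{a\in[0,1]}$ with $\mu^0,\mu^1\in D(\ent)$ and every subinterval $[\sigma,\rho]\subset[0,1]$,
\begin{equation}\label{eq:proposalendpoint}
    \partial_a^+\ent(\mu^a)\lvert_{a=\rho-}-\partial_a^-\ent(\mu^a)\lvert_{a=\sigma+}\geq K(\rho-\sigma)W^2(\mu^0,\mu^1)\;.
\end{equation}
Under the assumed upper regularity of $\ent$, the inequality \eqref{eq:proposalendpoint} applied to all $\sigma<\rho$ is equivalent to strong $K$-convexity of $a\mapsto\ent(\mu^a)$, which, since the geodesic is arbitrary, is precisely strong $\cd(K,\infty)$.

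The heart of the proof mirrors step (ii) of \cref{prop:etatoSRF}. By definition, $(x,y)\mapsto \eta^-(x,y)$ is lower semicontinuous, so combining lower semicontinuity with the hypothesis $\eta^-(x,x)\geq K$ yields, for every $\delta>0$ and $x\in X$, a neighborhood $U_x$ on which $\eta^-\geq K-\delta$; unfolding the definition of $\eta^-$ further provides, for each pair $(y,z)\in U_x\times U_x$, a radius $r_{y,z}>0$ such that every $W$-geodesic between measures supported in $B(y,r_{y,z})$ and $B(z,r_{y,z})$ satisfies the endpoint jump estimate with constant at least $K-\delta$. Now fix a $W$-geodesic $(\mu^a)$ with optimal dynamical plan $\pi$ concentrated on a non-branching Borel set $\Gamma\subset\geo(X)$, and assume first that $\spt\pi$ is compact. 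Covering the resulting compact subset of $X\times X$ by finitely many of the selected products of balls and intersecting with a fine enough equipartition $\{a_i=i/n\}_{i=0}^n$ of $[0,1]$, I obtain a finite measurable partition $\{\Gamma_\alpha\}$ of $\Gamma$ such that for each $i,\alpha$ and $\mu^a_\alpha\coloneqq(e_a)_{\#}(\pi\llcorner\Gamma_\alpha)/\pi(\Gamma_\alpha)$, the pair $(\mu^{a_i}_\alpha,\mu^{a_{i+1}}_\alpha)$ lies in a product of balls where the local bound applies; thus
\begin{equation*}
    (a_{i+1}-a_i)\big(\partial_a^-\ent(\mu^{a_{i+1}-}_\alpha)-\partial_a^+\ent(\mu^{a_i+}_\alpha)\big)\geq (K-\delta)\,W^2(\mu^{a_i}_\alpha,\mu^{a_{i+1}}_\alpha)\;.
\end{equation*}
Summing over $i$ and $\alpha$ via \cref{lemma:D_aEnt}, the upper regularity of $\ent$, and the constant-speed identities $W^2(\mu^{a_i}_\alpha,\mu^{a_{i+1}}_\alpha)=(a_{i+1}-a_i)^2W^2(\mu^0_\alpha,\mu^1_\alpha)$ and $W^2(\mu^0,\mu^1)=\sum_\alpha\pi(\Gamma_\alpha)W^2(\mu^0_\alpha,\mu^1_\alpha)$, yields \eqref{eq:proposalendpoint} (for $[\sigma,\rho]=[0,1]$) with $K$ replaced by $K-\delta$. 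Rerunning the argument on each restricted geodesic $(\mu^{\sigma+b(\rho-\sigma)})_{b\in[0,1]}$ produces the full \eqref{eq:proposalendpoint} with $K-\delta$, and letting $\delta\to 0$ concludes the compact-support case.

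The non-compact case proceeds in the spirit of step (iii) of \cref{prop:etatoSRF}: local compactness together with the Hopf–Rinow theorem makes $(X,d)$ proper, so $\spt\pi$ can be decomposed into a countable partition $\{\Gamma_n\}$ by pre-compact pieces to which the bounded-support estimate applies. One then passes to the limit via Fatou's lemma. The main technical obstacle — and the point where the present proof must diverge from \cref{prop:etatoSRF} — is that the latter invoked the present proposition to obtain the semiconvexity of $\ent$ needed to justify Fatou, which would be circular here. The fix is to establish the bounded-support statement first, unconditionally; this itself delivers $(K-\delta)$-convexity (hence semiconvexity) of $\ent$ along every sub-geodesic supported in a pre-compact set, providing the uniform lower bound on the incremental quotients $\tfrac{1}{h}\big(\ent(\mu^1_n)-\ent(\mu^{1-h}_n)-\ent(\mu^h_n)+\ent(\mu^0_n)\big)$ that Fatou requires. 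With Fatou available, \eqref{eq:proposalendpoint} holds for an arbitrary $W$-geodesic between measures in $D(\ent)$, and the strong $\cd(K,\infty)$ condition follows.
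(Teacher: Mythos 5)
Your proof is correct, but it follows a genuinely different route from the one the paper takes. The paper keeps the proof of \cref{prop:etatoCD} short: from $\eta^-(x,x)\geq K$ and lower semicontinuity one gets, for each $\delta>0$, a ball $B(x,r)$ with $\eta^-_r(x,x)>K-\delta$; since any geodesic with endpoints in $B(x,r/2)$ stays in $B(x,r)$, the endpoint inequality holds along every restricted sub-geodesic, and with upper regularity and \cite[Lemma 1.2]{Sturm2018Super} this upgrades to $(K-\delta)$-convexity of $\ent$ locally, i.e.\ local $\cd(K-\delta,\infty)$. The paper then invokes the local-to-global theorem for $\cd(K,\infty)$ on e.n.b.\ locally compact spaces (\cref{thm:localtoglobalCD}) together with \cref{thm:enb+ur}, both of which it has factored out into the appendix. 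You instead rebuild the local-to-global content from scratch by hand, transporting the partitioning machinery of \cref{prop:etatoSRF} (step (ii) for the compact case, step (iii) for the non-compact case) into the static setting. That works, but it is longer and essentially re-derives \cref{thm:localtoglobalCD}. One remark on your circularity concern: it is a real issue for \emph{your} route, since you are reusing the argument of \cref{prop:etatoSRF}, whose step (iii) cites \cref{prop:etatoCD} to justify Fatou; your fix (prove the bounded-support case first and use the resulting $(K-\delta)$-convexity along each pre-compact piece to supply the Fatou lower bound) is correct and is precisely how one must order the argument. But note that in the paper there is no circularity to break: \cref{prop:etatoCD} never cites \cref{prop:etatoSRF}, it cites the independent globalization theorem \cref{thm:localtoglobalCD}; only the reverse dependence exists. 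The trade-off is clear: the paper's route is shorter and cleanly modular, at the cost of relying on two appendix theorems; your route is self-contained within the proof but duplicates that material and requires careful sequencing to stay non-circular.
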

\begin{proof}
Fix any $\delta>0$. By assumption and the definition of $\eta^-$, for any $x\in X$, there is $r>0$ s.t. $\eta^-_r(x,x)>K-\delta$.
Notice that any Wasserstein geodesic $(\mu^a)_{a\in [0,1]}$ with $\spt(\mu^0),\spt(\mu^1)\subset B(x,\frac{r}{2})$ is contained in $B(x,r)$ in the sense that $\spt(\mu^a)\subset B(x,r)$ for all $a\in [0,1]$.
Therefore, by the definition of $\eta_r$, upper regularity of entropy and \cite[Lemma 1.2]{Sturm2018Super}, entropy is $(K-\delta)$-convex along all geodesics with endpoints supported in $B(x,\frac{r}{2})$.
In particular, $(X,d,\m)$ verifies local $\mathrm{CD}(K-\delta,\infty)$ for any $\delta>0$.
Hence the theorem follows from \cref{thm:localtoglobalCD} and \cref{thm:enb+ur}.
\end{proof}

\section{Rigidity theorem of spherical suspensions}\label{sec:appendix4}
\begin{theorem}\label{thm:suspension}
Let $(\Sigma(X),d_{\Sigma},\m_{\Sigma})$ be the $N$-spherical suspension over some m.m.s. $(X,d,\m)$ with $\mathrm{diam}(X)\leq \pi$.
Assume that $(\Sigma(X),d_{\Sigma},\m_{\Sigma})$ satisfies $\rcd(K',N')$ for some $K'\in\R$ and $N'\in[0,\infty)$.
Then either 
\begin{enumerate}
    \item $\vartheta^*(\mathcal{S})=\vartheta^*(\mathcal{N})=+\infty$;\quad or
    \item $N$ is an integer and $(\Sigma(X),d_{\Sigma},\m_{\Sigma})$ is isomorphic to the unit sphere $\mathbb{S}^{N+1}$ with the round distance and a multiple of the volume measure.
\end{enumerate}
\end{theorem}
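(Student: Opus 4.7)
Plan: I will focus on the south pole $\mathcal{S}$; the argument at $\mathcal{N}$ is identical by the $s\leftrightarrow\pi-s$ symmetry of the spherical suspension. Suppose $\vartheta^*(\mathcal{S})<+\infty$. The strategy is to reduce the claim to the rigidity theorem for Euclidean cones established in \cite{Erbar-Sturm} via a parabolic blow-up at $\mathcal{S}$.

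First I would identify the tangent cone of $(\Sigma(X),d_\Sigma,\m_\Sigma)$ at $\mathcal{S}$. A Taylor expansion of the defining relation \eqref{eq:d_conic} around $s=s'=0$ yields
\[
d_\Sigma^2\bigl((x,s),(x',s')\bigr)=(s-s')^2+ss'\bigl(d(x,x')\wedge\pi\bigr)^2+O\bigl((s^2+s'^2)^2\bigr)\;,
\]
while $\sin^N s\sim s^N$ as $s\to 0$. Thus, as $\lambda\to\infty$, the pointed rescaled metric measure spaces $(\Sigma(X),\lambda d_\Sigma,\lambda^{N+1}\m_\Sigma,\mathcal{S})$ converge in pointed measured Gromov--Hausdorff sense to the Euclidean $(N+1)$-cone $C(X)$ over $(X,d,\m)$, equipped with its natural warped-product distance and measure $r^N\,dr\otimes d\m$, based at the vertex $o$. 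Since $(\Sigma(X),d_\Sigma,\m_\Sigma)$ satisfies $\rcd(K',N')$, stability of the $\rcd$ condition under pmGH convergence \cite{GigliRCD} implies that $C(X)$ satisfies $\rcd(0,N'')$ for some $N''\in[1,\infty)$.

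Next I would track the behaviour of $\vartheta^*$ under this rescaling. Since the Dirichlet form rescales as $\mathcal{E}^{(\lambda)}=\lambda^{N-1}\mathcal{E}$ and the reference measure by $\lambda^{N+1}$, the generator rescales by $\lambda^{-2}$ and hence $\ap[s]^{(\lambda)}=\ap[s/\lambda^2]$ on the rescaled space, giving $W_s^{(\lambda)}(\ap[s]^{(\lambda)}\delta_x,\ap[s]^{(\lambda)}\delta_y)=\lambda\,W_{s/\lambda^2}(\ap[s/\lambda^2]\delta_x,\ap[s/\lambda^2]\delta_y)$. A change of variables $\tau=s/\lambda^2$ then yields $\vartheta^{+,(\lambda)}=\lambda^{-2}\vartheta^+$. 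Combined with the assumed finiteness $\vartheta^*(\mathcal{S})<+\infty$, this gives a uniform quadratic decay along the blow-up scale. Using stability of heat kernels and heat flows along pmGH-converging sequences of $\rcd$ spaces together with lower semicontinuity of Wasserstein distances, I would pass to the tangent-cone limit and conclude $\vartheta^*(o)\leq 0$ at the vertex of $C(X)$. This limiting step is the main technical obstacle: one must control the heat propagators uniformly across the rescaling and show that the $\limsup/\liminf$ structure defining $\vartheta^*$ survives pmGH convergence; the uniform Gaussian heat-kernel bound \eqref{ineq:Gaussian} and the uniform $\rcd$ control ensured by Ketterer's suspension theorem \cite{Ketterer-JMPA15} are the key tools.

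Finally, I would invoke the Euclidean-cone rigidity of Erbar and Sturm \cite{Erbar-Sturm}: an $\rcd(K,N'')$ space that is simultaneously a Euclidean $(N+1)$-cone and satisfies $\vartheta^*(o)\leq 0$ at the vertex must be isomorphic to $\R^{N+1}$. Hence $C(X)\cong\R^{N+1}$, which forces $N+1$ (and thus $N$) to be an integer and the link $(X,d,\m)$ to be isomorphic to $(\mathbb{S}^N,d_{\mathbb{S}^N},c\cdot\vol_{\mathbb{S}^N})$ for some $c>0$. Consequently $(\Sigma(X),d_\Sigma,\m_\Sigma)$ is isomorphic to the unit sphere $\mathbb{S}^{N+1}$ with a multiple of its volume measure, which is exactly alternative~(2).
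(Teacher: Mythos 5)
Your blow-up strategy is genuinely different from the paper's proof, and the first two ingredients are fine: the identification of the tangent cone of $(\Sigma(X),d_\Sigma,\m_\Sigma)$ at $\mathcal{S}$ with the Euclidean $(N+1)$-cone $C(X)$, and the scaling computation $\vartheta^{+,(\lambda)}=\lambda^{-2}\vartheta^+$ obtained from $\Delta^{(\lambda)}=\lambda^{-2}\Delta$ and $W^{(\lambda)}=\lambda W$. Both are correct. The conclusion of the final step is also correct: if you could show $\vartheta^*(o)\le 0$ at the vertex of $C(X)$, the cone rigidity of Erbar--Sturm would give $C(X)\cong\R^{N+1}$, forcing $X\cong\mathbb S^N$ and hence $\Sigma(X)\cong\mathbb S^{N+1}$.

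The gap is the passage to the limit, which you correctly flag as the ``main technical obstacle'' but do not close. You need to convert the fact that $\vartheta^{*,(\lambda)}(\mathcal S)=\lambda^{-2}\vartheta^*(\mathcal S)\to 0$ into $\vartheta^*(o)\le 0$ on the pmGH-limit $C(X)$. That is an upper-semicontinuity statement for $\vartheta^*$ along a pmGH-convergent sequence. There is no such result available: $\vartheta^+(t,x,y)$ is obtained by taking a $\liminf$ as $s\nearrow t$ of a ratio of Wasserstein distances along heat flows, and $\vartheta^*$ then takes a $\limsup$ in $(y,z)$. Even \emph{within a single fixed space}, $\vartheta^\pm$ fails to be semicontinuous in the spatial variables --- this is explicitly remarked in Section~\ref{sec:comparingRF} ($\vartheta^\flat$ is introduced precisely because $\vartheta^-$ is not l.s.c.), and it is exactly the mechanism by which cones distinguish rough from weak notions. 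Across a degenerating family of spaces the situation is worse: uniform Gaussian bounds and uniform $\rcd$ constants give $L^2$ or Mosco convergence of heat flows, but they do not control the \emph{rate} $\frac{1}{t-s}\log(W_s/d_t)$ as $s\nearrow t$ uniformly in $\lambda$, which is what you would need to interchange $\lambda\to\infty$ with $s\nearrow t$. Making this step rigorous would require short-time two-sided Wasserstein asymptotics for the heat flow near the tip of the suspension, uniform in the blow-up parameter --- which is essentially what the paper proves directly.

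By contrast, the paper's proof (Appendix C) is a direct computation rather than a blow-up: it bounds $W^2(\hat P_t\delta_{\mathcal S},\hat P_t\delta_p)$ from above using the pointwise comparison $d_\Sigma\le d_C$ and a product coupling, reduces the resulting expression to moments of the radial marginal $\bar\nu^t$ and an integral of $\cos(d(x,\cdot))$, and deduces $\vartheta^+(\mathcal S,p)=+\infty$ unless $\int_X\int_X\cos(d(x,y))\,\d\m\,\d\m=0$. That integral condition characterizes the round sphere among $\rcd(N-1,N)$ spaces of diameter $\le\pi$ by \cite[Cor.~1.4]{Erbar-Sturm}. This bypasses any limit of $\vartheta$ across spaces. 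If you want to pursue the blow-up route you would have to supply the uniform short-time estimates yourself, at which point the direct argument is shorter.
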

The proof is parallel to the proof of Theorem 1.1 in \cite{Erbar-Sturm} with suitable modifications.
We refer interested readers to \cite{Erbar-Sturm} for details of arguments that are identical as in the case of cones.
\begin{proof}
By \cite[Theorem 2.8]{Erbar-Sturm}, the RCD-condition on the suspension ensures that the base space $(X,d,\m)$ satisfies $\rcd(N-1,N)$ (note that the RCD and $\rcd^*$ conditions have been shown to be equivalent in \cite{cavalletti2021globalization}).
As in \cite{Erbar-Sturm}, we will proceed by showing the following claim.

    \textbf{Claim 1:} For any $x\in X$ with $\int_X \cos (d(x,y))\d \m(y)>0$, then
    \begin{equation}
        \vartheta^+(o,(x,r))=+\infty,\quad \text{for }\left\{\begin{array}{c}
            \forall r\in(0,\frac{\pi}{2}], o=\mathcal{S}\\
          \forall r\in[\frac{\pi}{2},\pi), o=\mathcal{N}
        \end{array} \right.
    \end{equation}

Then the proof is concluded by applying \cite[Corollary 1.4]{Erbar-Sturm}, which says an $\rcd(N-1,N)$ space with $\mathrm{diam}(X)\leq \pi$ is the unit sphere $\mathbb{S}^N$ with the round metric, a multiple of the volume measure and $N$ being an integer if and only if 
\begin{equation}\label{eq:cos-sphere}
    \int_X\int_X \cos(d(x,y))\d\m(x)\d \m(y)=0. 
\end{equation}
    
    To prove the Claim 1, denote $a\coloneqq \aint{X}{}\cos(d(x,y))\d\m(y)>0$.
    The idea is to reduce the problem to the Euclidean cone situation, and the latter is proved in Proposition 4.1 of \cite{Erbar-Sturm}.
      Following \cite{Erbar-Sturm}, we denote by $\bar{\nu}^t_p$ the marginal of $\hat{P}_t\delta_p$ in the radial component and by $\{\nu^t_{p,s}\}_{s\in [0,\pi]}$ the disintegration of $\hat{P}_t\delta_p$ over $\bar{\nu}^t_p$ i.e. $\nu^t_{p,s}$ are measures on $\Sigma(X)$ s.t.
   \begin{equation}
      \hat{P}_t\delta_p=\int^\pi_0 \nu^t_{p,s}\d \bar\nu^t_p(s).
   \end{equation}
   Owing to the symmetry between upper and lower half suspension, it is enough to consider $o=\mathcal{S}$ and $r\leq \frac{\pi}{2}$.

   Recall that the conic metric $d_{\Sigma}$ coincides with the length structure induced by
   \[
\mathrm{Length}(\gamma)\coloneqq \int^b_a \sqrt{|r'(s)|^2+\sin^2(r(s))|\dot\theta|^2_X(s)}\d s
   \]
   where $\gamma\colon [a,b]\ni s\mapsto (\theta(s),r(s))\in \Sigma(X)$ is Lipschitz, see e.g. \cite[Chapter 3]{BBImetricgeo}.
   Then, as a consequence of $\sin s\leq s$, for every $(y,s),(y',s')\in \Sigma(X)$
   \begin{equation}
       d_{\Sigma}((y,s),(y',s'))\leq d_C((y,s),(y',s'))\coloneqq \sqrt{s^2+(s')^2-2ss'\cos(d_X(y,y'))}.
   \end{equation}
   Hence as in the Step 2 of the proof of \cite[Proposition 4.1]{Erbar-Sturm}, we have
   \begin{align}
       W^2(\hat{P}_t\delta_o,\hat{P}_t\delta_p)&\leq \int d_\Sigma^2\d \hat{P}_t\delta_o\otimes\hat{P}_t\delta_p\leq \int d_C^2\d \hat{P}_t\delta_o\otimes\hat{P}_t\delta_p\\
       &\leq \int r^2 \d \bar\nu^t_p(r)+\int s^2\d\bar\nu^t_o(s)-2\int s\d\bar\nu^t_o(s)\cdot \int f\d \nu^t_p,\label{ineq:moment}
   \end{align}
      where $f(y,s)=s\aint{X}{}\cos(d(x,y))\d\m(y)$.

\textbf{Claim 2:} There exists $C>0$ s.t.
\begin{equation}
    \int f\d \nu^t_p\geq ar-C\sqrt{t},\quad \forall t>0. 
\end{equation}
    We modify the value of $f$ on the north hemisphere $\{(y,s):s\geq \frac{\pi}{2}\}$ by considering 
   \begin{align}
       \tilde{f}(y,s)\coloneqq \frac{\pi}{2}\sin s\cdot \aint{X}{}\cos(d(x,y))\d\m(y),s\geq \frac{\pi}{2};\quad \tilde{f}(y,s)=f(y,s),s\leq \frac{\pi}{2}.
   \end{align}
   Observe that $\tilde f$ is $\frac{\pi}{2}$-Lipschitz on $\Sigma(X)$.
   Indeed, on the south hemisphere, $d_{\Sigma}\geq \frac{2}{\pi}d_{C}$ as $\sin s\geq \frac{2}{\pi}s$ for $s\in[0,\frac{\pi}{2}]$. 
   Hence the Lipschitzness is clear by \cite[Lemma 4.3]{Erbar-Sturm}.
   On the north hemisphere, we show similarly to \cite[Lemma 4.3]{Erbar-Sturm} that $(y,s)\mapsto \sin s\cdot \cos(d(x,y))$ is $1$-Lipschitz. 
   For every $(y,s),(y',s')\in \Sigma(X)$, consider the geodesic $\gamma\colon [0,1]\ni s\mapsto (\theta(s),r(s))$ connecting them. Then by the Cauchy-Schwarz inequality, it holds that
   \begin{align}
      & |\sin s\cdot \cos(d(x,y))-\sin s'\cdot \cos(d(x,y'))|=\int^1_0 \frac{\d}{\d \tau}\sin(r(\tau))\cos(\theta(\tau))\d \tau\\
       \leq &\int^1_0 |r'(\tau)\cos(r(\tau))\cos(\theta(\tau))|+|\dot\theta|_X(\tau)\cdot|\sin r(\tau)\sin(\theta(\tau))|\d \tau\\
       \leq &\int^1_0 \sqrt{|r'(\tau)|^2+\sin^2(r(\tau))|\dot\theta|^2_X(\tau)}\d \tau=d_{\Sigma}((y,s),(y',s')).
   \end{align}
   Thanks to the non-negativity of $\int_X\cos(d(x,y))\d\m(y)$ for all $x$ (due to Bishop--Gromov volume comparison), we have $\tilde{f}\leq f$.
   Now applying Kantorovich duality with candidate $\tilde{f}$, together with \cite[Lemma 2.2]{Erbar-Sturm}, we obtain
   \begin{align}
       \int f\d \nu^t_p&\geq \int \tilde f\d \nu^t_p= \left(\int \tilde f\d \nu^t_p-\int \tilde f\d \delta_p\right)+\tilde f(p)\\
       &\geq -\frac{\pi}{2}\sqrt{W_2(\nu^t_p,\delta_p)}+\tilde f(p)\geq -\frac{\pi}{2}\sqrt{2Nt}+ar.
   \end{align}
We are left to estimate moments of the heat flow on spherical suspensions.
We denote by $P^I_t,\hat{P}^I_t$ and $L^I$ the heat semigroup, its adjoint acting on measures and its generator on the weighted interval $([0,\pi],|\cdot|, \sin^N r\d r)$, respectively.
Denote by $m_\alpha(p,t)\coloneqq \int s^\alpha \d\bar\nu^t_p(s)$ the $\alpha$-order moments.
\smallskip

\textbf{Claim 3:} It holds
\begin{equation}
    m_2(p,t)=r^2+O(t),\quad m_1(o,t)\geq c\cdot\sqrt{t}, c>0.
\end{equation}
Repeating the proof of \cite[Lemma 4.2]{Erbar-Sturm} gives that $\bar\nu^t_p=\hat{P}^I\delta_r$.
We now have
\begin{equation}
    (L^I  u)(r)=u''(r)+N\frac{\cos r}{\sin r}u'(r),\quad \forall u\in C^\infty_0(I).
\end{equation}
The stochastic process $Y_t$ associated with the generator $\frac12 L^I$ is the solution of following SDE
\begin{equation}
    \d Y_t=\frac{N}{2}\frac{\cos(Y_t)}{\sin(Y_t)}\d t+\d B_t
\end{equation}
with $B_t$ a standard Brownian motion.
Then the moment $m_\alpha(p,t)$ is expressed via the identity
\begin{equation}
    \int r^\alpha\d \bar\nu^t_p(r)=\mathbb{E}[(Y_{2t})^\alpha],\quad Y_0=r.
\end{equation}
By Ito's formula, for $\alpha=2$, 
\begin{align}
    \d (Y_t)^2=2 Y_t\d Y_t+\d \langle Y\rangle_t= 2Y_t\left(\frac{N}{2}\frac{\cos(Y_t)}{\sin(Y_t)}\d t+\d B_t\right)+\d t.
\end{align}
Hence with the fact $\frac{1}{\tan(r)}\leq \frac{1}{r}$ for $r\in [0,\pi]$, the second moment can be bounded as follows
\begin{align}
    m_2(p,\frac{t}{2})&=\mathbb{E}(Y_0^2)+t+N\mathbb{E}\left[\int^t_0 Y_s\frac{\cos(Y_s)}{\sin(Y_s)}\d s\right]\\
    &\leq r^2+ t+N\mathbb{E}\left[\int^t_0 Y_s\frac{1}{Y_s}\d s\right]=r^2+t(N+1).
\end{align}
To bound the first moment, we use lower Gaussian estimate (see e.g. \cite[Theorem 1.1]{Jiang-Li-Zhang}) of heat kernel $p_t(\cdot,\cdot)$ on the $\rcd(N,N+1)$ space $([0,\pi],|\cdot|,\m^I)$.
Then for any $t>0$ small enough s.t. $\sin\sqrt{t}\geq \sqrt{t}/2$, we have
\begin{align}
    m_1(o,t)&=\int^\pi_0 s\d \hat{P}^I\delta_0=\int^\pi_0 p_t(s,0)s\d\m^I(s)\\
    & \gtrsim\int \frac{1}{\m^I((0,\sqrt{t}))}\exp(-\frac{s^2}{t})s\d \m^I(s)\\
    &\geq\int^{\sqrt{t}}_0\exp(-s^2/t)\left(\int^{\sqrt{t}}_{0}\sin^N a\d a\right)^{-1}s\cdot\sin^Ns\d s\\
    &\geq e^{-1}\int^{\sqrt{t}}_0  \left(\int^{\sqrt{t}}_0 a^N\d a\right)^{-1}s(\frac{s}{2})^N\d s\gtrsim \sqrt{t}.
\end{align}
Finally substituting estimates in Claim 2,3 into \eqref{ineq:moment} we conclude that $\vartheta^+(o,p)=+\infty$ whenever $a$ is positive.
\end{proof}

\bibliographystyle{abbrv}
\bibliography{ms.bib}
\end{document}